\crefname{subsection}{Subsection}{Subsections}
\crefname{subsubsection}{Subsubsection}{Subsubsections}
\theoremstyle{definition}
\newtheorem{theorem}{Theorem}[subsection]
\newtheorem{defn}[theorem]{Definition}
\newtheorem{ex}[theorem]{Example}
\newtheorem{cor}[theorem]{Corollary}
\newtheorem{lemma}[theorem]{Lemma}
\newtheorem{prop}[theorem]{Proposition}
\newtheorem{rmk}[theorem]{Remark}
\newtheorem{warning}[theorem]{Warning}
\newtheorem*{rmk*}{Remark}
\newtheorem*{ex*}{Example}
\newtheorem*{theorem*}{Theorem}
\newtheorem*{defn*}{Definition}
\newcommand{\bbZ}{\mathbb{Z}}
\newcommand{\bbF}{\mathbb{F}}
\newcommand{\bbN}{\mathbb{N}}
\newcommand{\bbE}{\mathbb{E}}
\newcommand{\bbA}{\mathbb{A}}
\newcommand{\bbT}{\mathbb{T}}
\newcommand{\bbL}{\mathbb{L}}
\newcommand{\calM}{\mathcal{M}}
\newcommand{\calO}{\mathcal{O}}
\newcommand{\calC}{\mathcal{C}}
\newcommand{\calD}{\mathcal{D}}
\newcommand{\calJ}{\mathcal{J}}
\newcommand{\calK}{\mathcal{K}}
\newcommand{\calA}{\mathcal{A}}
\newcommand{\calP}{\mathcal{P}}
\newcommand{\calX}{\mathcal{X}}
\newcommand{\calR}{\mathcal{R}}
\newcommand{\frakm}{\mathfrak{m}}
\newcommand{\Sp}{\mathcal{S}\mathrm{p}}
\newcommand{\Mod}{\mathcal{M}\mathrm{od}}
\newcommand{\Alg}{\mathcal{A}\mathrm{lg}}
\newcommand{\Gpd}{\mathcal{G}\mathrm{pd}}
\newcommand{\CAlg}{\mathcal{C}\mathrm{Alg}}
\newcommand{\Ab}{\mathcal{A}\mathrm{b}}
\newcommand{\Fun}{\mathrm{Fun}}
\newcommand{\s}{\mathrm{s}}
\newcommand{\Set}{\mathcal{S}\mathrm{et}}
\newcommand{\Mon}{\mathcal{M}\mathrm{on}}
\newcommand{\LMod}{\mathrm{L}\mathcal{M}\mathrm{od}}
\newcommand{\Fin}{\mathcal{F}\mathrm{in}}
\newcommand{\Hrd}{\mathcal{H}\mathrm{rd}}
\newcommand{\Model}{\mathcal{M}\mathrm{odel}}
\newcommand{\Ch}{\mathrm{Ch}}
\newcommand{\Sph}{\mathcal{S}\mathrm{ph}}
\newcommand{\Map}{\operatorname{Map}}
\newcommand{\Hom}{\operatorname{Hom}}
\newcommand{\Aut}{\operatorname{Aut}}
\newcommand{\Iso}{\operatorname{Iso}}
\newcommand{\Fib}{\operatorname{Fib}}
\newcommand{\Tor}{\operatorname{Tor}}
\newcommand{\Pro}{\operatorname{Pro}}
\newcommand{\Psh}{\operatorname{Psh}}
\newcommand{\Ext}{\operatorname{Ext}}
\newcommand{\Cof}{\operatorname{Cof}}
\newcommand{\colim}{\operatorname*{colim}}
\newcommand{\hocolim}{\operatorname*{hocolim}} 
\newcommand{\tCof}{\operatorname{tCof}}
\newcommand{\im}{\operatorname{Im}}
\renewcommand{\ker}{\operatorname{Ker}}
\newcommand{\olotimes}{\mathbin{\ol{\otimes}}}
\newcommand{\EM}{\mathcal{EM}}
\newcommand{\AB}{\mathcal{AB}}
\newcommand{\GRP}{\mathcal{GP}}
\newcommand{\Eq}{\mathrm{Eq}}
\newcommand{\EXT}{\mathcal{E}\mathrm{xt}}
\newcommand{\loc}{\mathrm{loc}}
\newcommand{\heart}{\heartsuit}
\newcommand{\h}{\mathrm{h}}
\newcommand{\op}{\mathrm{op}}
\newcommand{\cn}{\mathrm{cn}}
\newcommand{\free}{\mathrm{free}}
\newcommand{\sfg}{\mathrm{sfg}}
\newcommand{\Cpl}{\mathrm{Cpl}}
\newcommand{\bs}{{-}}
\newcommand{\ul}{\underline}
\newcommand{\ol}{\overline}
\def\hyp{{\hbox{-}}}
\newcommand\xqed[1]{%
  \leavevmode\unskip\penalty9999 \hbox{}\nobreak\hfill
  \quad\hbox{#1}}
\newcommand\tqed{\xqed{$\triangleleft$}}
\DeclareRobustCommand{\tvdots}{%
  \vbox{\baselineskip4\p@\lineskiplimit\z@\kern0\p@\hbox{.}\hbox{.}\hbox{.}}}
\begin{document}

\title[Deformations of homotopy theories via algebraic theories]{Deformations of homotopy theories \\ via algebraic theories}

\author{William Balderrama}

\subjclass[2020]{
18C10, 
18G10, 
55Q35, 
55S35, 
55T05. 
}

\begin{abstract}
We develop a homotopical variant of the classic notion of an algebraic theory as a tool for producing deformations of homotopy theories. From this, we extract a framework for constructing and reasoning with obstruction theories and spectral sequences that compute homotopical data starting with purely algebraic data. 
\end{abstract}

\maketitle

\section{Introduction}

Consider an example from spectral algebra. If $R$ is a $\bbE_\infty$-ring, and $M$ and $N$ are $R$-modules, then there are universal coefficient and K\"unneth spectral sequences
\begin{align*}
&\Ext^{p+q}_{R_\ast}(M_\ast,N_{\ast+p})\Rightarrow \pi_{\ast-q}\Mod_R(M,N),\\
&\Tor^{R_\ast}_{p+q}(M_\ast,N_{\ast-p})\Rightarrow \pi_{\ast+q}M\otimes_R N.
\end{align*}
We can view the existence of these spectral sequences as saying that the homotopy theory of $R$-modules is, in some sense, approximated by the homological algebra of $R_\ast$-modules.

The purpose of this paper is to describe a certain $\infty$-categorical framework that captures this idea. Central to our approach is the notion of an \textit{algebraic theory}. Building on insights of Hopkins-Lurie \cite{hopkinslurie2017brauer} and Pstrągowski \cite{pstragowski2023moduli}, in turn conceptualizing ideas of Dwyer-Kan-Stover \cite{dwyerkanstover1995bigraded} and Blanc-Dwyer-Goerss \cite{blancdwyergoerss2004realization}, we introduce a certain homotopical refinement of the classic notion of an algebraic theory, which we call \textit{loop theories}. If $\calP$ is a loop theory, then in addition to its category $\Model_\calP$ of ($\infty$-groupoid valued) models,

\begin{enumerate}
\item There is a category $\Model_\calP^\Omega$ of models of the theory $\calP$ that respect the additional homotopical structure present;
\item There is a category $\Model_{\h\calP}^\heart$ of set-valued models of the homotopy category $\h\calP$, with nonabelian derived category $\Model_{\h\calP}$ and a natural ``homotopy groups'' functor $\Model_\calP^\Omega\rightarrow\Model_{\h\calP}^\heart$.
\end{enumerate}

For example, when $\calP = \Mod_R^\free$, the category $\Model_\calP^\Omega$ recovers the category of $R$-modules, and $\Model_{\h\calP}^\heart$ is equivalent to the ordinary category of $R_\ast$-modules; when $\calP = \CAlg_{H\bbF_p}^\free$, the category $\Model_\calP^\Omega$ recovers the category of $\bbE_\infty$ algebras over $\bbF_p$, and $\Model_{\h\calP}^\heart$ is equivalent to a category of $\bbF_{p\ast}$-rings with Dyer-Lashof operations. In general, $\calM\simeq\Model_\calP^\Omega$ whenever $\calM$ is a homotopy theory with a good subcategory $\calP\subset\calM$ of free objects, and $\Model_{\h\calP}^\heart$ is then an algebraic approximation to $\calM$. However the key point is the third:

\begin{enumerate}[resume]
\item The category $\Model_\calP$ fits into a span
\begin{center}\begin{tikzcd}
\Model_{\h\calP}&\ar[l] \Model_\calP\ar[r]&\Model_\calP^\Omega
\end{tikzcd},\end{center}
and behaves as a \textit{deformation} with generic fiber $\Model_\calP^\Omega$ and special fiber $\Model_{\h\calP}$.
\end{enumerate}

Here, $\Model_{\h\calP}$ is equivalent to Quillen's homotopy theory of simplicial set-valued models of the algebraic theory $\h\calP$. The deformation theory available in this context then gives concrete computational tools, in the form of obstruction-theoretic machinery, for understanding the homotopy theory of $\Model_\calP^\Omega$ starting with the algebra of $\Model_{\h\calP}$. For example, there is an obstruction theory computing maps in $\Model_\calP^\Omega$ with obstruction groups given in terms of the Quillen cohomology of objects of $\Model_{\h\calP}^\heart$.

In addition to recovering a number of classical tools, such as the universal coefficient and K\"unneth spectral sequences already mentioned, we view as one of the primary benefits of our framework the ease in which it is adapted to new situations. Our original motivation for this was to develop some tools allowing one to compute with $K(n)$-local $\bbE_\infty$ algebras over Lubin-Tate spectra, or Morava $E$-theories, using their theories of power operations. There are a number of subtleties in this context; beyond being unstable and unpointed, the $K(n)$-local condition produces categories which are inherently \textit{infinitary}. We incorporate all of this by allowing our theories to themselves be infinitary, following a similar approach to that taken by Lurie in \cite[Section 4.2]{lurie2011quasi}, which has also been applied to Lubin-Tate spectra by Brantner \cite{brantner2017lubin}. This approach allows us to incorporate such exotic settings into one uniform theory. Particular applications appear in detail in \cite{balderrama2021algebraic}.

After laying out some general categorical conventions, we shall give an extended overview of the paper in \cref{ssec:outline} below, omitting most of the technicalities. In adddition, we give examples in \cref{ssec:examples}; the reader may prefer to browse this subsection first to get a feeling for the contexts in which our machinery may be applied.

\subsection*{Acknowledgements}

This paper was originally the first half of the author's thesis at the University of Illinois Urbana-Champaign, and has benefited from conversations with many people during this period; in particular, special thanks are due to Brian Shin and Chieu-Minh Tran for comments on an earlier draft. I would especially like to thank Charles Rezk for his support over the past several years, without which this work would not have been possible.

\subsection{Conventions}

We will freely use the theory of $\infty$-categories, which we refer to just as categories, as developed by Lurie in \cite{lurie2017highertopos} and \cite{lurie2017higheralgebra}, and by default all of our constructions should be interpreted in this sense. We write $\Gpd_\infty$ for the category of $\infty$-groupoids, also commonly known as the ($\infty$-)category of spaces, and for a small category $\calC$ we write $\Psh(\calC)$ for the category of presheaves of $\infty$-groupoids on $\calC$, writing instead $\Psh(\calC,\Set)$ when we mean presheaves of sets, and similarly for presheaves valued in other categories.

We follow the standard convention of fixing a universe of small $\infty$-groupoids, with respect to which everything in sight will be at least locally small, contained in a universe of large $\infty$-groupoids, with respect to which everything in sight is small, unless otherwise specified. For a (locally small) category $\calC$, we write $\Psh(\calC)$ for the category of presheaves on $\calC$ that arise as small colimits of representable presheaves; this is the cocompletion of $\calC$ under small colimits. We write $h\colon\calC\rightarrow\Psh(\calC)$ for the Yoneda embedding, and write the same for various restricted Yoneda embeddings.

Given a functor $f\colon\calC\rightarrow\calD$, we write $f_!\colon\Psh(\calC)\rightarrow\Psh(\calD)$ for the left adjoint to the restriction $f^\ast\colon \Psh(\calD)\rightarrow\Psh(\calC)$. This is the left Kan extension of $h\circ f\colon \calC\rightarrow\calD\rightarrow\Psh(\calD)$ along $h\colon \calC\rightarrow\Psh(\calC)$, and we shall also write $f_!\colon \Psh(\calC)\rightarrow\calD$ for the functor left Kan extended from $f$ itself when $\calD$ admits sufficiently many colimits, and for related functors.

For a category $\calC$, we write $\h\calC$ for the homotopy category of $\calC$, and $\calC^\simeq$ for the maximal sub-$\infty$-groupoid of $\calC$.

\subsection{Outline}\label{ssec:outline}

\subsubsection{Algebraic theories}(\cref{sec:malc}).

The basis of our work is a variant of the classic notion of a \textit{Lawvere theory}, which is a categorical approach to universal algebra pioneered by Lawvere in this thesis \cite{lawvere1963functorial}. Some familiarity with this story is useful for understanding the rest of the paper, so we give a very brief review here. Classically, a Lawvere theory may be defined as a category $\calC$ with object set $\bbN$ wherein $n$ is the $n$-fold coproduct of $1$ for all $n\in\bbN$. To be precise, the classical case would require $\calC$ to be a $1$-category, but we shall not require this. The category of \textit{models} of $\calC$ is then the category of presheaves $X$ on $\calC$ such that the canonical map $X(n)\rightarrow X(1)^{\times n}$ is an isomorphism for all $n\in\bbN$.

By only asking that $\calC$ has finite coproducts, and not that a specified object generates $\calC$ under coproducts, one is led to the notion of a \textit{(multisorted) algebraic theory}. Note that no particular sorts are specified in this definition; this approach emphasizes the aspects of algebraic theories which are invariant under Morita equivalence, i.e.\ emphasizes their categories of models as the primary objects of interest. Here, the category $\Model_\calC$ of models of $\calC$ is the category of presheaves $X$ on $\calC$ such that $X(\coprod_{i\in F}C_i)\simeq\prod_{i\in F}X(C_i)$ for any finite collection $\{C_i:i\in F\}$ of objects in $\calC$.

By restricting $\calC$ to be a \textit{discrete} algebraic theory, that is, a $1$-category, and considering the full category $\Model_\calC^\heart$ of discrete, or set-valued, models, one recovers from this a large number of naturally occuring algebraic categories. Taking $\calC$ to still be a discrete algebraic theory, the category $\Model_\calC$ of $\infty$-groupoid-valued models is a familiar homotopy theory: it is the underlying $\infty$-category of the category of simplicial set-valued models of $\calC$ equipped with model structure constructed by Quillen \cite[Section II.4]{quillen1967homotopical}, as can be seen starting with work of Badzioch \cite{badzioch2002algebraic}, generalized by Bergner \cite{bergner2006rigidification}, and put into the $\infty$-categorical context by Lurie \cite[Section 5.5.9]{lurie2017highertopos}.

One can view the categories arising in this manner as exactly the categories of models of multisorted finite product theories, and this is useful for understanding various examples. For example, if $\calC$ is the category of finitely generated and free abelian groups, then $\calC$ is an algebraic theory, and the category of models of $\calC$ is equivalent to the category of abelian groups; roughly, this is because an abelian group $M$ is determined by its addition map, which may be recovered as the map $\Delta^\ast\colon\Ab(\bbZ,M)\times\Ab(\bbZ,M)\cong\Ab(\bbZ\oplus\bbZ,M)\rightarrow\Ab(\bbZ,M)$ given by restriction along the diagonal $\Delta\colon\bbZ\rightarrow\bbZ\oplus\bbZ$. For our purposes, it is more useful to view these categories as those which admit a family of compact projective generators; from this perspective, the category $\Model_\calC$ is best characterized as the free cocompletion of $\calC$ under filtered colimits and geometric realizations, see \cite[Section 5.5.8]{lurie2017highertopos}, or \cite{adamekrosickyvitale2011algebraic} for a textbook account of the $1$-categorical case.

We are interested in certain categories which admit families of projective, but not necessarily compact, generators. To incorporate these, we allow our theories to be \textit{infinitary}. The classic reference for infinitary theories is Wraith \cite{wraith1969algebraic}, although certain size issues are overlooked there. To deal with these, we restrict ourselves to \textit{bounded} theories, i.e.\ those which are generated by $\kappa$-ary operations for some regular cardinal $\kappa$. This has the further benefit of making available to us all the tools from the theory of presentable categories. 

In general, infinitary theories are not as well-behaved as finitary theories. In order to obtain a story mimicking the finitary case, we must restrict ourselves to those theories which are \textit{Mal'cev}; see for instance \cite{smith1976malcev} and \cite{lambek1992ubiquity}, though we require very little of the general theory. This assumption turns out to play two roles: not only is it used to ensure that $\Model_\calP$ has good properties when $\calP$ is an infinitary theory, it is also necessary for the development of the further homotopical refinement of loop theories, which we introduce further below.

Let us proceed to the precise definition. A \textit{Mal'cev} operation on a set $H$ is a ternary operation $t\colon H\times H\times H\rightarrow H$ satisfying $t(x,x,y) = y$ and $t(x,y,y) = x$; a set equipped with a Mal'cev operation is called a \textit{herd}. This term is due to Lambek \cite{lambek1955groups}; we do not impose the associativity condition $t(t(v,w,x),y,z) = t(v,w,t(x,y,z))$. The motivating example is $H = \Iso(X,Y)$ for two objects $X$ and $Y$ of some $1$-category, with Mal'cev operation $t(f,g,h) = fg^{-1}h$, and all of our examples are ultimately derived from this. Being equationally defined objects, herds are modeled by a Lawvere theory, so it makes sense to speak of herd objects in arbitrary categories with finite products.

\begin{defn*}[\cref{def:malc}]
A \textit{Mal'cev theory} is a category $\calP$ such that
\begin{enumerate}
\item $\calP$ admits small coproducts;
\item All objects of $\calP$ admit the structure of a coherd (i.e.\ of a herd object in $\calP^\op$).
\end{enumerate}
The category of \textit{models} of $\calP$ is the category $\Model_\calP$ of presheaves $X$ on $\calP$ such that $X(\coprod_{i\in I}P_i)\simeq \prod_{i\in I}X(P_i)$ for any set $\{P_i:i\in I\}$ of objects in $\calP$, and $\Model_\calP^\heart\subset\Model_\calP$ is the full subcategory of discrete, or set-valued, models.
\tqed
\end{defn*}

This definition may be somewhat opaque. A similar notion was studied in \cite[Section 4.2]{lurie2011quasi}, with groups in place of herds; from this perspective, herds arise as an unpointed generalization of groups. When $\calP$ is a discrete theory, there is a much more elegant formulation: a discrete theory $\calP$ is Mal'cev precisely when every simplicial set-valued model of $\calP$ takes values in Kan complexes. This is exactly the condition Quillen requires in \cite[II.4]{quillen1967homotopical} to produce homotopy theories of simplicial objects in non-compactly generated settings. We expect there could be more elegant or more general formulations of the Mal'cev condition for $\infty$-categorical theories. For this reason we gather the facts which rely on the Mal'cev assumption in one place in \cref{ssec:malc}, after which it no longer appears explicitly, and everything we do holds equally well for any theory satisfying properties of the sort laid out there.

We will only be concerned with Mal'cev theories, and so will refer to them simply as \textit{theories}. If $\calC$ is a finitary theory and $\calP\subset\Model_\calC$ is generated by $\calC$ under coproducts, then $\Model_\calP\simeq\Model_\calC$ (\cref{prop:modelspresentable}); thus infinitary theories do indeed generalize finitary theories. Throughout the paper, we will make some minor size assumptions, assuming that our theories are generated in a similar way by a small, but not necessary countable, amount of data (\cref{rmk:bounded}).

\begin{rmk}
All of the discrete theories we will encounter arise as a combination of the following facts:
\begin{enumerate}
\item If $\calA$ is a cocomplete abelian category and $\calP\subset\calA$ is a full subcategory consisting of projective objects and closed under coproducts such that every $M\in\calA$ admits a projective resolution by objects of $\calP$, then $\calA\simeq\Model_\calP^\heart$ (\cref{prop:additiveabelian});
\item If $\calP$ is a discrete theory, $T$ is a monad on $\Model_\calP^\heart$ preserving reflexive coequalizers, and $T\calP\subset\Alg_T$ is the full subcategory spanned by the image of $\calP$ under $T$, then $T\calP$ is a theory and $\Alg_T\simeq\Model_{T\calP}^\heart$ (\cref{prop:monadtheory});
\item If $\calP$ is a discrete theory and $X\in\Model_\calP^\heart$, then the slice category $\calP/X$ is a theory and $\Model_{\calP/X}^\heart\simeq\Model_\calP^\heart/X$. It is for examples like this that we have used herds, rather than groups, in the definition of the theories we work with: even if the objects of $\calP$ admit the structure of a cogroup, this will generally fail for the objects of $\calP/X$, as it is common to have $f,g\in \calP/X$ with $\Map_{\calP/X}(f,g) = \emptyset$.
\tqed
\end{enumerate}
\end{rmk}

In \cref{sec:malc}, we show that (Mal'cev) theories and their categories of models indeed share all the good properties of finitary theories. Most importantly, in \cref{ssec:malc} we show that $\Model_\calP$ is the free cocompletion of $\calP$ under geometric realizations. Moreover, $\Model_\calP$ is presentable under our mild size conditions on $\calP$. In \cref{ssec:simplicial}, we verify that if $\calP$ is a discrete theory, then $\Model_\calP$ is the underlying $\infty$-category of Quillen's model category of simplicial set-valued models of $\calP$, and review the notion of left-derived functor available in this context.
\subsubsection{Loop theories}(\cref{sec:looptheories}).

Many of the categories we care most about are not of the form $\Model_\calP$. For example, no nontrivial stable category is of this form. Heuristically, theories are \textit{product} theories, and so encode operations with arities indexed by \textit{discrete} sets, whereas these categories require operations indexed over higher-dimensional objects, such as spheres. This leads to the following definition.

\begin{defn*}[\cref{def:looptheories}]
A theory $\calP$ is a \textit{loop theory} if for any finite wedge of spheres $F$ and $P\in\calP$, the tensor $F\otimes P = \colim_{x\in F} P$ exists in $\calP$. If $\calP$ is a loop theory, then the category $\Model_\calP^\Omega\subset\Model_\calP$ of \textit{loop models} is the full subcategory of models $X$ such that $X(F\otimes P)\simeq X(P)^{F}$ for all $P\in\calP$ and finite wedge of spheres $F$.
\tqed
\end{defn*}

We can now describe the general philosophy of this paper. For a great many categories $\calM$ that arise in homotopy theory, one can find (possibly multiple) full subcategories $\calP\subset\calM$ which are loop theories such that $\calM\simeq\Model_\calP^\Omega$. Upon choosing such a $\calP$, one may then embed $\calM$ into the larger category $\Model_\calP$, and this category provides a bridge between $\calM$ and the essentially algebraic category $\Model_{\h\calP}$. Conceptually, we may view $\Model_\calP$ as a deformation with generic fiber $\calM$ and special fiber $\Model_{\h\calP}$; in particular, this category gives access to new filtrations on constructions in $\calM$, with filtration quotients computed in $\Model_{\h\calP}$. By studying these filtrations, one gains access to various obstruction theories and spectral sequences by which one may approach the homotopy theory of $\calM$ starting with the algebra of $\Model_{\h\calP}$.

In the finitary and pointed case, where the objects of $\calP$ are instead asked to be homotopy cogroups, this context was first studied in general by Pstrągowski \cite{pstragowski2023moduli}, where it was used to give a conceptual approach to the realization problem for $\Pi$-algebras of Blanc-Dwyer-Goerss \cite{blancdwyergoerss2004realization}. The first instance we are aware of where a particular case of this context appears is in work of Hopkins-Lurie \cite{hopkinslurie2017brauer}. We add to this story by using a different class of theories, carrying out new constructions, and giving tools for recognizing additional examples.

In \cref{sec:looptheories}, we develop the basic properties of loop theories. Most important for the general theory is Pstrągowski's interpretation of the spiral spectral sequence, which holds equally well in our setting (\cref{thm:spiral}). This is the primary tool that allows us to identify various constructions in terms of the algebraic category $\Model_{\h\calP}$.

In \cref{ssec:constructingexamples}, we give tools for constructing and identifying examples. Heuristically, $\calM\simeq\Model_\calP^\Omega$ whenever $\calP\subset\calM$ is a reasonable collection of free objects closed under coproducts and $S^1$-tensors. Here, ``free'' does not have a precise meaning; $\calP$ must be a loop theory, but otherwise there is no particular freeness condition imposed on its objects. For example, if $\calM$ is a compactly generated stable category, then $\calM\simeq\Model_\calP^\Omega$ for any full subcategory $\calP\subset\calM$ which contains a family of compact generators and is a loop theory closed under coproducts, suspensions, and desuspensions in $\calM$ (\cref{thm:stabletheory}). In particular, there can be very different choices of $\calP\subset\calM$, producing wildly different algebraic categories $\Model_{\h\calP}$, for which one has $\calM\simeq\Model_\calP^\Omega$; we give an example below in \cref{sssec:bock}.

\begin{rmk}\label{rmk:pi0}
Before continuing on, we pause to point out the following notational subtlety.  By way of example, let $R$ be an $\bbA_\infty$-ring, and let $\calP = \LMod_R^\free$. In this case it turns out
\[
\Model_{\calP}^\Omega\simeq\LMod_R,\qquad\Model_{\calP}^\heart\simeq\LMod_{R_\ast}^\heart,
\]
and the following diagram commutes:
\begin{center}\begin{tikzcd}
\LMod_R\ar[r,"\pi_\ast"]\ar[d,"h"]&\LMod_{R_\ast}^\heart\ar[d,"h","\simeq"']\\
\Model_{\calP}\ar[r,"\pi_0"]&\Model_{\calP}^\heart
\end{tikzcd}.\end{center}
The notational subtlety is that
\[
\pi_\ast M = \pi_0 h(M).
\]
In general, if $\calP$ is a loop theory and $X\in\Model_\calP^\Omega$, then the $\h\calP$-model $\pi_0 X$ encodes all of the homotopy groups of $X$.
\tqed
\end{rmk}

\subsubsection{Stable loop theories}(\cref{sec:stable}).

\cref{sec:stable} and \cref{sec:postnikov} are independent of each other, with the exception that \cref{ssec:spiralss} connects with the material in \cref{sec:stable}. These sections both use the categories $\Model_\calP$ to produce computational tools for $\Model_\calP^\Omega$. We begin by describing the contents of \cref{sec:stable}.

\begin{defn*}
A loop theory $\calP$ is said to be \textit{stable} if 
\begin{enumerate}
\item $\calP$ is pointed and admits suspensions;
\item $\Sigma\colon\calP\rightarrow\calP$ is an equivalence.
\tqed
\end{enumerate}
\end{defn*}

Fix a stable loop theory $\calP$. Then $\Model_\calP$ is additive and thus embeds into its stabilization, which can be identified as the category of $\Sp$-valued models of $\calP$, i.e.\ of product-preserving presheaves of spectra on $\calP$. We will write $\LMod_\calP$ for this category, and moreover write $\LMod_\calP^\cn$ for the category of $\Sp_{\geq 0}$-valued models of $\calP$ and $\LMod_\calP^\heart$ for the category of abelian group-valued models of $\calP$. This notation is chosen in analogy with ordinary spectral algebra: $\LMod_\calP$ behaves very much like the category of modules over a connective ring spectrum, and $\LMod_\calP^\cn$ and $\LMod_\calP^\heart$ like the corresponding categories of connective and discrete modules. The forgetful functors allow us to identify $\LMod_\calP^\cn\simeq\Model_\calP$ and $\LMod_\calP^\heart\simeq\Model_\calP^\heart$.

There is in addition a category $\LMod_\calP^\Omega$ of $\Sp$-valued models $X$ such that $X(F\otimes P)\simeq X(P)^F$ for $P\in\calP$ and $F$ a finite wedge of spheres; it is equivalent to ask just that $X(\Sigma P)\simeq\Omega X(P)$ for $P\in\calP$. Moreover, the functor $\Omega^\infty\colon\LMod_\calP\rightarrow\Model_\calP$ restricts to an equivalence of categories $\Omega^\infty\colon\LMod_\calP^\Omega\simeq\Model_\calP^\Omega$. However, there is an important subtlety in this: $\LMod_\calP^\Omega\subset\LMod_\calP$ and $\Model_\calP^\Omega\subset\Model_\calP\simeq\LMod_\calP^\cn\subset\LMod_\calP$ are very different subcategories of $\LMod_\calP\subset\Psh(\calP,\Sp$, as the former generally takes values in nonconnective spectra. 

For $X,Y\in\LMod_\calP$, there is a mapping spectrum $\EXT_\calP(X,Y)$, so that $\Omega^\infty\EXT_\calP(X,Y) = \Map_{\calP}(X,Y)$. Despite the above subtlety, if $N\in\LMod_\calP^\Omega$, $M\in\Model_\calP^\Omega$, and $LM\in\Model_\calP$ is the preimage of $M$ under the equivalence $\LMod_\calP^\Omega\simeq\Model_\calP^\Omega$, then $\EXT_\calP(M,N)\simeq\EXT_\calP(LM,N)$. In \cref{ssec:stabletowers}, we describe the decomposition of these mapping spectra that may be obtained from Postnikov towers in $\LMod_\calP$. This takes the form of the following universal coefficient spectral sequence.

For $X\in \LMod_\calP$ and $n\in\bbZ$, write $\pi_nX = \pi_n \circ X \colon \calP^\op\rightarrow\Sp\rightarrow\Ab$. As $\pi_n$ preserves products and $\Ab$ is a $1$-category, this naturally lives in the abelian category $\Model_{\calP}^\heart\simeq\Model_{\h\calP}^\heart$. For $X\in\LMod_{\h\calP}$ and $n\in\bbZ$, write $X[n]$ for the model obtained by restricting $X$ along the functor $\h\calP\rightarrow\h\calP$ induced by $\Sigma^n\colon \calP\rightarrow\calP$, so $\pi_n X \cong (\pi_0 X)[n]$ if $X\in\LMod_\calP^\Omega$.

\begin{theorem*}[\cref{thm:stablemappingss}]
Fix $M\in \Model_\calP^{\Omega}$ and $N\in\LMod_\calP^\Omega$. Then the spectral sequence associated to the Postnikov decomposition
\[
\EXT_\calP(M,N)\simeq \lim_{n\rightarrow\infty}\EXT_\calP(M,N_{\leq n})
\]
is of signature
\begin{gather*}
E_1^{p,q} = \Ext^{p+q}_{\h\calP}(\pi_0 M;\pi_0 N[p])\Rightarrow\pi_{-q}\EXT_\calP(M,N),
\end{gather*}
with differential $d_r^{p,q}\colon E_r^{p,q}\rightarrow E_r^{p+r,q+1}$.\tqed
\end{theorem*}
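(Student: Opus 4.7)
My plan is to apply the limit-preserving functor $\EXT_\calP(M,-)\colon\LMod_\calP\to\Sp$ to the Postnikov tower of $N$ and run the standard spectral sequence of the resulting tower of spectra, whose limit is $\EXT_\calP(M,N)$ by the given decomposition. The $E_1$ page is assembled from the mapping spectra into the successive fibers $\Sigma^p H\pi_p N\in\LMod_\calP$, so after accounting for the shift $\Sigma^p$ one has $E_1^{p,q}\cong\pi_{-(p+q)}\EXT_\calP(M,H\pi_p N)$. Because $N\in\LMod_\calP^\Omega$, the homotopy groups are rigid in the sense that $\pi_p N\cong (\pi_0 N)[p]$ in $\LMod_\calP^\heart$, which matches the shifted coefficients in the statement.

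The crux is therefore the identification
\[
\pi_{-s}\EXT_\calP(M,HA)\ \cong\ \Ext^s_{\h\calP}(\pi_0 M, A) \qquad (A\in\LMod_\calP^\heart,\ s\in\bbZ).
\]
To establish this, I would resolve $M$ by a simplicial object in $\calP$, using that $\Model_\calP$ is the free cocompletion of $\calP$ under geometric realizations, and appeal to the spiral sequence (\cref{thm:spiral}) to see that, for $M\in\Model_\calP^\Omega$, such a resolution descends to a projective resolution of $\pi_0 M$ in the abelian category $\Model_{\h\calP}^\heart\simeq\LMod_\calP^\heart$. Since $\EXT_\calP(h(P),HA)\simeq HA(P)$ is a discrete spectrum for $P\in\calP$, applying $\EXT_\calP(-,HA)$ to the resolution yields a cochain complex of discrete spectra whose totalization has precisely $\Ext^*_{\h\calP}(\pi_0 M,A)$ as homotopy.

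The differential structure $d_r^{p,q}\colon E_r^{p,q}\to E_r^{p+r,q+1}$ and convergence then follow from the general formalism of towers of spectra together with Postnikov completeness of the $t$-structure on $\LMod_\calP$. The main obstacle is the key identification in the previous paragraph: without the $\Omega$-hypothesis on $M$ and the spiral sequence, $\EXT_\calP(-,HA)$ applied to a resolution of $M$ would produce a hyperderived spectral sequence with contributions from all the $\Ext^i(\pi_j M, A)$, rather than the clean expression involving only $\pi_0 M$ that appears in the $E_1$ page. The $\Omega$-condition together with the spiral sequence is precisely what collapses this into a single term.
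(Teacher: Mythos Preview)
Your proposal is correct and follows essentially the same route as the paper: apply $\EXT_\calP(M,-)$ to the Postnikov tower of $N$, identify the layers via $\pi_p N\cong(\pi_0 N)[p]$, and reduce to the key identification $\pi_{-s}\EXT_\calP(M,HA)\cong\Ext^s_{\h\calP}(\pi_0 M,A)$ using the spiral. The only difference is packaging: the paper gets this identification in one line from the $(\tau_!,\tau^*)$-adjunction together with \cref{cor:spiraldiscrete} (the discrete target lies in the image of $\tau^*$, and $\tau_! M\simeq\pi_0 M$ for $M\in\Model_\calP^\Omega$), whereas you unpack the same fact by resolving $M$ explicitly and checking that the resolution descends through $\tau$ to a projective resolution of $\pi_0 M$.
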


For an arbitrary loop theory $\calP$, the category $\Model_\calP^\Omega$ is a localization of $\Model_\calP$; the distinguishing feature of the stable case is the identification of the localization $L\colon\LMod_\calP\rightarrow\LMod_\calP^\Omega$ (\cref{thm:stablelocalization}). Given $X\in\LMod_\calP$, write $X_{\Sigma^n}$ for the model defined by $X_{\Sigma^n}(P) = X(\Sigma^n P)$. Then $L\colon\LMod_\calP\rightarrow\LMod_\calP^\Omega$ may be identified as
\begin{equation}\label{eq:loc}
LX = \colim_{n\rightarrow\infty}\Sigma^{-n}X_{\Sigma^{-n}}.
\end{equation}
This yields a spectral sequence computing $\pi_\ast LX$, which can be considered a stable analogue of the spiral spectral sequence of \cite{dwyerkanstover1995bigraded}. We describe a particular application in \cref{ssec:lderss}; to describe this application we require some additional notation.

Fix another loop theory $\calP'$, which need not be stable, together with some functor $F\colon\Model_{\calP'}^\Omega\rightarrow \LMod_\calP^\Omega\simeq\Model_\calP^\Omega$ which preserves geometric realizations. Then $F$ is determined by its restriction $f$ to $\calP'$; explicitly, where $f_!\colon\Model_{\calP'}\rightarrow\Model_\calP$ is obtained from $f$ by left Kan extension, there is an equivalence $Lf_! X\simeq F X$ for $X\in\Model_{\calP'}^\Omega$. The composite $\pi_0\circ f \colon \calP'\rightarrow\LMod_\calP^\heart$ uniquely factors through the homotopy category to give $\ol{f}\colon\h\calP'\rightarrow\LMod_\calP^\heart$, and by left Kan extension this gives  functor $\ol{F}\colon\Model_{\calP'}^\heart\rightarrow\LMod_\calP^\heart$ preserving reflexive coequalizers, and this admits a total left-derived functor $\bbL\ol{F} = \ol{f}_! \colon\Model_{\h\calP'}\rightarrow\Model_{\h\calP}$ (cf.\ \cref{prop:leftderivedfunctors}).

\begin{theorem*}[\cref{thm:quillenss}]
In the situation of the previous paragraph, given $R\in\Model_{\calP'}^{\Omega}$, the spectral sequence in $\Model_\calP^\heart$ associated to the tower
\[
F R \simeq\colim_{n\rightarrow\infty} \Sigma^{-n} (f_! R)_{\Sigma^{-n}}
\] is of signature
\[
E^1_{p,q} = (\bbL_{p+q}\ol{F}\,\pi_0 R)[-p]\Rightarrow(\pi_0 F R)[q],\qquad d^r_{p,q}\colon E^r_{p,q}\rightarrow E^r_{p-r,q-1}.
\]
This spectral sequence converges, for instance, if $\pi_0$ preserves filtered colimits or if $\bbL\ol{F}\,\pi_0 R$ is truncated.
\tqed
\end{theorem*}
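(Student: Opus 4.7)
The plan is to recognize the claimed spectral sequence as the standard sequential-colimit spectral sequence in the stable $\infty$-category $\LMod_\calP$ applied to the tower $T_p := \Sigma^{-p}(f_!R)_{\Sigma^{-p}}$, and to identify its $E^1$-page using the spiral sequence (\cref{thm:spiral}) together with a simplicial resolution of $R$. The equivalence $FR\simeq\colim_p T_p$ comes directly from the identification $FR\simeq Lf_!R$ noted before the theorem together with the formula (\ref{eq:loc}) applied to $X=f_!R$. Writing $C_p=\Cof(T_{p-1}\to T_p)$, the sequential colimit in the stable category $\LMod_\calP$ gives rise by the standard construction to a spectral sequence of signature $E^1_{p,q}=\pi_{p+q}C_p\Rightarrow\pi_{p+q}FR$ with differential of the stated homological bidegree.

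To identify the $E^1$ terms, I would choose a simplicial resolution $B_\bullet\twoheadrightarrow R$ in $\Model_{\calP'}$ by coproducts of representables in $\calP'$. Since $f_!$ preserves geometric realizations we have $f_!R\simeq|f_!B_\bullet|$, and applying $\pi_0$ degreewise produces a simplicial resolution of $\pi_0 R$ by free objects of $\Model_{\h\calP'}^\heart$, so that the geometric realization of $\ol{f}_!\pi_0 B_\bullet$ in $\Model_{\h\calP}$ computes $\bbL_\ast\ol{F}\,\pi_0 R$ by definition of $\bbL\ol{F}=\ol{f}_!$. On the other hand, after pulling out the common $\Sigma^{-p}$ the cofiber $C_p$ is equivalent to $\Sigma^{-p}$ applied to the cofiber of the natural comparison $\Sigma(f_!R)[-(p-1)]\to(f_!R)[-p]$; this is exactly the type of cofiber whose homotopy is governed by the spiral sequence, which in the stable setting identifies the associated graded with iterates of the algebraic shift $[-]$ applied to $\pi_0$. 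Combining these two computations, and accounting for the external desuspension by $\Sigma^{-p}$ (which contributes the overall degree shift) together with the internal restriction along $\Sigma^{-p}$ (which contributes the $[-p]$), one obtains $\pi_{p+q}C_p\simeq(\bbL_{p+q}\ol{F}\,\pi_0 R)[-p]$.

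Convergence is then routine. If $\pi_0\colon\LMod_\calP\to\LMod_\calP^\heart$ preserves filtered colimits, the colimit $FR=\colim_p T_p$ commutes with $\pi_0$ and the spectral sequence converges strongly; if each $\bbL\ol{F}\,\pi_0 R$ is truncated, the $E^1$-page vanishes outside a bounded range in each fixed total degree, forcing the tower to stabilize degreewise. The main obstacle is the identification $\pi_{p+q}C_p\simeq(\bbL_{p+q}\ol{F}\,\pi_0 R)[-p]$: one must carefully track the interaction between the external desuspension $\Sigma^{-p}$ and the internal twist $[-p]$, and apply the stable version of the spiral sequence to relate the homotopy of the (non-$\Omega$-local) object $f_!R$ to the algebraic derived functors of $\ol{F}$. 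Once this identification is in place, the rest of the argument is formal bookkeeping with the filtered-colimit spectral sequence in $\LMod_\calP$.
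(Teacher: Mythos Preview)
Your approach is essentially the paper's, but the paper packages the two key ingredients more cleanly and avoids the explicit simplicial resolution. The stable form of the spiral (\cref{lem:stablespiral}) applied to $(f_!R)_{\Sigma^{-p}}$ immediately identifies the cofiber of $T_{p-1}\to T_p$ as $\tau^\ast\Sigma^{-p}\tau_!\big((f_!R)_{\Sigma^{-p}}\big)$, and the commutativity $\tau_! f_! \simeq \bbL\ol{F}\,\tau_!$ (\cref{prop:lder}), together with $\tau_!R\simeq\pi_0 R$ from \cref{cor:spiraldiscrete}, then gives $C_p\simeq\tau^\ast\Sigma^{-p}(\bbL\ol{F}\,\pi_0 R)[-p]$ directly; your resolution argument is exactly what underlies the proof of \cref{prop:lder}, so you are re-deriving that proposition inline rather than citing it. One bookkeeping slip: with $C_p$ as above, $E^1_{p,q}=\pi_q C_p$ (not $\pi_{p+q}C_p$) in the paper's conventions, and it is this $\pi_q$ that produces $(\bbL_{p+q}\ol{F}\,\pi_0 R)[-p]$ after absorbing the $\Sigma^{-p}$.
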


As described, the localization $L\colon \LMod_\calP\rightarrow\LMod_\calP^\Omega$ naturally factors through a functor $\LMod_\calP\rightarrow\Fun(\bbZ,\LMod_\calP)$. There is natural notion of a \textit{monoidal loop theory} (\cref{defn:monoidalres}), and this functor turns out to be lax monoidal whenever $\calP$ is monoidal. This leads to the introduction of pairings on spectral sequences constructed in the above manner (\cref{thm:pairingquillen}). For example, this gives rise to pairings on K\"unneth-type spectral sequences; such pairings have a history of being difficult to construct by hand \cite{tilson2016power}. In some cases, there is still more structure, coming from additional unstable data. Accessing this requires unstable methods, and we will come to it in \cref{ssec:spiralss}.

\subsubsection{Postnikov decompositions}\label{sssec:intropostnikov}(\cref{sec:postnikov}).

In \cref{sec:postnikov}, we consider Postnikov decompositions in $\Model_\calP$, where $\calP$ is a loop theory which need not be stable. The content of this section is more closely related to the earlier simplicial work of Blanc-Dwyer-Goerss \cite{blancdwyergoerss2004realization} and Goerss-Hopkins \cite{goersshopkinsxxxxmoduli}, and directly builds on Pstrągowski \cite{pstragowski2023moduli}. One consequence of the theory is the following.

\begin{theorem*}[\cref{thm:mappingspaces}]
Fix $A,C\in\Model_\calP^\Omega$, together with a map $\phi\colon \pi_0 A\rightarrow \pi_0 C$ in $\Model_\calP^\heart$. Let $\Map^\phi_{\calP}(A,C)\subset\Map_{\calP}(A,C)$ be the space of maps $f$ such that $\pi_0 f = \phi$. Then the Postnikov tower of $C$ in $\Model_{\calP}$ gives a decomposition 
\[
\Map^\phi_{\calP}(A,C)\simeq\lim_{n\rightarrow\infty}\Map^{\phi}_{\calP}(A,C_{\leq n}),
\]
where $\Map^{\phi}_{\calP}(A,C_{\leq 0}) = \{\phi\}$ and for each $n\geq 1$ there is a canonical fiber sequence
\[
\Map^\phi_{\calP}(A,C_{\leq n})\rightarrow\Map^\phi_{\calP}(A,C_{\leq n-1})\rightarrow \Map_{\h\calP/\pi_0 C}(\pi_0 A;B^{n+1}_{\pi_0 C}\Pi_n C).
\]
Here we have written $\Pi_n C = \pi_0 C^{S^n}$, which is an abelian group object in the slice category $\Model_{\h\calP}^\heart/\pi_0 C$, and $B^{n+1}_{\pi_0 C}\Pi_n C$ for its delooping therein. In particular, there are successively defined obstructions in the algebraic Quillen cohomology groups $H^{n+1}_{\h\calP/\pi_0 C}(\pi_0 A;\Pi_n C) = \pi_0 \Map_{\h\calP/\pi_0 C}(\pi_0 A;B^{n+1}_{\pi_0 C}\Pi_n C)$ to realizing $\phi$ as arising from a map $A\rightarrow C$.
\tqed
\end{theorem*}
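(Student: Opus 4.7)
The plan is to apply $\Map^\phi_\calP(A,-)$ to the Postnikov tower of $C$ in $\Model_\calP$, and to identify each layer algebraically via the spiral sequence.

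First I would use that $\Model_\calP^\Omega\subset\Model_\calP$ is a full subcategory (by definition), so all mapping spaces in sight agree with the ones computed in $\Model_\calP$; I therefore work throughout in $\Model_\calP$. By the identification (from \cref{ssec:simplicial}) of $\Model_\calP$ with the underlying $\infty$-category of Quillen's model structure on simplicial objects of $\h\calP$, the Postnikov tower $C\simeq\lim_n C_{\leq n}$ converges, and applying $\Map_\calP(A,-)$ followed by restriction to the $\phi$-component yields the claimed limit decomposition. The base case $\Map^{\phi,\leq 0}_\calP(A,C)=\{\phi\}$ is immediate since $C_{\leq 0}\simeq\pi_0 C$ is discrete, so a map $A\to\pi_0 C$ in $\Model_\calP$ is the same as a map $\pi_0 A\to\pi_0 C$ in $\Model_{\h\calP}^\heart$.

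Next, for each $n\geq 1$, I would invoke the Postnikov $k$-invariant pullback square
\begin{center}\begin{tikzcd}
C_{\leq n}\ar[r]\ar[d]&\pi_0 C\ar[d]\\
C_{\leq n-1}\ar[r,"k_n"]&B^{n+1}_{\pi_0 C}\Pi_n C
\end{tikzcd}\end{center}
in $\Model_\calP/\pi_0 C$, where $\Pi_n C=\pi_0 C^{S^n}$ is the $n$-th homotopy sheaf viewed as an abelian group object of $\Model_{\h\calP}^\heart/\pi_0 C$. Such a square exists because $\Model_\calP$ inherits a well-behaved theory of Postnikov towers and $k$-invariants from its model-categorical incarnation, and because the Mal'cev hypothesis ensures that the relevant deloopings remain in the discrete subcategory. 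Applying $\Map_\calP(A,-)$, using $\phi$ to supply the basepoint in $\Map_\calP(A,\pi_0 C)$ and taking the fiber over the resulting map, produces the asserted fiber sequence.

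The principal work lies in the identification
\[
\Map_\calP(A,B^{n+1}_{\pi_0 C}\Pi_n C)\simeq\Map_{\h\calP/\pi_0 C}(\pi_0 A;B^{n+1}_{\pi_0 C}\Pi_n C).
\]
Since the target is a truncated, ``purely algebraic'' object built from $\Pi_n C\in\Ab(\Model_{\h\calP}^\heart/\pi_0 C)$, mapping into it from $A\in\Model_\calP^\Omega$ ought to depend only on $\pi_0 A$. This is exactly where the spiral sequence of \cref{thm:spiral} enters: for targets arising from $\Model_{\h\calP}$, its comparison map degenerates, identifying mapping spaces computed in $\Model_\calP$ with the simplicial mapping spaces in $\Model_{\h\calP}/\pi_0 C$. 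This identification is the main technical obstacle. Once it is in hand, the Quillen cohomology interpretation is immediate from the definition $H^{n+1}_{\h\calP/\pi_0 C}(\pi_0 A;\Pi_n C)=\pi_0\Map_{\h\calP/\pi_0 C}(\pi_0 A;B^{n+1}_{\pi_0 C}\Pi_n C)$, and the successive obstructions to realizing $\phi$ arise inductively from the long exact sequences of homotopy groups attached to the fiber sequences above.
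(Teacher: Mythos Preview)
Your overall strategy matches the paper's: decompose $\Map_\calP(A,C)$ along the Postnikov tower of $C$, use the $k$-invariant squares of \cref{thm:modelspostnikov} to obtain the fiber sequences, and then identify mapping into the Eilenberg--MacLane object algebraically. Two points need correction or sharpening.

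First, your justification for Postnikov convergence is wrong. The identification from \cref{ssec:simplicial} applies only to \emph{discrete} theories and yields $\Model_{\h\calP}$, not $\Model_\calP$; the loop theory $\calP$ is typically not discrete, and $\Model_\calP$ is genuinely different from $\Model_{\h\calP}$. Convergence of $C\simeq\lim_n C_{\leq n}$ holds instead because Postnikov truncations in $\Model_\calP$ are computed levelwise in the presheaf topos $\Psh(\calP)$, where Postnikov towers converge. Similarly, the Mal'cev condition does not ensure that deloopings ``remain discrete''; its role (via \cref{prop:algebrasaresimple}) is to guarantee that every $\pi_0 C$-module is simple, so that the $k$-invariant square is based at $\pi_0 C$ rather than $C_{\leq 1}$.

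Second, the key identification deserves a cleaner statement than ``the spiral comparison map degenerates''. The Eilenberg--MacLane object appearing in \cref{thm:modelspostnikov} is $\tau^\ast B^{n+1}_{\pi_0 C}\Pi_n C$, pulled back from $\Model_{\h\calP}$. The $\tau_!\dashv\tau^\ast$ adjunction therefore gives
\[
\Map_{\calP/\pi_0 C}(A,\tau^\ast B^{n+1}_{\pi_0 C}\Pi_n C)\simeq\Map_{\h\calP/\pi_0 C}(\tau_! A,B^{n+1}_{\pi_0 C}\Pi_n C),
\]
and the precise input from the spiral is \cref{cor:spiraldiscrete}: since $A\in\Model_\calP^\Omega$, one has $\tau_! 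A\simeq\pi_0 A$. This is exactly what the paper does.
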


In fact we prove something stronger, giving the analogous decomposition for mapping spaces in slice categories of $\Model_\calP$.

This theorem makes use of Postnikov towers in $\Model_\calP$, which may be defined levelwise. In \cref{ssec:spiralss}, we explain how one may derive these levelwise Postnikov towers (\cref{thm:derivedpostnikov}), leading to the spiral spectral sequence (\cref{thm:spiralss}), which plays the role of the spectral sequence of the same name constructed by Dwyer-Kan-Stover in \cite{dwyerkanstover1995bigraded} in the context of simplicial spaces. This is an unstable analogue of the spectral sequence associated to \cref{eq:loc}, and we describe explicitly how they relate (\cref{prop:synth}).

In \cref{ssec:realizations}, we verify that the obstruction theory of \cite{pstragowski2023moduli} for realizing an object $\Lambda\in\Model_\calP^\heart$ as $\Lambda = \pi_0 R$ with $R\in \Model_\calP^\Omega$ holds in our setting (\cref{thm:obstructions}). The short form of this is that there are successively defined obstructions in certain quotients of $H^{n+2}_{\h\calP/\Lambda}(\Lambda;\Lambda\langle n \rangle)$, where $\Lambda\langle n \rangle(P) = \Lambda(S^n\otimes P)$, to producing such an $R$ (\cref{prop:coarseobstruction}).

\subsubsection{Miscellaneous}(\cref{sec:completions}, \cref{sec:app}).

\cref{sec:completions} discusses localizations and completions in the context of theories. In \cref{ssec:localizingtheories}, we describe some general facts about localizations of theories. In \cref{ssec:completionsr}, we introduce $R$-linear theories for a connective $\bbE_2$-ring $R$, and study the corresponding notion of $I$-completions for a finitely generated ideal $I\subset R_0$. In particular, we give conditions under which the algebraic categories arising in this context may be described more explicitly. The examples which may be obtained in this context were our original motivation for working with infinitary theories. The use of infinitary theories as a tool for working with completeness conditions has also been employed by Brantner in \cite{brantner2017lubin} in the Noetherian case.

\cref{sec:app} can be considered an appendix to \cref{sec:stable}. In the latter, we required some properties of the spectral sequence associated to a tower in a stable category with $t$-structure; in particular, we required the relation between pairings of towers and pairings of their spectral sequences. We review the construction and convergence of these spectral sequences in \cref{ssec:constructss}, and give an overview of their multiplicative properties in \cref{ssec:monoidaltower} and \cref{ssec:pairings}.

\subsection{Examples}\label{ssec:examples}

The bulk of our work is, by necessity, somewhat abstract. The following are some examples that fit into the framework developed. For the most part, that these are examples may be verified using the results of \cref{ssec:constructingexamples}.

\subsubsection{Generalized $\Pi$-algebras}

Let $\calC$ be a theory, and let $\calP\subset\Model_{\calC}$ be the full subcategory generated by objects of the form $F\otimes h(C)$ where $F$ is a finite product of finite wedges of spheres and $C\in\calC$. If $\calC$ is pointed, then one may also wish to close this under suspensions. The category $\calP$ is then a loop theory with $\Model_{\calP}^\Omega\simeq\Model_{\calC}$ (\cref{thm:pialg}). There is no reason to expect the category $\Model_\calP^\heart$ to be easily described in general, but as a simple example we note that if $R$ is a connective ring spectrum and $\calC\subset\LMod_R$ is the full subcategory on objects of the form $R^{\oplus I}$ for a set $I$, then $\Model_\calC$ is the category of connective left $R$-modules and $\Model_\calP^\heart$ is the category of left $R_\ast$-modules concentrated in nonnegative degrees.

Many ideas in this paper, particularly in \cref{sec:postnikov}, may be traced back to work of Dwyer-Kan-Stover \cite{dwyerkanstover1995bigraded} and Blanc-Dwyer-Goerss \cite{blancdwyergoerss2004realization} on simplicial spaces and $\Pi$-algebras. These ideas were first explicitly interpreted in terms of $\infty$-categorical algebraic theories by Pstrągowski in \cite{pstragowski2023moduli}, and this example is a slight generalization of \cite[Proposition 4.3]{pstragowski2023moduli}, which may be regarded as the case where $\calC$ is the theory of groups.

\subsubsection{Modules over ring spectra}\label{sssec:rmodfilt}

Let $R$ be an $\bbA_\infty$ ring spectrum, and let $\calP = \LMod_R^\free$ be the full subcategory of $\LMod_R$ generated by $R$ under small coproducts, suspensions, and desuspensions. Then $\calP $ is a loop theory, and $\LMod_\calP ^\Omega\simeq\LMod_R$; moreover $\h\calP \simeq\LMod_{R_\ast}^\free$, and therefore $\LMod_{\h\calP }\simeq\LMod_{R_\ast}$ is the unbounded derived category of left $R_\ast$-modules.

The category $\LMod_\calP $ of all models is more exotic. This category fits into a span
\begin{center}\begin{tikzcd}
\LMod_{R_\ast}&\ar[l,"\tau_!"']\LMod_\calP \ar[r,"L"]&\LMod_R
\end{tikzcd},\end{center}
and we have proposed to consider it a deformation with generic fiber $\LMod_R$ and special fiber $\LMod_{R_\ast}$. There is another method for building deformations of stable homotopy theories that is perhaps more concrete, which proceeds via \textit{filtered objects}; see for instance \cite{gheorgheisaksenkrausericka2018cmotivic} for an application of this philosophy. The category $\LMod_\calP $ turns out to admit a description in terms of filtered spectra: there is an equivalence $\LMod_\calP \simeq\LMod_{W(R)}$, where $W(R)$ is the Whitehead tower of $R$ viewed as a filtered ring spectrum. We expect that various other categories of the form $\LMod_{\calP}$ admit similar descriptions using filtered objects.

\begin{rmk}\label{rmk:synth}
Let $X$ be a filtered spectrum, and set $X(\infty) = \colim_p X(p)$. Then there is filtration spectral sequence of signature
\[
E^1_{p,q} = \pi_q\Cof(X(p-1)\rightarrow X(p))\Rightarrow \pi_q X(\infty),\qquad d^r_{p,q}\colon E^r_{p,q}\rightarrow E^r_{p-r,q-1}.
\]
Examples from the filtered approach to deformations suggest that to understand $\pi_\ast X(\infty)$ as computed via this filtration spectral sequence, one should go further and consider all of the intermediate objects $X(p)$. At a basic level, this amounts to studying the bigraded homotopy groups $\pi_{s,c} X = \pi_s X(c)$. Each $\pi_{s,\ast} X$ is a module over a graded polynomial ring $\bbZ[\sigma]$, where $\sigma\colon \pi_{s,c-1}X\rightarrow\pi_{s,c}X$ is induced by $X(c-1)\rightarrow X(c)$; this satisfies $\pi_{s,\ast} X[\sigma^{-1}] = \pi_s X(\infty)\otimes\bbZ[\sigma^{\pm 1}]$, and in general the $\bbZ[\sigma]$-module structure of $\pi_{\ast,\ast}X$ tracks information about the computation of $\pi_\ast X(\infty)$ via the filtration spectral sequence, such as differentials and hidden extensions. All of the obstruction theories and spectral sequences we construct, both stably and unstably, proceed by constructing some filtered or cofiltered object with identifiable filtration quotients, and so we may directly import these insights into our setting. For example, in the context of \cref{thm:quillenss}, one might instead compute $\pi_\ast(f_! R)$. We encounter this idea again in \cref{ssec:spiralss}.
\tqed
\end{rmk}

\subsubsection{Modules over equivariant ring spectra}\label{sssec:equivariantex}

Let $G$ be a finite group,  $\Sp^G$ be the category of genuine $G$-equivariant spectra, and $R$ be an $\bbA_\infty$ ring in $\Sp^G$. Let $\calP \subset\LMod_R$ be the full subcategory generated under coproducts by objects of the form $\Sigma^\alpha R \otimes S_+$ for $\alpha\in RO(G)$ and $S$ a finite $G$-set. Then $\calP $ is a loop theory, $\LMod_\calP ^\Omega\simeq\LMod_R$, and $\LMod_{\h\calP } \simeq\LMod_{R_\star}$ is the unbounded derived category of left Mackey modules over the $RO(G)$-graded Green functor $R_\star$. In fact we would still have $\LMod_\calP ^\Omega\simeq\LMod_R$ had we restricted $\alpha$ to $\bbZ\subset RO(G)$, only in this case $\LMod_{\calP }^\heart$ would be the category of left Mackey modules over the $\bbZ$-graded Green functor $R_\ast$.

Now suppose for simplicity that $R$ is an $\bbE_2$ ring, so that $\LMod_R$ is a monoidal category. The monoidal product on $\LMod_R$ restricts to $\calP $, which then extends to a monoidal product on $\LMod_\calP $. In addition the monoidal product on $\calP $ induces one on $\h\calP $, which then extends to a monoidal product on $\LMod_{\h\calP }\simeq\LMod_{R_\star}$, which is simply the (derived) box product over $R_\star$. Now fix $M,N\in\LMod_R$, giving models $h(M),h(N)\in\LMod_\calP^\cn$. The methods of \cref{thm:quillenss} applied to $h(M)\otimes h(N)$ then give a Mackey functor K\"unneth spectral sequence
\[
E^1_{p,q} = \pi_{p+q} (\pi_\star M \square^\bbL_{\pi_\star R} \pi_\star N)_{\star-p}\Rightarrow \pi_{\star+q} (M\otimes_R N),\qquad d^r_{p,q}\colon E^r_{p,q}\rightarrow E^r_{p-r,q-1},
\]
such as those considered in \cite{lewsismandell2006equivariant}. Moreover, one has access to all the pairings on these spectral sequences that one could hope for.

As stated, \cref{thm:stablemappingss} gives a universal coefficient spectral sequence
\[
E_1^{p,q} = \Ext^{p+q}_{R_\star}(\pi_\star M;\pi_{\star+p}N)\Rightarrow\pi_{-q}\EXT_R(M,N),\qquad d_r^{p,q}\colon E_r^{p,q}\rightarrow E_r^{p+r,q+1}
\]
of abelian groups. By naturality, as discussed at the end of \cref{ssec:stabletowers}, this may be enhanced to a spectral sequence
\[
E_1^{p,q} = \Ext^{p+q}_{R_\star}(\pi_\star M;\pi_{\star+p}N)_\star\Rightarrow\pi_{\star-q}\EXT_R(M,N),\qquad d_r^{p,q}\colon E_r^{p,q}\rightarrow E_r^{p+r,q+1}
\]
internal to the category of $R_\star$-modules.

\subsubsection{Functor categories}

Let $\calP$ be a loop theory and $\calJ$ be a $1$-category. Then
\[
\Fun(\calJ,\Model_\calP^\Omega)\simeq\Model_{\calP^\calJ}^\Omega, \qquad \Fun(\calJ,\Model_\calP^\heart)\simeq\Model_{\calP^\calJ}^\heart,
\]
where $\calP^\calJ$ is the loop theory obtained as the image of the representable functors under the composite
\[
\Fun(\calJ\times\calP^\op,\Gpd_\infty)\simeq\Fun(\calJ,\Psh(\calP))\rightarrow\Fun(\calJ,\Model_\calP^\Omega),
\]
where the second functor is obtained from localization map $\Psh(\calP)\rightarrow\Model_\calP^\Omega$. Explicitly, for $P\in\calP$ and $i\in\calJ$, define $H_{P,i}\colon\calJ\rightarrow\Model_\calP^\Omega$ by
\[
H_{P,i}(j) = \calJ(i,j)\otimes h(P) = h\left(\coprod_{x\in \calJ(i,j)}P\right);
\]
then $\calP^\calJ$ is generated under coproducts in the category $\Fun(\calJ,\Model_\calP^\Omega)$ by functors of the form $H_{P,i}$. The assumption that $\calJ$ is a $1$-category is necessary to identify $\h(\calP^\calJ)\simeq(\h\calP)^\calJ$.

Write $q^\ast\colon \Model_\calP^\Omega\rightarrow\Fun(\calJ,\Model_\calP^\Omega)$ for the diagonal. Given $F\colon \calJ\rightarrow\Model_\calP^\Omega$ and $P\in\calP$, there is an equivalence
\[
\left(\lim_{j\in\calJ}F(j)\right)(P)\simeq\Map_\calP(q^\ast h(P),F).
\]
Applying \cref{thm:mappingspaces}, or \cref{thm:stablemappingss} if $\calP$ is stable, in this context then returns a homotopy limit spectral sequence for computing $\pi_\ast\left(\lim_{j\in\calJ}F(j)\right)$. If $\calP$ is stable, then applying \cref{thm:quillenss} to the colimit functor $\Fun(\calJ,\Model_\calP^\Omega)\rightarrow\Model_{\calP}^\Omega$ returns a homotopy colimit spectral sequence.

For example, when $\calJ = BG$ for a discrete group $G$, these return homotopy fixed point and homotopy orbit spectral sequences respectively. When $\calJ = \Delta^\op$, the homotopy colimit spectral sequence recovers the standard spectral sequence of a simplicial object. The fact that geometric realizations respect monoidal structures then combines with \cref{thm:pairingquillen} to produce the standard pairings on these spectral sequences.

\subsubsection{Completed modules}

The following example is developed in further detail in \cref{ssec:completionsr}. Let $R$ be an $\bbE_2$ ring and $I\subset\pi_0 R$ be a finitely generated ideal. Then there is a category $\LMod_R^{\Cpl(I)}$ of $I$-complete $R$-modules, and a category of $\LMod_{R_\ast}^{\Cpl(I)}$ of ``derived $I$-complete'' $R_\ast$-modules. These are developed in detail in \cite[Section 7]{lurie2018spectral}. For example, taking $R = E$ to be a Lubin-Tate spectrum of height $h$ and $I = \frakm\subset E_0$ to be the maximal ideal, the category $\Mod_E^{\Cpl(\frakm)}$ recovers the category $\Mod_E^\loc$ of $K(h)$-local $E$-modules, and $\Mod_{E_\ast}^{\Cpl(\frakm)}$ is the derived category of $L$-complete $E_\ast$-modules in the sense of \cite[Appendix A]{hoveystrickland1999morava}.

Let $\calP = \LMod_R^{\Cpl(I),\free}$ be the category of $I$-completions of free $R$-modules. Then $\calP$ is a loop theory, and $\LMod_\calP^\Omega\simeq\LMod_R^{\Cpl(I)}$. By contrast, one subtlety of completions is that it is not always the case that $\LMod_{\h\calP}\simeq\Mod_{R_\ast}^{\Cpl(I)}$. This does however hold under a minor algebraic tameness condition on $I\subset R_\ast$ which is satisfied in practice, at least in situations where one would wish to compute in $\Mod_{R_\ast}^{\Cpl(I)}$ (cf.\ \cref{sec:completions}).

\subsubsection{Bocksteins}\label{sssec:bock}

If $\calP$ is a theory, then $\calP\subset\Model_\calP$ is essentially a distinguished subcategory: the idempotent completion of $\calP$ is equivalent to the full subcategory of $\Model_\calP$ consisting of those models $X$ which are projective in the sense that $\Map_\calP(X,\bs)$ preserves geometric realizations (\cref{cor:modelsresolved}). By contrast, if $\calP$ is a loop theory, then $\calP\subset\Model_\calP^\Omega$ carries no particular universal property, and wildly different loop theories can model the same category. Here is an example (cf.\ \cite{hopkinslurie2017brauer}).

For simplicity, let $R$ be an $\bbE_\infty$ ring, and $u\in \pi_0 R$ be a non-zero-divisor in $\pi_\ast R$. By the preceding example, there is a loop theory $\calP = \Mod_R^{\Cpl(u),\free}$ with $\LMod_\calP^\Omega\simeq\Mod_R^{\Cpl(u)}$. The relevant algebraic tameness conditions are satisfied to ensure $\LMod_{\h\calP}\simeq\Mod_{R_\ast}^{\Cpl(I)}$. On the other hand, $C(u) = \Cof(u\colon R\rightarrow R)$ is a compact generator of $\Mod_R^{\Cpl(u)}$; thus where $E(u) = \EXT_R(C(u),C(u))$, there is an equivalence
\[
\Mod_R^{\Cpl(u)}\simeq\LMod_{E(u)},\qquad M\mapsto \EXT_R(C(u),M).
\]
To be precise, we should take the product on $E(u)$ which is opposite to the standard composition product, or else consider right $E(u)$-modules instead. This equivalence is an instance of Schwede-Shipley's Morita theory \cite{schwedeshipler2003stable} \cite[Section 7.1.2]{lurie2017higheralgebra}, and in the language of loop theories may be understood as follows. Let $\calP'\subset\Mod_R^{\Cpl(u)}$ be the full subcategory generated by $C(u)$ under small coproducts, suspensions, and desuspensions; all of these may also be computed in $\Mod_R$ itself. Because $C(u)$ is a compact generator of $\Mod_R^{\Cpl(u)}$, there is an equivalence $\Mod_R^{\Cpl(u)}\simeq\LMod_{\calP'}^\Omega$. On the other hand, $\calP'$ may be identified as the theory associated to $E(u)$ via the construction of \cref{sssec:rmodfilt}, and therefore $\LMod_{\calP'}^\Omega\simeq\LMod_{E(u)}$.

The algebraic category we extract from this choice of loop theory modeling $\Mod_R^{\Cpl(u)}$ is very different from $\Mod_{R_\ast}^{\Cpl(u)}$. Indeed, regularity of $u$ implies that
\[
\pi_\ast E(u) \cong (R_\ast/(u))[\ol{u}]/(\ol{u}^2)
\]
where $\ol{u}$ is a \textit{Bockstein} element associated to $u$. Thus, whereas $\calP$ leads one to approximate $\Mod_R^{\Cpl(u)}$ by complete $R_\ast$-modules, $\calP'$ leads one to approximate $\Mod_R^{\Cpl(u)}$ by $R_\ast/(u)$-modules together with $u$-Bockstein information. To illustrate this, first note that as $\EXT_R(C(u),M)\simeq\Sigma^{-1}M\otimes C(u)$, we may realize the equivalence $\Mod_R\simeq\LMod_{E(u)}$ instead by $M\mapsto M\otimes C(u)$. Now if $M\in\LMod_R^{\Cpl(u)}$, then the universal coefficient spectral sequence of \cref{thm:stablemappingss} applied to $M\simeq\EXT_R(R,M)$ using the theory $\calP'$ takes the form
\[
E_1^{p,q} = \Ext_{\pi_\ast E(u)}^{p+q}(R_\ast/(u),\pi_{\ast+p}(M\otimes C(u)))\Rightarrow \pi_{\ast-q} M.
\]
Taking the standard $\pi_\ast E(u)$-resolution of $R_\ast/(u)$, we may step back a page and view this as being of the form
\[
\pi_\ast (M\otimes C(u))[u]\Rightarrow \pi_\ast M,
\]
where $u$ is in stem $0$ and filtration $1$. This is a $u$-Bockstein spectral sequence for $\pi_\ast M$.

\subsubsection{Algebras over monads}

Let $\calP$ be a loop theory, and let $T$ be a monad on $\Model_\calP^\Omega$ which preserves geometric realizations. Define $T\calP\subset\Alg_T$ to be the full subcategory spanned by the essential image of $\calP$ under $T$. Then $T\calP$ is a loop theory, and there is an equivalence $\Alg_T\simeq\Model_{T\calP}^\Omega$. The condition that $T$ preserves geometric realizations may be relaxed at the expense of a more technical statement; this is all treated in \cref{ssec:constructingexamples}. The theory $\h T\calP$ should be thought of as the theory of \textit{homotopy operations} for $T$-algebras; compare the treatments of theories of power operations in \cite{rezk2006lectures} and \cite{lawsonxxxxen}.

This gives rise to more examples than we could hope to enumerate. Two examples in particularly good standing are $\calP = \Mod_{H\bbF_p}^\free$ and $T$ the free $\bbE_\infty$ algebra functor, and $\calP = \smash{\Mod_E^{\loc,\free}}$ for $E$ a Lubin-Tate spectrum of height $h$ and $T$ the free $K(h)$-local $\bbE_\infty$ algebra functor. These examples are developed in greater detail in \cite{balderrama2021algebraic}.

\subsubsection{Associative algebras}

Let $R$ be an $\bbE_2$ ring, so that $\LMod_R$ is monoidal and there is a category $\Alg_R = \Mon(\LMod_R)$ of $\bbA_\infty$ algebras over $R$. Let $\calP = \Alg_R^\free$; then $\Model_\calP^\Omega\simeq\Alg_R$ and $\Model_{\h\calP}^\heart\simeq\Alg_{R_\ast}^\heart$. In this context, \cref{thm:mappingspaces} is an obstruction theory for mapping spaces in $\Alg_R$ built from the ordinary Hochschild cohomology of $R_\ast$-algebras. By taking $R$ to instead be a $G$-equivariant $\bbE_2$ ring, we would obtain an obstruction theory built from the cohomology of associative Green algebras for the commutative Green functor $R_\star$.

\subsubsection{$\bbF_p$-synthetic $p$-profinite spaces}\label{sssec:syntheticprofinite}

We end with an example of a different flavor.

Fix a prime $p$, and let $\calP_0^\op\subset\Gpd_\infty$ be the full subcategory spanned by finite products of the Eilenberg-MacLane spaces $K(\bbF_p,n)$. Then $\calP_0$ is a finitary loop theory; we may complete it to a full loop theory $\calP$ by declaring $\calP\subset\Model_{\calP_0}$ to be the full subcategory generated by $\calP_0$ under coproducts. Now consider $\Model_\calP^\op$; we propose to consider this a category of $\bbF_p$-synthetic $p$-profinite spaces. This is in analogy with the category of $\bbF_p$-synthetic spectra in the sense of Pstrągowski \cite{pstragowski2023synthetic}, which is a deformation of the category of spectra refining the classic mod $p$ Adam spectral sequence; see in particular \cite[Appendix A]{burklundhahnsenger2019boundaries}.

Let $\Fin_p\subset\Gpd_\infty$ be the full subcategory of $p$-finite spaces, i.e.\ those $X$ such that $\pi_0 X$ is finite, $X$ is truncated, and each $\pi_n(X,x)$ is a finite $p$-group. There is then an equivalence of categories $\Pro(\Fin_p)\simeq(\Model_\calP^\Omega)^\op$, and $\Model_{\h\calP}^\heart$ is equivalent to the category of unstable algebras over the Steenrod algebra. In this context \cref{thm:mappingspaces} may be interpreted as an unstable Adams spectral sequence.

Now let $\calP'$ be defined in the same manner as $\calP$, only using pointed objects throughout. To any pointed space $X$, we may define a model $h(X)$ of $\calP'$ by $h(X)(P) = \Map(X,P)$, and $X_p^\wedge = \Map_{\calP'}(h(X),h(S^0))$ recovers the $p$-profinite completion of $X$. Define
\[
\pi_{s,c}^{\bbF_p} X = \pi_{s}\Fib(X_p^\wedge\rightarrow\Map_{\calP'}(h(X),h(S^0)_{\leq c-1})).
\]
Although we are now working unstably, the ideas mentioned in \cref{rmk:synth} still indicate that $\pi_{\ast,\ast}^{\bbF_p} X$ may be viewed as a deformation of $\pi_\ast X$ associated to the unstable Adams spectral sequence. Thus we may consider $\pi_{\ast,\ast}^{\bbF_p}X$ as a candidate definition for the $\bbF_p$-synthetic \textit{unstable} homotopy groups of $X$.

\section{Mal'cev theories}\label{sec:malc}

This section covers the general properties of Mal'cev theories. In particular, in \cref{ssec:malc}, we show that if $\calP$ is a Mal'cev theory, then the category $\Model_{\calP}$ of models of $\calP$ freely adjoins geometric realizations to $\calP$. Moreover, we verify that $\Model_\calP$ is presentable under some minor smallness conditions on $\calP$. In \cref{ssec:simplicial}, we restrict to the case where $\calP$ is a discrete Mal'cev theory, verify that $\Model_{\calP}$ is the underlying $\infty$-category of Quillen's model structure on simplicial objects in $\Model_\calP^\heart$, and review the resulting notion of left-derived functor.

\subsection{Definitions and universal properties}\label{ssec:malc}

The structure of a \textit{herd} on a set $X$ is a ternary operation $t$ satisfying $t(x,x,y) = y$ and $t(x,y,y) = x$; write $\Hrd$ for the category of herds. Herds are the models of a finitary and discrete algebraic theory: the defining relations $t(x,x,y) = y$ and $t(x,y,y) = x$ determine a presentation of a Lawvere theory in the sense of \cite[Section II.2]{lawvere1963functorial}. In particular, herd objects can be defined in an arbitrary category with finite products, allowing for the following definition.

\begin{defn}\label{def:malc}
A \textit{Mal'cev theory} is a category $\calP$ such that
\begin{enumerate}
\item $\calP$ admits all small coproducts;
\item All objects of $\calP$ admit the structure of a coherd (i.e.\ of a herd object in $\calP^\op$).
\end{enumerate}
For a regular cardinal $\kappa$, a Mal'cev theory $\calP$ is said to be \textit{$\kappa$-bounded} if there exists a small full subcategory $\calP_0\subset\calP$ such that
\begin{enumerate}[resume]
\item $\calP_0$ is closed under $\kappa$-small coproducts;
\item Every object of $\calP$ is a retract of a small coproduct of objects of $\calP_0$;
\item For every $P\in\calP_0$ and every set of objects $\{P_i':i\in I\}$ in $\calP$, the canonical map $\colim_{F\subset I,\, |F|<\kappa}\Map_\calP(P,\coprod_{i\in F}P_i')\rightarrow\Map_\calP(P,\coprod_{i\in I}P_i')$ is an equivalence.
\end{enumerate}
A Mal'cev theory $\calP$ is \textit{bounded} if it is $\kappa$-bounded for some $\kappa$, and is \textit{discrete} if it is a $1$-category.
\tqed
\end{defn}

We will refer to Mal'cev theories as just \textit{theories}. After this subsection, we will moreover assume that all of our theories are bounded; see \cref{rmk:bounded}. Throughout this subsection, and everywhere else, $\calP$ will always refer to a theory, at times satisfying additional assumptions. 

If $X$ is a herd object in some category $\calC$, then $\Map_\calC(A,X)$ will be a herd object in $\Gpd_\infty$ for any $A\in \calC$. The category of herd objects in $\Gpd_\infty$ is modeled by the Quillen's model structure on simplicial herds \cite[Section II.4]{quillen1967homotopical}, where the weak equivalences and fibrations of simplicial herds are detected on their underlying simplicial sets, see \cite[Section 5.5.9]{lurie2017highertopos}. This allows us to use simplicial herds to prove things about herd objects in $\Gpd_\infty$, and we will freely do so.

\begin{samepage}
\begin{lemma}\label{lem:herdfacts}
\hphantom{blank}
\begin{enumerate}
\item Every surjection of simplicial herds is a Kan fibration;
\item If $X$ is a herd object in $\Gpd_\infty$ and $A$ and $B$ are pointed spaces, then the restriction $r\colon \Map(A\times B,X)\rightarrow\Map(A\vee B,X)$ admits a section.
\end{enumerate}
Now let $p\colon B\rightarrow X$ be a map of herd objects in $\Gpd_\infty$ equipped with a section $s$, and let $q\colon A\rightarrow X$ be any map of $\infty$-groupoids.
\begin{enumerate}[resume]
\item The canonical map $j\colon A\times_X B\rightarrow A\times B$ admits a retraction;
\item The natural map $\pi_0(A\times_X B)\rightarrow\pi_0A\times_{\pi_0 X}\pi_0 B$ is a bijection.
\end{enumerate}
In particular, if $p$ is a covering map, then $p$ has trivial monodromy.
\end{lemma}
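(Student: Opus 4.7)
The plan is to treat (3) and (4) first, since they are the computational heart. For (3), at any pair $(f,g)$ with common basepoint $f(\ast) = g(\ast) = x_0$, set $s(f,g)(a,b) = t(f(a), x_0, g(b))$; the herd identities $t(x,x,y) = y$ and $t(x,y,y) = x$ immediately give $s(f,g)(\ast, b) = g(b)$ and $s(f,g)(a, \ast) = f(a)$, so this defines a section of $r$ that is natural in $f$ and $g$. For (4), define $\rho\colon A \times B \to A \times_X B$ by $\rho(a,b) = (a, t(s(q(a)), s(p(b)), b))$. Applying $p$ to the second coordinate and using $p \circ s \simeq \mathrm{id}_X$ together with $t(u,v,v) = u$ gives $q(a)$, so the pair lies in the homotopy pullback; when $(a,b) \in A \times_X B$, the identity $q(a) = p(b)$ gives $s(q(a)) \simeq s(p(b))$, and $t(u,u,v) = v$ then collapses the second coordinate to $b$, yielding $\rho j \simeq \mathrm{id}$.

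Items (1) and (2) are essentially classical. For (1), the plan is to adapt Moore's argument that a surjection of simplicial groups is a Kan fibration, using $t$ in place of group subtraction: given a horn in $X$ and a filler $y \in Y$ restricting appropriately, one inductively corrects the faces of a chosen lift using $t$, with the hypothesis on image and path components providing the preimages needed at each step. For (2), if $G$ is a group in $\Hrd$ with unit $e$ and multiplication $\cdot$, then $t$ is a group homomorphism, so writing $\phi(y) = t(e,y,e)$ one gets $t(x,y,z) = x \cdot \phi(y) \cdot z$; the herd axioms force $\phi(y) = y^{-1}$, and then demanding that $t$ itself is a group homomorphism reduces to demanding that inversion is a group homomorphism, which forces $G$ to be abelian.

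For (5), surjectivity of $\pi_0(A \times_X B) \to \pi_0 A \times_{\pi_0 X} \pi_0 B$ holds for any homotopy pullback of $\infty$-groupoids. Injectivity follows from (4): the retraction $\rho$ exhibits $j_*\colon \pi_0(A \times_X B) \to \pi_0(A \times B) = \pi_0 A \times \pi_0 B$ as an injection, and since this map factors through $\pi_0 A \times_{\pi_0 X} \pi_0 B$, the map to the pullback is also injective. The final sentence specializes to $A = \ast$ with $q$ picking out a point $x \in X$: then $A \times_X B$ is the homotopy fiber of $p$ at $x$, which is discrete by hypothesis with $\pi_0$ just $p^{-1}(x)$, while $\pi_0 A \times_{\pi_0 X} \pi_0 B$ is the quotient of $p^{-1}(x)$ by the $\pi_1(X,x)$-monodromy, so (5) forces this action to be trivial. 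The main obstacle is the combinatorial induction in (1); everything else reduces to writing down the right formula in terms of $t$ and $s$ and chasing the two herd identities.
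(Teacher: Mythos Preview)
Your proposal is correct and matches the paper's proof almost exactly: the formulas you give for (3) and (4) are identical to the paper's, your argument for (5) via the retraction from (4) is the same, and your sketch for (1) is precisely the Moore-style induction the paper carries out (with the explicit correction step $w_r = t(w_{r-1}, s_r d_r w_{r-1}, s_r x_r)$). The only minor difference is in (2): the paper gives the one-line Eckmann--Hilton computation $gh = t(g,e,e)t(e,e,h) = t(g,e,h) = t(e,e,h)t(g,e,e) = hg$ directly, whereas you first identify $t(x,y,z) = xy^{-1}z$ and then deduce commutativity from $t$ being a homomorphism; both are the same idea.
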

\end{samepage}
\begin{proof}
(1)~~ This is well known should we replace herds with groups, and the same proof applies. In brief, suppose given a surjection $p\colon E\rightarrow B$ of simplicial herds, elements $x_0,\ldots,x_{k-1},x_{k+1},\ldots,x_{n+1}\in E_n$ such that $d_i(x_j)=d_{j-1}(x_i)$ for $i<j$ and $i\neq k$, and $y\in  B_{n+1}$ such that $d_i(y) = p(x_i)$ for $i\neq k$. Inductively define $w_r\in E_{n+1}$ such that $p(w_r) = y$ and $d_i w_r = x_i$ for $i\leq r$ and $i\neq k$ by choosing $w_{-1}$ to be any element in the preimage of $y$, and setting $w_r = t(w_{r-1},s_r d_r w_{r-1},s_r x_r)$, except when $r=k$, in which case $w_r = w_{r-1}$. Then $w_{n+1}\in E_{n+1}$ witnesses the Kan condition.

(2)~~ If $G$ is a group object in $\Hrd$, then as the herd operation $t\colon G\times G\times G \rightarrow G$ is a group homomorphism we have $t(u,v,w)t(x,y,z) = t(ux,vy,wz)$ for any $u,v,w,x,y,z\in G$. In particular, if $e\in G$ is the unit and $g,h\in G$ are arbitrary, then $gh = t(g,e,e)t(e,e,h) = t(g,e,h) = t(e,e,h)t(g,e,e) = h g$. 

(3)~~ Fix a herd object $X$ and pointed spaces $A$ and $B$, whose basepoint we denote by $e$. We may model these objects by simplicial sets, in which case a section $s\colon \Map(A\vee B,X)\rightarrow\Map(A\times B,X)$ to the restriction is given by $s(f)(a,b) = t(f(a),f(e),f(b))$.

(4)~~ We may model the objects and maps in question by simplicial sets, choosing $p$ in such a way that the strict pullback $A\times_X B$ models the homotopy pullback. Now a retraction $r\colon A\times B\rightarrow A\times_X B$ is given by $r(a,b) = (a,t(s(q(a)),s(p(b)),b))$.

(5)~~ The map $\pi_0(A\times_X B)\rightarrow\pi_0 A\times_{\pi_0 X}\pi_0 B$ is always surjective, so we must only verify that it is injective. This follows from (4), which implies that the composite $\pi_0(A\times_X B)\rightarrow\pi_0 A\times_{\pi_0 X}\pi_0 B\subset \pi_0 A\times \pi_0 B$ admits a retraction.
\end{proof}

\begin{defn}
The category of \textit{models} of a theory $\calP$ is the full subcategory $\Model_\calP\subset\Psh(\calP)$ of small presheaves $X$ on $\calP$ such that for any set $\{P_i:i\in I\}$ of objects in $\calP$, the canonical map
\[
X\left(\coprod_{i\in I}P_i\right)\rightarrow \prod_{i\in I}X(P_i)
\]
is an equivalence. The category of \textit{discrete models} of $\calP$ is the full subcategory $\Model_\calP^\heart\subset\Model_\calP$ of $\Set$-valued models of $\calP$.
\tqed
\end{defn}

In general we will write, for instance, $\Map_\calP$ rather than $\Map_{\Model_\calP}$. As $\calP$ consists of coherds, if $X\in\Model_\calP$ and $P\in\calP$, then $X(P) \simeq \Map_\calP(h(P),X)$ is itself a herd. This need not be natural in $P$, but it is natural in $X$, i.e.\ maps $X\rightarrow Y$ of models induce maps $X(P)\rightarrow Y(P)$ of herds. This is sufficient for the following.

\begin{prop}\label{prop:geometricrealizationsgroups}
The subcategory $\Model_{\calP}\subset\Psh(\calP)$ is closed under small limits and geometric realizations. Moreover, geometric realizations preserve small products in $\Model_\calP$.
\end{prop}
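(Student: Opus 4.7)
The proof divides into the two claims. For small limits, the argument is essentially formal: limits in $\Psh(\calP)$ are computed pointwise in $\Gpd_\infty$ and commute with products, so for any diagram $X$ of models,
\[
(\lim X)\Bigl(\coprod_{i} P_i\Bigr) \simeq \lim \prod_{i} X(P_i) \simeq \prod_{i} \lim X(P_i) = \prod_{i} (\lim X)(P_i).
\]

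The substantive case is geometric realizations. Let $X_\bullet\colon\Delta^{\op}\to\Model_\calP$ be a simplicial model. Geometric realizations in $\Psh(\calP)$ are computed pointwise, and each $X_n$ is a model, so
\[
|X_\bullet|\Bigl(\coprod_{i} P_i\Bigr) \simeq \Bigl|X_\bullet\Bigl(\coprod_{i} P_i\Bigr)\Bigr| \simeq \Bigl|\prod_{i} X_\bullet(P_i)\Bigr|.
\]
The target of the comparison map is $\prod_i |X_\bullet(P_i)|$, so the claim reduces to the following: for any family $\{Y^i_\bullet\}_{i \in I}$ of simplicial herd objects in $\Gpd_\infty$---each $X_\bullet(P_i)$ being such, since $P_i$ is a coherd in $\calP$---the canonical map $\bigl|\prod_i Y^i_\bullet\bigr| \to \prod_i |Y^i_\bullet|$ is an equivalence.

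Here the Mal'cev hypothesis enters via \cref{lem:herdfacts}(1). Strictifying each $Y^i_\bullet$ to an honest simplicial herd in simplicial sets, we may assume nonemptiness (the empty case being trivial on both sides), whereupon $Y^i_\bullet \to \ast$ is a degreewise surjection of simplicial herds and hence a Kan fibration by the lemma; thus each $Y^i_\bullet$ is levelwise a Kan complex. Arbitrary products of Kan complexes are Kan complexes, so $\prod_i Y^i_\bullet$ remains levelwise Kan, which means the strict product represents the product in $\Gpd_\infty$. For any bisimplicial set the realization over $\Delta^{\op}$ in $\Gpd_\infty$ is computed by the diagonal, and the diagonal commutes strictly with products of bisimplicial sets. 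Combining these facts yields the required equivalence.

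The main obstacle is the infinitary product step: for \emph{finite} products the statement is free, since $\Delta^{\op}$ is sifted and sifted colimits commute with finite products. \cref{lem:herdfacts}(1) does exactly the work of upgrading simplicial herds to homotopically well-behaved (levelwise fibrant) simplicial objects, which is what allows the diagonal argument to extend to arbitrary products---without it, infinite products need not commute with geometric realization in $\Gpd_\infty$.
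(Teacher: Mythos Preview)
Your proof is correct and follows essentially the same approach as the paper: reduce to showing geometric realizations commute with arbitrary products of simplicial herd objects in $\Gpd_\infty$, model these by bisimplicial herds, invoke \cref{lem:herdfacts}(1) to see that every simplicial herd is a Kan complex so that strict products compute homotopy products, and conclude because diagonals of bisimplicial sets commute with products. The only cosmetic difference is that the paper phrases the reduction as ``geometric realizations preserve products in $\Hrd(\Gpd_\infty)$'' and notes that the forgetful functor $\Hrd(\Gpd_\infty)\to\Gpd_\infty$ preserves geometric realizations, whereas you work directly with the underlying spaces; your phrasing around applying the lemma (to ``$Y^i_\bullet\to\ast$'' rather than levelwise to each $Y^i_n\to\ast$) is slightly ambiguous but your conclusion makes clear you mean the latter.
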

\begin{proof}
Let $X\colon \calJ\rightarrow\Model_\calP$ be a small diagram and $\lim_{j\in\calJ}X_j\in\Psh(\calP)$ be its pointwise colimit, which we are claiming lives in $\Model_\calP\subset\Psh(\calP)$, and let $\{P_i:i\in I\}$ be a set of objects in $\calP$. Then
\[
\lim_{j\in\calJ}X_j(\coprod_{i\in I}P_i) \simeq \lim_{j\in\calJ}(\prod_{i\in I}X_j(P_i))\simeq \prod_{i\in I}(\lim_{j\in\calJ}X_j(P_i)),
\]
so that $\lim_{j\in\calJ}X_j\in\Model_\calP$ as claimed.

Next let $X\colon \Delta^\op\rightarrow\Model_\calP$ be a simplicial object, and write $X_j = X([j])$. Then the pointwise colimit $\colim_{j\in\Delta^\op}X_j$ will live in $\Model_\calP$ by the same reasoning, with $\lim_{j\in\calJ}$ replaced by $\colim_{j\in\Delta^\op}$, provided we can verify that
\[
\colim_{j\in\Delta^\op}\prod_{j\in\calJ}X_j(P_i)\simeq \prod_{j\in\calJ} \colim_{j\in\Delta^\op}X_j(P_i).
\]
This will also prove that geometric realizations preserve small products in $\Model_\calP$. Abbreviate $X_{i,j} = X_j(P_i)$. Then each $X_{i,j}$ is a herd object in $\Gpd_\infty$, and this is natural in $j$. As the category of herd objects in $\Gpd_\infty$ is modeled by Quillen's model category of simplicial herds, we may model each simplicial herd object $\{X_{i,j}:j\in\Delta^\op\}$ by a bisimplicial herd $\{X_{i,j,k}:j,k\in\Delta^\op\}$, in which case the geometric realization $\colim_{j}X_{j,i}$ is modeled by the diagonal $\{X_{i,j,j}:j\in\Delta^\op\}$. As simplicial herds are fibrant by \cref{lem:herdfacts}(1), small products of simplicial herds are homotopy products. Thus $\colim_j\prod_i X_{i,j}\simeq \prod_i \colim_j X_{i,j}$ as products and diagonals of bisimplicial sets commute.
\end{proof}

We also record the following here. 

\begin{lemma}\label{lem:cartrel}
Fix a levelwise Cartesian square
\begin{center}\begin{tikzcd}
W\ar[r]\ar[d]&X\ar[d,"\pi"]\\
Y\ar[r]&Z
\end{tikzcd}\end{center}
of simplicial objects in $\Model_{\calP}$, and suppose that $\pi$ is levelwise a $\pi_0$-surjection. Then the square remains Cartesian after geometric realization.
\end{lemma}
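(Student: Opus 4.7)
The plan is to reduce the statement to one about bisimplicial herds, and then invoke a Bousfield--Friedlander-type principle: the diagonal of a bisimplicial map that is Cartesian at each external level and sufficiently fibrant will remain Cartesian.

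First I would reduce to the case of evaluation at a fixed object $P\in\calP$. Limits in $\Model_\calP$ are computed pointwise, and by the proof of \cref{prop:geometricrealizationsgroups} so are geometric realizations. Thus it suffices to show that, for each $P$, the square of herd objects in $\Gpd_\infty$
\[
\begin{tikzcd}
|W(P)|\ar[r]\ar[d]&|X(P)|\ar[d]\\
|Y(P)|\ar[r]&|Z(P)|
\end{tikzcd}
\]
is Cartesian, given that each $W_n(P)\simeq Y_n(P)\times_{Z_n(P)} X_n(P)$ and that each $\pi_n(P)\colon X_n(P)\to Z_n(P)$ is a $\pi_0$-surjection of herds.

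Following the proof of \cref{prop:geometricrealizationsgroups}, I would model the simplicial herds $X_n(P)$, etc., by simplicial herds in the ordinary sense, so that $X(P),Y(P),Z(P),W(P)$ become bisimplicial herds whose geometric realizations are modeled by the diagonal. By \cref{lem:herdfacts}(1), the $\pi_0$-surjectivity hypothesis implies that for each external simplicial level $n$, $\pi_n(P)$ is a Kan fibration of simplicial herds, i.e., $\pi(P)$ is a Kan fibration in the internal direction; and moreover $Z_n(P)$, $X_n(P)$, etc., are all fibrant.

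The main obstacle is then the bisimplicial step: showing that the diagonal of a bisimplicial map which is levelwise Cartesian and internally a Kan fibration (between fibrant things) remains Cartesian. I would handle this via Bousfield--Friedlander, which requires the $\pi_*$-Kan condition on $Z(P)$. For bisimplicial herds this condition is automatic: the face maps of each $\pi_*Z(P)$ are themselves maps of simplicial herds, in fact of simplicial abelian groups when $*\geq 1$ by \cref{lem:herdfacts}(2), and any level-wise $\pi_0$-surjection of these is a Kan fibration by \cref{lem:herdfacts}(1). This is the herd analogue of the classical fact that bisimplicial groups always satisfy the $\pi_*$-Kan condition. Once $\pi(P)$ is known to be a realization fibration in this sense, the diagonal of the strict Cartesian square is Cartesian, which is exactly what we need.
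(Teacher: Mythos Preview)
Your reduction to bisimplicial herds and the overall strategy are sound, and your observation that bisimplicial herds automatically satisfy the $\pi_*$-Kan condition (via \cref{lem:herdfacts}(1) and (2)) is correct and would make a Bousfield--Friedlander argument go through. There is, however, a small imprecision: \cref{lem:herdfacts}(1) asserts that \emph{degreewise surjections} of simplicial herds are Kan fibrations, not that $\pi_0$-surjections are. To pass from the levelwise $\pi_0$-surjectivity hypothesis to a map to which \cref{lem:herdfacts}(1) applies, you must first replace $\pi(P)$ by a levelwise Kan fibration (say via a projective factorization in bisimplicial herds) and then observe that a Kan fibration which is $\pi_0$-surjective is automatically surjective in every degree. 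This is routine, but it is not what you wrote.

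The paper's argument is more direct and bypasses Bousfield--Friedlander entirely. Once $\pi$ is modeled as a \emph{bidegreewise surjection} of bisimplicial herds (achievable by the same replacement just described), the diagonal $\operatorname{diag}(\pi)$ is itself a degreewise surjection of simplicial herds and hence a Kan fibration directly by \cref{lem:herdfacts}(1). Since the diagonal preserves strict pullbacks, the diagonal square is strictly Cartesian with one leg a Kan fibration, hence homotopy Cartesian. No $\pi_*$-Kan condition or realization-fibration technology is needed: the Mal'cev hypothesis is strong enough that the single application of \cref{lem:herdfacts}(1) to $\operatorname{diag}(\pi)$ suffices. Your route works, but it imports more machinery than the situation demands.
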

\begin{proof}
By \cref{prop:geometricrealizationsgroups}, pullbacks and geometric realizations are computed pointwise, so by evaluating an arbitrary element $P\in\calP$ we may reduce to proving the corresponding statement with $\Model_{\calP}$ replaced by $\Hrd(\Gpd_\infty)$. The square can now be modeled as a Cartesian square of bisimplicial herds in which the map $\pi$ is levelwise a surjection. This square remains Cartesian upon taking diagonals, and remains homotopy Cartesian as $\pi$ remains a Kan fibration by \cref{lem:herdfacts}(1). This proves the claim.
\end{proof}

\begin{rmk}\label{rmk:malcdone}
After this point, herds will no longer appear explicitly.
\tqed
\end{rmk}

Observe that $\Model_{\calP}$ consists of those small presheaves on $\calP$ which are local with respect to the class of maps of the form $\coprod_{i\in I}h(P_i)\rightarrow h(\coprod_{i\in I}P_i)$ for $\{P_i:i\in I\}$ a set of objects in $\calP$. 

\begin{lemma}\label{prop:modelsreflective}
The inclusion $R\colon\Model_{\calP}\rightarrow\Psh(\calP)$ admits a left adjoint.
\end{lemma}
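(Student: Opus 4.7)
My plan is to invoke the adjoint functor theorem after verifying a solution-set condition. By \cref{prop:geometricrealizationsgroups}, the inclusion $R$ preserves small limits, so it remains to show that for every $X\in\Psh(\calP)$, the class of arrows $X\to Y$ with $Y\in\Model_\calP$ admits a small cofinal subset.

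For this, recall that $\Model_\calP\subset\Psh(\calP)$ consists of objects local with respect to the class $S$ of maps
\[
\coprod_{i\in I}h(P_i)\longrightarrow h\Bigl(\coprod_{i\in I}P_i\Bigr),
\]
where $\{P_i\}_{i\in I}$ ranges over small families of objects in $\calP$. In general $S$ is a proper class, which is the source of the difficulty. To localize, I would exploit smallness of $X$: since $X$ is a small presheaf, there exists a small full subcategory $\calP_0\subseteq\calP$ such that $X$ is left Kan extended from $\calP_0$. Choose a regular cardinal $\kappa$ exceeding the size of the diagram witnessing smallness of $X$, and replace $\calP_0$ by its closure under $\kappa$-small coproducts inside $\calP$; this remains small, and $\calP_0$ itself is a small Mal'cev theory. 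The inclusion $\Model_{\calP_0}\subset\Psh(\calP_0)$ is then a localization at the (small) set of product-maps internal to $\calP_0$, so admits a left adjoint $L_0$ via the standard small object argument.

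The main obstacle is to promote $L_0$ to a reflection in $\Psh(\calP)$. I would check that the left Kan extension of $L_0(X|_{\calP_0})$ along $\calP_0\hookrightarrow\calP$ produces an object $LX\in\Model_\calP$, i.e.\ one satisfying the product condition against all families in $\calP$, not only the $\kappa$-small ones internal to $\calP_0$; the Mal'cev hypothesis, via \cref{prop:geometricrealizationsgroups} and \cref{lem:cartrel}, controls the necessary colimits so that the product condition propagates. Any map $X\to Y$ with $Y\in\Model_\calP$ restricts along $\calP_0\subseteq\calP$ to a map $X|_{\calP_0}\to Y|_{\calP_0}$ of $\calP_0$-models, which factors uniquely through $L_0(X|_{\calP_0})$; by the left Kan extension adjunction this factorization lifts uniquely through $LX$, giving the universal property. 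Running this construction naturally in $X$ assembles into the desired left adjoint.
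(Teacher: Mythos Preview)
Your proposal takes a substantially more circuitous route than the paper, and the crucial step is not actually carried out.

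First, a terminological point: you call $\calP_0$ a ``small Mal'cev theory,'' but by the paper's \cref{def:malc} a Mal'cev theory must have \emph{all} small coproducts, which a small category cannot. What you presumably mean is a small category closed under $\kappa$-small coproducts whose objects are coherds; this is fine, but it is not the paper's notion of theory.

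More importantly, the heart of your argument is the claim that the left Kan extension $j_!L_0(X|_{\calP_0})$ along $j\colon\calP_0\hookrightarrow\calP$ lands in $\Model_\calP$. You gesture at \cref{prop:geometricrealizationsgroups} and \cref{lem:cartrel}, but neither of these says anything about left Kan extensions or about how the product condition for $\kappa$-small families propagates to arbitrary families. To actually verify this, you would need to express $j_!L_0(X|_{\calP_0})$ as a geometric realization of $\calP$-representables: write it as a colimit of $h(P_{0,\alpha})$'s, rearrange into $\colim_{n\in\Delta^\op}\coprod_{i\in I_n}h(P_{n,i})$, and then absorb the coproducts into $\calP$ using that $\calP$ has all small coproducts. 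But once you do that, the detour through $\calP_0$ and $L_0$ is entirely superfluous.

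Indeed, the paper's proof is exactly this direct construction, with no adjoint functor theorem and no auxiliary subcategory. Given $X\in\Psh(\calP)$, smallness lets one write $X\simeq\colim_{n\in\Delta^\op}\coprod_{i\in I_n}h(P_{n,i})$. For any $Y\in\Model_\calP$, mapping out of $\coprod_{i\in I_n}h(P_{n,i})$ and out of $h(\coprod_{i\in I_n}P_{n,i})$ agree, so $\Map_{\Psh(\calP)}(X,R(-))$ is corepresented by $\colim_{n\in\Delta^\op}h(\coprod_{i\in I_n}P_{n,i})$, which lies in $\Model_\calP$ by \cref{prop:geometricrealizationsgroups}. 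Your framing via the adjoint functor theorem is also misleading: you never produce a solution set, but rather attempt to construct the reflection directly---which is what the paper does, only without the scaffolding.
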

\begin{proof}
By the Yoneda lemma, it is sufficient to verify the pointwise assertion that for all $X\in\Psh(\calP)$, the functor $\Map_{\Psh(\calP)}(X,R(\bs))\colon \Model_{\calP}\rightarrow\Gpd_\infty$ is representable; see for instance \cite[Proposition 6.1.11]{cisinski2019higher}. By definition of $\Psh(\calP)$, the presheaf $X$ is small, and thus admits a presentation of the form $X \simeq \colim_{n\in\Delta^\op}\coprod_{i\in I_n}h(P_{n,i})$ for some sets $I_n$ and $P_{n,i}\in\calP$, see for example \cite[Section 5]{mazelgee2019grothendieck}. As a consequence, we have
\[
\Map_{\Psh(\calP)}(X,R(\bs))\simeq\Map_{\Psh(\calP)}(\colim_{n\in\Delta^\op}h(\coprod_{i\in I_n} P_{n,i}),R(\bs)).
\]
\cref{prop:geometricrealizationsgroups} shows that $\colim_{n\in\Delta^\op}h(\coprod_{i\in I_n}P_{n,i})$ lives in $\Model_{\calP}$, and we conclude as
\[
\Map_{\Psh(\calP)}(\colim_{n\in\Delta^\op}h(\coprod_{i\in I_n} P_{n,i}),R(\bs))\simeq \Map_{\calP}(\colim_{n\in\Delta^\op}h(\coprod_{i\in I_n} P_{n,i}),\bs).\qedhere
\]
\end{proof}

\begin{prop}\label{cor:modelsresolved}
For a theory $\calP$,
\begin{enumerate}
\item The category $\Model_{\calP}$ admits all small colimits;
\item The subcategory $\Model_{\calP}\subset\Psh(\calP)$ is the smallest full subcategory containing all representables and closed under geometric realizations;
\item For $X\in\Model_{\calP}$, the functor $\Map_{\calP}(X,\bs)$ preserves geometric realizations if and only if $X$ is a retract of a representable.
\end{enumerate}
\end{prop}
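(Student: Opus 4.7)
The plan is to treat the three parts in sequence, with part (2) carrying the real content and parts (1) and (3) following quickly from it.

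For part (1), small limits in $\Model_\calP$ exist and are computed as in $\Psh(\calP)$ by \cref{prop:geometricrealizationsgroups}. Small colimits are then obtained by applying the reflection $L\colon \Psh(\calP) \to \Model_\calP$ of \cref{prop:modelsreflective} to the corresponding colimits in $\Psh(\calP)$; this is a formal consequence of the adjunction.

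For part (2), note first that representables lie in $\Model_\calP$: the identification $h(P)(\coprod_i Q_i) \simeq \prod_i h(P)(Q_i)$ is immediate from the universal property of coproducts in $\calP$. Combined with \cref{prop:geometricrealizationsgroups}, this shows $\Model_\calP$ itself contains all representables and is closed under geometric realizations in $\Psh(\calP)$, so any such subcategory must lie inside $\Model_\calP$. The substantive direction is the reverse inclusion. Every $X \in \Model_\calP$ is a small presheaf, and thus admits a canonical presentation $X \simeq |Y_\bullet|$ in $\Psh(\calP)$ with $Y_n = \coprod_{i\in I_n} h(P_{n,i})$ a coproduct of representables. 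These $Y_n$ are not themselves representable in $\Psh(\calP)$, but the reflection sends them to single representables: a direct check against the defining property of models gives $L\bigl(\coprod_i h(P_i)\bigr) \simeq h\bigl(\coprod_i P_i\bigr)$. Since $L$ preserves colimits and, by \cref{prop:geometricrealizationsgroups}, geometric realizations in $\Model_\calP$ agree with those computed in $\Psh(\calP)$, applying $L$ yields $X \simeq |LY_\bullet|$, a geometric realization of representables, as desired.

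For part (3), if $X$ is a retract of a representable $h(P)$, then $\Map_\calP(X,\bs)$ is a retract of the evaluation functor $\mathrm{ev}_P \simeq \Map_\calP(h(P),\bs)$, which preserves all colimits. Conversely, present $X \simeq |Z_\bullet|$ with each $Z_n$ representable as in (2). The hypothesis yields $\Map_\calP(X, X) \simeq |\Map_\calP(X, Z_\bullet)|$, and surjectivity of $\pi_0 \Map_\calP(X, Z_0) \to \pi_0 |\Map_\calP(X, Z_\bullet)|$ for a geometric realization of spaces produces a map $X \to Z_0$ splitting the canonical map $Z_0 \to X$, exhibiting $X$ as a retract of $Z_0$.

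The main obstacle, in my view, is the rewriting step in part (2): the tautological Yoneda presentation of $X$ gives a geometric realization of \emph{coproducts} of representables in $\Psh(\calP)$, and one must use the model condition essentially, via the identification $L(\coprod_i h(P_i)) \simeq h(\coprod_i P_i)$, to convert this into a genuine geometric realization of representables. Everything else is a formal consequence of the reflective adjunction together with \cref{prop:geometricrealizationsgroups}.
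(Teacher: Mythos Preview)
Your proof is correct and follows essentially the same approach as the paper: part (2) is exactly the content extracted from the proof of \cref{prop:modelsreflective}, and parts (1) and (3) are deduced from it in the same way. One small imprecision: in part (3) you write that $\mathrm{ev}_P$ ``preserves all colimits,'' which is true for $\mathrm{ev}_P\colon\Psh(\calP)\to\Gpd_\infty$ but not for $\mathrm{ev}_P\colon\Model_\calP\to\Gpd_\infty$; what you actually need and use is preservation of geometric realizations, which holds because these are computed in $\Psh(\calP)$ by \cref{prop:geometricrealizationsgroups}.
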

\begin{proof}
(1)~~ The category $\Model_\calP$ admits all small colimits by \cref{prop:modelsreflective}, see for instance \cite[Remark 5.2.7.5]{lurie2017highertopos}.

(2)~~ This follows from the proof of \cref{prop:modelsreflective}, which gives a way of writing any $X\in\Model_{\calP}$ as a geometric realization of representables. 

(3)~~ If $X$ is representable, then $\Map_{\Psh(\calP)}(X,\bs)$ preserves all small colimits. By \cite[Proposition 5.1.4.9]{lurie2017highertopos}, the same is then true if $X$ is a retract of a representable. Thus $\Map_{\calP}(X,\bs)$ preserves all geometric realizations, as these are computed in $\Psh(\calP)$. Conversely, if $\Map_{\calP}(X,\bs)$ preserves geometric realizations, then upon using (2) to write $X\simeq\colim_{n\in\Delta^\op} h(P_n)$, we find $\Map_{\calP}(X,X)\simeq\colim_{n\in\Delta^\op}\Map_{\calP}(X,h(P_n))$, so that the identity of $X$ factors through some representable.
\end{proof}

\cref{cor:modelsresolved} may be used to identify $\Model_\calP$ as a certain cocompletion of $\calP$, giving access to two related universal properties.

\begin{prop}\label{prop:lkan}
Let $\calD$ be a category admitting geometric realizations. Then restriction $\Fun(\Model_\calP,\calD)\rightarrow\Fun(\calP,\calD)$ along the Yoneda embedding induces an equivalence
\[
\Fun^{\Delta^\op}(\Model_\calP,\calD)\simeq\Fun(\calP,\calD)
\]
between the category of functors $\Model_\calP\rightarrow\calD$ preserving geometric realizations and the category of arbitrary functors $\calP\rightarrow\calD$. If $\calD$ also admits coproducts and thus all small colimits, then this further restricts to an equivalence
\[
\Fun^L(\Model_\calP,\calD)\simeq\Fun^{\amalg}(\calP,\calD)
\]
between the category of functors $\Model_\calP\rightarrow\calD$ preserving colimits and the category of functors $\calP\rightarrow\calD$ preserving coproducts. In either case, the inverse is given by left Kan extension along $h\colon \calP\rightarrow\Model_\calP$.
\end{prop}
\begin{proof}
\cref{cor:modelsresolved}(2) implies that $\Model_\calP$ is equivalent to the category $\Psh^{\{\Delta^\op\}}_\emptyset(\calP)$ constructed in \cite[Proposition 5.3.6.2]{lurie2017highertopos} (where we write $\Psh^{\calK}_{\calR}(\calC)$ for what would there be written $\calP^{\calK}_{\calR}(\calC)$). As $\calP\rightarrow\Model_\calP$ preserves coproducts, the category $\Model_\calP$ is further equivalent to $\Psh^{\{\Delta^\op\}\cup S}_{S}(\calP)$, where $S$ is the class of small sets. Thus the claims follow from \cite[Proposition 5.3.6.2]{lurie2017highertopos}.
\end{proof}

Now is a good time to introduce the following notation.

\begin{defn}
Given a functor $f\colon \calP\rightarrow\calD$ from a theory to a category admitting geometric realizations, we write
\[
f_!\colon \Model_\calP\rightarrow\calD
\]
for the left Kan extension of $f$ along the Yoneda embedding $h\colon \calP\rightarrow\Model_\calP$.
\tqed
\end{defn}

\begin{rmk}\label{rmk:func1}
Given $f\colon \calP\rightarrow\calD$, \cref{prop:lkan} implies that $f_!\colon \Model_\calP\rightarrow\calD$ is the unique functor which preserves geometric realizations and satisfies $f_!(h(P)) = f(P)$ for $P\in\calP$. A good special case is obtained when $\calD = \Model_{\calP'}$ for another theory $\calP'$ and $f$ restricts to a coproduct-preserving functor $f\colon \calP\rightarrow\calP'$. In this case $f_!$ preserves colimits, and restriction $f^\ast\colon \Psh(\calP')\rightarrow\Psh(\calP)$ itself restricts to a functor $f^*\colon \Model_{\calP'}\rightarrow\Model_{\calP}$ which is right adjoint to $f_!$.
\tqed
\end{rmk}

We now give the following recognition theorem for categories of the form $\Model_\calP$.

\begin{theorem}\label{thm:recognitiontheorem}
Let $\calD$ be a category admitting geometric realizations, and let $F\colon \Model_{\calP}\rightarrow\calD$ be a functor. Write $f = F \circ h \colon \calP\rightarrow\Model_{\calP}\rightarrow\calD$. Then
\begin{enumerate}
\item $F$ preserves geometric realizations if and only if it arises as the left Kan extension of $f$ along $h\colon\calP\rightarrow\Model_\calP$.
\item Suppose that $\calD$ admits coproducts. Then $F$ preserves colimits if and only if $F$ preserves geometric realizations and $f$ preserves coproducts.
\item If the following hold, then $F$ is fully faithful:
\begin{enumerate}
\item[(a)]$F$ preserves geometric realizations,
\item[(b)]$f$ is fully faithful,
\item[(c)]For all $P\in\calP$, the functor $\Map_\calD(f(P),\bs)$ preserves geometric realizations;
\end{enumerate}
\item If the following hold, then $F$ is an equivalence:
\begin{enumerate}
\item[(d)]$F$ preserves colimits,
\item[(e)]$F$ is fully faithful,
\item[(f)]The right adjoint to $F$, given by $G(D) = \Map_\calD(f(\bs),D)$, is conservative.
\end{enumerate}
\end{enumerate}
\end{theorem}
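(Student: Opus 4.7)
The plan is to treat the four parts in order, using \cref{cor:modelsresolved} throughout. The driving observation is that $\Model_\calP$ is the free cocompletion of $\calP$ under geometric realizations, so every $X \in \Model_\calP$ admits a resolution $X \simeq |h(P_\bullet)|$ with $P_\bullet \in \calP^{\Delta^\op}$ by \cref{cor:modelsresolved}(2). For (1), any $F$ preserving geometric realizations must satisfy $F(X) \simeq |f(P_\bullet)|$, so is determined by $f = F \circ h$ and agrees with the pointwise left Kan extension along $h$; conversely, I would construct the LKE via this formula, with well-definedness reflecting the universal property behind \cref{cor:modelsresolved}(2), namely that restriction $\Fun^{|\cdot|}(\Model_\calP, \calD) \to \Fun(\calP, \calD)$ is an equivalence when $\calD$ admits geometric realizations. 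For (2), the forward direction follows since $h$ preserves coproducts (by the model condition on representables), so $f$ inherits this when $F$ preserves colimits. For the converse, a standard decomposition result writes any small colimit as a geometric realization of coproducts, so preservation of both suffices for preservation of all small colimits; preservation of coproducts for $F \simeq f_!$ follows from that of $f$ after passing through representable resolutions, using that coproducts and geometric realizations commute in $\calD$.

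For (3), the main step is to show $\Map_\calP(X, Y) \to \Map_\calD(FX, FY)$ is an equivalence when $X = h(P)$ is representable. Writing $Y \simeq |h(Q_\bullet)|$ and using \cref{cor:modelsresolved}(3) (representables are trivially retracts of themselves, so $\Map_\calP(h(P), -)$ preserves geometric realizations),
\[
\Map_\calP(h(P), Y) \simeq |\Map_\calP(h(P), h(Q_\bullet))| \simeq |\Map_\calP(P, Q_\bullet)|
\]
by Yoneda. On the $\calD$ side, applying (a), (c), and (b) in succession,
\[
\Map_\calD(f(P), FY) \simeq \Map_\calD(f(P), |f(Q_\bullet)|) \simeq |\Map_\calD(f(P), f(Q_\bullet))| \simeq |\Map_\calP(P, Q_\bullet)|,
\]
and the comparison map between these is the evident natural equivalence. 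For general $X \simeq |h(P'_\bullet)|$, both $\Map_\calP(X, Y)$ and $\Map_\calD(FX, FY)$ convert the outer geometric realization into a cosimplicial totalization, and the representable case provides the required degreewise equivalence, which passes to totalizations.

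For (4), assertion (d) together with the definition of $G$ shows that $G(D) \in \Model_\calP$ (the model condition on $G(D)$ is precisely that $f$ preserves coproducts, which follows from (d) and $F \circ h = f$) and that $G$ is right adjoint to $F$ (verified on representables via Yoneda and extended through resolutions as in (3)). Fully faithfulness (e) makes the unit $\eta \colon 1 \to GF$ an equivalence, so the triangle identity $G\epsilon \cdot \eta_G = \mathrm{id}_G$ forces $G\epsilon$ to be an equivalence, and conservativity (f) of $G$ then yields that the counit $\epsilon$ is an equivalence. Hence $F$ is an equivalence. The main conceptual hurdle is part (1), since it encapsulates the universal property of $\Model_\calP$ driving the rest; once it is in hand, (2) and (3) reduce to bookkeeping with resolutions, and (4) is purely formal.
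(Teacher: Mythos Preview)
Your proof is correct and follows essentially the same approach as the paper. The paper's own proof is terser for parts (1) and (2), simply citing \cref{cor:modelsresolved} and the general theory of cocompletions from \cite[Section 5.3.6]{lurie2017highertopos}, whereas you unpack the argument via explicit resolutions and the decomposition of colimits into geometric realizations of coproducts; your treatment of (3) and (4) matches the paper's almost exactly, including the two-step reduction in (3) (first to representable $X$, then to representable $Y$) and the formal adjunction argument in (4).
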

\begin{proof}
(1,2)~~ These follow from \cref{prop:lkan}.

(3)~~ Suppose given $F\colon\Model_{\calP}\rightarrow\calD$ satisfying conditions (a)-(c). We must show that
\[
\Map_{\calP}(X,Y)\simeq\Map_{\calD}(F(X),F(Y)).
\]
As $X$ may be written as a geometric realization of representable functors, by (a) we reduce to showing that
\[
\Map_{\calP}(h(P),Y)\simeq\Map_{\calD}(f(P),F(Y))
\]
for $P\in\calP$. As $Y$ may also be written as a geometric realization of representable functors, by (a) and (c) we reduce to showing that
\[
\Map_{\calP}(h(P),h(P'))\simeq\Map_\calD(f(P),f(P'))
\]
for $P,P'\in\calP$, which is a consequence of (b).

(4)~~ Condition (d) ensures that the functor $G$ described in (f) is right adjoint to $F$, and the assertion then follows from the general fact that an adjunction $F\dashv G$ with $F$ fully faithful and $G$ conservative is an equivalence.
\end{proof}

Suppose now that $\calP$ is $\kappa$-bounded, choose a subcategory $\calP_0\subset\calP$ realizing this, and let $\Psh^{\Pi_\kappa}(\calP_0)\subset\Psh(\calP_0)$ be the full subcategory consisting of presheaves which preserve $\kappa$-small products.

\begin{lemma}\label{lem:boundedmodels}
\hphantom{blank}
\begin{enumerate}
\item The subcategory $\Psh^{\Pi_\kappa}(\calP_0)\subset\Psh(\calP_0)$ is closed under geometric realizations and $\kappa$-filtered colimits;
\item The category $\Psh^{\Pi_\kappa}(\calP_0)$ consists of those objects of $\Psh(\calP_0)$ local with respect to the set of maps of the form $\coprod_{i\in F}h(P_i)\rightarrow h(\coprod_{i\in I} P_i)$ where $\{P_i:i\in F\}$ is a set of objects of $\calP_0$ with $|F|<\kappa$.
\end{enumerate}
In particular, $\Psh^{\Pi_\kappa}(\calP_0)$ is a $\kappa$-compactly generated presentable category.
\end{lemma}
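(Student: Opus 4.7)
The plan is to handle (1) directly by exploiting the structure already established, deduce (2) from the Yoneda lemma, and then package everything using the standard theory of accessible localizations.

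For the $\kappa$-filtered colimit assertion in (1), I would simply appeal to the fact that $\kappa$-filtered colimits in $\Gpd_\infty$ commute with $\kappa$-small products, so if each presheaf in a $\kappa$-filtered diagram sends $\kappa$-small coproducts in $\calP_0$ to products, so does the colimit (computed pointwise). The closure under geometric realizations is the substantive part, and for this I would follow the blueprint of \cref{prop:geometricrealizationsgroups}: since every $P\in\calP_0$ is a coherd, evaluation of any $X\in\Psh^{\Pi_\kappa}(\calP_0)$ at $P$ lands in herds, reducing the claim to showing that geometric realizations of simplicial herds preserve $\kappa$-small (indeed all small) products. This last fact is exactly the content of the bisimplicial-diagonal argument appearing in \cref{prop:geometricrealizationsgroups}, where \cref{lem:herdfacts}(1) guarantees that all simplicial herds are fibrant so that levelwise products model homotopy products.

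For (2), let $S$ be the set of maps $\coprod_{i\in F}h(P_i)\rightarrow h(\coprod_{i\in F}P_i)$ indexed by families $\{P_i:i\in F\}\subset\calP_0$ with $|F|<\kappa$; note that $\coprod_{i\in F}P_i$ lies in $\calP_0$ by the boundedness assumption. By the Yoneda lemma, mapping such a source and target into a presheaf $X$ yields the canonical comparison $X(\coprod_{i\in F}P_i)\rightarrow\prod_{i\in F}X(P_i)$, so $X$ is $S$-local if and only if $X$ preserves $\kappa$-small products.

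For the final assertion, I would note that $\Psh(\calP_0)$ is $\kappa$-compactly generated (in fact compactly generated) with each representable $\kappa$-compact, and that $\kappa$-small coproducts of $\kappa$-compact objects are again $\kappa$-compact, so the maps in $S$ are maps between $\kappa$-compact generators. Part (2) identifies $\Psh^{\Pi_\kappa}(\calP_0)$ with the $S$-local objects, and part (1) (together with the observation that $S$ is small) identifies this as a $\kappa$-accessible reflective subcategory whose reflector is given by iteratively forcing the $S$-locality conditions through $\kappa$-filtered colimits and geometric realizations. By the standard criterion for accessible localizations (cf.\ \cite[Proposition 5.5.4.15]{lurie2017highertopos} and its analogues for $\kappa$-accessible localizations), this makes $\Psh^{\Pi_\kappa}(\calP_0)$ a $\kappa$-compactly generated presentable category. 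The only potential subtlety I expect is verifying that the reflector exists and preserves $\kappa$-compact objects, which reduces to the closure properties in (1); once that is in hand, the rest is bookkeeping.
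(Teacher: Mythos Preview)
Your proposal is correct and essentially unpacks what the paper's one-line proof (``immediate from the definition'') is implicitly invoking: the herd argument from \cref{prop:geometricrealizationsgroups} for closure under geometric realizations, the standard commutation of $\kappa$-filtered colimits with $\kappa$-small products, and the Yoneda lemma for (2). The only place where you are slightly more elaborate than necessary is the final paragraph on $\kappa$-compact generation---once (1) and (2) are in hand, presentability follows from the small-object argument applied to $S$, and $\kappa$-compact generation follows because (1) says the inclusion preserves $\kappa$-filtered colimits; you need not separately argue that the reflector preserves $\kappa$-compact objects.
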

\begin{proof}
Claim (1) follows from the same argument as \cref{prop:geometricrealizationsgroups}, using the fact that $\kappa$-filtered colimits preserve $\kappa$-small products. Claim (2) is just a reformulation of the definition of $\Psh^{\Pi_\kappa}(\calP_0)$. Presentability then follows from \cite[Proposition 5.5.4.15]{lurie2017highertopos}.
\end{proof}

\begin{prop}\label{prop:modelspresentable}
Restriction $R\colon \Model_{\calP}\rightarrow\Psh^{\Pi_\kappa}(\calP_0)$ is an equivalence.
\end{prop}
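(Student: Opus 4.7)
The plan is to construct an inverse $L$ to $R$ and show that both the unit and counit of the resulting adjunction $L \dashv R$ are equivalences. A natural candidate is the composite
\[
L \colon \Psh^{\Pi_\kappa}(\calP_0) \hookrightarrow \Psh(\calP_0) \xrightarrow{(\iota_0)_!} \Psh(\calP) \to \Model_\calP,
\]
where $\iota_0 \colon \calP_0 \hookrightarrow \calP$ is the inclusion and the last functor is the reflection of \cref{prop:modelsreflective}. Then $L$ is cocontinuous, and since $L(h_0(P_0)) \simeq h(P_0)$ (as $(\iota_0)_!$ sends representables to representables), Yoneda gives the adjunction $L \dashv R$ on representables, which extends to all of $\Psh^{\Pi_\kappa}(\calP_0)$ by cocontinuity of $L$.

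To verify the unit $Z \to RL(Z)$ is an equivalence, I would first treat the case $Z = \coprod_I h_0(P_i)$ with $P_i \in \calP_0$, where $L(Z) \simeq h(\coprod_I P_i)$ since the Yoneda-like embedding $\calP \to \Model_\calP$ preserves coproducts. The required identification
\[
R(h(\coprod_I P_i))(P_0) \simeq \Map_\calP(P_0, \coprod_I P_i) \simeq \colim_{\substack{F \subset I \\ |F|<\kappa}} \prod_{i \in F} \Map_\calP(P_0, P_i) \simeq Z(P_0)
\]
for $P_0 \in \calP_0$ is then precisely condition (3) of $\kappa$-boundedness, where the last equivalence uses that coproducts in $\Psh^{\Pi_\kappa}(\calP_0)$ are $\kappa$-filtered colimits of $\kappa$-small coproducts. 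To pass to general $Z$, I would use that every object of $\Psh^{\Pi_\kappa}(\calP_0)$ is a geometric realization of coproducts of representables (an argument parallel to the proof of \cref{prop:modelsreflective}), combined with the fact that $L$ and $R$ both preserve geometric realizations --- the former as a left adjoint, the latter because geometric realizations in both source and target are pointwise. The counit $LR(X) \to X$ is handled dually: by \cref{cor:modelsresolved}(2), $X$ is a geometric realization of $h(P)$ for $P \in \calP$, each of which is, by condition (2) of $\kappa$-boundedness, a retract of $\coprod_I h(P_i)$ with $P_i \in \calP_0$, reducing to the previous calculation.

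The main obstacle is reconciling the two coproduct structures at play: in $\Model_\calP$, coproducts of representables are representables of coproducts in $\calP$, while in $\Psh^{\Pi_\kappa}(\calP_0)$, the infinite coproduct $\coprod_I h_0(P_i)$ is obtained via reflection from $\Psh(\calP_0)$ and decomposes as $\colim_{|F|<\kappa} h_0(\coprod_F P_i)$. Condition (3) of $\kappa$-boundedness is exactly the compatibility needed for $R$ to exchange these two presentations, and its careful deployment is the heart of the proof.
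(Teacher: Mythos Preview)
Your approach is correct in outline but takes a different route from the paper. The paper applies \cref{thm:recognitiontheorem} directly to $R$: it verifies that $R$ preserves geometric realizations (both sides compute them pointwise), that the composite $r = R\circ h\colon\calP\to\Psh^{\Pi_\kappa}(\calP_0)$ preserves coproducts (this is exactly the computation you isolate, using condition~(5) of $\kappa$-boundedness), and that the right adjoint is conservative. The recognition theorem then packages fully-faithfulness and essential surjectivity in one stroke. Your proof instead builds the inverse $L$ explicitly and checks unit and counit by hand, reducing via geometric realizations to the same key computation on coproducts of representables. Your route is more elementary and self-contained; the paper's is shorter because it has already invested in the abstract criterion.

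One correction: in your displayed chain of equivalences you write
\[
\Map_\calP\Bigl(P_0,\coprod_I P_i\Bigr)\simeq\colim_{\substack{F\subset I\\|F|<\kappa}}\prod_{i\in F}\Map_\calP(P_0,P_i),
\]
but the $\prod$ is wrong---condition~(5) of $\kappa$-boundedness gives $\colim_{|F|<\kappa}\Map_\calP(P_0,\coprod_{i\in F}P_i)$, and there is no reason for $\Map_\calP(P_0,\coprod_F P_i)$ to split as a product. Fortunately your target $Z(P_0)$ is also $\colim_{|F|<\kappa}\Map_\calP(P_0,\coprod_F P_i)$ (since the $\kappa$-small coproduct $\coprod_F h_0(P_i)$ in $\Psh^{\Pi_\kappa}(\calP_0)$ is represented by $\coprod_F P_i\in\calP_0$), so the conclusion stands once the middle term is corrected.
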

\begin{proof}
We verify the conditions of \cref{thm:recognitiontheorem}. First, as geometric realizations are computed pointwise in either category, they are preserved by $R$. Next, by our smallness assumption on the objects of $\calP_0$, we find that for any collection of objects $\{P_i : i \in I\}$ in $\calP_0$ there is an equivalence
\[
R(h(\coprod_{i\in I}P_i)) \simeq \colim_{\substack{F\subset I\\ |F|<\kappa}}h(\coprod_{i\in F}P_i) \simeq \colim_{\substack{F\subset I \\ |F|<\kappa}}\coprod_{i\in F}h(P_i) \simeq \coprod_{i\in F}h(P_i)
\]
in $Psh^{\Pi_\kappa}(\calP_0)$. As all objects of $\calP$ may be written as a small coproduct of objects of $\calP_0\subset\calP$, it follows that $\calP\rightarrow\Psh^{\Pi_\kappa}(\calP_0)$ preserves coproducts, and so $R$ preserves colimits. In particular, if $P\in \calP$ is written as a small coproduct $P = \coprod_{i\in I}P_i$ with $P_i\in \calP_0$, then
\[
\Map_{\Psh^{\Pi_\kappa}(\calP_0)}(R(h(P)),\bs) \simeq \prod_{i\in I} \Map_{\Psh^{\Pi_\kappa}(\calP_0)}(h(P_i),\bs)
\]
preserves geometric realizations, cf.\ \cref{prop:geometricrealizationsgroups}. The right adjoint $G$ to $R$ satisfies $G(X)(P) = X(P)$ when $P\in \calP_0\subset \calP$, and thus is conservative. So the conditions of \cref{thm:recognitiontheorem} are satisfied and $R$ is an equivalence as claimed.
\end{proof}

\begin{cor}
If $\calP$ is bounded, then $\Model_\calP$ is presentable. In particular, if $\calP$ is bounded then $\Model_\calP$ admits all small limits and colimits.
\end{cor}
\begin{proof}
Combine \cref{prop:modelspresentable} and \cref{lem:boundedmodels}.
\end{proof}

\begin{rmk}
Everything in this subsection has a fully $1$-categorical analogue, where all categories are taken to be $1$-categories, $\Model_{\calP}$ is replaced by $\Model_\calP^\heart$, and geometric realizations reduce to reflexive coequalizers. 
\tqed
\end{rmk}

\begin{rmk}\label{rmk:bounded}
We assume for the rest of this paper that all theories are bounded. In order to avoid cumbersome notation, we will adopt the following convention: if $\calP$ is a $\kappa$-bounded theory, choose $\calP_0\subset\calP$ realizing this; now, $\Psh(\calP)$ refers to $\Psh(\calP_0)$, $\Model_{\calP}$ refers to $\Psh^{\Pi_\kappa}(\calP_0)$, and so forth. In case one should meet a theory which is not bounded, we point out that an arbitrary theory $\calP$ is of the form $\calP = \calP_0'$ where $\calP'$ is a bounded theory with respect to a larger universe.
\tqed
\end{rmk}

\subsection{Rigidification and left-derived functors}\label{ssec:simplicial}

Throughout this subsection, all of our theories are assumed to be discrete theories. In this subsection, we briefly review the notion of left-derived functors available for categories of the form $\Model_\calP^\heart$. This story is classical, and goes back to \cite{quillen1967homotopical} and \cite{doldpuppe1961homologie}; see also \cite{tierneyvogel1969simplicial}. To facilitate comparisons with the classical theory, we begin with an identification of a model for $\Model_{\calP}$.

For a category $\calC$, write $\s\calC = \Fun(\Delta^\op,\calC)$ for the category of simplicial objects in $\calC$.

\begin{lemma}[{\cite[Section II.4]{quillen1967homotopical}}]
There is a simplicial model structure on $\s\Model_\calP^\heart$ in which a map $f\colon X\rightarrow Y$ is a weak equivalence, resp., fibration, if and only if for all $P\in\calP$ the map $f(P)\colon X(P)\rightarrow Y(P)$ is a weak equivalence, resp., fibration.\qed
\end{lemma}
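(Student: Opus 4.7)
The plan is to right-transfer the product Kan--Quillen model structure on $\s\Set^{\calP_0}$ along the free/forgetful adjunction
\[
F : \s\Set^{\calP_0} \rightleftarrows \s\Model_\calP^\heart : U,
\]
with $U(X) = (X(P))_{P \in \calP_0}$ and $F$ the left Kan extension of $P \mapsto h(P)$. With transfer in hand, the characterization of fibrations and weak equivalences holds on the nose, and the simplicial enrichment is the standard one carried by simplicial objects in a bicomplete category.

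First I would dispatch the prerequisites. The category $\s\Model_\calP^\heart$ is bicomplete, since the $1$-categorical analogue of \cref{prop:modelspresentable} shows that $\Model_\calP^\heart$ is presentable. The images under $F$ of the standard generators $\{ h(P)\otimes \partial\Delta^n \hookrightarrow h(P)\otimes \Delta^n\}$ and $\{h(P)\otimes \Lambda^n_k\hookrightarrow h(P)\otimes \Delta^n\}$ in $\s\Model_\calP^\heart$ then permit the small object argument for the same reason.

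To complete the transfer, by Quillen's path-object criterion it suffices to produce, functorially in $X$, a factorization of the diagonal
\[
X \xrightarrow{\sim} X^{\Delta^1} \twoheadrightarrow X \times X
\]
through a weak equivalence and a fibration in the putative model structure. Take $X^{\Delta^1}$ to be the cotensor formed in $\s\Model_\calP^\heart$. Since $U$ is a right adjoint it preserves cotensors, so evaluation at $P \in \calP$ reduces the question to the corresponding statement for $X(P)^{\Delta^1}$ in simplicial sets. The map $X(P) \to X(P)^{\Delta^1}$ is always a weak equivalence, and $X(P)^{\Delta^1} \to X(P) \times X(P)$ is a Kan fibration as soon as $X(P)$ is a Kan complex.

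This last point is where the Mal'cev hypothesis is essential, and is the main obstacle. Since $P$ is a coherd in $\calP$ and each $X_n$ is a model, the simplicial set $X(P)$ is canonically a simplicial herd. Applying \cref{lem:herdfacts}(1) to the terminal map of $X(P)$ exhibits $X(P) \to *$ as a Kan fibration, so $X(P)$ is Kan. This is essentially Quillen's original observation in \cite[Section II.4]{quillen1967homotopical}; with the path objects so constructed, the model category axioms and the SM7 enrichment axiom follow from standard formal arguments.
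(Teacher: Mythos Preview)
The paper does not give a proof of this lemma: the statement ends with a bare \texttt{\textbackslash qed} and a citation to Quillen \cite[Section II.4]{quillen1967homotopical}. Your proposal is a correct reconstruction of that argument, phrased in the paper's Mal'cev/herd language: right-transfer the projective model structure on $\s\Set^{\calP_0}$, and discharge the acyclicity condition via Quillen's path-object criterion, using \cref{lem:herdfacts}(1) to see that every $X(P)$ is Kan (so every object is fibrant and $X^{\Delta^1}$ is a genuine path object). This is exactly the mechanism Quillen uses in \cite[II.4]{quillen1967homotopical}, and it is the reason the paper imposes the Mal'cev hypothesis in the first place (cf.\ the discussion preceding \cref{def:malc}). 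There is nothing to correct.
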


\begin{prop}\label{thm:rigidification}
The ($\infty$-categorical) colimit functor 
\[
C\colon\s\Model_\calP^\heart \subset\Fun(\Delta^\op,\Model_{\calP})\rightarrow\Model_{\calP}
\]
realizes $\Model_{\calP}$ as the underlying $\infty$-category of $\s\Model_\calP^\heart$.
\end{prop}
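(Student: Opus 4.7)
The plan is to apply the recognition theorem \cref{thm:recognitiontheorem} to a functor $G\colon\Model_\calP\to\calM$ going opposite to $C$, where $\calM$ denotes the underlying $\infty$-category of $\s\Model_\calP^\heart$, and then identify $G$ as the inverse of the functor $\tilde C\colon\calM\to\Model_\calP$ that $C$ induces. Here $\tilde C$ exists because, by \cref{prop:geometricrealizationsgroups}, geometric realizations in $\Model_\calP$ are computed pointwise on $\calP$, so $C(X)(P)\simeq |X(P)|$ as $\infty$-groupoids, and $C$ therefore sends levelwise weak equivalences to equivalences. To construct $G$, let $g\colon\calP\to\calM$ be the composite
\[
\calP\xrightarrow{h}\Model_\calP^\heart\xrightarrow{c}\s\Model_\calP^\heart\to\calM,
\]
with $c$ the constant simplicial object functor, and put $G = g_!$ by left Kan extension along $h\colon\calP\to\Model_\calP$, so that $G$ preserves geometric realizations by \cref{thm:recognitiontheorem}(1).

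The bulk of the work is then to verify the hypotheses of \cref{thm:recognitiontheorem}(3)--(4). The key observation is that each $c h(P)$ is cofibrant in $\s\Model_\calP^\heart$: the model structure is projective with generating cofibrations $(\partial\Delta^n\to\Delta^n)\otimes h(Q)$ for $Q\in\calP$, and $c h(P) = h(P)\otimes\Delta^0$ is built by attaching a single $0$-cell to $\emptyset$. This cofibrancy yields three consequences. First, the simplicial mapping space $\Map_{\s\Model_\calP^\heart}(c h(P), c h(Q))$ directly computes $\Map_\calM(g(P),g(Q))$ and equals the discrete simplicial set $\Map_\calP(P,Q)$, so $g$ is fully faithful. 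Second, coproducts of the $c h(P_i)$ in $\s\Model_\calP^\heart$ present homotopy coproducts in $\calM$, so $g$ preserves coproducts, and hence $G$ preserves all colimits by \cref{thm:recognitiontheorem}(2). Third, the evaluation functor $X\mapsto X(P)$ from $\s\Model_\calP^\heart$ to $\s\Set$ is a left Quillen functor (cofibrations and weak equivalences both being defined levelwise), whose derived functor on $\calM$ preserves geometric realizations and, by cofibrancy of $c h(P)$, coincides with $\Map_\calM(g(P),-)$. These three together give conditions (a)--(c) of \cref{thm:recognitiontheorem}(3), so $G$ is fully faithful. For \cref{thm:recognitiontheorem}(4)(f), the right adjoint to $G$ sends $M\in\calM$ to the model $P\mapsto M(P)$, whose conservativity is exactly the defining property of levelwise weak equivalences in $\s\Model_\calP^\heart$.

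Finally I would identify $\tilde C$ and $G$ as inverse equivalences: both preserve colimits, and there is a canonical equivalence $\tilde C(g(P)) = |c h(P)|\simeq h(P)$, so $\tilde C\circ G$ agrees with the identity on the generators $h(P)$ and therefore on all of $\Model_\calP$; combined with $G$ being an equivalence, this forces $\tilde C$ to be the inverse. The main technical obstacle I anticipate is carefully tracking how the simplicial enrichment of $\s\Model_\calP^\heart$ descends to mapping spaces in $\calM$, in particular identifying $\Map_\calM(c h(P),-)$ with the derived evaluation at $P$; this rests on the cofibrancy observations above together with the standard model-categorical machinery in \cite[Section II.4]{quillen1967homotopical}.
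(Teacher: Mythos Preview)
Your approach is correct in outline but differs from the paper's, and it contains one factual slip that you should fix.

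The paper argues directly with $C$: it checks that $C$ inverts weak equivalences, is essentially surjective, and is fully faithful on cofibrant source, reducing each of these to three elementary observations---that $X(P)\simeq\ul{\Map}(ch(P),X)$ computes the derived mapping space, that homotopy geometric realizations in $\s\Model_\calP^\heart$ are modeled by diagonals, and that $\ul{\Map}(ch(P),X)\simeq\hocolim_n X_n(P)$. Your route instead builds an inverse $G=g_!$ via \cref{thm:recognitiontheorem} and then identifies $\tilde C$ as its inverse. This is a legitimate and thematically pleasant alternative: it exercises the recognition theorem and makes the role of $\calP$ as a generating subcategory explicit. The paper's route is shorter because it avoids constructing $G$ at all, while yours makes the universal property of $\Model_\calP$ do the work.

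The slip is in your justification of condition (c). Evaluation at $P$ is \emph{not} left Quillen: cofibrations in Quillen's model structure on $\s\Model_\calP^\heart$ are \emph{not} defined levelwise (only weak equivalences and fibrations are), and $\mathrm{ev}_P$ does not preserve colimits in $\Model_\calP^\heart$, so it has no right adjoint. What is true is that $\mathrm{ev}_P$ is \emph{right} Quillen, and more to the point that $\Map_\calM(g(P),-)\simeq(-)(P)$ preserves geometric realizations because homotopy geometric realizations of bisimplicial objects in $\s\Model_\calP^\heart$ are computed by diagonals, and diagonals commute with evaluation at $P$. This is exactly the paper's observation (b), so you end up leaning on the same underlying fact; just replace the ``left Quillen'' claim with the diagonal argument and your proof goes through.
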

\begin{proof}
Given a simplicial set $X$, write $|X| = \colim_{n\in\Delta^\op}X_n \in \Gpd_\infty$ for its underlying $\infty$-groupoid. Let $W$ denote the class of weak equivalences in $\s\Model_\calP^\heart$. Then to show that $C$ induces an equivalence $\s\Model_\calP^\heart[W^{-1}]\simeq \Model_\calP$, we must verify the following:
\begin{enumerate}
\item $C$ inverts $W$;
\item $C$ is essentially surjective;
\item For $X,Y\in\s\Model_\calP^\heart$ with $X$ cofibrant, $C$ gives an equivalence $|\ul{\Map}_{\s\Model_\calP^\heart}(X,Y)|\simeq\Map_{\calP}(C X,C Y)$, where $\ul{\Map}_{\s\Model_\calP^\heart}$ denotes the simplicial enrichment of $\s\Model_\calP^\heart$.
\end{enumerate}
These are verified as follows.

(1)~~ Suppose that $f\colon X\rightarrow Y$ is a weak equivalence in $\s\Model_\calP^\heart$. This means that $f$ induces an equivalence $|X(P)|\rightarrow |Y(P)|$ for each $P\in \calP$. As $|X(P)| = C(X)(P)$, it follows that $C(X)\rightarrow C(Y)$ is an equivalence.

(2)~~ The restriction of $C$ to $\s\calP\subset\s\Model_\calP^\heart$ is already essentially surjective by \cref{cor:modelsresolved} .

(3)~~ First note that if $P\in \calP$ and $Y\in \s\Model_\calP^\heart$, then
\[
|\ul{\Map}_{\s\Model_\calP^\heart}(h(P),Y)| \simeq |Y(P)| \simeq C(Y)(P) \simeq \Map_{\calP}(h(P),C(Y)).
\]
In general, if $X \in \s\Model_\calP^\heart$ is cofibrant, then $X_n \in \calP$ for each $n$. As homotopy geometric realizations of simplicial objects in $\s\Model_\calP^\heart$ are modeled by diagonals, we have $X\simeq\hocolim_{n\in\Delta^\op}X_n$, and thus
\begin{align*}
|\ul{\Map}_{\s\Model_\calP^\heart}(X,Y)|&\simeq \lim_{n\in\Delta}|\ul{\Map}_{\s\Model_\calP^\heart}(X_n,Y)|\\
&\simeq \lim_{n\in\Delta}\Map_\calP(X_n,C(Y))\simeq \Map_\calP(C(X),C(Y))
\end{align*}
as claimed.
\end{proof}

We now review the relevant notion of left-derived functor. Let $\calP$ and $\calP'$ be discrete theories, and fix an arbitrary functor $\ol{f}\colon \calP'\rightarrow\Model_\calP^\heart$. By left Kan extension, we obtain a functor $\ol{F}\colon\Model_{\calP'}^\heart\rightarrow\Model_\calP^\heart$ preserving reflexive coequalizers. By left Kan extension of the composite $f\colon\calP'\rightarrow\Model_\calP^\heart\subset\Model_{\calP}$, we obtain a functor $f_!\colon \Model_{\calP'}\rightarrow\Model_\calP$ preserving geometric realizations such that $\pi_0 f_! X = \ol{F} X$ for any $X\in\Model_{\calP'}^\heart$.

\begin{prop}\label{prop:leftderivedfunctors}
Fix notation as above. Fix $X'\in\Model_{\calP'}$, and choose some $X_\bullet'\in\s\Model_{\calP'}^\heart$ modeling $X$. Choose a simplicial object $P'_\bullet$ of $\calP'$ together with a weak equivalence $h(P_\bullet')\rightarrow X_\bullet'$. Then $f_! X'$ is modeled by $f P'_\bullet$.
\end{prop}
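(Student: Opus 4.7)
The plan is to combine the rigidification equivalence of \cref{thm:rigidification} with the defining property of $f_!$ as a left Kan extension. First, observe that by \cref{thm:rigidification}, the underlying object of $X'_\bullet$ in $\Model_{\calP'}$ is the geometric realization $\colim_{n\in\Delta^\op} X'_n$, and the weak equivalence $h(P'_\bullet) \to X'_\bullet$ in $\s\Model_{\calP'}^\heart$ descends to an equivalence
\[
\colim_{n\in\Delta^\op} h(P'_n) \xrightarrow{\,\simeq\,} X'
\]
in $\Model_{\calP'}$.

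Next, $f_!\colon\Model_{\calP'} \to \Model_\calP$ is obtained by left Kan extension of $f\colon \calP'\to\Model_\calP$ along the Yoneda embedding $h\colon \calP' \to \Model_{\calP'}$, so it is the unique colimit-preserving extension of $f$; in particular it preserves geometric realizations, and $f_! h(P'_n) \simeq f(P'_n)$. Applying $f_!$ to the equivalence above therefore gives
\[
f_! X' \;\simeq\; \colim_{n\in\Delta^\op} f_! h(P'_n) \;\simeq\; \colim_{n\in\Delta^\op} f(P'_n)
\]
in $\Model_\calP$. Since each $f(P'_n)$ lives in $\Model_\calP^\heart$, the right-hand side is, by a second application of \cref{thm:rigidification}, precisely the image of the simplicial object $f P'_\bullet \in \s\Model_\calP^\heart$ under the rigidification functor $C\colon \s\Model_\calP^\heart \to \Model_\calP$. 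This is exactly what it means for $f P'_\bullet$ to model $f_! X'$.

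The main obstacle, such as it is, lies in justifying that the weak equivalence $h(P'_\bullet)\to X'_\bullet$ of simplicial objects in $\Model_{\calP'}^\heart$ yields an equivalence in the underlying $\infty$-category $\Model_{\calP'}$; this is immediate from \cref{thm:rigidification}, which identifies $\Model_{\calP'}$ with the localization of $\s\Model_{\calP'}^\heart$ at the weak equivalences. A minor subsidiary point is that we are implicitly using existence of such a simplicial resolution $h(P'_\bullet)\to X'_\bullet$ (as asserted in the statement), which in turn follows from the cofibrant replacement in Quillen's simplicial model structure: every simplicial object in $\Model_{\calP'}^\heart$ admits a weak equivalence from a simplicial object whose levels are coproducts of representables, and one may then diagonally reindex to obtain one whose levels are themselves representable by objects of $\calP'$ (after enlarging $\calP'$ under retracts if necessary). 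Beyond this, every step in the argument is formal, being a direct manipulation of colimits and left Kan extensions.
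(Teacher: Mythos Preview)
Your proof is correct and follows essentially the same route as the paper's: use \cref{thm:rigidification} to identify $X'$ with the geometric realization of $h(P'_\bullet)$, then apply the geometric-realization-preserving functor $f_!$ termwise. One small imprecision: $f_!$ is only guaranteed to preserve geometric realizations, not all colimits (since $\ol{f}$ is arbitrary and need not preserve coproducts), but this is all you actually use; the closing digression on existence of resolutions is also unnecessary, as the statement assumes one is given.
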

\begin{proof}
By \cref{thm:rigidification}, to say that $X_\bullet'$ models $X$ is to say we have chosen an identification $\colim_{n\in\Delta^\op}X_n'= X'$ in $\Model_{\calP'}$, and to say $h(P'_\bullet)\rightarrow X_\bullet'$ is a weak equivalence is to say it induces a weak equivalence $\colim_{n\in\Delta^\op}h(P_n')\simeq \colim_{n\in\Delta^\op}X_n'= X'$ in $\Model_{\calP'}$. By definition of $f_!$, we learn
\[
f_! X' \simeq f_! \colim_{n\in\Delta^\op}h(P_n')\simeq\colim_{n\in\Delta^\op}f P_n',
\]
and the result follows as $\colim_{n\in\Delta^\op}f P_n'$ is modeled by $f P_\bullet'$.
\end{proof}

This justifies writing $\bbL \ol{F} = f_! \colon \Model_{\calP'}\rightarrow\Model_{\calP}$ and calling it the total left-derived functor of $\ol{F}$, for by \cref{prop:leftderivedfunctors} this is equivalent to any other correct definition of $\bbL \ol{F}$.

\section{Loop theories}\label{sec:looptheories}

This section covers some generalities of loop theories. The basic example is the category $\calP = \Mod_R^\free$ of free $R$-modules for some $\bbA_\infty$-ring $R$; here we allow ``free $R$-module'' to include suspensions and desuspensions of $R$. If $M$ is an $R$-module, then $h(M)\in\Model_\calP$ lives in the full subcategory $\Model_\calP^\Omega$ consisting of those $X$ with the additional property that $X(\Sigma F)\simeq \Omega X(F)$, and this turns out be a full characterization, i.e.\ there is an equivalence $\Mod_R\simeq\Model_\calP^\Omega$; a particular case of this appears in \cite[Proposition 4.2.5]{hopkinslurie2017brauer}. Loop theories axiomatize the general situation.

After giving some definitions and fixing some notation in \cref{ssec:looptheories}, in \cref{ssec:spiral} we verify that the spiral sequence, as interpreted in \cite{pstragowski2023moduli}, holds equally well in our setting; this is the main tool for relating $\Model_\calP^\Omega$ to $\Model_{\h\calP}$. In \cref{ssec:constructingexamples}, we record some tools for writing categories $\calM$ as $\Model_\calP^\Omega$ for some $\calP\subset\calM$, and for describing the categories $\Model_{\h\calP}^\heart$.

\subsection{Definitions and notation}\label{ssec:looptheories}
Fix a theory $\calP$.

\begin{defn}\label{def:looptheories}
The theory $\calP$ is:
\begin{enumerate}
\item A \textit{loop theory} if for all finite wedges of spheres $F$ and all $P\in\calP$, the tensor $F\otimes P = \colim_{x\in F}P$ exists in $\calP$;
\item A \textit{pointed loop theory} if moreover $\calP$ is pointed and admits suspensions;
\item An \textit{additive loop theory} if it is pointed and additive;
\item A \textit{stable loop theory} if it is pointed and $\Sigma\colon\calP\rightarrow\calP$ is an equivalence.
\tqed
\end{enumerate}
\end{defn}

\begin{rmk}
Each case of \cref{def:looptheories} refines the previous; the final implication is not obvious, but follows from \cref{lem:stableisstable}.
\tqed
\end{rmk}

Suppose now that $\calP$ is a loop theory, and define the subcategory $\Model_\calP^\Omega\subset\Model_{\calP}$ of \textit{loop models} to consist of those $X$ satisfying the additional condition that $X(F\otimes P)\simeq X(P)^{F}$ for all $P\in\calP$ and finite wedge of spheres $F$. In other words, $\Model_\calP^\Omega$ is the full subcategory of $\Model_{\calP}$ local with respect to $F\otimes h(P)\rightarrow h(F\otimes P)$ for all $P\in\calP$ and finite wedge of spheres $F$. Because we are assuming that $\calP$ is bounded, as laid out in \cref{rmk:bounded}, we obtain the following.

\begin{lemma}\label{lem:looploc}
The category $\Model_\calP^\Omega$ is an accessible localization of $\Model_{\calP}$. In particular, it is presentable.
\qed
\end{lemma}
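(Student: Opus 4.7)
The plan is to exhibit $\Model_\calP^\Omega$ as the localization of $\Model_\calP$ at a small set of morphisms, and then invoke the presentability of $\Model_\calP$ (\cref{prop:modelspresentable}) together with standard facts about accessible localizations of presentable categories.

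First I would identify the defining condition as a locality condition. Writing $F \otimes h(P) = \colim_{x \in F} h(P)$ for the tensor in $\Model_\calP$, the universal property of this colimit produces a canonical comparison map $F \otimes h(P) \to h(F \otimes P)$ in $\Model_\calP$, and by the Yoneda lemma the condition $X(F \otimes P) \simeq X(P)^F$ is exactly that $X$ is local with respect to this map. Hence $\Model_\calP^\Omega$ is the full subcategory of $\Model_\calP$ of objects local with respect to the class
\[
S = \{\, F \otimes h(P) \to h(F \otimes P) \,:\, P \in \calP,\, F \text{ a finite wedge of spheres}\,\}.
\]

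Next I would reduce to a small subclass. Fixing $\calP_0 \subset \calP$ witnessing boundedness as in \cref{rmk:bounded}, let $S_0 \subset S$ be the subcollection in which $P$ ranges over $\calP_0$ and $F$ over a set of representatives of finite wedges of spheres; this collection is essentially small. The heart of the argument is showing that $S_0$-locality implies $S$-locality. For this I would use three preservation facts: every $P \in \calP$ is a retract of some $\coprod_{i \in I} P_i$ with $P_i \in \calP_0$; the functor $F \otimes (-) = \colim_F (-)$ on $\calP$ preserves coproducts and retracts (being a colimit); and the Yoneda functor $h \colon \calP \to \Model_\calP$ preserves coproducts (as follows from \cref{prop:modelspresentable}) and retracts. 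Combining these shows that the comparison map for a general $P$ is a retract of the coproduct $\coprod_i \bigl(F \otimes h(P_i) \to h(F \otimes P_i)\bigr)$. Since locality at a single morphism is preserved both under coproducts of morphisms (mapping spaces send coproducts to products) and under retracts of morphisms (retracts of equivalences are equivalences), $S_0$-locality implies $S$-locality, completing the reduction.

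The main obstacle is the bookkeeping verifying the interaction of $F \otimes (-)$, $h$, coproducts, and retracts; once this is in hand, accessibility and presentability of the localization follow immediately from the standard theory.
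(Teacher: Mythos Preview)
Your proposal is correct and spells out precisely what the paper leaves implicit: the paper gives no proof, marking the lemma with \qed after remarking that the boundedness convention of \cref{rmk:bounded} is in force. Your reduction from $S$ to the small set $S_0$ indexed by $\calP_0$ and representative wedges of spheres, together with the observation that locality is stable under coproducts and retracts of morphisms, is exactly the verification that makes this ``immediate.''
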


We will generally write $L\colon \Model_\calP\rightarrow\Model_\calP^\Omega$ for the localization.

\begin{rmk}\label{rmk:func2}
Given loop theories $\calP$ and $\calP'$, any functor $F\colon \Model_\calP^\Omega\rightarrow\Model_{\calP'}^\Omega$ preserving geometric realizations, and even some which do not, may be recovered as the left Kan extension of its restriction to $\calP$ along the inclusion $\calP\subset\Model_\calP^\Omega$. Moreover, if we write $f\colon \calP\rightarrow\Model_{\calP'}$ for the restriction of $F$ to $\calP$, then the diagram
\begin{center}\begin{tikzcd}
\Model_\calP^\Omega\ar[r,"F"]&\Model_{\calP'}^\Omega\\
\Model_\calP\ar[r,"f_!"]\ar[u,"L"]&\Model_{\calP'}\ar[u,"L"]
\end{tikzcd}\end{center}
commutes; indeed, all the functors involved preserve geometric realizations and both composites agree on $\calP\subset\Model_\calP$. In particular, if $X\in \Model_{\calP}^\Omega\subset\Model_\calP$ then $F(X) = Lf_!(X)$, and conversely any functor $f\colon \calP\rightarrow\Model_{\calP'}^\Omega\subset\Model_\calP$ extends to a diagram as above. A good special case is obtained when $f$ restricts to a functor $\calP\rightarrow\calP'$ which preserves coproducts and tensors by finite wedges of spheres. In this case, restriction $f^\ast\colon \Psh(\calP')\rightarrow\Psh(\calP)$ itself restricts to a functor $f^*\colon \Model_{\calP'}^\Omega\rightarrow\Model_\calP^\Omega$ which is right adjoint to $F$.
\tqed
\end{rmk}

\begin{rmk}
Although we do not know a good description of the localization $L\colon \Model_\calP\rightarrow\Model_\calP^\Omega$ in general, more can be said in particular cases; see in particular \cref{thm:pialg} and \cref{thm:stablelocalization}, as well as \cref{lem:loccommute}.
\tqed
\end{rmk}

Observe that $\calP$ is tensored not just over finite wedges of spheres, but also finite products thereof. Indeed, if $F$ and $F'$ are finite wedges of spheres and $P\in\calP$, then we may identify $(F\times F')\otimes P\simeq F\otimes (F'\otimes P)$, which lives in $\calP$. In general, if $F$ is a space for which constant colimits over $F$ exist in $\calP$, then for $X\in\Model_\calP$ we write $X_F$ for the model of $\calP$ defined by $X_F(P) = X(F\otimes P)$. There are canonical maps $X_F\rightarrow X^F$, and the condition that $X\in\Model_\calP^\Omega$ is equivalent to the condition that these maps be equivalences for all finite wedges of spheres $F$. It turns out to be sufficient to verify this when $F = S^1$.

\begin{lemma}\label{lem:pbwedge}
Fix a coCartesian diagram
\begin{center}\begin{tikzcd}
F_1&F_2\ar[l]\\
F_3\ar[u]&F_4\ar[u]\ar[l]
\end{tikzcd}\end{center}
of finite products of finite wedges of spheres, and suppose that the map $h(P)_{F_2}\rightarrow h(P)_{F_4}$ is a surjection on path components for $P\in\calP$. Then for any $X\in\Model_\calP$, the square
\begin{center}\begin{tikzcd}
X_{F_1}\ar[d]\ar[r]&X_{F_2}\ar[d]\\
X_{F_3}\ar[r]&X_{F_4}
\end{tikzcd}\end{center}
is Cartesian. In particular, the cogroup structure on $S^n$ gives maps
\[
X_{S^n}\rightarrow X_{S^n\vee S^n}\simeq X_{S^n}\times_X X_{S^n}
\]
making $X_{S^n}$ into a group object in $\Model_{\calP}/X$ for $n\geq 1$.
\end{lemma}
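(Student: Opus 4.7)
The plan is to resolve an arbitrary $X \in \Model_\calP$ by representables and then invoke \cref{lem:cartrel}. Using \cref{cor:modelsresolved}(2), I would write $X \simeq |Y_\bullet|$ with each $Y_n = h(Q_n)$ for $Q_n \in \calP$. Since geometric realizations in $\Model_\calP$ are computed pointwise by \cref{prop:geometricrealizationsgroups}, and $X_F(P) = X(F \otimes P)$ is also defined pointwise, the operation $(-)_F$ commutes with geometric realization, giving $X_{F_i} \simeq |(Y_\bullet)_{F_i}|$ as simplicial objects in $\Model_\calP$.

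The key input is that for any representable $h(Q)$ and any $R \in \calP$,
\[
(h(Q))_{F_i}(R) \simeq \Map_\calP(F_i \otimes R, Q) \simeq \Map_\calP(\colim_{F_i} R, Q) \simeq \Map(F_i, \Map_\calP(R, Q)),
\]
the last equivalence because the diagram of $\Map_\calP(R, Q)$ over $F_i$ is constant. The coCartesian square $F_1 \simeq F_2 \cup_{F_4} F_3$ of spaces thus becomes a pullback after applying $\Map(-, M)$ for any $M$, showing that the level-$n$ square of $(Y_n)_{F_i}$ is pointwise Cartesian, hence Cartesian in $\Model_\calP$ (since limits there are pointwise). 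Applying the hypothesis of the lemma with $P$ replaced by $Q_n$ tells us $(Y_n)_{F_2} \to (Y_n)_{F_4}$ is a $\pi_0$-surjection for each $n$, so \cref{lem:cartrel} applies: the geometric realization of this square is Cartesian. Combined with $X_{F_i} \simeq |(Y_\bullet)_{F_i}|$, this proves the first claim.

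For the cogroup consequence, take the coCartesian square $S^n \vee S^n \simeq S^n \cup_{pt} S^n$ of spaces. The hypothesis on $h(P)_{S^n} \to h(P)_{pt} = h(P)$ being $\pi_0$-surjective holds because the basepoint inclusion $pt \to S^n$ admits a retraction $S^n \to pt$, which, after tensoring with $R$, provides a section of the restriction map $\Map_\calP(S^n \otimes R, P) \to \Map_\calP(R, P)$. The cogroup structure on $S^n$ (given by the pinch map satisfying standard cogroup axioms, available for $n \geq 1$) then transfers contravariantly through $X$ to produce a group-object structure $X_{S^n \vee S^n} \simeq X_{S^n} \times_X X_{S^n} \to X_{S^n}$ on $X_{S^n}$ in $\Model_\calP/X$. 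The main obstacle throughout is assembling the pieces correctly; once the representable case and the compatibility of $(-)_F$ with the resolution are in place, \cref{lem:cartrel} carries out the substantive work.
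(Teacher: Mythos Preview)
Your proposal is correct and follows essentially the same strategy as the paper: verify the square is Cartesian for representables, then write a general $X$ as a geometric realization of representables and invoke \cref{lem:cartrel}. The paper's proof is terser—it simply declares the representable case ``clear'' and immediately appeals to \cref{lem:cartrel}—whereas you spell out the identification $(h(Q))_{F_i}(R) \simeq \Map(F_i, \Map_\calP(R,Q))$ and the verification of the $\pi_0$-surjectivity hypothesis for the cogroup claim, but the content is the same.
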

\begin{proof}
If $X$ is representable, or more generally if $X\in\Model_\calP^\Omega$, then this is clear. In general, the claim follows by writing $X$ as a geometric realization of representables and appealing to \cref{lem:cartrel}.
\end{proof}

\begin{prop}\label{lem:loopequiv}
Fix $X\in\Model_{\calP}$. Then the following are equivalent:
\begin{enumerate}
\item $X\in\Model_\calP^\Omega$;
\item The map $X_{S^1}\rightarrow X^{S^1}$ is an equivalence.
\end{enumerate}
\end{prop}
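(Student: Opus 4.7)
The direction (1) $\Rightarrow$ (2) is immediate: specializing the defining condition of $\Model_\calP^\Omega$ to $F = S^1$ gives the equivalence with the canonical comparison map. For the converse, the plan is to propagate the $S^1$-equivalence to all finite wedges of spheres by combining a wedge-closure step with an induction on the dimension of individual spheres.

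For closure under wedges, regard $F = F_1 \vee F_2$ as the pushout $F_1 \cup_* F_2$ in $\Gpd_\infty$. Mapping spaces carry this pushout to a pullback, giving $X^F \simeq X^{F_1} \times_X X^{F_2}$. On the tensor side, \cref{lem:pbwedge} applies to this pushout: the required $\pi_0$-surjectivity of $h(P)_{F_i} \to h(P)_*$ holds because the collapse $F_i \to *$ is split by the basepoint $* \to F_i$, inducing a section of the relevant map of models. Hence $X_F \simeq X_{F_1} \times_X X_{F_2}$. Inducting on the number of wedge summands, it then suffices to verify $X_{S^n} \simeq X^{S^n}$ for all $n \geq 0$; the case $n = 0$ is the model product property $X(P\sqcup P) \simeq X(P)^2$, and $n = 1$ is the hypothesis.

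For the inductive step $n \to n+1$ (with $n \geq 1$), use the pushout $S^{n+1} = * \cup_{S^n} *$ in $\Gpd_\infty$. Applying $X^{(-)}$ gives $X^{S^{n+1}} \simeq X \times_{X^{S^n}} X \simeq X \times_{X_{S^n}} X$ by the inductive hypothesis. It remains to identify $X_{S^{n+1}}$ with $X \times_{X_{S^n}} X$; equivalently, to show that $X$ sends the $\calP$-pushout $S^{n+1} \otimes P \simeq P \cup_{S^n \otimes P} P$ to the corresponding pullback of $\infty$-groupoids.

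This last step is the main obstacle. A naive application of \cref{lem:pbwedge} to the suspension pushout fails, because the map $h(P)_* \to h(P)_{S^n}$ (induced by $S^n \to *$) admits only a section, not a retraction, so it is not generally $\pi_0$-surjective. Overcoming this requires additional structure: the group-object structure on $X_{S^n}$ over $X$ supplied by \cref{lem:pbwedge}, together with the $S^1$-hypothesis. A plausible route is to rewrite the pullback in terms of this group structure so that \cref{lem:cartrel} becomes applicable after resolving $X$ as a geometric realization of representables via \cref{cor:modelsresolved} — on representables, the analogous equivalence holds by the universal property of the tensor in $\calP$, and the group structure provides the compensating surjectivity needed for the passage to the realization.
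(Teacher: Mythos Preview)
Your reduction to individual spheres via wedge decomposition is the same as the paper's, and you correctly identify that the suspension pushout $S^{n+1}\simeq *\cup_{S^n}*$ cannot be fed into \cref{lem:pbwedge}: the map $h(P)_*\rightarrow h(P)_{S^n}$ is the diagonal $h(P)(Q)\rightarrow h(P)(Q)^{S^n}$, which is essentially never $\pi_0$-surjective. The problem is that your proposed workaround is not an argument. Appealing to the group-object structure on $X_{S^n}$ and to \cref{lem:cartrel} on a resolution by representables does not help: the surjectivity hypothesis in \cref{lem:cartrel} is exactly the one that fails for the map $X\rightarrow X_{S^n}$, and the group structure on $X_{S^n}$ over $X$ does not make this map an effective epimorphism. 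There is no compensating surjectivity to be extracted here.

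The missing idea is to change the pushout. The paper uses the cofibering
\[
S^n\vee S^1\rightarrow S^n\times S^1\rightarrow S^{n+1},
\]
which presents $S^{n+1}$ as the pushout of $*\leftarrow S^n\vee S^1\rightarrow S^n\times S^1$. For this square the relevant map in \cref{lem:pbwedge} is $h(P)_{S^n\times S^1}\rightarrow h(P)_{S^n\vee S^1}$, and this \emph{is} $\pi_0$-surjective: by \cref{lem:herdfacts}(3), the restriction $\Map(A\times B,H)\rightarrow\Map(A\vee B,H)$ admits a section whenever $H$ is a herd object, and $h(P)(Q)$ is a herd. One then obtains a Cartesian square with corners $X_{S^{n+1}}$, $(X_{S^n})_{S^1}$, $X$, and $X_{S^n}\times_X X_{S^1}$. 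The hypothesis $X_{S^1}\simeq X^{S^1}$, applied with $P$ replaced by $S^n\otimes P$, gives $(X_{S^n})_{S^1}\simeq (X_{S^n})^{S^1}$; combined with the inductive hypothesis $X_{S^n}\simeq X^{S^n}$ this lets you compare with the analogous (always Cartesian) square for cotensors and conclude $X_{S^{n+1}}\simeq X^{S^{n+1}}$. The point is that the herd structure enters through \cref{lem:herdfacts}(3), not through any direct manipulation of the bar construction.
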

\begin{proof}
The implication $(1)\Rightarrow (2)$ is clear, so suppose conversely that $X_{S^1}\rightarrow X^{S^1}$ is an equivalence. We must show that $X_F\rightarrow X^F$ is an equivalence when $F$ is any finite wedge of spheres. By \cref{lem:pbwedge}, there are natural equivalences $X_{F'\vee F''}\simeq X_{F'}\times_X X_{F''}$, so we can reduce to $F = S^n$. For $n=0$, this is a consequence of the fact that $X\in\Model_{\calP}$, and for $n=1$ this is what we have assumed. For $n\geq 2$, that $X_{S^n}\rightarrow X^{S^n}$ is an equivalence follows from an inductive argument using Cartesian squares
\begin{center}\begin{tikzcd}
X_{S^{n+1}}\ar[r]\ar[d]&(X_{S^n})_{S^1}\ar[d]\\
X\ar[r]&X_{S^n}\times_X X_{S^1}
\end{tikzcd}\end{center}
for $n\geq 1$. These may be obtained by applying \cref{lem:pbwedge} to the cofibering $S^n\vee S^1\rightarrow S^n\times S^1\rightarrow S^{n+1}$ and identifying $X_{S^n\times S^1}\simeq(X_{S^n})_{S^1}$ and $X_{S^n\vee S^1}\simeq X_{S^n}\times_X X_{S^1}$; the relevant surjectivity hypothesis is satisfied by \cref{lem:herdfacts}(3).
\end{proof}

We end this subsection by introducing some additional notation. When $\calP$ is pointed, write $X_{\Sigma^n}$ for the presheaf $X_{\Sigma^n}(P) = X(\Sigma^n P)$. If, for instance, $\calP$ is additive, then one may split $X_{S^n}\simeq X \times X_{\Sigma^n}$, so in particular $\Model_\calP^\Omega$ consists of those $X\in\Model_{\calP}$ such that $X_{\Sigma}\simeq\Omega X$.

The functor $P\mapsto S^n\otimes P$ descends to a functor on $\h\calP$; for $X\in\Model_{\h\calP}$, write $X\langle n \rangle$ for the restriction of $X$ along this functor. Thus $\pi_0 (X_{S^n}) = (\pi_0 X)\langle n \rangle$ for $X\in\Model_{\calP}$. Similarly, when $\calP$ is pointed, write $X[n]$ for the restriction of $X$ along the functor on $\h\calP$ obtained from $\Sigma^n\colon\calP\rightarrow\calP$. We point out that these constructions are not intrinsic to the theory $\h\calP$, but rely on extra structure coming from $\calP$.

\subsection{The spiral}\label{ssec:spiral}

Let $\calP$ be a loop theory, and write $\tau\colon\calP\rightarrow\h\calP $ for the canonical map to its homotopy category. To connect $\Model_\calP^\Omega$ with $\Model_{\h\calP}$, we will need to understand $\tau_!\colon\Model_{\calP}\rightarrow\Model_{\h\calP}$. This understanding is achieved via the following.

\begin{theorem}[{\cite[Section 3.5]{pstragowski2023moduli}}]\label{thm:spiral}
For $X\in\Model_{\calP}$,
\begin{enumerate}
\item The map $X\rightarrow\tau^\ast\tau_! X$ is a $\pi_0$-equivalence;
\item There is a natural Cartesian square
\begin{center}\begin{tikzcd}
B_X X_{S^1}\ar[d]\ar[r]&X\ar[d]\\
X\ar[r]&\tau^\ast\tau_! X
\end{tikzcd},\end{center}
where $B_X X_{S^1}$ is the delooping of $X_{S^1}$ in the slice category $\Model_{\calP}/X$.
\end{enumerate}
\end{theorem}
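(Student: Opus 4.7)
The plan is to reduce both statements to the representable case $X = h(P)$ and verify them by a direct pointwise calculation, using that $\Model_\calP$ is generated under geometric realizations by representables (\cref{cor:modelsresolved}) together with the commutation of all the functors in sight with geometric realizations.

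For the representable case, left Kan extension gives $\tau_! h(P) \simeq h_{\h\calP}(P)$, so $(\tau^* \tau_! h(P))(P') = \Hom_{\h\calP}(\tau P', P) = \pi_0 \Map_\calP(P', P)$. Hence $\tau^* \tau_! h(P) \simeq \pi_0 h(P)$ and the unit is the $\pi_0$-truncation, settling (1) on representables. For (2), $h(P)_{S^1}(P')$ is the free loop space $\Map_\calP(P', P)^{S^1}$ equipped with its natural group structure over $\Map_\calP(P', P)$ coming from the cogroup structure on $S^1$ (\cref{lem:pbwedge}). For any space $Y$, the delooping $B_Y(Y^{S^1})$ is computed fiberwise as $B\Omega_y Y$, which is the path component of $y$ in $Y$; so globally $B_Y(Y^{S^1}) \simeq Y \times_{\pi_0 Y} Y$. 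These identifications combine to yield the Cartesian square pointwise for $X = h(P)$.

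To extend to general $X$, write $X \simeq |h(P_\bullet)|$ and verify that all the functors in the statement commute with geometric realizations. The composite $\tau^* \tau_!$ does so because $\tau_!$ is a left adjoint and $\tau^*$ is restriction between categories whose geometric realizations are pointwise by \cref{prop:geometricrealizationsgroups}; the functors $\pi_0$ and $X \mapsto X_{S^1}$ are themselves pointwise. For the pullback $X \times_{\tau^* \tau_! X} X$ and the delooping $B_X X_{S^1}$, commutation with geometric realizations in $X$ follows from \cref{lem:cartrel}, after verifying the requisite $\pi_0$-surjectivity. For the pullback, $h(P) \to \pi_0 h(P) \simeq \tau^* \tau_! h(P)$ is even an iso on $\pi_0$. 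For the delooping, viewed as a geometric realization of the bar complex $[n] \mapsto X_{S^1}^{\times_X n}$, one iteratively applies \cref{lem:cartrel} to the nested fiber products, using that $h(P)_{S^1} \to h(P)$ admits a section (the constant loops), ultimately provided by the herd structure via \cref{lem:pbwedge} and \cref{lem:herdfacts}(3).

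With these commutation properties established, (1) follows because $\pi_0 X$ and $\pi_0 \tau^* \tau_! X$ agree on representables and both preserve geometric realizations. For (2), the natural comparison map $B_X X_{S^1} \to X \times_{\tau^* \tau_! X} X$ is constructed via the universal property of the pullback; it is an equivalence on representables by the calculation above, and hence on all of $\Model_\calP$. The main obstacle I anticipate is verifying that $X \mapsto B_X X_{S^1}$ commutes with geometric realizations, since this involves a colimit of iterated fiber products and demands careful bookkeeping of the $\pi_0$-surjectivity conditions coming from the herd structure.
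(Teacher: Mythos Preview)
Your proposal is correct and follows essentially the same approach as the paper: verify both statements on representables via the identification $\tau^\ast\tau_! h(P)\simeq\pi_0 h(P)$, then propagate to all of $\Model_\calP$ using that everything in sight commutes with geometric realizations, invoking \cref{lem:cartrel} for the Cartesian square. The paper's proof is terser and simply asserts that ``the terms in the original square\ldots are compatible with the formation of geometric realizations''; you have correctly identified the one nontrivial verification hidden in that sentence, namely that $X\mapsto B_X X_{S^1}$ preserves geometric realizations.

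One minor simplification for that step: rather than iteratively applying \cref{lem:cartrel} to the nested fiber products in the bar complex, you can use \cref{lem:pbwedge} directly to identify $X_{S^1}^{\times_X n}\simeq X_{\bigvee^n S^1}$. Since $X\mapsto X_F$ is precomposition by $F\otimes(-)$ and hence preserves all pointwise colimits, each bar level commutes with geometric realizations on the nose, and so does their realization $B_X X_{S^1}$. This bypasses the bookkeeping you anticipated.
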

\begin{proof}
When $X = h(P)$ with $P\in\calP$, as $\tau_! h(P) = h(\tau P)$ it follows that $\tau^\ast\tau_! X = \pi_0 X$. In this case, $X\rightarrow \tau^\ast\tau_! X$ is certainly a $\pi_0$-equivalence, and the above square becomes
\begin{center}\begin{tikzcd}
B_X X^{S^1}\ar[r]\ar[d]&X\ar[d]\\
X\ar[r]&\pi_0 X
\end{tikzcd},\end{center}
which is Cartesian. Next, observe that the terms in the original square, as well as the property of $X\rightarrow \tau^\ast\tau_! X$ being a $\pi_0$-equivalence, are compatible with the formation of geometric realizations. By writing $X$ as a geometric realization of representables, we may conclude with an application of \cref{lem:cartrel}.
\end{proof}

The following is sufficient for many applications.

\begin{cor}\label{cor:spiraldiscrete}
For $X\in\Model_{\calP}$, the map $\tau_! X\rightarrow \pi_0 X$ is an equivalence if and only if $X\in\Model_\calP^\Omega$.
\end{cor}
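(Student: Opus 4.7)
Setting $Z := \tau^\ast\tau_! X$, \cref{thm:spiral} supplies a natural $\pi_0$-equivalence $u\colon X\to Z$ together with a natural identification $B_X X_{S^1}\simeq X\times_Z X$. Since $\tau^\ast$ preserves and reflects discreteness, and the equivalence $\Model_{\h\calP}^\heart\simeq\Model_\calP^\heart$ carries $\tau_! X$ (when discrete) to $\pi_0 X$, the map $\tau_! X\to\pi_0 X$ is an equivalence iff $Z$ is discrete, in which case $Z\simeq\pi_0 X$. My task therefore reduces to showing that $Z$ is discrete iff $X\in\Model_\calP^\Omega$.

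For the easier direction, if $Z$ is discrete then $Z\simeq\pi_0 X$, so the spiral square becomes $B_X X_{S^1}\simeq X\times_{\pi_0 X}X$. Taking $\Omega_X$ at the diagonal identifies the left side with $X_{S^1}$ and the right side, fiberwise, with the loop space of each component of $X$, i.e.\ with $X^{S^1}$. By naturality of the spiral, this equivalence agrees with the canonical comparison $X_{S^1}\to X^{S^1}$, so $X\in\Model_\calP^\Omega$ by \cref{lem:loopequiv}.

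For the converse I plan to use the factorization $X\xrightarrow{u} Z\xrightarrow{p}\pi_0 X$, where $p$ comes from the isomorphism $\pi_0 u\colon\pi_0 X\simeq\pi_0 Z$. The standard composition-of-fibers fiber sequence $\Fib(u)\to\Fib(p\circ u)\to\Fib(p)$, after applying $\Omega_X$ at the canonical sections, yields a fiber sequence
\[X_{S^1}\to X^{S^1}\to W\]
in $\Model_\calP/X$: the first term uses the spiral's identification $X_{S^1}\simeq\Omega_X\Fib(u)$ coming from $B_X X_{S^1}\simeq X\times_Z X$, the second uses the tautological identification $\Omega_X\Fib(X\to\pi_0 X)\simeq X^{S^1}$, and $W$ is the fiberwise free loop space of $Z$ pulled back to $X$ along $u$. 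If $X\in\Model_\calP^\Omega$ then the first map is an equivalence, forcing $W$ to be trivial over $X$; since $u$ is $\pi_0$-surjective this trivialization reaches every component of $Z$, so $\pi_n Z=0$ for $n\geq 1$ and $Z$ is discrete.

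The main obstacle I expect is verifying that the identification $X_{S^1}\simeq\Omega_X\Fib(u)$ extracted from the spiral's Cartesian square is compatible, under the induced map $Z\to\pi_0 X$, with the canonical comparison $X_{S^1}\to X^{S^1}$ appearing in \cref{lem:loopequiv}. This should follow from the naturality of the spiral in $X$ applied to the $\pi_0$-truncation $X\to\pi_0 X$, but it needs a direct check at the level of the Cartesian squares supplied by \cref{thm:spiral}.
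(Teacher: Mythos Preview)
Your proof is correct in substance and follows the same underlying idea as the paper: both arguments compare the spiral square $B_X X_{S^1}\simeq X\times_{\tau^\ast\tau_! X}X$ with the standard square $B_X X^{S^1}\simeq X\times_{\pi_0 X}X$. The difference is packaging. The paper assembles these two Cartesian squares as the back and front faces of a single cube
\[
\begin{tikzcd}
B_XX_{S^1}\ar[rr]\ar[dd]\ar[dr]&&X\ar[dr]\ar[dd]\\
&B_XX^{S^1}\ar[rr]\ar[dd]&&X\ar[dd]\\
X\ar[dr]\ar[rr]&&\tau^\ast\tau_! X\ar[dr]\\
&X\ar[rr]&&\pi_0 X
\end{tikzcd}
\]
and then reads off both directions at once: if the bottom right edge is an equivalence, the top left edge must be; conversely, if the top left edge is an equivalence then the right face becomes Cartesian, and combined with the fact that $\tau^\ast\tau_! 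X\to\pi_0 X$ is already a $\pi_0$-equivalence this forces it to be an equivalence.

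Your unpacking into a fiber sequence $X_{S^1}\to X^{S^1}\to W$ is a valid alternative, but it costs you exactly the naturality check you flag at the end. The cube handles this automatically: the comparison map $B_X X_{S^1}\to B_X X^{S^1}$ is built into the diagram, and the argument never needs to separately identify it with the canonical map from \cref{lem:loopequiv}. So the paper's route is shorter and sidesteps the one genuine obstacle you anticipated.
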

\begin{proof}
Fix $X\in\Model_{\calP}$, and consider the cube
\begin{center}\begin{tikzcd}
B_XX_{S^1}\ar[rr]\ar[dd]\ar[dr]&&X\ar[dr]\ar[dd]\\
&B_XX^{S^1}\ar[rr]\ar[dd]&&X\ar[dd]\\
X\ar[dr]\ar[rr]&&\tau^\ast\tau_! X\ar[dr]\\
&X\ar[rr]&&\pi_0 X
\end{tikzcd}\end{center}
in which the front and back faces are Cartesian. If $\tau^\ast \tau_! X \rightarrow \pi_0 X$ is an equivalence, then $B_X X_{S^1}\rightarrow B_X X^{S^1}$ must be an equivalence. Conversely, if $B_X X_{S^1}\rightarrow B_X X^{S^1}$ is an equivalence, then as $\tau^\ast\tau_! X\rightarrow \pi_0 X$ is a $\pi_0$-equivalence, the right square is Cartesian, and this implies that $\tau^\ast\tau_! X \simeq \pi_0 X$.
\end{proof}

\subsection{Producing examples}\label{ssec:constructingexamples}

This subsection is concerned with producing and identifying categories of the form $\Model_\calP^\Omega$, as well as their associated algebraic categories $\Model_\calP^\heart\simeq\Model_{\h\calP}^\heart$. See \cref{ssec:examples} for some explicit examples.

A simple class of examples is given by the following observation: if $\calP$ is a discrete theory, then $\calP$ is a loop theory with $F\otimes P\simeq (\pi_0 F) \otimes P$ for $F$ a finite wedge of spheres and $P\in\calP$. In this case, $\pi_0\colon \Model_\calP^\Omega\rightarrow\Model_\calP^\heart$ is an equivalence. 

More interesting examples come from loop theories with more homotopical structure. In \cite[Proposition 4.3]{pstragowski2023moduli}, the following example is given: if $\calP\subset\Gpd_\infty^\ast$ is the full subcategory of wedges of positive-dimensional spheres, then $\Model_\calP^\Omega$ is the category of pointed connected $\infty$-groupoids, and $\Model_\calP^\heart$ is the category of $\Pi$-algebras. This admits the following generalization.

Let $\Sph^+\subset\Gpd_\infty$ denote the full subcategory generated by the positive-dimensional spheres under finite products.

\begin{theorem}\label{thm:pialg}
Let $\calC$ be a theory, and let $\calP\subset\Model_{\calC}$ be the full subcategory generated under coproducts by objects of the form $F\otimes h(P)$ with $F\in\Sph^+$ and $P\in\calC$. Then $\calP$ is a loop theory, and the following hold.
\begin{enumerate}
\item Restriction along the inclusion $i\colon \calC\subset\calP$ is left adjoint to the restricted Yoneda embedding $h\colon\Model_\calC\rightarrow\Model_\calP$;
\item $h$ is fully faithful, and restricts to an equivalence $\Model_\calC\simeq\Model_\calP^\Omega$.
\end{enumerate}
\end{theorem}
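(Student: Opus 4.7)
The plan is to dispatch the two claims by direct Yoneda arguments, after first verifying that $\calP$ actually is a loop theory. Closure under coproducts is built into the construction of $\calP$; coherd structure on any $P\in\calC$ transports along the cocontinuous endofunctor $F\otimes(\bs)$ of $\Model_\calC$ to yield coherd structure on the generator $F\otimes h(P)$, and extends to arbitrary coproducts. Tensors by finite wedges of spheres land back in $\calP$ since $F'\otimes(F\otimes h(P))\simeq(F\times F')\otimes h(P)$ and $\Sph^+$ is closed under finite products.

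For (1), the key observation is that $h\colon\Model_\calC\to\Model_\calP$ is the restriction of the right Kan extension $i_\ast\colon\Psh(\calC)\to\Psh(\calP)$. Indeed, for $Q\in\calP\subset\Model_\calC$, the Yoneda lemma inside $\Model_\calC$ identifies $i^\ast h_\calP(Q)$ with the image of $Q$ in $\Psh(\calC)$, so $(i_\ast X)(Q)=\Map_{\Psh(\calC)}(Q,X)=\Map_{\Model_\calC}(Q,X)=h(X)(Q)$. The presheaf-level adjunction $i^\ast\dashv i_\ast$ then restricts to $i^\ast\dashv h$ once one checks that $i^\ast$ takes models to models and that $h(X)$ is a model for each $X\in\Model_\calC$. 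The first uses that the Yoneda embedding $\calC\hookrightarrow\Model_\calC$ preserves coproducts---a feature of theories, coming from models being product-preserving---so $i\colon\calC\subset\calP$ does as well; the second is the calculation $h(X)(\coprod_j Q_j)=\Map_{\Model_\calC}(\coprod_j Q_j,X)=\prod_j h(X)(Q_j)$.

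For (2), fully faithfulness of $h$ follows from the adjunction combined with the counit computation $(i^\ast h(X))(P)=\Map_{\Model_\calC}(h(P),X)=X(P)$ for $P\in\calC$, which gives $i^\ast h\simeq\mathrm{id}$. Next, $h$ factors through $\Model_\calP^\Omega$: from $h(X)(F\otimes h(P))=X(P)^F$ one reads off $h(X)(F'\otimes(F\otimes h(P)))=X(P)^{F'\times F}=h(X)(F\otimes h(P))^{F'}$, and this extends to all of $\calP$ using that $h(X)$ carries coproducts in $\calP$ to products. Conversely, for $Y\in\Model_\calP^\Omega$, the unit $Y\to h(i^\ast Y)$ evaluates on a generator $F\otimes h(P)$ to $Y(F\otimes h(P))\to Y(h(P))^F$, an equivalence by the defining $\Omega$-condition; product-preservation on coproducts then upgrades this to an equivalence on all of $\calP$, giving essential surjectivity and the desired equivalence $h\colon\Model_\calC\simeq\Model_\calP^\Omega$.

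The principal subtlety is the interplay between the three kinds of coproducts in sight---in $\calC$, in $\calP$, and of representables in $\Model_\calC$---all of which coincide via the fact that the Yoneda embedding of a theory preserves coproducts. Once this is in hand, the argument reduces to repeated applications of Yoneda together with the explicit formula $h(X)(F\otimes h(P))=X(P)^F$.
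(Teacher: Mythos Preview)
Your proof is correct and follows essentially the same route as the paper's. For part (2) the arguments are virtually identical: both verify that the counit $i^\ast h(X)\simeq X$ is an equivalence (giving full faithfulness) and that the unit $Y\to h(i^\ast Y)$ is an equivalence on $\Model_\calP^\Omega$ via the computation $Y(F\otimes h(P))\simeq Y(h(P))^F$.

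For part (1) there is a minor difference in framing. The paper argues that $h$ admits a left adjoint given by left Kan extension of the inclusion $\calP\subset\Model_\calC$, and then identifies this with $i^\ast$ by checking on generators $F\otimes h(C)$ (using that $i^\ast$ preserves geometric realizations). You instead identify $h$ directly with the right Kan extension $i_\ast$ at the presheaf level and then restrict the standard adjunction $i^\ast\dashv i_\ast$ to models. These are dual descriptions of the same adjunction; your route is slightly more elementary in that it avoids invoking the cocompletion property of $\Model_\calP$, at the cost of the extra verification that $i^\ast$ and $i_\ast$ preserve the model condition.
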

\begin{proof}
Given $C\in\calC$ and $F\in\Sph^+$, we shall write $F\boxtimes h(C)$ for $F\otimes h(C)$ considered as an object of $\Model_\calP$; this notation will only appear in this proof.

(1)~~As $\Model_\calC$ is cocomplete, $h$ admits a left adjoint which may be identified as the functor obtained from the inclusion $\calP\subset\Model_\calC$ by left Kan extension. As $i^\ast$ preserves geometric realizations, it is thus sufficient to verify that $i^\ast(F\boxtimes h(C)) = F \otimes h(C)$ for any $C\in\calC$ and $F\in\Sph^+$. This is clear, for if $C'\in\calC$, then $\Map_\calC(h(C'),i^\ast(F\boxtimes h(C)))\simeq\Map_\calP(h(C'),F\boxtimes h(C))\simeq \Map_\calC(h(C),F\otimes h(C))$ by definition.

(2)~~Consider the adjunction $h:\Model_\calC\rightarrow\Model_\calP^\Omega:i^\ast$ obtained by restriction from that given by (1). Clearly the counit $i^\ast(h(X))\simeq X$ is an equivalence for $X\in\Model_\calC$, so it suffices to verify that the counit $Y\rightarrow h(i^\ast(Y))$ is an equivalence for $Y\in\Model_\calP^\Omega$. Indeed, if $F\in\Sph^+$ and $C\in\calC$, then $Y(F\boxtimes C)\simeq Y(C)^F\simeq (h(i^\ast(Y))(C)^F\simeq (h(i^\ast(Y)))(F\boxtimes C)$.
\end{proof}

To recover \cite[Proposition 4.3]{pstragowski2023moduli}, one takes $\calC$ to be the theory of groups, or equivalently the full subcategory of pointed spaces consisting of wedges of $S^1$, so that $\Model_\calC$ is equivalent to the category of pointed connected spaces. This restriction to pointed connected spaces is necessary in order for $\calC$ to satisfy the Mal'cev condition.

We are particularly interested in examples arising from spectral algebra, which cannot be of this form. To get at these, we begin by considering the stable case.

\begin{lemma}\label{lem:stableisstable}
If $\calP$ is a stable loop theory, then $\Model_\calP^\Omega$ is a stable category.
\end{lemma}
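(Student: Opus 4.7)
The plan is to verify that $\Model_\calP^\Omega$ satisfies the standard characterization of stability: a presentable pointed $\infty$-category is stable if and only if its loop functor is an equivalence.

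Presentability is \cref{lem:looploc}. For pointedness, the representable $h(0)$ at the zero object $0\in\calP$ is a zero object of $\Model_\calP^\Omega$: it is terminal since $h(0)(P) = \Map_\calP(P,0) = *$, and initial since $\Map_\calP(h(0),X) = X(0) = *$ for $X\in\Model_\calP$, as $0$ is the empty coproduct in $\calP$ and hence $X$ sends it to the empty product. It clearly lies in $\Model_\calP^\Omega$.

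The main step is to show that $\Omega\colon\Model_\calP^\Omega\to\Model_\calP^\Omega$ is an equivalence. Let $\Omega_\calP\colon\calP\to\calP$ denote the inverse of $\Sigma$. Restriction along $\Sigma$ defines an autoequivalence $\Sigma^\ast\colon\Psh(\calP)\to\Psh(\calP)$, $(\Sigma^\ast X)(P) = X(\Sigma P)$, with inverse $\Omega_\calP^\ast$. Both preserve $\Model_\calP$ (since $\Sigma,\Omega_\calP$ preserve coproducts) and $\Model_\calP^\Omega$ (since they commute with finite wedge-of-sphere tensors, being colimit-preserving equivalences of $\calP$). I claim $\Sigma^\ast \simeq \Omega$ on $\Model_\calP^\Omega$, which would exhibit $\Omega$ as an equivalence. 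On a representable $h(Q)$, both sides evaluate to $h(\Omega_\calP Q)$: for $\Sigma^\ast h(Q)$ this uses the internal adjunction $\Sigma\dashv\Omega_\calP$ in $\calP$, and for $\Omega h(Q)$ this uses the pointwise formula for limits together with the same adjunction. To globalize, I would pass to left adjoints: $\Omega_\calP^\ast$ is the left adjoint of $\Sigma^\ast$, while $\Sigma_{\Model_\calP^\Omega}$ is the left adjoint of $\Omega$. Both are colimit-preserving, and both should send $h(Q)$ to $h(\Sigma Q)$, so they agree on the generating subcategory of representables and hence on all of $\Model_\calP^\Omega$.

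The main obstacle is verifying that $\Sigma_{\Model_\calP^\Omega} h(Q) \simeq h(\Sigma Q)$, equivalently that $\Omega X(Q) \simeq X(\Sigma Q)$ for every $X\in\Model_\calP^\Omega$. The plan for this is to use the cofiber sequence $Q\to S^1\otimes Q\to \Sigma Q$ in $\calP$, which becomes a fiber sequence $X(\Sigma Q)\to X(S^1\otimes Q)\to X(Q)$ upon applying $X$ via the Mal'cev/herd pullback stability of \cref{lem:pbwedge} and \cref{lem:cartrel}, combined with the $\Omega$-condition's identification $X(S^1\otimes Q) \simeq X(Q)^{S^1}$ and the splitting of this free loop space over the herd $X(Q)$ as $X(Q)\times \Omega X(Q)$. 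Canceling the $X(Q)$ factor then yields the desired identification.
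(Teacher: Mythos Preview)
Your approach is the same as the paper's: show $\Model_\calP^\Omega$ is pointed with finite limits and that $\Omega$ is an equivalence, then cite the standard characterization (the paper cites \cite[Corollary 1.4.2.27]{lurie2017higheralgebra}). The paper simply asserts that precomposition with $\Sigma$ agrees with $\Omega$ on $\Model_\calP^\Omega$; you supply the details of this identification.

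Two small points to tighten. First, the detour through left adjoints is unnecessary: since $\Omega$ is computed pointwise on $\Model_\calP^\Omega$, verifying $X(\Sigma Q)\simeq\Omega X(Q)$ for all $X\in\Model_\calP^\Omega$ and $Q\in\calP$ directly gives $\Sigma^\ast\simeq\Omega$. Second, \cref{lem:pbwedge} does not literally apply to the square coming from $Q\to S^1\otimes Q\to\Sigma Q$, since $\Sigma Q$ is not of the form $F\otimes Q$ for a wedge of spheres $F$; but the argument behind it does. Concretely: the pushout square
\[
\begin{tikzcd}
Q\ar[r]\ar[d]&0\ar[d]\\
S^1\otimes Q\ar[r]&\Sigma Q
\end{tikzcd}
\]
exists in $\calP$ (one checks directly that $\Sigma Q$ has the right universal property, using $\Map_\calP(S^1\otimes Q,P)\simeq\Map_\calP(Q,P)^{S^1}$), and any representable $h(P)$ takes it to a pullback. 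Writing $X\in\Model_\calP$ as a geometric realization of representables and applying \cref{lem:cartrel} with the levelwise $\pi_0$-surjection $\Map(Q,P_n)^{S^1}\to\Map(Q,P_n)$ shows $X(\Sigma Q)$ is the fiber of $X(S^1\otimes Q)\to X(Q)$ over the basepoint. For $X\in\Model_\calP^\Omega$ this fiber is $\Omega X(Q)$ directly, no splitting of the free loop space required; the splitting you invoke is not a general fact about herds and is in any case unnecessary here.
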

\begin{proof}
This is a consequence of \cite[Corollary 1.4.2.27]{lurie2017higheralgebra}, as precomposition with the equivalence $\Sigma\colon\calP\rightarrow\calP$ agrees with $\Omega$ on $\Model_\calP^\Omega$.
\end{proof}

The following can be regarded as a generalization of \cite[Proposition 4.2.5]{hopkinslurie2017brauer}.

\begin{theorem}\label{thm:stabletheory}
Let $\calM$ be a stable category admitting small colimits, and let $\calP\subset\calM$ be a full subcategory which is a stable loop theory closed under coproducts and suspensions in $\calM$. Then
\begin{enumerate}
\item The restricted Yoneda embedding $h\colon\calM\rightarrow\Psh(\calP)$ is fully faithful upon restriction to the thick subcategory generated by $\calP$;
\item The restricted Yoneda embedding yields an equivalence $\calM\simeq\Model_\calP^\Omega$ provided either of the following is satisfied:
\begin{enumerate}
\item[(a)]The restricted Yoneda embedding is conservative and $\calP$ is generated under coproducts by objects which are compact in $\calM$;
\item[(b)]There is a fixed finite diagram $\calJ$ such that every object of $\calM$ may be written as a $\calJ$-shaped colimit of objects of $\calP$.
\end{enumerate}
\end{enumerate}
\end{theorem}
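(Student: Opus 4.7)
The plan is to first verify that $h(X) = \Map_\calM(-,X)|_\calP$ lands in $\Model_\calP^\Omega$ and preserves finite colimits, and then to deduce (1) by a closure argument on the thick subcategory, and finally to treat the two cases of (2). For any $X\in\calM$, the product condition $h(X)(\coprod_i P_i)\simeq\prod_i h(X)(P_i)$ and the loop condition $h(X)(F\otimes P)\simeq h(X)(P)^F$ for finite wedges of spheres $F$ follow once one observes that coproducts and $F$-tensors in $\calP$ agree with those in $\calM$, using closure of $\calP\subset\calM$ under coproducts and suspensions. Moreover $h$ preserves all limits as a restricted Yoneda embedding, and $\Model_\calP^\Omega\subset\Psh(\calP)$ is closed under limits being reflective; since $\Model_\calP^\Omega$ is stable by \cref{lem:stableisstable}, finite limits and finite colimits coincide there, so $h\colon\calM\to\Model_\calP^\Omega$ preserves finite colimits.

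For (1), fix $Y\in\calM$ and let $\calF_Y\subset\calM$ denote the full subcategory of $X$ for which the canonical comparison $\Map_\calM(X,Y)\to\Map_{\calP}(h(X),h(Y))$ is an equivalence. By the Yoneda lemma, $\calP\subset\calF_Y$. Given a cofiber sequence $A\to B\to C$ in $\calM$ with $A,B\in\calF_Y$, preservation of finite colimits by $h$ identifies $h(C)$ as the cofiber of $h(A)\to h(B)$ in $\Model_\calP^\Omega$, and mapping both sides into $h(Y)$ presents them as fibers of equivalent maps, so $C\in\calF_Y$; analogous closure under fibers and retracts follows, so $\calF_Y$ contains the thick subcategory generated by $\calP$.

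For (2)(a), first observe that filtered colimits in $\Model_\calP^\Omega$ are computed pointwise since finite limits commute with filtered colimits of spaces, so each $h(P)$ is compact in $\Model_\calP^\Omega$. Writing $\calP^c\subset\calP$ for the subcategory of objects compact in $\calM$, the compactness hypothesis together with conservativity of $h$ realizes $\calM$ as compactly generated stably presentable with $\calM^\omega$ the thick subcategory of $\calP^c$, so $\calM\simeq\operatorname{Ind}(\calM^\omega)$; likewise $\Model_\calP^\Omega\simeq\operatorname{Ind}(\text{thick}(h(\calP^c)))$. By (1), $h$ restricts to a fully faithful functor $\calM^\omega\to(\Model_\calP^\Omega)^\omega$, whose essential image covers $(\Model_\calP^\Omega)^\omega$, and $\operatorname{Ind}$-extending gives the equivalence $\calM\simeq\Model_\calP^\Omega$.

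For (2)(b), the hypothesis immediately makes $\calM$ equal to the thick subcategory generated by $\calP$, so $h$ is fully faithful on all of $\calM$ by (1). For essential surjectivity I would invoke the adjoint functor theorem to produce a left adjoint $\tilde L\colon\Model_\calP^\Omega\to\calM$ with $\tilde L h(P)\simeq P$; given $X\in\Model_\calP^\Omega$, expressing $\tilde L X$ as $\operatorname{colim}_\calJ P_j$ in $\calM$ and applying $h$ (which preserves finite colimits) produces $h\tilde L X\simeq\operatorname{colim}_\calJ h(P_j)$, with the unit $X\to h\tilde L X$ ideally identifying $X$ with this colimit. The main obstacle lies here: verifying the unit is an equivalence in case (b) requires a more delicate argument than in case (a), since one lacks the compactness tools, and the plan would be to argue directly that every $X\in\Model_\calP^\Omega$ admits a $\calJ$-colimit presentation from $\calP$, perhaps by analyzing the interplay between the localization $\Psh(\calP)\to\Model_\calP^\Omega$ and the hypothesis on $\calM$.
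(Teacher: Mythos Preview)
Your argument for (1) matches the paper's: $h$ lands in $\Model_\calP^\Omega$, preserves finite colimits by stability, and the thick-subcategory closure finishes it.

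For (2)(a) you take a genuinely different route via $\operatorname{Ind}$-categories, whereas the paper argues uniformly for both (a) and (b) by showing that $h$ preserves \emph{all} small colimits, and then uses the adjunction $\tilde L \dashv h$: once $h$ is colimit-preserving, $h\tilde L$ is a colimit-preserving endofunctor of $\Model_\calP^\Omega$ which is the identity on representables, hence the identity; and $\tilde L h\simeq\mathrm{id}$ follows from conservativity of $h$ in case (a), or from full faithfulness via (1) in case (b). Your $\operatorname{Ind}$ approach is valid, but your justification that filtered colimits in $\Model_\calP^\Omega$ are pointwise is incomplete as stated: the loop condition is a finite-limit condition, but the product condition defining $\Model_\calP$ involves \emph{arbitrary} products. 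What rescues you is precisely hypothesis (a): compactness of the generators makes $\calP$ an $\omega$-bounded theory, so $\Model_\calP\simeq\Psh^{\Pi_\omega}(\calP_0)$ and filtered colimits there are pointwise. You should say this.

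For (2)(b) you have correctly located a genuine gap in your own argument. The missing idea is the paper's trick for showing $h$ preserves \emph{arbitrary} coproducts, not just finite ones. Given $\{M_i\}_{i\in I}$ in $\calM$, write each $M_i\simeq\colim_{j\in\calJ}P_{i,j}$ with $P_{i,j}\in\calP$ and interchange:
\[
h\Bigl(\bigoplus_{i\in I}M_i\Bigr)\simeq h\Bigl(\colim_{j\in\calJ}\bigoplus_{i\in I}P_{i,j}\Bigr)\simeq\colim_{j\in\calJ}h\Bigl(\bigoplus_{i\in I}P_{i,j}\Bigr)\simeq\colim_{j\in\calJ}\bigoplus_{i\in I}h(P_{i,j})\simeq\bigoplus_{i\in I}h(M_i).
\]
The second equivalence uses that $\calJ$ is finite and $h$ preserves finite colimits; the third uses that $\calP$ is closed under coproducts in $\calM$, so $\bigoplus_i P_{i,j}\in\calP$ and $h$ takes this to the coproduct of representables in $\Model_\calP^\Omega$; the last reverses the interchange in $\Model_\calP^\Omega$. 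Now $h$ preserves finite colimits and arbitrary coproducts, hence all colimits, and the unit argument you outlined goes through without further difficulty.
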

\begin{proof}
(1)~~ Write $k\colon\calM\rightarrow\Model_\calP^\Omega$. As $k$ is a limit-preserving functor between stable categories, $k$ preserves finite colimits. Fix $Y\in\calM$. Then the collection of $X\in\calM$ such that
\[
\Map_\calM(X,Y)\rightarrow\Map_{\calP}(k(X),k(Y))
\]
is an equivalence is a thick subcategory of $\calM$ containing $\calP$, proving (1).

(2)~~ First we claim that in either case $k$ preserves all colimits. In case (a), $k$ preserves filtered colimits, so this follows from preservation of finite colimits. In case (b), it is sufficient to verify that $k$ preserves coproducts. Given a collection $\{M_i:i\in I\}$ of objects of $\calM$, we may write $M_i\simeq\colim_{j\in\calJ}P_{i,j}$ with $P_{i,j}\in\calP$, and so compute
\[
k\left(\bigoplus_{i\in I}M_i\right)\simeq k\left(\colim_{j\in\calJ}\bigoplus_{i\in I}P_{i,j}\right)\simeq\colim_{j\in\calJ}\bigoplus_{i\in I}k\left(P_{i,j}\right)\simeq\bigoplus_{i\in I}k\left(M_i\right),
\]
as $k$ preserves all finite colimits and all small coproducts of objects of $\calP$.

Now as $\calM$ admits small colimits, $k$ admits a left adjoint $L$, and the fact that $k$ preserves small colimits implies that $X\simeq kLX$ for $X\in\Model_\calP^\Omega$. It is then sufficient to verify that $L k M \simeq M$ for $M\in\calM$. This is immediate in case (b), and in case (a) follows as $k$ is conservative and $k M\simeq k L k M$.
\end{proof}

If $\calP$ is stable, then as $\Model_\calP^\Omega$ is stable, $\calP$ is additive. The structure of $\Model_{\h\calP}^\heart$ in this case can be understood through the following.

\begin{prop}\label{prop:additiveabelian}
\hphantom{blank}
\begin{enumerate}
\item If $\calP$ is a discrete additive theory, then $\Model_\calP^\heart$ is a complete and cocomplete abelian category with enough projectives;
\item If $\calA$ is a cocomplete abelian category and $\calP\subset\calA$ is a full subcategory consisting of projective objects and closed under coproducts such that every $M\in\calA$ admits a projective resolution by objects of $\calP$, then $\calA\simeq\Model_\calP^\heart$.
\end{enumerate}
\end{prop}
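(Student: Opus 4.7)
For part (1), the additivity of $\calP$ equips every object with a canonical coabelian-group structure, so each $X \in \Model_\calP^\heart$ naturally takes values in abelian groups, identifying $\Model_\calP^\heart$ with the category of additive functors $\calP^\op \to \Ab$. Small limits and colimits exist by the $1$-categorical analogue of \cref{cor:modelsresolved}, with kernels and cokernels computed pointwise in $\Ab$; this pointwise description makes $\Model_\calP^\heart$ abelian. Each representable $h(P)$ is projective because $\Map_\calP(h(P),-) = \operatorname{ev}_P$ is exact, and \cref{cor:modelsresolved} (in its $1$-categorical form) writes every $X$ as a reflexive coequalizer of representables, giving enough projectives.

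For part (2), left Kan extend the inclusion $i\colon \calP \hookrightarrow \calA$ along $h\colon \calP \to \Model_\calP^\heart$ to obtain $F\colon \Model_\calP^\heart \to \calA$. The $1$-categorical analogue of \cref{thm:recognitiontheorem}(1,2) ensures $F$ preserves reflexive coequalizers by construction, and preserves coproducts because $\calP$ is closed under coproducts in $\calA$, so $F$ is cocontinuous. Its right adjoint is the restricted Yoneda embedding $G\colon \calA \to \Model_\calP^\heart$ with $G(A)(P) = \calA(P,A)$, which lands in $\Model_\calP^\heart$ because $\calA$-coproducts of objects of $\calP$ lie in $\calP$ and turn into products of mapping sets.

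It remains to verify that $F$ is fully faithful and $G$ is conservative, at which point $F \dashv G$ is an adjoint equivalence. Full faithfulness follows from the $1$-categorical analogue of \cref{thm:recognitiontheorem}(3): $i$ is fully faithful, and $\calA(P,-)$ preserves reflexive coequalizers for $P \in \calP$ because such coequalizers in an abelian category are cokernels of difference maps, and projectivity makes $\calA(P,-)$ exact. For conservativity, suppose $\phi\colon A \to B$ has $G(\phi)$ an isomorphism. If $\ker\phi \neq 0$, cover it by an epimorphism $P \twoheadrightarrow \ker\phi$ with $P \in \calP$; the composite $P \to A$ is nonzero but vanishes under $\phi$, contradicting injectivity of $G(\phi)(P)$. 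Symmetrically, if $\coker\phi \neq 0$, cover it by $P \twoheadrightarrow \coker\phi$, lift via projectivity to $P \to B$, factor this lift through $\phi$ using surjectivity of $G(\phi)(P)$, and observe the image in $\coker\phi$ vanishes — a contradiction. The main obstacle is simply packaging the hypotheses of \cref{thm:recognitiontheorem} correctly; once in place, the resolution assumption does all the real work of conservativity via these projective-cover arguments on $\ker\phi$ and $\coker\phi$.
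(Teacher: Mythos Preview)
Your proof is correct and follows essentially the same approach as the paper: identify $\Model_\calP^\heart$ with $\Ab$-valued additive functors and use that finite colimits there are pointwise, then for (2) invoke the $1$-categorical recognition theorem with conservativity of the restricted Yoneda embedding. The paper is terser---it asserts conservativity as an immediate consequence of the resolution hypothesis without spelling out the $\ker\phi$/$\coker\phi$ argument, and leaves the full faithfulness check implicit---but your expanded treatment is sound and tracks the same skeleton.
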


\begin{proof}
(1)~~ Observe that as $\calP$ is additive, every model $X\colon \calP^\op\rightarrow\Set$ admits an essentially unique lift through $\Ab\rightarrow\Set$. Thus $\Model_\calP^\heart$ is equivalent to the category of $\Ab$-valued models of $\calP$, which is a full subcategory of $\Psh(\calP,\Ab)$ closed under finite limits and colimits. This implies that $\Model_\calP^\heart$ is abelian, and that it is complete and cocomplete with enough projectives follows from \cref{cor:modelsresolved}.

(2)~~ Fix such $\calA$ and $\calP\subset\calA$. As $\calA$ admits small colimits, the restricted Yoneda embedding $h\colon \calA\rightarrow\Model_\calP^\heart$ admits a left adjoint $L\colon\Model_\calP^\heart\rightarrow\calA$. As $\Model_\calP^\heart$ is the free  $1$-categorical cocompletion of $\calP$ under reflexive coequalizers, we must only verify that $h$ is conservative, which follows from the assumption that every $M\in\calA$ is resolved by objects of $\calP$.
\end{proof}

We now move on to methods that allow for the production of unstable examples.

\begin{prop}\label{prop:monadicforget}
Let $\calP$ and $\calP'$ be theories, and let $f\colon\calP'\rightarrow\calP$ be an essentially surjective coproduct-preserving functor.
\begin{enumerate}
\item Restriction $f^\ast\colon\Model_{\calP}\rightarrow\Model_{\calP'}$ is the forgetful functor of a monadic adjunction;
\item If $\calP$ and $\calP'$ are loop theories and $f$ preserves tensors by finite wedges of spheres, then restriction $f^\ast\colon\Model_\calP^\Omega\rightarrow\Model_{\calP'}^\Omega$ is the forgetful functor of a monadic adjunction.
\end{enumerate}
\end{prop}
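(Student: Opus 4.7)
The plan is to apply the $\infty$-categorical Barr-Beck monadicity theorem in both cases; the work reduces to producing a left adjoint, verifying conservativity, and observing that the split realization hypothesis is automatic.

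First I would check that $f^\ast$ is well-defined as claimed. If $Y\in\Model_\calP$ and $\{P_i':i\in I\}$ is a set of objects in $\calP'$, then since $f$ preserves coproducts,
\[
(f^\ast Y)\Bigl(\coprod_{i}P_i'\Bigr)= Y\Bigl(\coprod_{i}f(P_i')\Bigr)\simeq \prod_{i} Y(f(P_i'))=\prod_{i}(f^\ast Y)(P_i'),
\]
so $f^\ast$ lands in $\Model_{\calP'}$. In case (2), the analogous computation with $F\otimes P'$ in place of $\coprod P_i'$, using that $f$ preserves tensors by finite wedges of spheres, shows that $f^\ast$ sends $\Model_\calP^\Omega$ into $\Model_{\calP'}^\Omega$.

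To produce the left adjoint in (1), I would appeal to the recognition theorem (\cref{thm:recognitiontheorem}): the functor $h\circ f\colon\calP'\rightarrow\Model_\calP$ preserves coproducts, so its left Kan extension along $h\colon\calP'\rightarrow\Model_{\calP'}$ gives a colimit-preserving functor $f_!\colon\Model_{\calP'}\rightarrow\Model_\calP$, and on representables the adjunction identity
\[
\Map_\calP(f_!h(P'),Y)\simeq \Map_\calP(h(f(P')),Y)\simeq Y(f(P'))\simeq \Map_{\calP'}(h(P'),f^\ast Y)
\]
extends by colimit preservation to give $f_!\dashv f^\ast$. In case (2), both $\Model_\calP^\Omega$ and $\Model_{\calP'}^\Omega$ are presentable by \cref{lem:looploc}, and the restriction of $f^\ast$ to $\Omega$-models preserves limits (the $\Omega$-subcategories are reflective, hence closed under limits in the ambient model categories, and $f^\ast$ preserves pointwise limits), so the adjoint functor theorem produces a left adjoint; explicitly it is $L_\calP\circ f_!\circ i_{\calP'}$, where $i_{\calP'}$ is the inclusion and $L_\calP$ is the $\Omega$-localization.

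For conservativity I would use essential surjectivity of $f$: since equivalences in $\Model_\calP$ (and hence in its reflective subcategory $\Model_\calP^\Omega$) are detected pointwise, if $f^\ast\phi$ is an equivalence then $\phi_{f(P')}$ is an equivalence for every $P'\in\calP'$, and every object of $\calP$ is equivalent to some $f(P')$. Finally, to invoke Barr-Beck (\emph{HA}~4.7.3.5), I note that the source categories $\Model_\calP$ and $\Model_\calP^\Omega$ are presentable and so admit all geometric realizations, while $f^\ast$-split simplicial objects have absolute realizations and are therefore preserved by any functor; combined with conservativity this yields monadicity. The only step I expect to require care is precisely this last observation in case (2): there is no reason to expect $f^\ast$ to preserve \emph{all} geometric realizations in the $\Omega$-model categories, since these are computed by applying $L_\calP$ to pointwise realizations and $L_\calP$ need not commute with $f^\ast$—but it is exactly here that the split hypothesis of Barr-Beck rescues the argument.
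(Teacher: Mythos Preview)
Your approach is the same as the paper's---construct a left adjoint, check conservativity via essential surjectivity, then invoke Barr--Beck---and most of the details are fine. There is, however, a small but genuine gap in your final step.

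You write that ``$f^\ast$-split simplicial objects have absolute realizations and are therefore preserved by any functor.'' This is not quite right. If $X_\bullet$ in $\Model_\calP$ is $f^\ast$-split, it is $f^\ast X_\bullet$ that is split in $\Model_{\calP'}$, and it is the realization $|f^\ast X_\bullet|$ that is absolute. Absoluteness tells you that $|f^\ast X_\bullet|$ is preserved by functors \emph{out of} $\Model_{\calP'}$; it does not by itself give you the comparison $f^\ast|X_\bullet|\simeq|f^\ast X_\bullet|$ that Barr--Beck requires. The simplicial object $X_\bullet$ itself need not be split in $\Model_\calP$.

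The paper closes this gap by using essential surjectivity a second time: for every $P\in\calP$ there is $P'\in\calP'$ with $f(P')\simeq P$, so $X_\bullet(P)\simeq(f^\ast X_\bullet)(P')$ is a split simplicial $\infty$-groupoid. Thus $X_\bullet$ is \emph{pointwise} split, its pointwise realization already lies in $\Model_\calP$ (and, in case~(2), in $\Model_\calP^\Omega$, since split realizations are preserved by the functors $Y\mapsto Y(F\otimes P)\simeq Y(P)^F$), and $f^\ast$---being restriction---manifestly preserves pointwise realizations. You correctly anticipated that case~(2) is the delicate one; this pointwise-split observation is exactly what makes it go through without needing $L_\calP$ to commute with $f^\ast$.
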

\begin{proof}
Observe that both instances of $f^\ast$ are right adjoints, with left adjoints left Kan extending $f$ (see \cref{rmk:func1} and \cref{rmk:func2}). Moreover, the assumption that $f$ is essentially surjective implies that each $f^\ast$ is conservative. By Beck's monadicity theorem \cite[Theorem 4.7.3.5]{lurie2017higheralgebra}, it is sufficient to verify that $f^\ast$ creates $f^\ast$-split geometric realizations. Indeed, split geometric realizations are in particular pointwise geometric realizations, so this follows from the fact that $f$ is essentially surjective.
\end{proof}

\begin{prop}\label{prop:monadtheory}
Let $\calP$ be a theory, and let $t_!\colon\Model_{\calP}\rightarrow\Model_{\calP}$ be a monad which preserves geometric realizations, so that $t_!$ is the left Kan extension of its restriction $t$ to $\calP$. Let $T\calP\subset\Alg_{t_!}$ be the full subcategory spanned by objects of the form $t(P)$ for $P\in\calP$. Then $\Alg_{t_!}\simeq\Model_{T\calP}$.
\end{prop}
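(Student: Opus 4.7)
The plan is to apply \cref{thm:recognitiontheorem} to the functor $F\colon \Model_{T\calP} \rightarrow \Alg_T$ obtained by left Kan extending the inclusion $T\calP \subset \Alg_T$ along the Yoneda embedding. Write $U\colon \Alg_T \rightarrow \Model_\calP$ and $\Phi\colon \Model_\calP \rightarrow \Alg_T$ for the monadic free-forgetful adjunction associated to $T$, so that $t(P) = \Phi(h(P))$ for $P \in \calP$.

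First I would verify that $T\calP$ is itself a theory in the sense of \cref{def:malc}. Since $\Phi$ is a left adjoint it preserves coproducts, giving
\[
\coprod_{i\in I} t(P_i) \simeq \Phi\Big(\coprod_{i\in I} h(P_i)\Big) \simeq t\Big(\coprod_{i\in I} P_i\Big) \in T\calP,
\]
so $T\calP$ is closed under small coproducts in $\Alg_T$. Applying $\Phi$ to the coherd structure on each $h(P)\in\Model_\calP$ (inherited from $P\in\calP$) produces a coherd structure on $t(P)$, and boundedness for $T\calP$ follows from boundedness for $\calP$ by choosing $(T\calP)_0 = t(\calP_0)$. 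Next, since $T$, and hence $T^2$, preserves geometric realizations, the standard monadicity criterion shows that $U$ creates geometric realizations, so $\Alg_T$ admits geometric realizations and they are computed at the level of $\Model_\calP$.

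With these preliminaries in hand, I would apply \cref{thm:recognitiontheorem}(3): (a) $F$ preserves geometric realizations by construction as a left Kan extension; (b) the inclusion $T\calP \subset \Alg_T$ is fully faithful by definition; and (c) for each $P \in \calP$ the identification
\[
\Map_{\Alg_T}(t(P), -) \simeq \Map_{\Model_\calP}(h(P), U(-))
\]
combined with \cref{cor:modelsresolved}(3) shows that $\Map_{\Alg_T}(t(P), -)$ preserves geometric realizations. Hence $F$ is fully faithful.

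To conclude via \cref{thm:recognitiontheorem}(4), I must check that $F$ preserves colimits and that its right adjoint is conservative. Colimit-preservation reduces to preservation of coproducts, which is precisely the identification $\coprod_i t(P_i) \simeq t(\coprod_i P_i)$ recorded above. The right adjoint $G$ sends $A$ to the model $t(P) \mapsto U(A)(P)$, so $G$ is conservative because $U$ is conservative by Beck's monadicity theorem and representables jointly detect equivalences in $\Model_\calP$. The main obstacle I anticipate is the bookkeeping required to show that $T\calP$ really is a theory—especially transporting the Mal'cev condition from $\calP$ to $T\calP$ via $\Phi$ and verifying boundedness—rather than the application of the recognition theorem itself, which is largely formal once the setup is in place.
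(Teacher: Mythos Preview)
Your proposal is correct and follows exactly the approach the paper takes: apply \cref{thm:recognitiontheorem} to the left Kan extension of the inclusion $T\calP\subset\Alg_T$. The paper's proof is a one-line reference to that theorem, and you have simply filled in the verification of hypotheses (a)--(f), which is precisely what that reference entails; your identification $\Map_{\Alg_T}(t(P),-)\simeq\Map_{\calP}(h(P),U(-))$ for condition (c) and the conservativity argument for (f) are the correct checks.
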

\begin{proof}
This follows by an application of \cref{thm:recognitiontheorem} to the functor $\Model_{T\calP}\rightarrow\Alg_{t_!}$ obtained by left Kan extension from the inclusion $T\calP\subset\Alg_{t_!}$.
\end{proof}

\begin{theorem}\label{thm:recmon}
Let $\calP$ be a loop theory, and let $T$ be an accessible monad on $\Model_\calP^\Omega$. Let $t=Th$ denote the restriction of $T$ to $\calP$, and let $T\calP\subset\Alg_T$ be the full subcategory spanned by objects of the form $t(P)$ for $P\in\calP$. Write $L\colon\Model_{\calP}\rightarrow\Model_\calP^\Omega$ for the localization. Then the restricted Yoneda embedding yields an equivalence $\Alg_T\simeq\Model_{T\calP}^\Omega$ if and only if the canonical map $L t_! X\rightarrow T X$ is an equivalence for $X\in\Model_\calP^\Omega$. In particular, $\Alg_T\simeq\Model_{T\calP}^\Omega$ if $T$ preserves geometric realizations.
\end{theorem}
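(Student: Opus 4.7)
The plan is to recognize $k$ as a comparison functor between two monadic adjunctions over $\Model_\calP^\Omega$, and to translate the equivalence condition on $k$ into a condition on their monads. First I would verify that $T\calP$ is a loop theory: coproducts and coherd structure are inherited from $\calP$ through the free $T$-algebra functor $\mathrm{Free}_T\colon\Model_\calP^\Omega\to\Alg_T$, and since $\mathrm{Free}_T$ preserves colimits, for any finite wedge of spheres $F$ one has $F\otimes_{\Alg_T}t(P)\simeq\mathrm{Free}_T(F\otimes h(P))\simeq t(F\otimes P)$, so $T\calP$ is closed under such tensors. Direct computation then gives $k(A)(t(P))=\Map_{\Alg_T}(\mathrm{Free}_T h(P),A)=UA(P)$, which shows that $k\colon\Alg_T\to\Psh(T\calP)$ lands in $\Model_{T\calP}^\Omega$ (the loop condition follows from the tensor--mapping adjunction in $\Alg_T$) and identifies $t^\Omega\circ k = U$, where $t^\Omega\colon\Model_{T\calP}^\Omega\to\Model_\calP^\Omega$ is restriction along $t$. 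Being limit-preserving and accessible between presentable categories, $k$ admits a left adjoint $L_T$.

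By \cref{prop:monadicforget}(2), the adjunction $t^\Omega_!\dashv t^\Omega$ is monadic with monad $M:=t^\Omega t^\Omega_!$, while $\mathrm{Free}_T\dashv U$ is monadic with monad $T$. The identity $U=t^\Omega k$ exhibits $k$ as the canonical comparison between these two monadic adjunctions over $\Model_\calP^\Omega$. Applying $t^\Omega$ to the unit $\mathrm{id}\to kL_T$ of $L_T\dashv k$ and precomposing with $t^\Omega_!$ yields a natural map of monads $\alpha\colon M\to T$; the standard yoga of monadic comparisons, together with the conservativity of $t^\Omega$, shows that $k$ is an equivalence if and only if $\alpha$ is an equivalence, equivalently $MX\to TX$ is an equivalence for every $X\in\Model_\calP^\Omega$.

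The main obstacle is then to match $\alpha$ with the canonical comparison $Lt_! X\to TX$ of the theorem statement. On representables $X=h(P)$ both sides evaluate to $t(P)$. For general $X$, one uses the canonical presentation of $X$ as a geometric realization of representables $X\simeq|h(P_\bullet)|$ in $\Model_\calP$ (via \cref{cor:modelsresolved}); preservation of colimits by $L$, $t_!$, and $t^\Omega_!$ then gives $Lt_! X\simeq|t(P_\bullet)|^\Omega$ in $\Model_\calP^\Omega$ and $t^\Omega_! X\simeq|h(t(P_\bullet))|^\Omega$ in $\Model_{T\calP}^\Omega$. The Yoneda computation $t^\Omega h(t(P))\simeq Th(P)=t(P)$, combined with the compatibility of $t^\Omega$ with the specific geometric realizations of representables (which reduces to verifying that $t^*$ commutes with $L_{T\calP}$ on the relevant presheaves), then yields $MX\simeq|t(P_\bullet)|^\Omega\simeq Lt_! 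X$, under which $\alpha$ becomes the canonical comparison. The concluding ``in particular'' follows immediately: if $T$ preserves geometric realizations, $TX\simeq|Th(P_\bullet)|^\Omega\simeq|t(P_\bullet)|^\Omega\simeq Lt_! X$ on the above presentation, so $\alpha$ is an equivalence and hence so is $k$.
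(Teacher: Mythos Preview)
Your approach is essentially the same as the paper's: factor the forgetful functor as $\Alg_T\xrightarrow{k}\Model_{T\calP}^\Omega\xrightarrow{t^\ast}\Model_\calP^\Omega$, recognize both composites to $\Model_\calP^\Omega$ as monadic via \cref{prop:monadicforget}, and reduce the equivalence of $k$ to the comparison of monads. The paper's proof is two sentences long and simply asserts that the monad of $t^\ast\colon\Model_{T\calP}^\Omega\to\Model_\calP^\Omega$ is $Lt_!$; you spell this identification out.

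Your parenthetical ``which reduces to verifying that $t^\ast$ commutes with $L_{T\calP}$ on the relevant presheaves'' is the only place where the argument is not fully explicit, but it is correct and easy: both $t^\ast L_{T\calP}$ and $L_\calP t^\ast$ are functors $\Model_{T\calP}\to\Model_\calP^\Omega$ that preserve geometric realizations (each factor does, the $t^\ast$'s because realizations are pointwise and the $L$'s as left adjoints), and they agree on representables since $t^\ast h(t(P))\simeq t(P)\in\Model_\calP^\Omega$ is already local on both sides. By \cref{cor:modelsresolved}(2) they therefore agree. This is exactly the content of \cref{lem:loccommute}, which the paper states later; you are right that it does not depend on the conclusion of the theorem and can be invoked here. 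With that filled in, your identification $M\simeq Lt_!$ is clean and the rest follows.
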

\begin{proof}
Observe that there is a factorization of forgetful functors 
\begin{center}\begin{tikzcd}
\Alg_T\ar[r,"h"]&\Model_{T\calP}^\Omega\ar[r,"t^\ast"]& \Model_\calP^\Omega
\end{tikzcd}.\end{center}
By \cref{prop:monadicforget}, both $\Alg_T\rightarrow\Model_\calP^\Omega$ and $\Model_{T\calP}^\Omega\rightarrow\Model_\calP^\Omega$ are the forgetful functors of monadic adjunctions, with associated monads $T$ and $L t_!$. The above factorization gives rise to a map $L t_!\rightarrow T$ of monads which is an equivalence if and only if $\Alg_T\simeq\Model_{T\calP}^\Omega$.
\end{proof}

In the situation of \cref{thm:recmon}, we would like to identify the algebraic category $\Model_{T\calP}^\heart$. To that end, we have the following; compare \cite[Section 4]{rezk2009congruence}.

\begin{prop}\label{prop:algebraicapproximations}
Let $\calP$ be a theory, and fix a monad on $\Model_{\calP}$ which preserves geometric realizations, and so has underlying functor of the form $t_!$ for some $t\colon\calP\rightarrow\Model_{\calP}$. Let $T\calP\subset\Alg_{t_!}$ be the full subcategory spanned by objects of the form $t(P)$ for $P\in\calP$, so that $\Alg_{t_!}\simeq\Model_{T\calP}$. Then
\begin{enumerate}
\item $t^\ast \colon \Model_{T\calP}^\heart\rightarrow\Model_\calP^\heart$ is the forgetful functor of a monadic adjunction; write $\bbT$ for the associated monad on $\Model_\calP^\heart$.
\item The monad $\bbT$ is determined by natural isomorphisms $\bbT(\pi_0 X)\simeq\pi_0 t_! X$ of $t_!$-algebras for $X\in\Model_{\calP}$.
\item If $\calP$ is a loop theory and $t_!$ is obtained from a monad $T$ on $\Model_\calP^\Omega$, then $\bbT$ can instead be described in terms of $T$ in the following manner:
\begin{enumerate}
\item[(a)]$\bbT$ preserves reflexive coequalizers;
\item[(b)]There are natural maps $\bbT(\pi_0 X)\rightarrow\pi_0 T X$ for $X\in\Model_\calP^\Omega$ which are isomorphisms when $X = h(P)$ for some $P\in\calP$;
\item[(c)]The diagrams
\begin{center}\begin{tikzcd}
\bbT\bbT(\pi_0 X)\ar[r]\ar[d,"\mu"]&\bbT(\pi_0 TX)\ar[r]&\pi_0 TTX\ar[d,"\mu"]\\
\bbT(\pi_0 X)\ar[rr]&&\pi_0 TX
\end{tikzcd}
\begin{tikzcd}
\pi_0 X\ar[dr,"\eta"']\ar[r,"\eta"]&\bbT(\pi_0 X)\ar[d]\\
&\pi_0 T X
\end{tikzcd}\end{center}
commute.
\end{enumerate}
\end{enumerate}
\end{prop}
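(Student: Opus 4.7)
I would apply the $1$-categorical Beck monadicity theorem to $t^\ast\colon\Model_{T\calP}^\heart\to\Model_\calP^\heart$. It is conservative, since essential surjectivity of $t\colon\calP\to T\calP$ implies that a map of discrete $T\calP$-models is an equivalence iff so after restriction to $\calP$, and it preserves reflexive coequalizers, as these are computed pointwise in either $1$-category. A left adjoint is constructed by sending $Y\in\Model_\calP^\heart$ to $\bbT(Y) := \pi_0 t_!(\iota Y)$, where $\iota\colon\Model_\calP^\heart\hookrightarrow\Model_\calP$ is the inclusion; this lies in $\Model_{T\calP}^\heart$ because $\pi_0$ is valued in discrete presheaves and commutes with products. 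The adjunction $\bbT\dashv t^\ast$ is obtained by composing $t_!\dashv t^\ast$ with the reflection $\pi_0\dashv\iota$ on discrete objects, using that $t^\ast$ preserves discreteness.

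\textbf{Plan for (2).} I would verify this by Yoneda: for $Z\in\Model_{T\calP}^\heart$ and $X\in\Model_\calP$,
\[
\Hom(\pi_0 t_! X, Z)\cong\Hom(t_! X, Z)\cong\Hom(X, t^\ast Z)\cong\Hom(\pi_0 X, t^\ast Z)\cong\Hom(\bbT(\pi_0 X), Z),
\]
using discreteness of $Z$ and $t^\ast Z$ at the two $\pi_0$-adjunction steps. This yields a natural isomorphism $\pi_0 t_! X\cong\bbT(\pi_0 X)$; as both sides are realized as the reflection of $t_! X\in\Model_{T\calP}$ into $\Model_{T\calP}^\heart$, their $t_!$-algebra structures agree. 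The ``determined by'' clause records that $\bbT$ is thereby pinned down, since every object of $\Model_\calP^\heart$ is $\pi_0$ of itself and naturality handles morphisms.

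\textbf{Plan for (3).} Assume now the hypotheses of the third clause and write $\iota'\colon\Model_\calP^\Omega\hookrightarrow\Model_\calP$ with left adjoint $L$; I interpret the hypothesis ``$t_!$ is obtained from $T$'' as providing $T\simeq L t_!\iota'$ as a monad on $\Model_\calP^\Omega$, cf.\ \cref{thm:recmon}. For (a): reflexive coequalizers in $\Model_\calP^\heart$ arise as $\pi_0$ of the corresponding geometric realizations in $\Model_\calP$, and $\pi_0 t_!$ preserves geometric realizations by the hypothesis on $t_!$ and left-adjointness of $\pi_0$; combined with (2) this shows $\bbT$ preserves reflexive coequalizers. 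For (b): define the natural map to be $\pi_0$ applied to the localization $t_!(\iota' X)\to L t_!(\iota' X)\simeq T X$, composed with the iso of (2); when $X=h(P)$ with $P\in\calP$, $t_! h(P)\simeq t(P)\in\Model_\calP^\Omega$, so the localization map is already an equivalence and the resulting map is an iso. For (c): both $\mu_T$ and $\eta_T$ are induced from $\mu_{t_!}$ and $\eta_{t_!}$ via $L$, so the squares commute by naturality of $L$ together with the identifications in (2) and (b).

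\textbf{Main obstacle.} The most delicate point is the coherence in (3)(c): one must verify that the monad structure on $T$ is induced \emph{as a monad} (not merely as a functor) from that of $t_!$ via $L$. This should be extracted from the equivalence $\Alg_T\simeq\Model_{T\calP}^\Omega$ in \cref{thm:recmon}, which ensures that the multiplication and unit on $T\simeq L t_!\iota'$ coming from the monad $T$ on $\Model_\calP^\Omega$ agree with those induced by $\mu_{t_!}$ and $\eta_{t_!}$ via $L$.
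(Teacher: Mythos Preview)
Your proposal is correct and follows essentially the same route as the paper. For (1) you spell out the crude monadicity argument the paper invokes in one line; for (3) your construction of the comparison map $\bbT(\pi_0 X)\cong\pi_0 t_! X\to\pi_0 TX$ via the localization $t_!\iota' X\to L t_!\iota' X\simeq TX$ is exactly the paper's, and the coherence concern you flag in (3)(c) is real but mild, handled (as you say) by the identification of monads in \cref{thm:recmon}.

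The one genuine difference is in (2). The paper builds the map $\bbT(\pi_0 X)\to\pi_0 t_! X$ by hand from the universal property of the free $\bbT$-algebra, checks it is an isomorphism on representables $X=h(P)$, and then propagates to all $X$ by observing that both sides preserve geometric realizations computed in $\Model_\calP^\heart$. Your Yoneda chain
\[
\Hom(\pi_0 t_! X,Z)\cong\Hom(t_! X,Z)\cong\Hom(X,t^\ast Z)\cong\Hom(\pi_0 X,t^\ast Z)\cong\Hom(\bbT(\pi_0 X),Z)
\]
is cleaner and avoids the reduction-to-representables step entirely; it directly identifies both sides as the reflection of $t_! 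X$ into discrete $T\calP$-models. The paper's argument, on the other hand, makes the ``preserves geometric realizations'' property of $\bbT$ visible along the way, which feeds into (3)(a). Either approach is fine.
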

\begin{proof}
(1)~~ This follows immediately from the crude monadicity theorem.

(2)~~ First observe that $\Model_{T\calP}^\heart$ can be identified as the category of discrete $t_!$-algebras. In particular, if $X$ is a $t_!$-algebra, then $\pi_0 X$ is a $t_!$-algebra. As a consequence, for $X\in\Model_{\calP}$ the map $X\rightarrow t_! X\rightarrow \pi_0 t_! X$ extends uniquely to a map $\bbT(\pi_0 X)\rightarrow\pi_0 t_! X$ of $t_!$-algebras, which is evidently an isomorphism when $X=h(P)$ with $P\in\calP$. As this is a natural transformation of functors which preserve geometric realizations computed in $\Model_\calP^\heart$, it is a natural isomorphism, verifying (2).

(3)~~ As there are maps $t_! X\rightarrow T X$ for $X\in\Model_\calP^\Omega$, we can take as our natural transformation the map $\bbT(\pi_0 X)\simeq\pi_0 t_! X\rightarrow \pi_0 T X$; this has the indicated properties as $t(P) = Th(P)$ by assumption, and these evidently determine $\bbT$.
\end{proof}

In part (3) of \cref{prop:algebraicapproximations}, the assumption that $t_!$ is obtained from a monad $T$ on $\Model_\calP^\Omega$ is, by \cref{prop:monadicforget}, equivalent to the assumption that $t(P)\in\Model_\calP^\Omega$ for $P\in\calP$. In \cref{sec:completions}, we will see situations where this can fail; examples where $\Alg_T\simeq\Model_{T\calP}^\Omega$ even when $T$ does not preserve geometric realizations; and examples where the hypotheses of \cref{thm:stabletheory} are not satisfied yet nonetheless $\calM\simeq\Model_\calP^\Omega$.

\section{Stable loop theories}\label{sec:stable}

This section concerns those loop theories $\calP$ that give rise to stable categories. In the stable setting, it is natural to consider the category $\LMod_\calP$ of spectrum-valued models, and corresponding full subcategory $\LMod_\calP^\Omega\subset\LMod_\calP$ of spectrum-valued models that preserve loops. These categories behave somewhat differently from their unstable versions; the most important aspect is that the inclusion $\LMod_\calP^\Omega\subset\LMod_\calP$ has an explicitly describable left adjoint, which we give in \cref{thm:stablelocalization}. 

To illustrate what can be done in this context, we construct in \cref{ssec:lderss} a spectral sequence for computing with suitable functors into $\Model_\calP^\Omega$, and verify in \cref{ssec:monoidal} that it is multiplicative when one would expect it to be; this relies on the material of \cref{sec:app}. In \cref{ssec:stabletowers}, we give a spectral sequence for computing mapping spectra; this can be seen in part as a warmup for the more involved obstruction theory available in unstable contexts given in \cref{ssec:mappingspaceobstructions}, although it is not quite a special case of the latter.

\subsection{Additive and stable theories}\label{ssec:stabletheories}

Fix a theory $\calP$, and suppose moreover that $\calP$ is additive. In this case, it turns out that $\Model_{\calP}$ is close to being stable; specifically, it is prestable in the sense of \cite[Appendix C]{lurie2018spectral}. The properties we need are summarized below. 

Write $\Sp$ for the category of spectra, and write $\LMod_\calP$ for the category of $\Sp$-valued models of $\calP$ and $\LMod_\calP^\cn$ for the category of $\Sp_{\geq 0}$-valued models of $\calP$, where $\Sp_{\geq 0}$ is the category of connective spectra. These categories are all presentable.

\begin{lemma}\label{prop:stabilizeadditive}
Let $\calP$ be an additive theory. Then
\begin{enumerate}
\item Postcomposition with $\Omega^\infty$ yields an equivalence $\LMod_\calP^\cn\simeq\Model_\calP$;
\item The embedding $\Model_{\calP}\simeq\LMod_\calP^\cn\subset\LMod_\calP$ realizes $\LMod_\calP$ as the category of spectrum objects of $\Model_{\calP}$.
\end{enumerate}
\end{lemma}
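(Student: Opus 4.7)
The plan is to reduce both parts to the fact that stabilization commutes with product-preservation constraints, together with the recognition principle for connective spectra.

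For (1), the key observation is that since $\calP$ is additive, finite coproducts are biproducts, and so every $P \in \calP$ carries a canonical cocommutative abelian cogroup structure (the codiagonal together with its inverse witnessing the additive structure). Consequently, for any $X \in \Model_\calP$, each value $X(P)$ acquires a natural grouplike $E_\infty$-monoid structure, functorially in $P$. By the recognition principle for connective spectra (see \cite[Remark 5.2.6.26]{lurie2017higheralgebra}), postcomposition with $\Omega^\infty$ identifies connective spectra with grouplike $E_\infty$-monoids in $\Gpd_\infty$; threading this identification through at each $P$ produces a functor $\Model_\calP \to \LMod_\calP^\cn$ inverse to postcomposition with $\Omega^\infty$.

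For (2), I would argue formally, using that stabilization commutes with limits of presentable categories. Under the bounded presentation $\Model_\calP \simeq \Psh^{\Pi_\kappa}(\calP_0)$ from \cref{prop:modelspresentable}, the stabilization may be computed as
\[
\Sp(\Model_\calP) \simeq \Fun^{\Pi_\kappa}(\calP_0^\op, \Sp(\Gpd_\infty)) = \LMod_\calP,
\]
where the first equivalence comes from commuting $\Sp(\bs)$ past the limit defining $\Psh^{\Pi_\kappa}$. Tracing through, the functor $\Omega^\infty_\ast \colon \Sp(\Model_\calP) \to \Model_\calP$ corresponds to postcomposition with $\Omega^\infty \colon \Sp \to \Gpd_\infty$, so under the identification in (1) it matches the inclusion $\LMod_\calP^\cn \subset \LMod_\calP$.

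The main obstacle I expect is the bookkeeping in (2): one must verify that the natural comparison functor $\LMod_\calP \to \Sp(\Model_\calP)$ induced by the suspension-loop adjunction is an equivalence, and that the connective cover inclusion matches the canonical embedding of the prestable category $\Model_\calP \simeq \LMod_\calP^\cn$ into its stabilization. Both steps are essentially formal once the commutation of $\Sp(\bs)$ with $\Psh^{\Pi_\kappa}$ is in hand, but care is needed to keep track of universes under the bounded setup adopted in \cref{rmk:bounded}.
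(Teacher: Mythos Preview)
Your proposal is correct and follows essentially the same approach as the paper. The paper argues by first noting that $\Model_\calP$ is additive (using the description of colimits from \cref{prop:modelsreflective}) and then citing \cite[Proposition C.1.5.7, Remark C.1.5.9]{lurie2018spectral}, whereas you unpack this by directly invoking the recognition principle on values; for (2), the paper simply observes that $\Omega$ is computed pointwise, which is the content of your commutation of $\Sp(\bs)$ with $\Psh^{\Pi_\kappa}$.
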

\begin{proof}
If $\calP$ is finitary, then one may appeal directly to \cite[Remark C.1.5.9]{lurie2018spectral}; in general, one may appeal to the same by use of the embedding $\Model_\calP\subset\Psh^{\Pi_\omega}(\calP)$. More directly, using the description of colimits in $\Model_{\calP}$ given by \cref{prop:modelsreflective}, it is seen that $\Model_{\calP}$ is additive, from which it follows, as in the proof of \cite[Proposition C.1.5.7]{lurie2018spectral}, that $\Model_{\calP}\simeq\LMod_\calP^\cn$; the second claim follows in turn as $\Omega\colon\Model_{\calP}\rightarrow\Model_{\calP}$ is computed pointwise.
\end{proof}

We may at times abuse notation by identifying $\Model_\calP$ with $\LMod_\calP^\cn$ as a subcategory of $\LMod_\calP$.

Fix now a stable loop theory $\calP$; in particular $\calP$ is additive. Write $\LMod_\calP^\Omega\subset\LMod_\calP$ for the full subcategory of objects $X$ such that $X_\Sigma\simeq\Omega X$. This is distinct from the image of $\Model_\calP^\Omega$ under $\Model_{\calP}\simeq\LMod_\calP^\cn\subset\LMod_\calP$, as objects of $\LMod_\calP^\Omega\subset\Psh(\calP,\Sp)$ will generally take values in nonconnective spectra.

\begin{theorem}\label{thm:stablelocalization}
The inclusion $\LMod_\calP^\Omega\subset\LMod_\calP$ is the inclusion of a reflective subcategory, and the associated localization $L$ on $\LMod_\calP$ is given by
\[
L X = \colim_{n\rightarrow\infty}\Sigma^{-n} X_{\Sigma^{-n}}.
\]
\end{theorem}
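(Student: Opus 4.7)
The proof splits into (i) showing $LX\in\LMod_\calP^\Omega$ and (ii) verifying that the inclusion of the $n=0$ term exhibits $X\to LX$ as the unit of an adjunction with the inclusion $\LMod_\calP^\Omega\subset\LMod_\calP$. The key structural fact is that both the shift $\Sigma$ on $\LMod_\calP$ (from the stable structure on $\Sp$) and the precomposition $(\cdot)_\Sigma$ (coming from the equivalence $\Sigma\colon\calP\to\calP$) are colimit-preserving autoequivalences of $\LMod_\calP$, with respective inverses $\Omega$ and $(\cdot)_{\Sigma^{-1}}$. These two operations agree on representables $h_\Sp(Q)$ via the natural comparison $X_\Sigma\to\Omega X$ (which one obtains by applying $X$ contravariantly to the pushout $\Sigma P=0\sqcup_P 0$ and invoking the universal property of pullback), but they differ in general; the subcategory $\LMod_\calP^\Omega$ is by definition the locus on which this comparison is an equivalence.

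For (i), since $(\cdot)_\Sigma$ preserves colimits and satisfies $((\cdot)_{\Sigma^{-n}})_\Sigma=(\cdot)_{\Sigma^{-n+1}}$, I compute
\[
(LX)_\Sigma=\colim_n\Sigma^{-n}X_{\Sigma^{-n+1}}=\Sigma^{-1}\colim_m\Sigma^{-m}X_{\Sigma^{-m}}=\Omega LX,
\]
where the middle equality is the reindexing $m=n-1$, which does not affect the value of a sequential colimit. This places $LX\in\LMod_\calP^\Omega$.

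For (ii), fix $Y\in\LMod_\calP^\Omega$. Using that $(\cdot)_{\Sigma^{-n}}$ is an autoequivalence of $\LMod_\calP$ with inverse $(\cdot)_{\Sigma^n}$, and that $\Sigma$ on $\LMod_\calP$ has inverse $\Omega$, I rewrite
\[
\Map(LX,Y)=\lim_n\Map(\Sigma^{-n}X_{\Sigma^{-n}},Y)\simeq\lim_n\Sigma^n\Map(X,Y_{\Sigma^n}).
\]
Iterating the loop condition gives $Y_{\Sigma^n}\simeq\Omega^n Y=\Sigma^{-n}Y$ for $Y\in\LMod_\calP^\Omega$, so each term of the tower reduces to $\Sigma^n\Sigma^{-n}\Map(X,Y)=\Map(X,Y)$. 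A naturality chase then identifies the transition maps in the tower with identities under these equivalences, so the projection to $n=0$ is an equivalence; this projection is induced by precomposition with the unit $\eta\colon X\to LX$, exhibiting the claimed adjunction.

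The main technical point is the naturality chase in the last step. The transition maps in the defining colimit are $\Sigma^{-n}\eta_{X_{\Sigma^{-n}}}$, which are composites of several canonical maps; verifying that after the successive identifications (the adjointness of $(\cdot)_{\Sigma^{\pm n}}$ with $(\cdot)_{\Sigma^{\mp n}}$, of $\Sigma^{\pm n}$ with $\Sigma^{\mp n}$, and the explicit equivalence $Y_{\Sigma^n}\simeq\Sigma^{-n}Y$ for $Y\in\LMod_\calP^\Omega$) they all become identities of $\Map(X,Y)$ requires bookkeeping through the tensor-hom and shift adjunctions, but is formal once the structure is unwound.
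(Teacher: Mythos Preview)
Your argument is correct and follows essentially the same route as the paper: verify $LX\in\LMod_\calP^\Omega$ by commuting $(\cdot)_\Sigma$ and $\Omega$ past the colimit and reindexing, then check that each transition map $\Sigma^{-1}X_{\Sigma^{-1}}\to X$ becomes an equivalence after applying $\Map_\calP(-,Y)$ for $Y\in\LMod_\calP^\Omega$. Two small remarks: first, your display $\Map(\Sigma^{-n}X_{\Sigma^{-n}},Y)\simeq\Sigma^n\Map(X,Y_{\Sigma^n})$ only typechecks if $\Map$ denotes the mapping spectrum $\EXT_\calP$ rather than the mapping space---with mapping spaces the correct statement is $\Map(\Sigma^{-n}X_{\Sigma^{-n}},Y)\simeq\Map(X,\Sigma^nY_{\Sigma^n})$, which leads to the same conclusion; second, the paper sidesteps your ``naturality chase'' by observing directly that the composite $\Map_\calP(X,Y)\simeq\Map_\calP(\Sigma^{-1}X_{\Sigma^{-1}},\Sigma^{-1}Y_{\Sigma^{-1}})\simeq\Map_\calP(\Sigma^{-1}X_{\Sigma^{-1}},Y)$ is precisely restriction along the transition map, which makes the identification of the tower with a constant tower immediate.
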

\begin{proof}
As both $X\mapsto \Sigma^{-1} X$ and $X\mapsto X_{\Sigma^{-1}}$ are automorphisms of $\LMod_\calP$, for $X\in\LMod_\calP$ and $Y\in\LMod_\calP^\Omega$ there are equivalences
\[
\Map_\calP(X,Y)\simeq\Map_\calP(\Sigma^{-1} X_{\Sigma^{-1}},\Sigma^{-1}Y_{\Sigma^{-1}})\simeq\Map_\calP(\Sigma^{-1}X_{\Sigma^{-1}},Y),
\]
with composite given by restriction along $\Sigma^{-1}X_{\Sigma^{-1}}\rightarrow X$. Thus if we define $L'X = \colim_{n\rightarrow\infty} \Sigma^{-n}X_{\Sigma^{-n}}$, then $\Map_\calP(X,Y)\simeq\Map_\calP(L' X,Y)$, and to show $LX\simeq L'X$ we must only verify that $L' X \in \LMod_\calP^\Omega$. As $\LMod_\calP$ is stable, finite limits commute past arbitrary colimits. Thus we may compute
\begin{align*}
\Omega L' X \simeq \Omega\colim_{n\rightarrow\infty} \Sigma^{-n}X_{\Sigma^{-n}} \simeq \colim_{n\rightarrow\infty}\Sigma^{-n-1}X_{\Sigma^{-n}} 
\simeq (\colim_{n\rightarrow\infty}\Sigma^{-n}X_{\Sigma^{-n}})_{\Sigma}\simeq (L'X)_{\Sigma},
\end{align*}
showing $L'X\in\LMod_\calP^\Omega$.
\end{proof}

\begin{rmk}
The inclusion $\LMod_\calP^\Omega\subset\LMod_\calP$ is also the inclusion of a coreflective category, with colocalization $R$ on $\LMod_\calP$ given by
\[
R X = \lim_{n\rightarrow\infty}\Sigma^{n}X_{\Sigma^{n}}.
\]
We will not make use of this.
\tqed
\end{rmk}

\begin{cor}\label{cor:stablelocalization}
When $\calP$ is stable, 
\begin{enumerate}
\item If $X\in\Model_\calP^\Omega\subset\LMod_\calP^\cn\subset\LMod_\calP$, then the tower of \cref{thm:stablelocalization} producing $LX$ is exactly the Whitehead tower of $LX$;
\item Postcomposition with $\Omega^\infty$ yields an equivalence $\LMod_\calP^\Omega\simeq\Model_\calP^\Omega$;
\item The full subcategory $\LMod_\calP^\Omega\subset\LMod_\calP$ is closed under all small limits and colimits;
\item The composite $\Model_{\calP}\simeq\LMod_\calP\rightarrow\LMod_\calP^\Omega\simeq\Model_\calP^\Omega$ is left adjoint to the inclusion $\Model_\calP^\Omega\subset\Model_{\calP}$.
\end{enumerate}
\end{cor}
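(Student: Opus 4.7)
The plan is to prove the four assertions in sequence, leveraging the explicit formula $LX = \colim_n \Sigma^{-n}X_{\Sigma^{-n}}$ from \cref{thm:stablelocalization}. For (1), I would argue pointwise in $P \in \calP$: since $X \in \LMod_\calP^{\cn}$, the spectrum $X(\Sigma^{-n}P)$ is connective, so $\Sigma^{-n}X_{\Sigma^{-n}}$ is $(-n)$-connective. The hypothesis $X\in\Model_\calP^\Omega$ upgrades to the identity $X(\Sigma P)\simeq\tau_{\geq 0}\Omega X(P)$ inside $\LMod_\calP^{\cn}$, and iterating yields $\pi_j X(\Sigma^{-m}P)\cong\pi_{j-m}X(P)$ whenever $j\geq m$. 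Plugging into the colimit formula gives $\pi_k LX(P)\cong \pi_k \Sigma^{-n}X_{\Sigma^{-n}}(P)$ for all $k\geq -n$, and the universal property of the connective cover then identifies $\Sigma^{-n}X_{\Sigma^{-n}}\simeq (LX)_{\geq -n}$, so the tower is the Whitehead tower.

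For (2), $\Omega^\infty$ manifestly sends $\LMod_\calP^\Omega$ into $\Model_\calP^\Omega$ by applying $\Omega^\infty$ to the loop identity. The inverse I propose is $X\mapsto LX$, viewing $X$ inside $\LMod_\calP^{\cn}\subset\LMod_\calP$. One composite is handled by (1): $\Omega^\infty LX \simeq (LX)_{\geq 0}\simeq \Sigma^0 X_{\Sigma^0} = X$. For the other, given $Z\in\LMod_\calP^\Omega$, the relation $Z(\Sigma^{-n}P)\simeq\Sigma^n Z(P)$ gives $\Sigma^{-n}(\tau_{\geq 0}Z)_{\Sigma^{-n}}\simeq \tau_{\geq -n}Z$, and $\colim_n \tau_{\geq -n}Z\simeq Z$ yields $L\Omega^\infty Z\simeq Z$. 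For (3), closure under limits is automatic as $\LMod_\calP^\Omega$ is reflective. For colimits, both $(-)_\Sigma$ (restriction along an autoequivalence of $\calP$) and $\Sigma$ preserve colimits in $\LMod_\calP$, so the formula for $L$ does too; then for $\{Z_i\}$ in $\LMod_\calP^\Omega$ with $C = \colim_{\LMod_\calP}Z_i$, the computation $LC\simeq \colim LZ_i\simeq\colim Z_i = C$ shows $C\in\LMod_\calP^\Omega$.

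Finally, for (4), writing the composite left adjoint still as $L$ after identifying $\Model_\calP^\Omega\simeq\LMod_\calP^\Omega$ via (2), I would chain the natural equivalences
\[
\Map_{\Model_\calP^\Omega}(LX,Y)\simeq\Map_{\LMod_\calP^\Omega}(LX,LY)\simeq\Map_{\LMod_\calP}(X,LY)\simeq\Map_{\LMod_\calP^{\cn}}(X,\tau_{\geq 0}LY)\simeq\Map_{\Model_\calP}(X,Y),
\]
using (2), the localization adjunction, the connective-cover adjunction, and the identification $\tau_{\geq 0}LY\simeq Y$ established in (2). The principal obstacle is the connectivity analysis in (1); once that is in place, everything else reduces to bookkeeping with adjunctions and the colimit formula. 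The subtlety in (1) is that $X\in\Model_\calP^\Omega$ encodes only the truncated identity $X(\Sigma P)\simeq\tau_{\geq 0}\Omega X(P)$ in $\LMod_\calP^{\cn}$, rather than the honest loop identity valid in $\LMod_\calP^\Omega$, and one must track how this truncation propagates across the shifts $\Sigma^{-n}$ to pin down the successive connective covers of $LX$.
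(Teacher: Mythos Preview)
Your proposal is correct and follows essentially the same route as the paper. The only notable variation is in (2): the paper argues by showing $\tau_{\geq 0}\colon\LMod_\calP^\Omega\rightarrow\Model_\calP^\Omega$ is essentially surjective (via (1)) and fully faithful (via a short mapping-space chain using $X\simeq L(X_{\geq 0})$), whereas you construct the explicit inverse $X\mapsto LX$ and verify both composites; these are equivalent standard maneuvers, and your connectivity bookkeeping in (1) is exactly what underlies the paper's terse ``the map $X\rightarrow\Sigma^{-1}X_{\Sigma^{-1}}$ is an equivalence on $(-1)$-connected covers.''
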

\begin{proof}
(1)~~ This is clear, as the map $X\rightarrow\Sigma^{-1}X_{\Sigma^{-1}}$ is an equivalence on $(-1)$-connected covers.

(2)~~ Under the identification $\Model_\calP\simeq\LMod_\calP^\cn$, postcomposition with $\Omega^\infty$ is identified with $\tau_{\geq 0}$. Observe that if $X\in\Model_\calP^\Omega$ then $(LX)_{\geq 0}\simeq X$ by (1), and thus $\tau_{\geq 0}$ is essentially surjective. To see that it is fully faithful, fix $X,Y\in\LMod_\calP^\Omega$ and compute
\[
\Map_\calP(X,Y)\simeq\Map_\calP(L(X_{\geq 0}),Y)\simeq\Map_\calP(X_{\geq 0},Y)\simeq\Map_\calP(X_{\geq 0},Y_{\geq 0}).
\]

(3)~~ This is clear.

(4)~~ Observe that if $X\in\Model_{\calP}$ and $Y\in\Model_\calP^\Omega$, then
\[
\Map_\calP(X,Y)\simeq\Map_\calP(X,(LY)_{\geq 0})\simeq\Map_\calP(X,LY)\simeq\Map_\calP(LX,LY),
\]
so the claim follows from (2).
\end{proof}

\begin{warning}
Part (4) of \cref{cor:stablelocalization} does not combine with \cref{thm:stablelocalization} to give an explicit description of the localization $L\colon\Model_\calP\rightarrow\Model_\calP^\Omega$ in general, but it does when $\calP$ is finitary. Here the issue is that in general $\Omega^\infty\colon \LMod_\calP\rightarrow\Model_\calP$ need not preserve filtered colimits.
\tqed
\end{warning}

Any fiber sequence $X\rightarrow Y \rightarrow Z$ in $\Model_{\calP}$ with second map a $\pi_0$-surjection remains a fiber sequence in $\LMod_\calP$. In particular, \cref{thm:spiral} gives such a fiber sequence $B X_\Sigma\rightarrow X\rightarrow\tau^\ast\tau_! X$ in $\Model_{\calP}$, yielding the following.

\begin{lemma}\label{lem:stablespiral}
For $X\in\Model_\calP$, there is a fiber sequence
\[
\Sigma X_\Sigma \rightarrow X\rightarrow\tau^\ast \tau_! X
\]
in $\LMod_\calP$.
\qed
\end{lemma}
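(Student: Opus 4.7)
The plan follows the author's remark preceding the statement. First I would apply Theorem~\ref{thm:spiral} to obtain the Cartesian square
\begin{center}\begin{tikzcd}
B_X X_{S^1}\ar[d]\ar[r]&X\ar[d]\\
X\ar[r]&\tau^\ast\tau_! X
\end{tikzcd}\end{center}
in $\Model_\calP$, noting that the bottom map is a $\pi_0$-equivalence by part (1) of that theorem, and hence in particular a $\pi_0$-surjection.

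Next I would unpack $B_X X_{S^1}$ using that $\calP$ is additive. The splitting $S^1_+ \simeq S^0 \vee S^1$ together with additivity yields $S^1 \otimes P \simeq P \oplus \Sigma P$ in $\calP$, and therefore $X_{S^1} \simeq X \oplus X_\Sigma$ as group objects over $X$ (the trivial group structure on the $X$-factor, the abelian structure on $X_\Sigma$). Consequently the delooping in $\Model_\calP/X$ decomposes as $B_X X_{S^1} \simeq X \times B X_\Sigma$, where $B X_\Sigma$ denotes the classifying space of the abelian group object $X_\Sigma$ in $\Model_\calP$. Under this identification, the Cartesian square above exhibits $BX_\Sigma$ as the fiber of $X \to \tau^\ast \tau_! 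X$, giving a fiber sequence
\[
B X_\Sigma \to X \to \tau^\ast \tau_! X
\]
in $\Model_\calP$. Since the second map is a $\pi_0$-surjection, the quoted remark lets me transport this to a fiber sequence in $\LMod_\calP$.

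The final step is to rewrite $BX_\Sigma$ inside $\LMod_\calP$. Under the equivalence $\LMod_\calP^{\cn} \simeq \Model_\calP$ of Proposition~\ref{prop:stabilizeadditive}, the bar construction on an abelian group object corresponds to the spectrum suspension, so $B X_\Sigma \simeq \Sigma X_\Sigma$ in $\LMod_\calP$, producing the required fiber sequence. The main (and only) nontrivial point is this identification of $B X_\Sigma$ with $\Sigma X_\Sigma$, which amounts to the standard unstable-to-stable translation of the bar construction into a shift; the rest is a direct application of Theorem~\ref{thm:spiral} and the cited remark.
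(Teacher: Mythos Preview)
Your proposal is correct and follows essentially the same route as the paper: the paper's entire argument is the one-sentence remark preceding the lemma, which invokes \cref{thm:spiral}, uses the additive splitting $X_{S^1}\simeq X\times X_\Sigma$ (recorded at the end of \cref{ssec:looptheories}) to rewrite $B_X X_{S^1}$ as $B X_\Sigma$, and transports the resulting fiber sequence to $\LMod_\calP$ via the $\pi_0$-surjectivity of $X\to\tau^\ast\tau_! X$. You spell out each of these steps in more detail than the paper does, including the identification $B X_\Sigma\simeq\Sigma X_\Sigma$ under $\Model_\calP\simeq\LMod_\calP^\cn$, but there is no substantive difference in approach.
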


\subsection{Left-derived functor spectral sequences}\label{ssec:lderss}

Throughout this subsection, we fix a stable loop theory $\calP$ and arbitrary loop theory $\calP'$.

Let $f\colon\calP'\rightarrow\Model_\calP^\Omega$ be a functor, and extend this to $F\colon\Model_{\calP'}\rightarrow\Model_\calP^\Omega$ by left Kan extension. The composite $\pi_0\circ f\colon \calP'\rightarrow\Model_{\h\calP}^\heart$ factors through $\h\calP$, and we denote the resulting functor as $\ol{f}\colon \h\calP\rightarrow\Model_{\h\calP}^\heart$. Again by left Kan extension we obtain $\ol{F}\colon\Model_{\h\calP'}^\heart\rightarrow\Model_{\h\calP}^\heart$. Recall from \cref{ssec:simplicial} our discussion of the total left-derived functor $\bbL \ol{F}$ defined as $\bbL\ol{F}=\ol{f}_!\colon \Model_{\h\calP'}\rightarrow\Model_{\h\calP}$.

\begin{prop}\label{prop:lder}
The diagram
\begin{center}\begin{tikzcd}
\Model_{\calP'}\ar[d,"\tau_!"]\ar[r,"f_!"]&\Model_{\calP}\ar[d,"\tau_!"]\\
\Model_{\h\calP'}\ar[r,"\bbL \ol{F}"]&\Model_{\h\calP}
\end{tikzcd}\end{center}
commutes.
\end{prop}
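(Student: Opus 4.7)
The plan is to reduce the commutativity to a statement on representables via the universal property of left Kan extension. Concretely, I would show that both composites
\[
\tau_! \circ f_!,\quad \bbL\ol{F}\circ\tau_!\colon\Model_{\calP'}\longrightarrow\Model_{\h\calP}
\]
preserve geometric realizations. For the upper-right path, $f_!$ preserves geometric realizations by construction as a left Kan extension out of $\calP'$ (\cref{thm:recognitiontheorem}(1)), and $\tau_!\colon\Model_\calP\rightarrow\Model_{\h\calP}$ is left adjoint to restriction $\tau^\ast$ and hence preserves all small colimits. For the lower-left path, $\tau_!\colon\Model_{\calP'}\rightarrow\Model_{\h\calP'}$ again preserves all colimits, and $\bbL\ol{F}=\ol{f}_!$ preserves geometric realizations by its definition as a left Kan extension out of $\h\calP'$. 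By \cref{thm:recognitiontheorem}(1), a geometric-realization-preserving functor out of $\Model_{\calP'}$ is determined by its restriction to $\calP'$ along the Yoneda embedding, so it suffices to produce a natural equivalence of the two composites after restriction to $\calP'$.

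Evaluating on $P'\in\calP'$, the upper-right composite gives $\tau_!(f(P'))$, while the lower-left composite gives
\[
\bbL\ol{F}(\tau_! h(P')) \;=\; \ol{f}_!(h(\tau P')) \;=\; \ol{f}(\tau P') \;=\; \pi_0 f(P'),
\]
the middle equality being the defining property of left Kan extension on representables and the last being the definition of $\ol{f}$. Thus the reduction is to identifying $\tau_! f(P')$ with $\pi_0 f(P')$ naturally in $P'$. But by hypothesis $f(P')\in\Model_\calP^\Omega$, and \cref{cor:spiraldiscrete} supplies exactly this: the canonical natural transformation $\tau_! X\rightarrow\pi_0 X$ is an equivalence for every $X\in\Model_\calP^\Omega$. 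Naturality of this comparison in $X\in\Model_\calP^\Omega$, combined with the functoriality of $f$, provides the required natural equivalence of restrictions.

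The only real subtlety is bookkeeping: verifying that the two candidate descriptions of each composite on $\calP'$ are genuinely the restrictions of the respective ambient functors along $h\colon\calP'\rightarrow\Model_{\calP'}$, and that the equivalence furnished by \cref{cor:spiraldiscrete} is natural not only in $X$ but also (via $P'\mapsto f(P')$) in $P'\in\calP'$. Both facts are formal from the construction of the functors involved, so no further computation is required. I expect the proof to consist essentially of the two paragraphs above, with \cref{cor:spiraldiscrete} doing the conceptual work.
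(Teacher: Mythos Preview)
Your proposal is correct and follows essentially the same approach as the paper: reduce to representables using that all functors preserve geometric realizations, then invoke \cref{cor:spiraldiscrete} together with the hypothesis $f(P')\in\Model_\calP^\Omega$ to identify $\tau_! f(P')\simeq\pi_0 f(P')=\ol{f}(\tau P')$. The paper's proof is simply a more compressed version of what you wrote.
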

\begin{proof}
As all functors involved preserve geometric realizations, it suffices to check that the diagram commutes upon restriction to $\calP'$. This itself follows from \cref{cor:spiraldiscrete} together with the assumption that $f(P')\in\Model_\calP^\Omega$ for $P'\in\calP'$.
\end{proof}

\begin{theorem}\label{thm:quillenss}
Fix notation as above, and fix $R\in\Model_{\calP'}^\Omega$. Then the spectral sequence in $\Model_\calP^\heart$ associated to the tower
\[
F R \simeq\colim_{n\rightarrow\infty} \Sigma^{-n} (f_! R)_{\Sigma^{-n}}
\] 
guaranteed by \cref{thm:stablelocalization} is of signature
\[
E^1_{p,q} = (\bbL_{p+q}\ol{F}\,\pi_0 R)[-p]\Rightarrow(\pi_0 F R)[q],\qquad d^r_{p,q}\colon E^r_{p,q}\rightarrow E^r_{p-r,q-1}.
\]
This spectral sequence converges, for instance, if $\pi_0$ preserves filtered colimits or if $\bbL\ol{F}\pi_0 R$ is truncated.
\end{theorem}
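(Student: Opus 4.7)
The plan is to apply the general spectral sequence of a sequential colimit (as recalled in \cref{rmk:synth} and constructed in \cref{sec:app}) to the tower
\[
X(p) := \Sigma^{-p}(f_!R)_{\Sigma^{-p}},\qquad FR \simeq Lf_!R \simeq \colim_p X(p)
\]
provided by \cref{thm:stablelocalization}, and to identify the successive cofibers by combining the stable spiral (\cref{lem:stablespiral}) with \cref{prop:lder}. Up to reindexing, such a spectral sequence has the form $E^1_{p,q} = \pi_q\,\mathrm{Cof}(X(p-1)\to X(p)) \Rightarrow \pi_q\, FR$, with differential of the asserted bidegree, so the whole problem reduces to computing the filtration quotients.

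The key step is the identification of $\mathrm{Cof}(X(p-1)\to X(p))$. Writing $X = f_!R$ and applying \cref{lem:stablespiral} to $X_{\Sigma^{-p}}$ gives a fiber sequence $\Sigma X_{\Sigma^{-p+1}} \to X_{\Sigma^{-p}} \to \tau^\ast\tau_!(X_{\Sigma^{-p}})$ in $\LMod_\calP$; desuspending $p$ times exhibits the transition map $X(p-1)\to X(p)$ (one checks against the proof of \cref{thm:stablelocalization} that the map arising from the spiral agrees with the transition map in the colimit diagram) and identifies its cofiber as $\Sigma^{-p}\tau^\ast\tau_!(X_{\Sigma^{-p}})$. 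Since $\tau$ commutes with $\Sigma^{-p}$, taking left adjoints gives $\tau_!((-)_{\Sigma^{-p}}) \simeq [-p]\circ \tau_!$, so this cofiber is $\Sigma^{-p}\tau^\ast((\tau_! X)[-p])$. Now \cref{prop:lder} gives $\tau_! X = \tau_! f_! R \simeq \bbL\ol{F}(\tau_! R)$, and \cref{cor:spiraldiscrete} gives $\tau_! R \simeq \pi_0 R$ since $R\in\Model_{\calP'}^\Omega$, so $\tau_! X \simeq \bbL\ol{F}\,\pi_0 R$. Taking $\pi_q$ and using that $\pi_q\Sigma^{-p}\tau^\ast Y$, viewed in $\Model_{\h\calP}^\heart$ via the equivalence $\Model_\calP^\heart \simeq \Model_{\h\calP}^\heart$, equals $\pi_{q+p}Y$, yields $E^1_{p,q} = (\bbL_{p+q}\ol{F}\,\pi_0 R)[-p]$. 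The abutment is $\pi_q FR$, which as an object of $\Model_{\h\calP}^\heart$ is $(\pi_0 FR)[q]$ since $FR\in\LMod_\calP^\Omega$.

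For convergence, both claims follow from standard criteria for the spectral sequence of a sequential colimit. If $\pi_0\colon \LMod_\calP \to \Model_\calP^\heart$ preserves filtered colimits, then the natural map $\colim_p \pi_q X(p) \to \pi_q\,\colim_p X(p)$ is an isomorphism and strong convergence follows formally. If instead each $\bbL_n\ol{F}\,\pi_0 R$ is truncated, then for any fixed total degree only finitely many $E^1_{p,q}$ are nonzero, again giving strong convergence.

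I expect the main obstacle to be the careful bookkeeping in the identification of the transition map in the tower: namely, verifying that the connecting map from the stable spiral (after desuspension and the substitution $Y = X_{\Sigma^{-p}}$) agrees with the map $X(p-1)\to X(p)$ coming out of the colimit presentation $LX = \colim_n\Sigma^{-n}X_{\Sigma^{-n}}$ in \cref{thm:stablelocalization}. This amounts to tracing the unit of the $(-)_\Sigma\dashv \Omega$-like adjunction through the proof of \cref{thm:stablelocalization}; everything else is formal manipulation with adjunctions and homotopy groups.
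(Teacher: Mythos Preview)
Your proposal is correct and follows essentially the same approach as the paper: apply \cref{lem:stablespiral} to $X_{\Sigma^{-p}}$ with $X=f_!R$, desuspend to identify the layers of the tower from \cref{thm:stablelocalization}, and invoke \cref{prop:lder} (together with $\tau_!R\simeq\pi_0R$) to recognize the cofibers as $\Sigma^{-p}\tau^\ast(\bbL\ol{F}\,\pi_0R)[-p]$. You in fact spell out more than the paper does, including the explicit appeal to \cref{cor:spiraldiscrete} and the convergence discussion; your flagged worry about matching the spiral map with the transition map of the tower is a genuine bookkeeping point, but the paper leaves it equally implicit.
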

\begin{proof}
By \cref{lem:stablespiral} and \cref{prop:lder}, the tower $F R \simeq\colim_{n\rightarrow\infty} \Sigma^{-n} (f_! R)_{\Sigma^{-n}}$ has layers described by cofiber sequences
\[
\Sigma^{-(n-1)}(f_! R)_{\Sigma^{-(n-1)}}\rightarrow\Sigma^{-n}(f_! R)_{\Sigma^{-n}}\rightarrow \tau^\ast \Sigma^{-n}(\bbL\ol{F}\pi_0 R)[-n].
\]
This gives rise to the indicated spectral sequence in the usual way; we will review the construction and convergence in \cref{ssec:constructss}.
\end{proof}

The method of constructing spectral sequences by analyzing the tower obtained from \cref{thm:stablelocalization} is more general than just that given in \cref{thm:quillenss}. Roughly, given $M\in\Model_\calP^\Omega$, to obtain a tool for computing $\pi_\ast M$ one wants to find some $M'\in\Model_{\calP}$ with $L M ' = M$ such that $\tau_! M'$ is something computable. In the preceding theorem, we had $M = F X$, and took $M' = f_! X$; another simple case is the following.

\begin{ex}
Each of $\Model_{\calP}$, $\Model_\calP^\Omega$, and $\Model_{\h\calP}$ are tensored over $\Sp_{\geq 0}$ by additivity. Denote the resulting tensors by $\otimes_!$, $\otimes$, and $\mathop{\olotimes{}^\bbL}$. These are all compatible, in that if $X\in\Sp_{\geq 0}$ and $M\in\Model_{\calP}$, then
\[
\tau_!(X\otimes_! M) = X \mathop{\olotimes{}^\bbL} \tau_! M,\qquad L(X\otimes_! M) = X\otimes L M.
\]
For $X\in\Sp_{\geq 0}$ and $\Lambda\in\Model_\calP^\heart$, write
\[
H_\ast(X;\Lambda) = \pi_\ast (X\mathop{\olotimes{}^\bbL} \Lambda);
\]
this is a form of ordinary homology. For $M\in\Model_\calP^\Omega$, we obtain an Atiyah-Hirzebruch-type spectral sequence
\[
E^1_{p,q} = H_{p+q}(X;\pi_0 M)[-p]\Rightarrow \pi_q (X\otimes M),\qquad d^r\colon E^r_{p,q}\rightarrow E^r_{p-r,q-1},
\]
by analyzing the tower $X\otimes M = \colim_{n\rightarrow\infty}\Sigma^{-n}(X\otimes_! M)_{\Sigma^{-n}}$.
\tqed
\end{ex}

\subsection{Monoidal matters}\label{ssec:monoidal}

This subsection relies on the material of \cref{sec:app}.

We would like to introduce monoidal properties of the constructions discussed in \cref{ssec:stabletheories} and \cref{ssec:lderss}. Our primary reason for doing so is to introduce pairings into the spectral sequence of \cref{thm:quillenss}. For the sake of completeness, we will work briefly in a more general setting than is necessary for just the production of these pairings, and for this generality we require the theory of $\infty$-operads as developed in \cite{lurie2017higheralgebra}. However, the cases of the general theory necessary for our primary application, \cref{thm:pairingquillen}, are just as easily performed by hand, completely bypassing the theory of $\infty$-operads.

Fix a single-colored $\infty$-operad $\calO$ in the sense of \cite{lurie2017higheralgebra}. We will implicitly use the fact that every symmetric monoidal category canonically inherits the structure of an $\calO$-monoidal category. Say that an $\calO$-monoidal structure on a category $\calD$ \textit{respects} some class of colimits in $\calD$ if for every $n\geq 0$ and $f\in\calO(n)$, the tensor product $\otimes_f$ preserves such colimits in each variable. An $\calO$-monoidal category $\calD$ is said to be \textit{$\calO$-monoidally cocomplete} if it admits small colimits and its $\calO$-monoidal structure respects these. In \cite[Section 2.2.6]{lurie2017higheralgebra}, generalizing \cite{glasman2016day}, it is shown that if $\calC$ is a small $\calO$-monoidal category and $\calD$ is an $\calO$-monoidally cocomplete category, then $\Fun(\calC,\calD)$ admits the structure of an $\calO$-monoidally cocomplete category under Day convolution, informally described as follows: for $n\geq 0$, $f\in\calO(n)$, and $F_1,\ldots,F_n\colon\calC\rightarrow\calD$, the tensor product $\otimes_f(F_1,\ldots,F_n)$ is the left Kan extension of $\otimes_f\circ(F_1\times\cdots\times F_n)\colon\calC^{\times n}\rightarrow\calD^{\times n}\rightarrow\calD$ along $\otimes_f\colon\calC^{\times n}\rightarrow\calC$. Of interest is the case where $\calC$ is the poset $(\bbZ,<)$ with symmetric monoidal structure given by addition, where for towers $X_1,\ldots,X_n$ in $\calD$ we identify
\[
\otimes_f(X_1,\ldots,X_n)(p) = \colim_{p_1+\cdots+p_n\leq p}\otimes_f(X_1(p_1),\ldots,X_n(p_n)).
\]

\begin{defn}\label{defn:monoidalres}
A loop theory $\calP$ is an \textit{$\calO$-monoidal loop theory} if it is equipped with an $\calO$-monoidal structure compatible with coproducts and tensors by finite wedges of spheres.
\tqed
\end{defn}

Fix an $\calO$-monoidal loop theory $\calP$. We obtain by Day convolution, following \cite[Proposition 4.8.1.10]{lurie2017higheralgebra}, $\calO$-monoidal categories, all compatible with colimits, and strong $\calO$-monoidal functors, fitting into the diagram
\begin{center}\begin{tikzcd}
\Model_\calP^\heart&\ar[l]\Model_{\calP}\ar[r]&\Model_\calP^\Omega
\end{tikzcd}.\end{center}
When $\calP$ is in addition stable, we similarly obtain
\begin{center}\begin{tikzcd}
\Model_{\calP}\ar[r]\ar[d]&\Model_\calP^\Omega\ar[d,"\simeq"]\\
\LMod_\calP\ar[r]&\LMod_\calP^\Omega
\end{tikzcd}.\end{center}
There is an evident notion of a general $\calO$-monoidal theory, and if $\calP$ is such then we obtain the same diagrams, only with $\Model_\calP^\Omega$ and $\LMod_\calP^\Omega$ omitted. The only thing we have to say in this level of generality is the following.

\begin{prop}\label{prop:monoidallocalization}
Suppose that $\calP$ is an $\calO$-monoidal stable loop theory. Then the functor $\LMod_{\calP}\rightarrow\Fun(\bbZ,\LMod_\calP)$ sending $X$ to the tower
\[
\cdots\rightarrow \Sigma X_{\Sigma}\rightarrow X\rightarrow \Sigma^{-1}X_{\Sigma ^{-1}}\rightarrow\cdots
\]
is canonically lax $\calO$-monoidal.
\end{prop}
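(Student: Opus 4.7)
The plan is to realize $T$ as the currying of a strong $\calO$-monoidal functor defined on a product $\calO$-monoidal category, whence lax $\calO$-monoidality of $T$ will follow formally from the universal property of Day convolution.

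The first step is to introduce
\[
\Phi\colon \bbZ \times \LMod_\calP \longrightarrow \LMod_\calP, \qquad (p, X) \mapsto \Sigma^{-p} X_{\Sigma^{-p}},
\]
where $\bbZ$ is viewed as a symmetric monoidal category under addition and $\bbZ \times \LMod_\calP$ is equipped with the product $\calO$-monoidal structure; by construction $T$ is then the currying of $\Phi$. The heart of the argument is to verify that $\Phi$ is strong $\calO$-monoidal. For $f \in \calO(n)$ and objects $(p_i, X_i)_{i=1,\ldots,n}$ this reduces to producing a natural equivalence
\[
\otimes_f\bigl(\Sigma^{-p_1}(X_1)_{\Sigma^{-p_1}},\, \ldots,\, \Sigma^{-p_n}(X_n)_{\Sigma^{-p_n}}\bigr) \simeq \Sigma^{-\sum p_i}\bigl(X_1 \otimes_f \cdots \otimes_f X_n\bigr)_{\Sigma^{-\sum p_i}}.
\]
The external suspensions can be pulled through $\otimes_f$ using that the $\calO$-monoidal structure on $\LMod_\calP$ respects colimits in each variable, and hence commutes with suspensions. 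The compatibility of the internal restrictions with $\otimes_f$ is a short Day convolution computation: since $\Sigma^{-p_i}\colon\calP\to\calP$ is an equivalence (as $\calP$ is stable), a change of variables in the coend defining Day convolution identifies $(X_1)_{\Sigma^{-p_1}} \otimes_f \cdots \otimes_f (X_n)_{\Sigma^{-p_n}}$ with $(X_1 \otimes_f \cdots \otimes_f X_n)_{\Sigma^{-\sum p_i}}$.

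The second step is formal: the currying of a strong $\calO$-monoidal functor $\calA \times \calB \to \calC$ (for $\calA$ small and $\calO$-monoidal, and $\calB, \calC$ $\calO$-monoidally cocomplete) is canonically a lax $\calO$-monoidal functor $\calB \to \Fun(\calA, \calC)$ for the Day convolution structure on the target. This is a direct consequence of the universal property of Day convolution as laid out in \cite[Section 2.2.6]{lurie2017higheralgebra}. Applied to $\calA = \bbZ$, $\calB = \calC = \LMod_\calP$, and $\Phi$ as above, this equips $T$ with the desired structure; at a level $p$ the induced lax structure map identifies with the canonical comparison
\[
\otimes_f\bigl(T(X_1), \ldots, T(X_n)\bigr)(p) \simeq \colim_{k \leq p} T\bigl(X_1 \otimes_f \cdots \otimes_f X_n\bigr)(k) \longrightarrow T\bigl(X_1 \otimes_f \cdots \otimes_f X_n\bigr)(p)
\]
out of the colimit, induced by the transition maps of the tower.

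The main obstacle is the Day convolution identity for the internal restriction, which uses stability of $\calP$ essentially to permit the coend reindexing. The passage from a strong $\calO$-monoidal currying to a coherent lax $\calO$-monoidal functor is bureaucratic but routine within the fibrational formalism of $\infty$-operads.
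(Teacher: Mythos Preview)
Your approach is correct and takes a genuinely different route from the paper. The paper argues as follows: since $T$ preserves colimits and the $\calO$-monoidal structure on $\LMod_\calP$ is the Day convolution extension from $\calP$, it suffices to produce a lax $\calO$-monoidal structure on the restriction $T|_\calP\colon\calP\to\Fun(\bbZ,\LMod_\calP)$. By \cref{cor:stablelocalization}(1), this restriction is identified with the composite $\calP\hookrightarrow\LMod_\calP^\Omega\subset\LMod_\calP\xrightarrow{W}\Fun(\bbZ,\LMod_\calP)$, where $W$ sends an object to its Whitehead tower; and $W$ is lax $\calO$-monoidal by \cref{prop:whiteheadmonoidal}, via a colocalization argument using compatibility of the $t$-structure with $\otimes_f$.

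Your argument instead works directly on all of $\LMod_\calP$, exhibiting $T$ as the currying of a strong $\calO$-monoidal bifunctor $\Phi$, where the strong monoidality follows from a Day-convolution reindexing that uses only the invertibility of $\Sigma$ on $\calP$ and the compatibility of $\otimes_f$ with suspension. This is more self-contained and avoids the $t$-structure and the Whitehead-tower identification entirely. The paper's route, by contrast, makes that identification explicit---which is what subsequently drives the computation of $E^1$-pages in \cref{thm:quillenss} and \cref{thm:pairingquillen}---and reuses \cref{prop:whiteheadmonoidal}, which is needed elsewhere anyway. One small remark: the currying principle you invoke (that lax $\calO$-monoidal functors $\calB\to\Fun(\calA,\calC)$ correspond to lax $\calO$-monoidal functors $\calA\times\calB\to\calC$) is correct but is a bit more than the $\calO$-algebra statement literally recorded in \cite[Section~2.2.6]{lurie2017higheralgebra}; it follows from that together with the compatibility of Day convolution with products of indexing categories.
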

\begin{proof}
This functor preserves colimits, so by the universal property of Day convolution it is sufficient to verify that it is canonically lax $\calO$-monoidal upon restriction to $\calP$. By \cref{cor:stablelocalization}, its restriction to $\calP$ is equivalent to the composite
\[
\calP\rightarrow\LMod_\calP^\Omega\subset\LMod_\calP\xrightarrow{W}\Fun(\bbZ,\LMod_\calP),
\]
where $W$ is the functor sending an object to its Whitehead tower. We conclude by applying \cref{prop:whiteheadmonoidal}.
\end{proof}

We restrict now to the case where $\calO$ is the nonunital $\bbA_2$-operad, i.e.\ where our monoidal structures consist merely of a single pairing subject to no further coherence conditions. This is both the most general and the simplest situation: in the context of pairings of spectral sequences, additional properties such as associativity and commutativity can be verified at the level of homotopy groups, so we need not be concern ourselves with the coherence problems they present. Fix nonunital $\bbA_2$-monoidal loop theories $\calP$ and $\calP'$, and suppose that $\calP$ is stable. Write the associated pairings on $\LMod_\calP^\Omega$ and $\Model_{\calP'}^\Omega$ as $\otimes$, and the associated pairings on $\Model_{\h\calP}^\heart$ and $\Model_{\h\calP'}^\heart$ as $\olotimes$.

Fix a functor $F\colon\Model_{\calP'}^\Omega\rightarrow\Model_\calP^\Omega$ which preserves geometric realizations; from here we will use notation as in \cref{ssec:lderss}. Suppose that $F$ is lax monoidal; equivalently, that we have chosen a natural transformation $f(P')\otimes f(Q')\rightarrow f(P'\otimes Q')$. This gives rise to lax monoidal structures on $f_!$, $\ol{F}$, and $\bbL\ol{F}$.

By \cref{thm:rigidification} and the classic Dold-Kan correspondence \cite[Section 3]{doldpuppe1961homologie}, $\Model_{\h\calP}$ can be modeled as $\s\Model_{\h\calP}^\heart\simeq\Ch^+(\Model_{\h\calP}^\heart)$. Following \cref{prop:leftderivedfunctors}, the pairing on $\Model_{\h\calP}$ induced by that on $\h\calP $ can be modeled by the levelwise pairing on $\s(\h\calP)\subset\s\Model_{\h\calP}^\heart$, and by the Eilenberg-Zilber theorem this is modeled by the standard pairing on $\Ch^+(\h\calP)\subset\Ch^+(\Model_{\h\calP}^\heart)$. To be precise, we choose the pairing on $\Ch^+(\Model_{\h\calP}^\heart)$ given by $(C'\olotimes C'')_p = \bigoplus_{p'+p''}C'_{p'}\olotimes C''_{p''}$, with differential $d(x'\otimes x'') = d(x')\otimes x''+(-1)^{|x'|}x'\otimes d(x'')$. Having made a choice, a pairing $C'\olotimes C''\rightarrow C$ of chain complexes gives K\"unneth maps $H_{q'}C'\olotimes H_{q''}C''\rightarrow H_{q'+q''}C$. From this, for $R,S\in\Model_{\h\calP'}^\heart$ we obtain pairings $\bbL_p\ol{F}(R)\olotimes \bbL_q\ol{F}(S)\rightarrow\bbL_{p+q}\ol{F}(R\olotimes S)$.

\begin{theorem}\label{thm:pairingquillen}
Fix notation as above, and given $X\in\Model_{\calP'}^\Omega$, write $E(X)$ for the spectral sequence of \cref{thm:quillenss} computing $\pi_\ast F(X)$. Then a pairing $X'\otimes X''\rightarrow X$ in $\Model_{\calP'}^\Omega$ gives rise to a pairing $E(X')\olotimes E(X'')\rightarrow E(X)$ of spectral sequences, i.e.\ maps
\[
\smile\colon E^r_{p',q'}(X')\olotimes E^r_{p'',q''}(X'')\rightarrow E^r_{p'+p'',q'+q''}(X)
\]
satisfying
\[
d^r(x'\smile x'') = d^r(x')\smile x'' + (-1)^{q'}x\smile d^r(x''),
\]
with the pairing on $E^{r+1}$ induced by that on $E^r$. When $r=1$, this is the algebraic pairing on $\bbL_\ast\ol{F}$ twisted by $(-1)^{q'p''}$.
\end{theorem}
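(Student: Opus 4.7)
The strategy proceeds in three stages, leveraging the monoidal enhancements established earlier. First, since $F$ (equivalently, its restriction $f$ to $\calP'$) is lax monoidal and left Kan extension of a lax monoidal functor along a strong monoidal functor (via Day convolution) is again lax monoidal, any pairing $X' \otimes X'' \to X$ in $\Model_{\calP'}^{\Omega}$ yields a pairing $f_! X' \otimes f_! X'' \to f_! X$ in $\LMod_\calP$. Applying \cref{prop:monoidallocalization}, which asserts that the tower construction $Y \mapsto (\cdots \to \Sigma Y_{\Sigma} \to Y \to \Sigma^{-1} Y_{\Sigma^{-1}} \to \cdots)$ is canonically lax monoidal, I then obtain a pairing in $\Fun(\bbZ,\LMod_\calP)$ of the towers whose colimits compute $FX'$, $FX''$, and $FX$.

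Second, I would feed this pairing of towers into the machinery developed in \cref{sec:app}, and in particular \cref{ssec:pairings}, which upgrades pairings of towers in a stable category with $t$-structure to pairings of their associated spectral sequences. This yields maps
\[
\smile\colon E^r_{p',q'}(X') \olotimes E^r_{p'',q''}(X'') \to E^r_{p'+p'',\,q'+q''}(X)
\]
satisfying the Leibniz rule and with each page's pairing induced from the previous one's, as demanded by the statement.

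Third, the substance of the theorem is the identification of the pairing on $E^1$. In the proof of \cref{thm:quillenss}, the layers of the tower were identified via cofiber sequences
\[
\Sigma^{-(n-1)}(f_! R)_{\Sigma^{-(n-1)}} \to \Sigma^{-n}(f_! R)_{\Sigma^{-n}} \to \tau^\ast \Sigma^{-n}(\bbL \ol{F}\,\pi_0 R)[-n].
\]
Day-convolving two such towers produces, on the $n$-th associated graded piece, a sum indexed by $n'+n''=n$ of pairings of the individual layers; applying $\tau_!$ and using \cref{prop:lder}, these match the pairing on $\bbL\ol{F}$ coming from the lax monoidal structure on $\ol{F}$, precisely as modeled by the Eilenberg--Zilber/K\"unneth map on $\Ch^+(\Model_{\h\calP}^\heart)$ fixed before the theorem statement. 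The sign $(-1)^{q' p''}$ arises naturally: to exhibit a class $x' \in E^1_{p',q'}$ on the $p'$-th layer as living in bidegree $(p'+p'',q'+q'')$ of the target, one must commute the suspension shift $\Sigma^{-p''}$ past the internal degree $q'$, which in a stable setting introduces exactly this Koszul sign; this is the standard discrepancy between the levelwise product on bigraded simplicial objects and the Eilenberg--Zilber product on their associated total complexes.

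The main obstacle is the third step: unpacking \cref{prop:monoidallocalization} finely enough to trace the induced pairing through $\tau_!$ onto the associated graded and match it, with its sign, to the algebraic K\"unneth pairing coming from the chosen model of $\bbL\ol{F}$. The first two steps are formal consequences of constructions already assembled. The sign bookkeeping in step three, while ultimately a standard Koszul calculation, is where one must be careful to pin down conventions consistently with those laid out before the theorem.
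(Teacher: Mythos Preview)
Your proposal is correct and follows essentially the same route as the paper: combine \cref{prop:monoidallocalization} with the general pairing machinery of \cref{thm:pairingss} to obtain the pairing of spectral sequences, then identify the $E^1$ pairing with the algebraic K\"unneth map up to the Koszul sign $(-1)^{q'p''}$ arising from the suspension-shuffle conventions fixed in \cref{ssec:monoidaltower}. The paper's proof is terser but structurally identical; your third-stage commentary on the sign is a correct unpacking of what the paper encodes in its final commutative diagram.
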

\begin{proof}
For the construction of the pairings, combine \cref{prop:monoidallocalization} and \cref{thm:pairingss}. By this construction and the identification of the $E^1$ pages of these spectral sequences, the diagram
\begin{center}\begin{tikzcd}
E^1_{p',q'}(X')\olotimes E^1_{p'',q''}(X'')\ar[dd,"\smile"]\ar[r,"="]&(\bbL_{p'+q'}\ol{F}\pi_0 X')[-p']\olotimes (\bbL_{p''+q''}\ol{F}\pi_0 X'')[-p'']\ar[d]\\
&(\bbL_{p'+q'+p''+q''}\ol{F}\pi_0 X)[-p'-p'']\ar[d,"\simeq"]\\
E^1_{p'+q',p''+q''}(X'')\ar[r,"="]&(\bbL_{p'+p''+q'+q''}\ol{F}\pi_0 X)[-(p'+p'')]
\end{tikzcd}\end{center}
commutes, where the top right vertical map is the algebraic pairing, and the bottom right vertical map induces a sign of $(-1)^{q'p''}$.
\end{proof}

The pairings produced by \cref{thm:pairingquillen} behave well with respect to the pairing $F(X')\otimes F(X'')\rightarrow F(X)$, see our comments at the end of \cref{ssec:pairings}.

\subsection{Universal coefficient spectral sequences}\label{ssec:stabletowers}

Fix a stable loop theory $\calP$. For $X,Y\in\LMod_\calP$, there is a mapping spectrum $\EXT_\calP(X,Y)$ with $\Omega^{\infty-n}\EXT_\calP(X,Y) = \Map_{\calP}(X,\Sigma^nY)$. 

\begin{theorem}\label{thm:stablemappingss}
Fix $X,Y\in\LMod_\calP^\Omega$. Then the spectral sequence associated to the Postnikov decomposition
\[
\EXT_\calP(X,Y)\simeq \lim_{n\rightarrow\infty}\EXT_\calP(X_{\geq 0},Y_{\leq n})
\]
is of signature
\begin{gather*}
E_1^{p,q} = \Ext^{p+q}_{\h\calP}(\pi_0 X;\pi_0 Y[p])\Rightarrow\pi_{-q}\EXT_\calP(X,Y),\qquad d_r^{p,q}\colon E_r^{p,q}\rightarrow E_r^{p+r,q+1}.
\end{gather*}
\end{theorem}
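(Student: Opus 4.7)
The proof will apply the tower spectral sequence of \cref{ssec:constructss} to the tower obtained by mapping $X_{\geq 0}$ into the Postnikov tower of $Y$, and the only substantive step is identifying the layers as ordinary Ext groups in $\Model_{\h\calP}^\heart$.

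First I would record the relevant $t$-structure. By \cref{cor:stablelocalization}, $\LMod_\calP^\Omega$ is stable with connective part identified with $\Model_\calP^\Omega$, so it carries a $t$-structure whose heart is $\Model_\calP^\heart$; write $H\colon\Model_\calP^\heart\hookrightarrow\LMod_\calP^\Omega$ for the inclusion. For $Y\in\LMod_\calP^\Omega$ the Postnikov tower $\{Y_{\leq n}\}$ converges to $Y$ with layers $\Sigma^n H\pi_n Y$, and the $\Omega$-locality identity $Y(\Sigma^n P)\simeq\Omega^n Y(P)$ yields the identification $\pi_n Y = (\pi_0 Y)[n]$ in $\Model_{\h\calP}^\heart$. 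Applying the limit-preserving functor $\EXT_\calP(X_{\geq 0},-)$ produces a tower of spectra with layers $\Sigma^p\EXT_\calP(X_{\geq 0},H(\pi_0 Y)[p])$ and limit $\EXT_\calP(X_{\geq 0},Y)\simeq\EXT_\calP(X,Y)$, as in the stated decomposition.

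The key nonformal step is the identification
\[
\EXT_\calP(X_{\geq 0},H\Lambda)\simeq\EXT_{\LMod_{\h\calP}}(\pi_0 X,\Lambda)
\]
for $\Lambda\in\Model_{\h\calP}^\heart$. To see this, note that $H\Lambda = \tau^\ast\Lambda$ where $\tau^\ast\colon\LMod_{\h\calP}\to\LMod_\calP$ is restriction along $\tau\colon\calP\to\h\calP$, so the spectrum-level adjunction $\tau_!\dashv\tau^\ast$ gives $\EXT_\calP(X_{\geq 0},\tau^\ast\Lambda)\simeq\EXT_{\LMod_{\h\calP}}(\tau_! X_{\geq 0},\Lambda)$. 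Since $X_{\geq 0}\in\Model_\calP^\Omega$, \cref{cor:spiraldiscrete} identifies $\tau_! X_{\geq 0}\simeq\pi_0 X$. As $\LMod_{\h\calP}$ is the unbounded derived category of $\Model_{\h\calP}^\heart$ (via \cref{thm:rigidification} together with Dold--Kan after stabilizing), taking $\pi_{-r}$ yields the ordinary Ext group $\Ext^r_{\h\calP}(\pi_0 X,\Lambda)$.

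Combining, the filtration-$p$ layer contributes
\[
E_1^{p,q} = \pi_{-q}\Sigma^p\EXT_{\LMod_{\h\calP}}(\pi_0 X,(\pi_0 Y)[p]) = \Ext^{p+q}_{\h\calP}(\pi_0 X,(\pi_0 Y)[p]),
\]
matching the stated signature, and the differentials and convergence are then formal from the tower spectral sequence machinery. The main obstacle is precisely the layer identification above, which hinges on combining the $\tau_!\dashv\tau^\ast$ adjunction with \cref{cor:spiraldiscrete}; the use of $X_{\geq 0}$ rather than $X$ is exactly what allows the spiral corollary to apply, since it identifies $\tau_!$ with $\pi_0$ only on the subcategory $\Model_\calP^\Omega$.
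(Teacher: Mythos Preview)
Your proof is correct and follows essentially the same route as the paper: map into the Postnikov tower of $Y$ in $\LMod_\calP$, identify the $p$th layer as $\Sigma^p\pi_0 Y[p]$, and use the $\tau_!\dashv\tau^\ast$ adjunction together with the spiral to compute $\EXT_\calP(X_{\geq 0},\tau^\ast\Lambda)\simeq\EXT_{\h\calP}(\pi_0 X,\Lambda)$. The paper cites \cref{lem:stablespiral} for this last step while you cite \cref{cor:spiraldiscrete}; these amount to the same thing, since both encode that $\tau_! X_{\geq 0}\simeq\pi_0 X$ when $X_{\geq 0}\in\Model_\calP^\Omega$.

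One framing point to clean up: the $t$-structure and the inclusion $H$ live on $\LMod_\calP$, not on $\LMod_\calP^\Omega$. The truncations $Y_{\leq n}$ and the Eilenberg--MacLane objects $H\Lambda=\tau^\ast\Lambda$ are objects of $\LMod_\calP$ that generally do \emph{not} lie in $\LMod_\calP^\Omega$; indeed the whole point of the spectral sequence is that the tower leaves $\LMod_\calP^\Omega$. Your actual argument is unaffected, since you correctly invoke the adjunction inside $\LMod_\calP$, but the sentence asserting a $t$-structure on $\LMod_\calP^\Omega$ with heart $\Model_\calP^\heart$ should be rephrased to refer to $\LMod_\calP$.
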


\begin{proof}
As $LX_{\geq 0}\simeq X$ and $Y\in\LMod_\calP^\Omega$, we have $\EXT_\calP(X,Y)\simeq \EXT_\calP(X_{\geq 0},Y)$. Thus there is a decomposition $\EXT_\calP(X,Y)\simeq \lim_{n\rightarrow\infty}\EXT_\calP(X_{\geq 0},Y_{\leq n})$. As $Y\in \LMod_\calP^\Omega$ we have $\pi_p Y \simeq \pi_0 Y[p]$, and thus this tower has layers described by fiber sequences
\[
\EXT_\calP(X_{\geq 0},\Sigma^p \pi_0 Y[p])\rightarrow\EXT_\calP(X_{\geq 0},Y_{\leq p})\rightarrow\EXT_\calP(X_{\geq 0},Y_{\leq p-1}).
\]
By \cref{lem:stablespiral}, there is an equivalence
\[
\EXT_\calP(X_{\geq 0},\Sigma^p\pi_0 Y[p])\simeq\EXT_{\h\calP}(\pi_0 X,\Sigma^p\pi_0 Y[p]),
\]
so that
\[
\pi_{-q}\EXT_\calP(X_{\geq 0},\Sigma^p \pi_0 Y[p]) = \Ext_{\h\calP }^{p+q}(\pi_0 X,\pi_0 Y[p]).
\]
The theorem now follows from the usual construction of the spectral sequence of a tower, which we will review in  \cref{ssec:constructss}.
\end{proof}

We end with a remark concerning the introduction of extra structure into \cref{thm:stablemappingss}. Suppose that $\calP$ is a nonunital $\bbA_2$-monoidal stable loop theory, and write the resulting pairing on $\LMod_\calP$ by $\otimes_!$. Then $\LMod_\calP$ is closed monoidal, in that for $X,Y\in\LMod_\calP$ there are objects $F_l(X,Y),F_r(X,Y)\in\LMod_\calP$ with
\[
\EXT_\calP(X,F_l(Y,Z))\simeq\EXT_\calP(Y\otimes_! X,Z),\qquad \EXT_\calP(X,F_r(Y,Z))\simeq\EXT_\calP(X\otimes_! Y,Z).
\]
Consider just $F_r$. Here $F_r(X,Y)(P) = \EXT_\calP(h(P)\otimes_! X,Y)$; in particular, if $\otimes_!$ admits a left unit $I\in\calP$, then $F_r(X,Y)(I)\simeq\EXT_\calP(X,Y)$. The same remarks hold for $\h\calP $, so that, at least if $\otimes_!$ admits a left unit, for $X\in\LMod_\calP^\Omega$ and $M\in\LMod_\calP^\heart$ we can view $\pi_{-q}F_r(X_{\geq 0},M)\in\LMod_{\h\calP}^\heart$ as an enrichment of $\Ext^q_{\h\calP }(\pi_0 X,M)$, and in this manner obtain an enriched form of \cref{thm:stablemappingss}.

\section{Postnikov decompositions}\label{sec:postnikov}

Fix a loop theory $\calP$. This section considers the study of $\Model_\calP^\Omega$ via Postnikov decompositions in $\Model_{\calP}$. We begin in \cref{ssec:topospostnikov} with a brief review of the general theory of Postnikov decompositions available in any $\infty$-topos, then specialize in \cref{ssec:postnikovtowers} to the case of Postnikov towers in $\Model_{\calP}$, which can be computed in the ambient $\infty$-topos $\Psh(\calP)$; see also \cite{pstragowski2023moduli} for a treatment of these topics. 

Given these generalities, the construction of an obstruction theory for mapping spaces in $\Model_\calP^\Omega$ is essentially immediate, and obtained in \cref{ssec:mappingspaceobstructions}. Finally, \cref{ssec:realizations} contains a verification that the Blanc-Dwyer-Goerss obstruction theory for realizations holds in our setting.

\subsection{Eilenberg-MacLane objects and Postnikov towers}\label{ssec:topospostnikov}

Fix a Grothendieck $\infty$-topos $\calX$. Up to size issues, which for our purposes can be safely ignored, $\calX$ admits an \textit{object classifier} $\Omega$; see \cite[Theorem 6.1.6.8]{lurie2017highertopos}. In other words, there is a universal map $\Omega^\ast\rightarrow\Omega$ in $\calX$, pulling back along which induces an equivalence
\[
\Map_\calX(X,\Omega)\simeq(\calX/X)^{\simeq}\simeq\coprod_{f\in\calX/X}B\!\Aut_{\calX/X}(f)
\]
for any $X\in\calX$. For $n\geq 1$, there is a subobject $\EM_n \subset\Omega$ classifying abelian Eilenberg-MacLane objects concentrated in degree $n$, with associated universal map $\EM_n^\ast\rightarrow\EM_n$; write $\EM_1'$ for the object classifying arbitrary Eilenberg-MacLane objects concentrated in degree $1$. There are also objects $\AB$ and $\GRP$ classifying discrete abelian groups and discrete groups in $\calX$ respectively, and following \cite[Proposition 7.2.2.12]{lurie2017highertopos}, there are equivalences $\pi_n\colon\EM_n^\ast\rightarrow\AB$ and $\pi_1\colon \EM_1'^\ast\rightarrow\GRP$ with inverses $B^n\colon \AB\rightarrow\EM_n^\ast$ and $B\colon \GRP\rightarrow\EM_1'^\ast$. If $X\in\calX$, then $\Map_\calX(X,\EM_n)_{\leq 1}\simeq\Map_\calX(X,\AB)$, allowing us to construct $\pi_n\colon \EM_n\rightarrow\AB$ splitting $B^n\colon \AB\rightarrow \EM_n$. We can summarize some of the relations between these as follows.

\begin{prop}\label{prop:automorphismsem}
There are Cartesian squares
\begin{center}\begin{tikzcd}
\EM_n\ar[r,"\pi_n"]\ar[d,"\pi_n"]&\AB\ar[d,"B^{n+1}"]\\
\AB\ar[r,"B^{n+1}"]&\EM_{n+1}
\end{tikzcd}\end{center}
for $n\geq 1$. In other words, $\pi_n\colon \EM_n\rightarrow\AB$ makes $\EM_n$ into an object of $\EM_{n+1}^\ast(\calX/\AB)$, with pointing given by $B^n$.
\end{prop}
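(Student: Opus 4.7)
The plan is to prove the ``in other words'' clarification, from which the Cartesian square is essentially immediate. Specifically, I will show that $\pi_n\colon\EM_n\to\AB$, equipped with the section $B^n$, makes $\EM_n$ into a pointed Eilenberg-MacLane object of degree $n+1$ in $\calX/\AB$ whose $\pi_{n+1}$ is the tautological abelian group on $\AB$.

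To establish this EM structure: by construction $\pi_n$ is the unit of $1$-truncation $\EM_n\to(\EM_n)_{\leq 1}\simeq\AB$, so its fibers are $1$-connected. A lift $X\to\EM_n$ of a given $A\colon X\to\AB$ classifies an unpointed EM object of degree $n$ over $X$ with underlying abelian group $A$, which is the same data as an extension of the $\Aut(A)$-torsor $A$ to an $\Aut(K(A,n))$-torsor over $X$. Since for $n\geq 1$ the quotient $\Aut(K(A,n))/\Aut_\ast(K(A,n))\simeq K(A,n)$ is given by the translation action, such extensions are classified by $K(A,n)$-torsors in $\calX/X$, equivalently maps $X\to K(A,n+1)$ in $\calX/X$. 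Hence the fiber of $\pi_n$ at $A$ is $K(A,n+1)$, exhibiting $\EM_n\to\AB$ as pointed EM of degree $n+1$ with $\pi_{n+1}=A$; the section $B^n$ selects the trivial torsor, providing the basepoint compatibly.

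Granting this, the universal property of $\EM_{n+1}^\ast\to\EM_{n+1}$ applied in $\calX/\AB$ produces a Cartesian square
\[
\begin{tikzcd}
\EM_n \ar[r] \ar[d, "\pi_n"] & \EM_{n+1}^\ast \ar[d] \\
\AB \ar[r] & \EM_{n+1}
\end{tikzcd}
\]
whose bottom map classifies $\EM_n/\AB$ as a pointed EM of degree $n+1$. Since the underlying abelian group of $\EM_n/\AB$ is $\mathrm{id}_\AB$, and $\pi_{n+1}\colon\EM_{n+1}^\ast\simeq\AB$ has inverse $B^{n+1}$, this classifying map is $B^{n+1}$. Transporting the top-right $\EM_{n+1}^\ast$ to $\AB$ via $\pi_{n+1}$ turns the right vertical into $B^{n+1}\colon\AB\to\EM_{n+1}$ and the top horizontal into $\pi_n$ (a point of $\EM_n$ presenting an EM object $E$ is sent to the classifying pointed EM object $BE$, which has $\pi_{n+1}(BE)=\pi_n E$), recovering the square of the proposition. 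The main obstacle is the fiber identification in the second paragraph; the Yoneda-style computation via torsors above sidesteps any explicit decomposition of $\Aut(K(A,n))$, which in general is only a semidirect (not direct) product of $K(A,n)$ and $\Aut(A)$.
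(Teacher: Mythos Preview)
Your argument is correct and arrives at the same conclusion, but the strategy differs from the paper's. The paper reduces to global sections over each component $M\in\AB(\calX)$ and checks directly that the square of classifying spaces
\[
\begin{tikzcd}
B\Aut_\calX(B^nM)\ar[r]\ar[d,"\pi_n"]&B\Aut_{\Ab(\calX)}(M)\ar[d,"B^{n+1}"]\\
B\Aut_{\Ab(\calX)}(M)\ar[r,"B^{n+1}"]&B\Aut_\calX(B^{n+1}M)
\end{tikzcd}
\]
is Cartesian, by exploiting the split fiber sequence $B^n\Map_\calX(1_\calX,M)\to B\Aut_{\Ab(\calX)}(M)\to B\Aut_\calX(B^nM)$ coming from $\EM_n^\ast\to\EM_n$ and comparing the fibers of the two vertical maps. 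You instead establish the ``in other words'' clause first: you show $\pi_n\colon\EM_n\to\AB$ is itself an Eilenberg--MacLane object of degree $n+1$ in $\calX/\AB$, and then read off the Cartesian square from the universal property of $\EM_{n+1}^\ast\to\EM_{n+1}$. Both routes ultimately rest on the same fact---that $\Aut(K(A,n))$ sits in a split fiber sequence $K(A,n)\to\Aut(K(A,n))\to\Aut(A)$---but your packaging via the universal property is arguably cleaner, and it makes the second sentence of the proposition a genuine reformulation rather than a consequence.

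One point of phrasing: when you compute the fiber, you appeal to the quotient $\Aut(K(A,n))/\Aut_\ast(K(A,n))\simeq K(A,n)$ and speak of ``extending'' an $\Aut(A)$-torsor to an $\Aut(K(A,n))$-torsor. What you actually need is the fiber of $B\pi_n\colon B\Aut(K(A,n))\to B\Aut(A)$, which is $B\ker(\pi_n)\simeq BK(A,n)=K(A,n+1)$; the relevant $K(A,n)$ is the \emph{kernel} of $\pi_n$ (the translation subgroup), not the coset space $\Aut(K(A,n))/\Aut_\ast(K(A,n))$. These happen to coincide because the extension splits, as you note at the end, but it would be cleaner to invoke the kernel directly. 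The identification of the top horizontal map with $\pi_n$ in your last paragraph is correct but terse; it amounts to observing that the composite $\EM_n\to\EM_{n+1}^\ast\xrightarrow{\pi_{n+1}}\AB$ factors through $\AB$ via $\pi_n$ by construction of the pullback square.
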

\begin{proof}
For any $X\in\calX$, the Cartesian product of the given square with $X$ is the original square taken with respect to the slice topos $\calX/X$, so it is sufficient to verify that it is Cartesian upon taking global sections. Taking global sections and looking at path components corresponding to some $M\in\AB(\calX)$, it is sufficient to verify that
\begin{center}\begin{tikzcd}
B\!\Aut_\calX(B^n M)\ar[d,"\pi_n"]\ar[r,"\pi_n"]&B\!\Aut_{\Ab(\calX)}(M)\ar[d,"B^{n+1}"]\\
B\!\Aut_{\Ab(\calX)}(M)\ar[r,"B^{n+1}"]&B\!\Aut_\calX(B^{n+1}M)
\end{tikzcd}\end{center}
is Cartesian. The structure of $\AB\simeq\EM_n^\ast\rightarrow\EM_n$ gives a fiber sequence
\[
B^n\!\Map_\calX(1_\calX,M)\rightarrow B\!\Aut_{\Ab(\calX)}(M)\rightarrow B\!\Aut_\calX(B^n M),
\]
and because $M$ is abelian, this is split by $\pi_n$. We can thus identify
\[
\Omega\Fib(\pi_n)\simeq \Fib(B^n)\simeq B^n\!\Map_\calX(1_\calX,M),\qquad \Fib(B^{n+1})\simeq B^{n+1}\!\Map_\calX(1_\calX,M),
\]
and so find that the above square is Cartesian by comparing fibers.
\end{proof}

A map $X\rightarrow\AB$ classifies a discrete abelian group in $\calX/X$; call such an object an \textit{$X$-module}. As $\AB$ is $1$-truncated, $X$-modules are equivalent to $X_{\leq 1}$-modules. As moreover $X\rightarrow \pi_0 X$ is $1$-connected, $\Map_\calX(\pi_0 X,\AB)\rightarrow\Map_\calX(X,\AB)$ is $(-1)$-truncated, i.e.\ is an inclusion of a collection of path components; call an $X$-module \textit{simple} if it is in the image of this map. A theory of Postnikov towers arises from the observation that for all $n\geq 2$, there are Cartesian squares
\begin{center}\begin{tikzcd}
X_{\leq n}\ar[r]\ar[d]&\AB\ar[d]\\
X_{\leq n-1}\ar[r]&\EM_n
\end{tikzcd},\end{center}
and for $n=1$ there is an analogous square with $\EM_1$ replaced by $\EM_1'$ and $\AB$ by $\GRP$. If when $n=1$ such a replacement is not necessary, say that $X$ has \textit{abelian homotopy groups}. The top horizontal map of the above square defines an $X$-module $\Pi_n X$ for $n\geq 2$, or for $n\geq 1$ if $X$ has abelian homotopy groups. If $X$ has abelian homotopy groups and $\Pi_n X$ is simple for $n\geq 1$, say that $X$ is \textit{simple}.

For $\Lambda\in\calX_{\leq 1}$ and $M$ a $\Lambda$-module, one may form the Eilenberg-MacLane objects $B^{n+1}_\Lambda M$ in $\calX/\Lambda$ for all $n\geq 0$. When $n\geq 1$, these fit into Cartesian squares
\begin{center}\begin{tikzcd}
B^{n+1}_\Lambda M\ar[r]\ar[d]&\EM_n\ar[d,"\pi_n"]\\
\Lambda\ar[r,"M"]&\AB
\end{tikzcd}.\end{center}

\begin{prop}\label{prop:postnikovdecomposition}
Fix $X\in\calX$. For $n\geq 2$, there is a natural Cartesian square
\begin{center}\begin{tikzcd}
X_{\leq n}\ar[d]\ar[r]&X_{\leq 1}\ar[d]\\
X_{\leq n-1}\ar[r]&B^{n+1}_{X_{\leq 1}}\Pi_n X
\end{tikzcd}\end{center}
in $\calX$. If $X$ is simple, then for $n\geq 1$ there is a natural Cartesian square
\begin{center}\begin{tikzcd}
X_{\leq n}\ar[r]\ar[d]&\pi_0 X\ar[d]\\
X_{\leq n-1}\ar[r]&B^{n+1}_{\pi_0 X}\Pi_n X
\end{tikzcd}\end{center}
in $\calX$.
\end{prop}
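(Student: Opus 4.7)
The plan is to deduce both squares from the already-established Postnikov square
\[
\begin{tikzcd}
X_{\leq n}\ar[r]\ar[d]&\AB\ar[d,"B^n"]\\
X_{\leq n-1}\ar[r,"k_n"]&\EM_n
\end{tikzcd}
\]
by pasting it with a Cartesian square that exhibits the factorization through $B^{n+1}_{X_{\leq 1}}\Pi_n X$. The key input is the pullback description $B^{n+1}_\Lambda M\simeq \Lambda\times_{\AB}\EM_n$ together with \cref{prop:automorphismsem}, which tells us that $\pi_n\circ B^n\simeq\mathrm{id}_{\AB}$.

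First I would construct the map $X_{\leq n-1}\to B^{n+1}_{X_{\leq 1}}\Pi_n X$. Composing the $k$-invariant $k_n$ with $\pi_n\colon \EM_n\to \AB$ gives a map $X_{\leq n-1}\to\AB$ classifying $\Pi_n X$ as an $X_{\leq n-1}$-module; by the definition of $\Pi_n X$ as an $X_{\leq 1}$-module, this factors through $X_{\leq n-1}\to X_{\leq 1}\xrightarrow{\Pi_n X}\AB$. The resulting commuting square, together with the defining pullback for $B^{n+1}_{X_{\leq 1}}\Pi_n X$, yields a canonical map $X_{\leq n-1}\to B^{n+1}_{X_{\leq 1}}\Pi_n X$ whose further projection to $\EM_n$ recovers $k_n$. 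Next I would verify that
\[
\begin{tikzcd}
X_{\leq 1}\ar[r,"\Pi_n X"]\ar[d,"\mathrm{zero}"']&\AB\ar[d,"B^n"]\\
B^{n+1}_{X_{\leq 1}}\Pi_n X\ar[r]&\EM_n
\end{tikzcd}
\]
is Cartesian: unwinding $B^{n+1}_{X_{\leq 1}}\Pi_n X\simeq X_{\leq 1}\times_{\AB}\EM_n$, a $T$-point of the pullback $\AB\times_{\EM_n}B^{n+1}_{X_{\leq 1}}\Pi_n X$ consists of maps $\alpha\colon T\to\AB$, $\beta\colon T\to X_{\leq 1}$, $\gamma\colon T\to \EM_n$, with $\gamma\simeq B^n\alpha$ and $\pi_n\gamma\simeq (\Pi_n X)\beta$; since $\pi_n B^n\simeq\mathrm{id}$, these force $\alpha\simeq (\Pi_n X)\beta$, so the pullback is naturally equivalent to $X_{\leq 1}$. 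Pasting this Cartesian square onto the right of the target square in the statement reproduces the Postnikov square, so by the pasting lemma for pullbacks the target square is Cartesian as well, proving the first assertion.

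For the second statement, when $X$ is simple the module $\Pi_n X$ (for $n\geq 2$) is simple, so its classifying map $X_{\leq 1}\to\AB$ factors through $\pi_0 X$, and the canonical square
\[
\begin{tikzcd}
B^{n+1}_{X_{\leq 1}}\Pi_n X\ar[r]\ar[d]&B^{n+1}_{\pi_0 X}\Pi_n X\ar[d]\\
X_{\leq 1}\ar[r]&\pi_0 X
\end{tikzcd}
\]
is Cartesian by the pullback description of the bar constructions. Pasting this onto the right of the first diagram and noting that $X_{\leq 1}\to X_{\leq n}\to\pi_0 X$ is our postulated top map gives the desired Cartesian square for $n\geq 2$. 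The case $n=1$ is handled in exactly the same way, replacing the Postnikov square for $\EM_1'$ by its refinement through $\EM_1$ (valid because $X$ has abelian homotopy groups) and running the same pasting argument; here $X_{\leq 0}=\pi_0 X$ already, so no further collapse is needed. The main obstacle is keeping straight the two maps $\AB\rightrightarrows\EM_n$ (the section $B^n$ and the projection $\pi_n$) and ensuring the correct compatibilities, which is exactly what \cref{prop:automorphismsem} provides.
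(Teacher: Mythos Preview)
Your approach is essentially the paper's: both arguments construct the map $X_{\leq n-1}\to B^{n+1}_{X_{\leq 1}}\Pi_n X$ via the defining pullback $B^{n+1}_\Lambda M\simeq\Lambda\times_{\AB}\EM_n$, then verify the right-hand square
\[
\begin{tikzcd}
X_{\leq 1}\ar[r]\ar[d]&\AB\ar[d,"B^n"]\\
B^{n+1}_{X_{\leq 1}}\Pi_n X\ar[r]&\EM_n
\end{tikzcd}
\]
is Cartesian, and conclude by pasting. Where you compute $T$-points directly, the paper pastes this square with the defining pullback below it and observes that the composite right column $\AB\xrightarrow{B^n}\EM_n\xrightarrow{\pi_n}\AB$ and the composite left column are both identities, so the outer rectangle is trivially Cartesian; the two verifications are the same fact.

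One small gap in your treatment of the simple case: for the pasting you need, the square to attach on the right must have the \emph{zero sections} as vertical maps, matching the right column of your first square. What you wrote down and checked is the square with the \emph{projections} $B^{n+1}_\Lambda\Pi_n X\to\Lambda$ as verticals. The zero-section square is also Cartesian (e.g.\ because $B^{n+1}_{X_{\leq 1}}\Pi_n X\simeq X_{\leq 1}\times_{\pi_0 X}B^{n+1}_{\pi_0 X}\Pi_n X$ pulls the zero section of the target back to that of the source), but you should say so. The paper sidesteps this entirely by running the same diagram with $\pi_0 X$ in place of $X_{\leq 1}$ from the start, which is slightly cleaner. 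Also, your phrase ``$X_{\leq 1}\to X_{\leq n}\to\pi_0 X$'' should read $X_{\leq n}\to X_{\leq 1}\to\pi_0 X$.
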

\begin{proof}
Both cases are handled the same way. Consider the diagram
\begin{center}\begin{tikzcd}
X_{\leq n}\ar[r]\ar[d]&X_{\leq 1}\ar[d]\ar[r]&\AB\ar[d]\\
X_{\leq n-1}\ar[r,dashed,"k"]\ar[dr]&B^{n+1}_{X_{\leq 1}}\Pi_n X\ar[r,"j"]\ar[d]&\EM_n\ar[d]\\
&X_{\leq 1}\ar[r]&\AB
\end{tikzcd}.\end{center}
Here, the map $j\circ k$ exists making the upper half of the diagram Cartesian and the bottom half commute, so the individual map $k$ exists as the bottom right square is Cartesian. As the upper half of the diagram is Cartesian, to show that the upper left square is Cartesian it is sufficient to verify that the upper right square is Cartesian. As the bottom right square is Cartesian, it is sufficient to verify that the right half of the diagram is Cartesian, which is clear.
\end{proof}

\subsection{The Postnikov tower of a model of a theory}\label{ssec:postnikovtowers}
Fix a theory $\calP$, and set $\calX = \Psh(\calP)$. Observe that $\Model_{\calP}\subset\calX$ is closed under Postnikov towers, and that all objects of $\Model_\calP$ have abelian homotopy groups by \cref{lem:herdfacts}(2).

\begin{prop}[{\cite[Theorem 3.46]{pstragowski2023moduli}}]\label{prop:algebrasaresimple}
Fix $X\in\Model_\calP$. Then any $X$-module in $\AB(\Model_\calP/X)\subset\AB(\calX/X)$ is simple. In particular each $\Pi_n X$ is simple, and so $X$ is simple.
\end{prop}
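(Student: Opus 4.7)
The plan is to show directly that for any $M\in\AB(\Model_\calP/X)$ the natural comparison $M\to X\times_{\pi_0 X}\pi_0 M$ is an equivalence, thereby exhibiting its classifying map $X\to\AB$ as a pullback of some $\pi_0 X\to\AB$, i.e.\ as simple. The main tool will be \cref{lem:herdfacts}(5), applied objectwise on $\calP$.

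First I would unwind what the data of an $X$-module provides: since $\AB$ classifies discrete abelian group objects, the classifying map $X\to\AB$ corresponds to a $0$-truncated morphism $p\colon M\to X$ in $\Model_\calP$ equipped with a zero section $s\colon X\to M$. By the observation in \cref{ssec:malc} that maps in $\Model_\calP$ are levelwise maps of herds, for every $P\in\calP$ the evaluated map $p(P)\colon M(P)\to X(P)$ is a $0$-truncated map of herds in $\Gpd_\infty$ with a herd-theoretic section $s(P)$, so the hypotheses of \cref{lem:herdfacts}(5) are met. In particular, that lemma identifies $\pi_0(M\times_X M)\simeq\pi_0 M\times_{\pi_0 X}\pi_0 M$, so $\pi_0 M$ inherits the structure of an abelian group object over $\pi_0 X$ in $\Model_\calP^\heart$ and the fibered product $X\times_{\pi_0 X}\pi_0 M$ makes sense as an object of $\AB(\Model_\calP/X)$.

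Since equivalences in $\Model_\calP$ are detected levelwise, it suffices to show that $M(P)\to X(P)\times_{\pi_0 X(P)}\pi_0 M(P)$ is an equivalence for each $P\in\calP$. Both sides are $0$-truncated over $X(P)$, so it is enough to verify that the comparison is a bijection on fibers above every $x\in X(P)$: the left fiber is $\{x\}\times_{X(P)}M(P)$, the right fiber is $\{[x]\}\times_{\pi_0 X(P)}\pi_0 M(P)$, and the natural map between them is precisely the map asserted to be a bijection by \cref{lem:herdfacts}(5), applied with $A=\{x\}$ and $B=M(P)$.

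This establishes that every $X$-module in $\AB(\Model_\calP/X)$ is simple. In particular each $\Pi_n X$ qualifies, being an $X$-module by construction in the Cartesian square of \cref{prop:postnikovdecomposition}; the simpleness of $X$ itself then follows by unwinding the definition, observing that $X$ has abelian homotopy groups by \cref{lem:herdfacts}(2). The only genuine subtlety is to track that after evaluation at $P$, the levelwise herd structure and section are correctly in place to invoke \cref{lem:herdfacts}(5); everything else is a straightforward fiberwise check.
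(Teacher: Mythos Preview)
Your argument is correct and in some ways more transparent than the paper's. Both proofs rest on \cref{lem:herdfacts}(5), but they deploy it differently. The paper first reduces to the case where $X$ is $1$-truncated, then argues that each levelwise covering $E(P)\to X(P)$ has trivial monodromy, so the classifying functors $c_P\colon X(P)\to\Ab(\Psh(\calP/P,\Set))^\simeq$ factor through $\pi_0 X(P)$ \emph{pointwise}; it then invokes a rigidification step---passing to the homotopy $2$-category and strictifying $X$ and $c$---to upgrade these pointwise factorizations to a genuine factorization of $X\to\AB$ through $\pi_0 X$. You instead construct the candidate $\pi_0 X$-module $\pi_0 M$ directly, form the comparison $M\to X\times_{\pi_0 X}\pi_0 M$ in $\Model_\calP$, and verify it is an equivalence fiberwise using \cref{lem:herdfacts}(5) with $A=\{x\}$. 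This sidesteps the rigidification entirely: once the comparison map exists in $\Model_\calP$, equivalence is a levelwise check, and no coherence argument is needed. The paper's route makes the monodromy interpretation explicit, which is conceptually pleasant, but your approach is more self-contained and avoids the somewhat informal ``rigidifying $X$ to a $2$-functor'' step.
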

\begin{proof}
By the equivalence between $X$-modules and $X_{\leq 1}$-modules, we may suppose that $X$ is $1$-truncated, so that everything is taking place within the bicategory $\Psh(\calP,\Gpd)$. Let $\pi\colon E\rightarrow X$ be an $X$-module. \cref{lem:herdfacts} shows that for all $P\in\calP$, the covering map $\pi_P\colon E(P)\rightarrow X(P)$ has trivial monodromy. The module $E$ is classified by the map $c\colon X\rightarrow\AB$ given for $P\in\calP$ by the functor $c_P\colon X(P)\rightarrow\Ab(\Psh(\calP/P,\Set))^\simeq$ defined on objects by $c_P(x)(f\colon Q\rightarrow P) = \pi_Q^{-1}(f^\ast x)$ and on morphisms by monodromy; as a consequence, each $c_P$ factors through $\pi_0 X(P)$. By replacing $\calP$ with its homotopy $2$-category, and rigidifying $X$ to a $2$-functor and $c$ to a strict natural transformation, it is seen that this pointwise factorization lifts to factor $c$ through $\pi_0 X$.
\end{proof}

Observe that if $\Lambda\in\calX_{\leq 0}$, then $(\calX/\Lambda)_{\leq 0}\simeq\calX_{\leq 0}/\Lambda$. Thus, given $X\in\Model_{\calP}$, we may form the abuses of notation $\pi_0 X = \tau^\ast \pi_0 X$, and $\Pi_n X = \tau^\ast \Pi_n X$ as $\pi_0 X$-modules. Moreover, if $\Lambda\in\Psh(\h\calP ,\Set)$ and $M$ is a $\Lambda$-module, then $\tau^\ast B^n_\Lambda = B^n_{\tau^\ast \Lambda}\tau^\ast M$. This leads to the following.

\begin{theorem}\label{thm:modelspostnikov}
For $X\in\Model_{\calP}$ and $n\geq 1$, there is a natural Cartesian square
\begin{center}\begin{tikzcd}
X_{\leq n}\ar[d]\ar[r]&\pi_0 X\ar[d]\\
X_{\leq n-1}\ar[r]&\tau^\ast B^{n+1}_{\pi_0 X}\Pi_n X
\end{tikzcd}\end{center}
in $\Model_\calP$.
\end{theorem}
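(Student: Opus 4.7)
The plan is simple: apply \cref{prop:postnikovdecomposition} in the ambient topos $\Psh(\calP)$ and port the resulting square to $\Model_\calP$ using the bookkeeping set up earlier in the subsection.

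First I would invoke \cref{prop:algebrasaresimple}, which asserts that every $X \in \Model_\calP$ is simple, in order to apply the second (simple) clause of \cref{prop:postnikovdecomposition}. This produces, for each $n\geq 1$, a natural Cartesian square in $\Psh(\calP)$ whose corners are $X_{\leq n}$, $X_{\leq n-1}$, $\pi_0 X$, and $B^{n+1}_{\pi_0 X}\Pi_n X$.

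Next I would identify this bottom-right corner with the stated expression $\tau^\ast B^{n+1}_{\pi_0 X}\Pi_n X$. Since $\pi_0 X$ is discrete, it factors uniquely as $\tau^\ast$ of an object of $\Psh(\h\calP,\Set) = \Model_\calP^\heart$. The simplicity of the $X$-module $\Pi_n X$, combined with its discreteness as an abelian group object, promotes $\Pi_n X$ to a $\pi_0 X$-module in $\Psh(\h\calP,\Set)$; the identity $\tau^\ast B^{n+1}_\Lambda M \simeq B^{n+1}_{\tau^\ast\Lambda}\tau^\ast M$ recorded just before the theorem then rewrites the corner in the desired form.

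Finally, to conclude that the square is Cartesian in $\Model_\calP$ rather than merely in $\Psh(\calP)$, I would note that $\Model_\calP \subset \Psh(\calP)$ is reflective by \cref{prop:modelsreflective}, hence closed under limits, so it suffices that all four corners lie in $\Model_\calP$. The truncations $X_{\leq n}$, $X_{\leq n-1}$, $\pi_0 X$ lie there by closure of $\Model_\calP$ under truncation; the object $\tau^\ast B^{n+1}_{\pi_0 X}\Pi_n X$ lies there because $B^{n+1}_{\pi_0 X}\Pi_n X$ is built as a finite limit from objects of $\Model_{\h\calP}$ and $\tau^\ast$ preserves the product condition cutting out models. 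Naturality in $X$ is inherited from \cref{prop:postnikovdecomposition}. The step most likely to require care is the identification $\Pi_n X = \tau^\ast M$ for some $\pi_0 X$-module $M$ in $\Model_\calP^\heart$, but this is essentially a repackaging of the simplicity assertion of \cref{prop:algebrasaresimple}.
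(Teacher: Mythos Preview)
Your proposal is correct and takes essentially the same approach as the paper: the paper's proof is the one-liner ``Immediate from \cref{prop:postnikovdecomposition} and \cref{prop:algebrasaresimple},'' relying on the identifications recorded in the paragraph immediately preceding the theorem, and you have simply spelled out those steps in more detail.
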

\begin{proof}
Immediate from \cref{prop:postnikovdecomposition} and \cref{prop:algebrasaresimple}.
\end{proof}

\subsection{An obstruction theory for mapping spaces}\label{ssec:mappingspaceobstructions}

Fix a loop theory $\calP$, and let $\calX = \Psh(\calP)$. For $Y\in\calX$, we may identify $\calX/X\simeq\Psh(\calP/X)$, where $\calP/X$ is the slice category of $\calP$ over $X$. Given a map $f\colon X\rightarrow Y$ in $\calX$, we may form $\pi_0 f$ and $\Pi_n f$ for $n\geq 1$, considered as objects of $\calX/X$. If $X,Y\in\Model_{\calP}$, then each $\Pi_n f$ is a simple $X$-module by \cref{prop:algebrasaresimple}.

The following theorem is somewhat technical in its full generality (see \cref{rmk:mappingtechnical} below); see \cref{sssec:intropostnikov} for a special case that may be easier to digest, and \cref{ssec:stabletheories} for the simpler stable analogue.

\begin{theorem}\label{thm:mappingspaces}
Fix a $\pi_0$-surjection $R\rightarrow S$ in $\Model_\calP^\Omega$. Fix $A,C\in R/\Model_\calP^\Omega/S$, and write $p\colon C\rightarrow S$ for the given map. Fix $\phi\colon \pi_0 A\rightarrow \pi_0 C$ in $\pi_0 R/\Model_\calP^\heart/\pi_0 S$, and let $\Map^\phi_{R/\calP/S}(A,C)$ be the space of lifts of $\phi$ to a map in $R/\Model_\calP^\Omega/S$. Then the Postnikov tower of $p$ gives rise to a decomposition
\[
\Map^{\phi}_{R/\calP/S}(A,C)\simeq\lim_{n\rightarrow\infty}\Map^{\phi,\leq n}_{R/\calP/S}(A,C),
\]
where $\Map^{\phi,\leq 0}_{R/\calP/S}(A,C)\simeq\{\phi\}$, and for each $n\geq 1$ there is a natural Cartesian square
\begin{center}\begin{tikzcd}
\Map^{\phi,\leq n}_{R/\calP/S}(A,C)\ar[r]\ar[d]&\{\phi\}\ar[d]\\
\Map^{\phi,\leq n-1}_{R/\calP/S}(A,C)\ar[r]&\Map_{\pi_0 R/h\calP/\pi_0 C}(\pi_0 A,B^{n+1}_{\pi_0 C}\Pi_n p)
\end{tikzcd}.\end{center}
\end{theorem}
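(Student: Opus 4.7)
The plan is to apply \cref{prop:postnikovdecomposition} in the slice $\infty$-topos $\Psh(\calP)/S$ to the map $p\colon C \to S$, and then to reduce the resulting mapping spaces to algebraic Quillen cohomology using \cref{cor:spiraldiscrete} together with the hypothesis $A \in \Model_\calP^\Omega$.

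The first step is to form the Moore--Postnikov tower of $p$ in $\Psh(\calP)/S$: let $P_n$ denote the $n$-truncation of $C$ in this slice. Since $\Model_\calP \subset \Psh(\calP)$ is closed under Postnikov towers (as noted in \cref{ssec:postnikovtowers}), each $P_n$ lies in $\Model_\calP/S$, and $C \simeq \lim_n P_n$ in this slice. Applying $\Map_{R/\calP/S}(A,-)$ yields $\Map_{R/\calP/S}(A,C) \simeq \lim_n \Map_{R/\calP/S}(A,P_n)$, and I define $\Map^{\phi,\leq n}_{R/\calP/S}(A,C)$ to be the union of components of $\Map_{R/\calP/S}(A,P_n)$ lying over $\phi$.

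For $n \geq 1$, \cref{prop:algebrasaresimple} ensures that $P_n$ is a simple object of $\Psh(\calP)/S$, so \cref{prop:postnikovdecomposition} produces a natural Cartesian square in $\Psh(\calP)/S$ involving $P_n$, $P_{n-1}$, the $0$-truncation $\pi_0 P_n$ in the slice, and the Eilenberg--MacLane object $\tau^\ast B^{n+1}_{\pi_0 P_n}\Pi_n p$. The key observation is that since $C \to P_n$ is $n$-connected in the slice and therefore an absolute $\pi_0$-equivalence for $n \geq 1$, the relevant $0$-truncation agrees with the absolute $\pi_0 C$, and the Eilenberg--MacLane objects in degrees $\geq 2$ are determined by the absolute homotopy data $\pi_0 C$ and $\Pi_n p$. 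This lets me reformulate the square with $\pi_0 C$ and $\tau^\ast B^{n+1}_{\pi_0 C}\Pi_n p$ in place. Applying $\Map_{R/\calP/S}(A,-)$ and restricting to the component over $\phi$ converts the upper right $\pi_0 C$ to $\{\phi\}$ (as $\pi_0 C$ is discrete and $\pi_0$ of the map is fixed at $\phi$), while the lower right mapping space becomes $\Map_{\calP}(A,\tau^\ast B^{n+1}_{\pi_0 C}\Pi_n p)$. By the $\tau_! \dashv \tau^\ast$ adjunction combined with \cref{cor:spiraldiscrete}, which gives $\tau_! A \simeq \pi_0 A$ for $A \in \Model_\calP^\Omega$, this reduces to the algebraic $\Map_{\pi_0 R/\h\calP/\pi_0 S}(\pi_0 A, B^{n+1}_{\pi_0 C}\Pi_n p)$.

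The base case $n=0$ uses the $\pi_0$-surjectivity hypothesis on $R \to S$ (hence on $C \to S$): the $0$-truncation $P_0$ of $p$ in $\Psh(\calP)/S$ is encoded on the discrete level by $\pi_0 C \to \pi_0 S$, so a map $A \to P_0$ in $R/\calP/S$ with $\pi_0$ equal to $\phi$ is uniquely pinned down by $\phi$, giving $\Map^{\phi,\leq 0}_{R/\calP/S}(A,C) \simeq \{\phi\}$. The main obstacle is the careful translation between relative Postnikov data in $\Psh(\calP)/S$ (where the slice $0$-truncation is $P_0$, which is generally distinct from the absolute $\pi_0 C$ when $S$ is not discrete) and the absolute data appearing in the theorem's Eilenberg--MacLane object $B^{n+1}_{\pi_0 C}\Pi_n p$; making this identification precise requires exploiting that slicing over $S$ does not affect the $k$-invariants in the range $n \geq 1$, since these are classified by absolute $\pi_0$-data.
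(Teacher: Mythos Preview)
Your overall strategy coincides with the paper's: take the Postnikov tower of $p$ in the slice $\infty$-topos over $S$, apply the Postnikov decomposition (\cref{prop:postnikovdecomposition}) together with simplicity (\cref{prop:algebrasaresimple}), and then pass to algebraic data via $\tau_!\dashv\tau^\ast$ and \cref{cor:spiraldiscrete}. However, there is a genuine gap in your treatment of the base step, and a related inconsistency in how you handle the slice versus absolute $0$-truncation.

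You assert that the slice $0$-truncation $P_0$ is ``encoded on the discrete level by $\pi_0 C\to\pi_0 S$'' and that therefore $\Map^{\phi,\leq 0}_{R/\calP/S}(A,C)\simeq\{\phi\}$. But $P_0=\pi_0 p$ is $0$-truncated \emph{over} $S$, not discrete; as you yourself note later, it differs from the absolute $\pi_0 C$ when $S$ is not discrete. The identification $\Map_{R/\calP/S}(A,\pi_0 p)\simeq\Hom_{\pi_0 R/\h\calP/\pi_0 S}(\pi_0 A,\pi_0 C)$ is not formal: it is exactly here that both hypotheses $C,S\in\Model_\calP^\Omega$ and the $\pi_0$-surjectivity of $R\to S$ enter. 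The paper proves this by resolving $A$ to reduce to $A=R\amalg P$ with $P\in\calP$, then observing that the relevant value $(\pi_0 p)(q\colon P\to S)$ is $\pi_0$ of the fiber $F$ of $C(P)\to S(P)$ over $q$; since $C,S\in\Model_\calP^\Omega$, the $\pi_0$-surjectivity of $C\to S$ forces $\pi_1 C(P)\to\pi_1 S(P)$ to be surjective at every basepoint, so that $\pi_0 F$ is the fiber of $\pi_0 C(P)\to\pi_0 S(P)$, which is the desired discrete Hom-set. Your sketch does not supply this argument, and your earlier claim that ``the relevant $0$-truncation agrees with the absolute $\pi_0 C$'' is simply false and contradicts your later caveat.

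A related point: after restricting to the component over $\phi$, the lower-right corner of your Cartesian square is a mapping space into the Eilenberg--MacLane object $B^{n+1}_{\pi_0 p}\Pi_n p$ built over the \emph{slice} $0$-truncation $\pi_0 p$, not over the absolute $\pi_0 C$. The paper makes the passage $\Map_{R/\calP/\pi_0 p}(A,B^{n+1}_{\pi_0 p}\Pi_n p)\simeq\Map_{R/\calP/\pi_0 C}(A,B^{n+1}_{\pi_0 C}\Pi_n p)$ explicit before invoking the adjunction; your appeal to ``$k$-invariants in the range $n\geq 1$ being classified by absolute $\pi_0$-data'' is gesturing at the right phenomenon but does not actually carry out this step.
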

\begin{proof}
Explicitly, this decomposition is obtained from
\[
\Map_{R/\calP/S}(A,C)\simeq\lim_{n\rightarrow\infty}\Map_{R/\calP/S}(A,p_{\leq n}),
\]
where $p_{\leq n}$ is the $n$'th Postnikov truncation of $C$ viewed as an object of the slice topos $\calX/S$. The layers of this tower fit into Cartesian squares
\begin{center}\begin{tikzcd}
\Map_{R/\calP/S}(A,p_{\leq n})\ar[r]\ar[d]&\Map_{R/\calP/S}(A,\pi_0 p)\ar[d]\\
\Map_{R/\calP/S}(A,p_{\leq n-1})\ar[r]&\Map_{R/\calP/S}(A,B_{\pi_0 p}^{n+1}\Pi_n p)
\end{tikzcd},\end{center}
so we claim first that $\Map_{R/\calP/S}(A,\pi_0 p)\simeq\Hom_{\pi_0 R/h\calP/\pi_0 S}(\pi_0 A,\pi_0 C)$. By resolving $A$, we reduce to the case where $A = R\amalg P$ for some $q\colon P\rightarrow S$. In this case
\[
\Map_{R/\calP/S}(R\amalg P,\pi_0 p)\simeq\Map_{\calP/S}(P,\pi_0 p)\simeq (\pi_0 p)(q\colon P\rightarrow S) = \pi_0 F,
\]
where $F$ is the fiber of the map $C(P)\rightarrow S(P)$ over $q$. As the composite $R\rightarrow C\rightarrow S$ is a $\pi_0$-surjection, the map $C\rightarrow S$ is a $\pi_0$-surjection. As $C,S\in\Model_\calP^\Omega$, it follows that $\pi_1 C(P)\rightarrow \pi_1 S(P)$ is a surjection at all basepoints. Thus the fiber sequence $F\rightarrow C(P)\rightarrow S(P)$ remains a fiber sequence on taking $\pi_0$; as the fiber of $\pi_0 C(P)\rightarrow \pi_0 S(P)$ over $q$ is $\Map_{\pi_0 R/h\calP/\pi_0 S}(\pi_0 (R\amalg P),\pi_0 C)$, this gives $\Map_{R/\calP/S}(A,\pi_0 p)\simeq\Hom_{\pi_0 R/h\calP/\pi_0 S}(\pi_0 A,\pi_0 C)$ as claimed. Next, by restricting to path components corresponding to $\phi$ in the above square, we reduce to identifying the space $\Map_{R/\calP/\pi_0 p}(A,B^{n+1}_{\pi_0 p}\Pi_n p)$. This space may be identified as
\[
\Map_{R/\calP/\pi_0 p}(A,B^{n+1}_{\pi_0 p}\Pi_n p)\simeq \Map_{R/\calP/\pi_0 C}(A,B^{n+1}_{\pi_0 C}\Pi_n p)\simeq \Map_{\tau_! R/h\calP/\pi_0 C}(\tau_!A, B^{n+1}_{\pi_0 C}\Pi_n p),
\]
and we conclude by \cref{cor:spiraldiscrete}.
\end{proof}

\begin{rmk}\label{rmk:mappingtechnical}
The preceding theorem and its proof simplifies upon omitting $R$ and $S$, whereupon one obtains a decomposition of mapping spaces in $\Model_\calP$. The somewhat technical nature of the theorem as given is necessary to deal with the following subtlety: if $\calP$ is a theory and $X\in\Model_\calP$, then one might define theories $\calP/X$ and $X/\calP$ such that $\Model_{\calP/X}\simeq\Model_\calP/X$ and $\Model_{X/\calP}\simeq X/\Model_\calP$; however, in general both the maps $\h(\calP/X)\rightarrow\h\calP/\pi_0 X$ and $\h(X/\calP)\rightarrow\pi_0 X/\h\calP$ may fail to be equivalences.
\tqed
\end{rmk}

\subsection{The spiral spectral sequence}\label{ssec:spiralss}

Fix a loop theory $\calP$. By deriving \cref{thm:modelspostnikov}, we may obtain an alternate method of decomposing models of $\calP$. Write $\h_n\calP$ for the homotopy $n$-category of $\calP$, and $\tau_n\colon \calP\rightarrow \h_n\calP$ for the truncation; thus $\h_1\calP = \h\calP$ and $\tau_1 = \tau$. The main observation is the following.

\begin{theorem}\label{thm:derivedpostnikov}
Fix $X\in\Model_\calP$.
\begin{enumerate}
\item The map $X\rightarrow\tau_{(n+1)}^\ast\tau_{(n+1)!}X$ is an equivalence on $n$-truncations. In particular, 
\[
X\simeq\lim_{n\rightarrow\infty}\tau_n^\ast\tau_{n!}X.
\]
\item For $n\geq 1$, there is a natural Cartesian square
\begin{center}\begin{tikzcd}
\tau_{(n+1)}^\ast\tau_{(n+1)!}X\ar[r]\ar[d]&\tau^\ast\tau_! X\ar[d]\\
\tau_{n}^\ast\tau_{n!}X\ar[r]&\tau^*B^{n+1}_{\tau_! X}\tau_!X_{S^n}
\end{tikzcd},\end{center}
which agrees with that in \cref{thm:modelspostnikov} when $X\in\Model_\calP^\Omega$. In particular, if $X\in\Model_\calP^\Omega$ then $\tau_{(n+1)}^\ast\tau_{(n+1)!}X\simeq X_{\leq n}$.
\end{enumerate}
\end{theorem}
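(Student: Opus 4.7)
The natural strategy is to prove (2) first and then deduce (1), with the whole argument generalizing that of \cref{thm:spiral} (which is the case $n=1$, after identifying $\tau^{\ast}\tau_{!}h(P)\simeq \pi_0 h(P)$ for representables).

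\emph{Step 1: representables.} For $X=h(P)$ with $P\in\calP$, we have $\tau_{n!}h(P)=h(\tau_n P)$, so $\tau_n^{\ast}\tau_{n!}h(P)$ evaluated at $P'$ is the $(n{-}1)$-truncation $\Map_{\calP}(P',P)_{\leq n-1}$. The Postnikov decomposition in $\Psh(\h_n\calP)$ of $h(\tau_n P)$ over its $(n{-}1)$-truncation is then governed by a single $\pi_n$ piece, and this piece may be identified (using the cogroup structure on spheres via \cref{lem:pbwedge}) with $\pi_0 h(P)_{S^n}$ as a module over $\pi_0 h(P)$. This produces the Cartesian square of (2) when $X=h(P)$, with the right-hand map being the corresponding $k$-invariant.

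\emph{Step 2: reduction from representables.} For general $X\in\Model_{\calP}$, write $X\simeq\colim_{k\in\Delta^{\op}}h(P_k)$ as a geometric realization of representables, as provided by \cref{cor:modelsresolved}. Each of the four corners of the square in (2) is compatible with such geometric realizations: $\tau_{n!}$ is cocontinuous, $\tau_n^{\ast}$ commutes with realizations of simplicial models (the herd argument underlying \cref{prop:geometricrealizationsgroups} applies at each truncation level), the formation of $(-)_{S^n}$ is itself a colimit, and the slice Eilenberg--MacLane construction $B^{n+1}_{(-)}(-)$ interacts correctly with $\tau_{!}$ by naturality. The right-hand vertical map is a $\pi_0$-surjection since it arises from a Postnikov attaching map, so \cref{lem:cartrel} shows the levelwise Cartesian square from Step 1 remains Cartesian after realization. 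The comparison with \cref{thm:modelspostnikov} when $X\in\Model_{\calP}^{\Omega}$ then follows from \cref{cor:spiraldiscrete}, which gives $\tau^{\ast}\tau_{!}X\simeq\pi_0 X$ and, applied to $X_{S^n}$, identifies $\tau_{!}X_{S^n}$ with $\Pi_n X$.

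\emph{Step 3: deducing (1).} The Cartesian square of (2) presents the map $\tau_n^{\ast}\tau_{n!}X\to\tau_{(n-1)}^{\ast}\tau_{(n-1)!}X$ as the pullback of a map whose fiber is an Eilenberg--MacLane object concentrated in degree $n$. Combined with the base case $n=1$ of \cref{thm:spiral}(1) and an induction on $n$, this yields that $X\to\tau_n^{\ast}\tau_{n!}X$ is an equivalence on $n$-truncations. The limit formula $X\simeq\lim_{n\to\infty}\tau_n^{\ast}\tau_{n!}X$ then follows from convergence of Postnikov towers in the target (the fibers become more and more highly connected along the inverse system).

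\emph{Main obstacle.} The principal technical point is Step 1: identifying that the Postnikov $k$-invariant of $h(\tau_n P)$ in $\Psh(\h_n\calP)$, pulled back along $\tau_n$ and compared with the $\tau^{\ast}$-pullback from $\Psh(\h\calP)$, yields exactly the map into $\tau^{\ast}B^{n+1}_{\tau_{!}X}\tau_{!}X_{S^n}$ rather than some other delooping. This requires pinning down the $\tau_{!}X$-module structure on $\tau_{!}X_{S^n}$ and the naturality of slice Eilenberg--MacLane objects along the tower of truncations $\h_n\calP\to\h_{n-1}\calP\to\cdots\to\h\calP$; after that is done, the bootstrap to general $X$ is essentially formal via \cref{lem:cartrel}.
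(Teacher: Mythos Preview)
Your overall architecture --- verify on representables, then extend via geometric realizations using \cref{lem:cartrel} --- matches the paper. But you make both halves harder than necessary, and the paper's route dissolves what you flag as the ``main obstacle.''

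For (2), the paper does not compute any $k$-invariant. It simply observes that for $X=h(P)$ one has $\tau_m^\ast\tau_{m!}h(P)=h(P)_{\leq m}$ (the representable in $\h_m\calP$ pulled back is just a Postnikov truncation), so on representables the square in (2) \emph{is} the Postnikov square already supplied by \cref{thm:modelspostnikov}. One then regards ``Postnikov square of $h(P)$'' as a functor $\calP\to(\text{squares in }\Model_\calP)$ and left Kan extends to $\Model_\calP$; since each corner of the target square (as a functor of $X$) preserves geometric realizations and agrees on representables, the extension is forced to be the square claimed, and \cref{lem:cartrel} gives Cartesianness. No separate identification of the attaching map or of the $\tau_!X$-module structure on $\tau_!X_{S^n}$ is needed: all of that is packaged in the naturality of \cref{thm:modelspostnikov}.

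For (1), the paper again proceeds directly rather than by induction from (2): on representables the map $h(P)\to\tau_n^\ast\tau_{n!}h(P)$ is just truncation, hence an equivalence on the relevant truncation; since $n$-truncation is compatible with geometric realizations, the general case follows immediately. Your inductive deduction via the fiber of the square would work, but it is longer and depends on (2), whereas the paper's argument for (1) is independent and one line.

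A small wrinkle in your Step~1: you say $\tau_n^\ast\tau_{n!}h(P)$ is the $(n{-}1)$-truncation and then speak of a ``single $\pi_n$ piece'' in its Postnikov decomposition over its $(n{-}1)$-truncation; those two statements are not consistent. This is only an indexing slip, but it is exactly the sort of thing the paper's approach avoids by never unpacking the $k$-invariant at all.
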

\begin{proof}
(1)~~If $X = h(P)$ with $P\in\calP$, then $\tau_{(n+1)}^\ast\tau_{(n+1)!}X = X_{\leq n}$, and thus $X\rightarrow\tau_{(n+1)}^\ast\tau_{(n+1)!}X$ is an equivalence on $n$-truncations. As $n$-truncations are compatible with colimits, the claim follows in general by writing $X$ as a geometric realization of objects of the form $h(P)$. 

(2)~~Recall from \cref{thm:modelspostnikov} that for any $X\in\Model_\calP$, there is a natural Cartesian square
\begin{center}\begin{tikzcd}
X_{\leq n}\ar[r]\ar[d]&\pi_0 X\ar[d]\\
X_{\leq n-1}\ar[r]&\tau^\ast B^{n+1}_{\pi_0 X}\Pi_n X
\end{tikzcd}.\end{center}
By considering only the case where $X \in\calP$, this gives a functor from $\calP$ to squares in $\Model_\calP$. By left Kan extension, this yields a functor from $\Model_\calP$ to squares in $\Model_\calP$. This functor is determined by the property of preserving geometric realizations and fact that it agrees with the above square when $X = h(P)$ with $P\in\calP$, and so is easily seen to be of the desired form. That it is Cartesian follows from \cref{lem:cartrel}. Finally, if $X\in \Model_\calP^\Omega$ then $\tau_! X\simeq \pi_0 X$ and $\tau_! X_{S^n}\simeq \Pi_n X$ by \cref{cor:spiraldiscrete}, and it follows by induction on $n$ that the two squares agree for $X$.
\end{proof}

Call the tower
\[
X\rightarrow\cdots\rightarrow\tau_{(n+1)}^\ast\tau_{(n+1)!}X\rightarrow\tau_{n}^\ast\tau_{n!}X\rightarrow\cdots\rightarrow\tau^\ast\tau_!X
\]
the \textit{derived Postnikov tower} of $X$. The existence of this derived Postnikov tower has the following corollary.

\begin{cor}
The functor $\tau_!\colon \Model_\calP\rightarrow\Model_{\h\calP}$ is conservative.
\end{cor}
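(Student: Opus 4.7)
The plan is to reduce to an inductive statement using the derived Postnikov tower of \cref{thm:derivedpostnikov} and verify the inductive step using the Cartesian square in part (2). Fix $f\colon X\to Y$ with $\tau_! f$ an equivalence. By part (1), $X\simeq\lim_n\tau_n^\ast\tau_{n!}X$ and similarly for $Y$, so it suffices to show that $\tau_n^\ast\tau_{n!}f$ is an equivalence for every $n\geq 1$; since $\tau_n$ is essentially surjective, $\tau_n^\ast$ is conservative, so this in turn reduces to showing that $\tau_{n!}f$ is an equivalence. I will proceed by induction on $n$, with the base case $n=1$ being the hypothesis.

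For the inductive step, I exploit the naturality of the Cartesian square of \cref{thm:derivedpostnikov}(2). The induced map $\tau_n^\ast\tau_{n!}f$ on apexes is an equivalence provided the other three sides are. The top-right map $\tau^\ast\tau_! f$ is an equivalence by hypothesis, and the bottom-left map is an equivalence by the inductive hypothesis. The bottom-right map is $\tau^\ast$ applied to an induced map between the $B^{n+1}$-constructions in $\Model_{\h\calP}$; since $\tau^\ast$ is conservative and the construction $B^{n+1}_{(\bs)}(\bs)$ is functorial in both the base and the module, it is enough to check that $\tau_! f$ (the base) and $\tau_!(f_{S^n})\colon\tau_!(X_{S^n})\to\tau_!(Y_{S^n})$ (the module) are equivalences.

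The main obstacle is this last statement, and the key input will be a natural equivalence $\tau_!(X_{S^n})\simeq(\tau_! X)\langle n\rangle$ for all $X\in\Model_\calP$, where $\langle n\rangle$ is the restriction functor on $\Model_{\h\calP}$ induced by the endofunctor $S^n\otimes(\bs)$ of $\h\calP$. Both sides are functors $\Model_\calP\to\Model_{\h\calP}$ that preserve geometric realizations: the functor $(\bs)_{S^n}$ is precomposition with $S^n\otimes(\bs)$ and so preserves pointwise colimits, which agree with geometric realizations of models by \cref{prop:geometricrealizationsgroups}, while $\tau_!$ is a left adjoint and $\langle n\rangle$ again preserves pointwise colimits in $\Model_{\h\calP}$. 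By \cref{cor:modelsresolved}(2) it then suffices to verify the identification on representables. For $P\in\calP$, the adjunction characterizing the tensor $F\otimes(\bs)$ shows that $h(P)_{S^n}$ lies in $\Model_\calP^\Omega$, so \cref{cor:spiraldiscrete} gives $\tau_!(h(P)_{S^n})\simeq\pi_0(h(P)_{S^n})$, which is readily identified with $(\pi_0 h(P))\langle n\rangle\simeq(\tau_! h(P))\langle n\rangle$. With this identification in place, $\tau_!(f_{S^n})\simeq(\tau_! f)\langle n\rangle$ is an equivalence whenever $\tau_! f$ is, closing the induction.
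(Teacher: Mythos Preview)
Your proof is correct and follows essentially the same approach as the paper: induct up the derived Postnikov tower of \cref{thm:derivedpostnikov}, using the Cartesian square of part (2) to propagate the equivalence. The paper's proof is terser and simply asserts that $\tau_! f$ being an equivalence forces the map $\tau^\ast B^{n+1}_{\tau_! X}\tau_! X_{S^n}\to\tau^\ast B^{n+1}_{\tau_! Y}\tau_! Y_{S^n}$ to be one; you have carefully justified this by establishing the natural identification $\tau_!(X_{S^n})\simeq(\tau_! X)\langle n\rangle$, which is the content the paper leaves implicit.
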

\begin{proof}
Fix $X,Y\in\Model_\calP$ and a map $f\colon X\rightarrow Y$ for which $\tau_! f$ is an equivalence; we are claiming that $f$ itself is an equivalence. As $\tau_! f$ is an equivalence, so too are the maps $\tau^\ast\tau_! X\rightarrow\tau^\ast\tau_! Y$ and $\tau^\ast B^{n+1}_{\tau_!X}\tau_!X_{S^n}\rightarrow \tau^\ast B^{n+1}_{\tau_!Y}\tau_!Y_{S^n}$. The corollary then follows by inducting up derived Postnikov towers.
\end{proof}

Our main application of \cref{thm:derivedpostnikov} is to build an analogue in our context of the \textit{spiral spectral sequence} constructed by Dwyer-Kan-Stover in the context of simplicial spaces in \cite[Section 8]{dwyerkanstover1995bigraded}, itself an enhancement of the homotopy spectral sequence of a pointed simplicial space constructed by Bousfield-Friedlander in \cite[Appendix B]{bousfieldfriedlander1978homotopy}. For $X\in\Model_\calP$, this is a spectral sequence computing the $\pi_0X$-modules $\Pi_n X$ starting with information seen in $\tau_!X$. 

The construction requires some preliminaries. Fix $\Lambda\in\Model_\calP$, and consider the category $\Lambda / \Model_\calP / \Lambda$. By \cref{lem:herdfacts}(5), the functor
\[
\pi_0\colon \Lambda / \Model_\calP / \Lambda\rightarrow \pi_0\Lambda /\Model_\calP^\heart / \pi_0\Lambda
\]
preserves products, and thus if $E$ is a group object over $\Lambda$ then $\pi_0 E$ is a group object over $\pi_0 \Lambda$. This is necessarily a $\pi_0 \Lambda$-module by \cref{lem:herdfacts}(2).

\begin{lemma}
For $\Lambda\in\Model_\calP$ and $n\geq 1$, there is an isomorphism
\[
\Pi_n \Lambda \cong \pi_0(\Lambda^{S^n})
\]
of $\pi_0\Lambda$-modules.
\end{lemma}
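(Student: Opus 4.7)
The plan is to identify $\Lambda^{S^n}$ as an abelian group object over $\Lambda$ in $\Model_\calP$, so that the construction in the paragraph preceding the statement exhibits $\pi_0(\Lambda^{S^n})$ as a $\pi_0\Lambda$-module, and then to verify the isomorphism with $\Pi_n\Lambda$ by computing both sides pointwise.

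First I would note that $\Lambda^{S^n}\in\Model_\calP$, since $\Model_\calP\subset\Psh(\calP)$ is closed under limits by \cref{prop:geometricrealizationsgroups}. Evaluation at the basepoint of $S^n$ provides a projection $\Lambda^{S^n}\to\Lambda$, and the pushout decomposition $S^n\vee S^n\simeq S^n\amalg_{\mathrm{pt}}S^n$ yields an identification $\Lambda^{S^n\vee S^n}\simeq\Lambda^{S^n}\times_\Lambda\Lambda^{S^n}$, against which the pinch map $S^n\to S^n\vee S^n$ furnishes a multiplication making $\Lambda^{S^n}$ into a group object in $\Model_\calP/\Lambda$. The discussion immediately before the statement then presents $\pi_0(\Lambda^{S^n})$ as a $\pi_0\Lambda$-module.

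To identify this module with $\Pi_n\Lambda$, I would evaluate at $P\in\calP$. Since $\pi_0$ and the $S^n$-cotensor in $\Psh(\calP)$ are both computed pointwise, $\pi_0(\Lambda^{S^n})(P)\cong\pi_0\Map(S^n,\Lambda(P))$, fibered over $\pi_0\Lambda(P)$. The fiber over $x\in\pi_0\Lambda(P)$ is the set of free homotopy classes of maps $(S^n,\ast)\to(\Lambda(P),\widetilde{x})$, i.e.\ the orbits of the $\pi_1(\Lambda(P),\widetilde{x})$-action on $\pi_n(\Lambda(P),\widetilde{x})$. By \cref{lem:herdfacts}(5), applied to the universal cover of $\Lambda(P)$, this action is trivial, and the fiber is canonically $\pi_n(\Lambda(P),\widetilde{x})$. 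Since classifiers in the presheaf topos $\Psh(\calP)$ are computed pointwise, $\Pi_n\Lambda(P)$ is likewise the total space of the local system $x\mapsto\pi_n(\Lambda(P),\widetilde{x})$ over $\pi_0\Lambda(P)$, giving the desired bijection of underlying sets fibered over $\pi_0\Lambda$.

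The main obstacle I foresee is confirming that this pointwise bijection respects the abelian group structure, i.e.\ that the pinch-map-induced addition on $\pi_0(\Lambda^{S^n})$ recovers the addition implicit in the topos-theoretic definition of $\Pi_n\Lambda$ via the Postnikov square. Both structures arise from the cogroup structure on $S^n$, so they should agree by an Eckmann--Hilton-style comparison naturally in $P$, but making the comparison explicit against the classifier construction defining $\Pi_n\Lambda$ will require some bookkeeping.
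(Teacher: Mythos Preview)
Your overall strategy matches the paper's: evaluate pointwise at $P\in\calP$ and invoke \cref{lem:herdfacts}(5). However, your specific application of that lemma is misdirected. Applying it ``to the universal cover of $\Lambda(P)$'' does not give triviality of the $\pi_1$-action on $\pi_n$: the monodromy statement at the end of \cref{lem:herdfacts} concerns the action of $\pi_1$ on $\pi_0$ of the fiber of a covering map, not the action on higher homotopy groups, and the deck transformation description of the $\pi_1$-action on $\pi_n$ is not of that form.

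The paper avoids this detour entirely. It applies \cref{lem:herdfacts}(5) directly with $B=\Lambda(P)^{S^n}$, $X=\Lambda(P)$, $p$ the evaluation at the basepoint (with section given by constant maps), and $A=\{x\}$. Then $A\times_X B$ is the space of based maps $S^n\to(\Lambda(P),x)$, so the conclusion of the lemma reads
\[
\pi_n(\Lambda(P),x)=\pi_0\bigl(\Lambda(P)^{S^n}\times_{\Lambda(P)}\{x\}\bigr)\xrightarrow{\ \cong\ }\pi_0(\Lambda(P)^{S^n})\times_{\pi_0\Lambda(P)}\{x\},
\]
which is exactly the required identification of fibers. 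No intermediate step about free homotopy classes or the $\pi_1$-action is needed.

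As for your concern about compatibility of abelian group structures: the comparison map above is induced by the map of spaces $\Lambda(P)^{S^n}\times_{\Lambda(P)}\{x\}\to\Lambda(P)^{S^n}$, and both group structures are pulled back from the cogroup structure on $S^n$. Hence the map is a homomorphism by naturality, and the identification is automatically one of $\pi_0\Lambda$-modules; the paper leaves this implicit.
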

\begin{proof}
Fix $P\in\calP$ and $x\in \pi_0 \Lambda(P)$. Then there is a natural map
\[
(\Pi_n \Lambda)(x) = \pi_n(\Lambda(P),x) = \pi_0(\Lambda(P)^{S^n}\times_{\Lambda(P)} \{x\})\rightarrow \pi_0(\Lambda^{S^n})\times_{\pi_0 \Lambda}\{x\} = (\pi_0 \Lambda^{S^n})(x),
\]
and this is an isomorphism by \cref{lem:herdfacts}(5).
\end{proof}

Given a category $\calM$ with finite limits and $\Lambda\in \calM$, the slice category $\Lambda / \calM / \Lambda$ is pointed, and thus admits a loop functor $\Omega_\Lambda$ given by $\Omega_\Lambda E = \Lambda\times_E \Lambda$. This takes values in group objects over $\Lambda$. When $\calM = \Model_\calP$, we may use this to define for any $E\in \Lambda/\Model_\calP/\Lambda$ the $\pi_0 \Lambda$-module
\[
\pi_n^\Lambda E = \pi_0 \Omega_\Lambda^n E.
\]
When $E$ is itself a group object over $\Lambda$, we extend this notation to $n = 0$ by taking $\pi_0^\Lambda E = \pi_0 \Lambda$.

\begin{lemma}\label{lem:loopb}
Let $\calM$ be a category with finite limits, $\Lambda\in\calM$, and $E\in \Lambda / \calM / \Lambda$. Then there are Cartesian squares
\begin{center}\begin{tikzcd}
\Omega^n_\Lambda E\ar[r]\ar[d]&E^{S^n}\ar[d]\\
\Lambda\ar[r]&\Lambda^{S^n}\times_\Lambda E
\end{tikzcd}\end{center}
for $n\geq 1$.
\end{lemma}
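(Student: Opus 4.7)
The plan is to reduce to the case $\calM = \Gpd_\infty$ via the Yoneda embedding, and then verify the square is Cartesian by inspecting fibers over $\Lambda$.

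For the reduction, I note that for any $Z \in \calM$ the functor $\Map_\calM(Z,\bs)$ preserves small limits. Since $S^n$ is a finite $\infty$-groupoid, the cotensors $E^{S^n}$ and $\Lambda^{S^n}$ exist as finite limits in $\calM$ and are preserved, as are the iterated pullback $\Omega^n_\Lambda E$ and the pullback $\Lambda^{S^n} \times_\Lambda E$. Applying $\Map_\calM(Z,\bs)$ to the square in question therefore yields the analogous square in $\Gpd_\infty$ constructed from $\Map(Z, \Lambda)$, $\Map(Z, E)$, $p_\ast$, and $s_\ast$. Since Cartesianness in $\calM$ is detected by such Yoneda images, it suffices to prove the statement in $\Gpd_\infty$.

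In $\Gpd_\infty$, each vertex of the square carries a natural map to $\Lambda$ (with $E^{S^n} \to \Lambda$ given by $f \mapsto p(f(\ast))$), and the four edges all respect these structure maps; Cartesianness may therefore be checked fiberwise over each $\lambda \in \Lambda$. Writing $E_\lambda = p^{-1}(\lambda)$, the fiberwise pullback is readily identified as the space of $f \colon S^n \to E$ with $p \circ f$ constant at $\lambda$ and $f(\ast) = s(\lambda)$; this is the based loop space $\Omega^n_{s(\lambda)} E_\lambda$, which agrees with the fiber of $\Omega^n_\Lambda E$ over $\lambda$.

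The hard part will be the bookkeeping in the Yoneda reduction: one needs each construction appearing in the square, most subtly $\Omega^n_\Lambda$ itself, which must be expanded as an iterated pullback, to be transparently preserved by $\Map_\calM(Z,\bs)$. Once that is in place, the fiberwise computation in $\Gpd_\infty$ is essentially a direct unpacking of definitions.
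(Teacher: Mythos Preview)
Your approach matches the paper's: reduce to $\Gpd_\infty$ via Yoneda, then verify Cartesianness by comparing fibers over points of $\Lambda$. The paper carries out the fiberwise step via a diagram chase identifying both the fiber of $\Omega^n_\Lambda E \to \Lambda$ and the fiber of $E^{S^n} \to \Lambda^{S^n} \times_\Lambda E$ over $x$ with $\Omega^n_x E_x$, which is essentially your computation as well.

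There is one step you skip that the paper treats explicitly: constructing the top map $\Omega^n_\Lambda E \to E^{S^n}$. You write that ``the four edges all respect these structure maps,'' but you have not produced the top edge; the lemma asserts the existence of such a square, and this map is not part of the given data. The paper handles it by building the map directly: for $n=1$ it arises by taking horizontal limits in
\begin{center}\begin{tikzcd}
\Lambda\ar[r]\ar[d]&E\ar[d]&\Lambda\ar[l]\ar[d]\\
E\ar[r]&E\times E&E\ar[l]
\end{tikzcd},\end{center}
and for $n>1$ it proceeds inductively via the cofibering $S^n\vee S^1\to S^n\times S^1\to S^{n+1}$. This construction, rather than the Yoneda bookkeeping, is the step requiring care. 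Without the map in hand, your fiberwise check shows only that the pullback of the other three corners and $\Omega^n_\Lambda E$ have equivalent fibers over $\Lambda$; that is not by itself an identification in $\calM$. Once the map is constructed, everything you wrote goes through.
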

\begin{proof}
First note that there are natural maps $\Omega^n_\Lambda E\rightarrow E^{S^n}$ allowing us to form the indicated square. Indeed, consider the diagram
\begin{center}\begin{tikzcd}
\Lambda\ar[r]\ar[d]&E\ar[d]&\Lambda\ar[l]\ar[d]\\
E\ar[r]&E\times E&E\ar[l]
\end{tikzcd}.\end{center}
Taking limits horizontally yields a map $\Omega_\Lambda E\rightarrow E^{S^1}$. For $n > 1$, the desired maps may be obtained inductively using the cofibering $S^n\vee S^1\rightarrow S^n\times S^1\rightarrow S^{n+1}$. To show that this square is Cartesian, we may use the Yoneda lemma to reduce to the case $\calM = \Gpd_\infty$. Fix $x\in \pi_0\Lambda$, and write the same for its image in $\pi_0 E$ and $\pi_0 (\Lambda^{S^q}\times_\Lambda E)$. It then suffices to verify $p^{-1}(x)\simeq q^{-1}(x)$. Write $\Omega_x$ for loops based at $x$, and let $E_x$ denote the fiber of $E\rightarrow \Lambda$ over $x$. Consider the diagram
\begin{center}\begin{tikzcd}
\Omega^n_x E_x\ar[r]\ar[d]&\Omega^n_x E\ar[d]\ar[rr]&&E^{S^q}\ar[d,"q"]\\
\{x\}\ar[r]&\Omega^n_x\Lambda\ar[r]\ar[d]&E_x\times \Omega^n_x \Lambda\ar[d]\ar[r]&E\times_\Lambda \Lambda^{S^q}\ar[d]\\
&\{x\}\ar[r]&E_x\ar[r]&E
\end{tikzcd}.\end{center}
All the bottom rectangles are Cartesian. The large square consisting of the three rightmost rectangles is Cartesian, implying that the upper right rectangle is Cartesian. The leftmost square is Cartesian, and thus the top outer rectangle is Cartesian. It follows that $p^{-1}(x)\simeq\Omega^n_x E_x\simeq q^{-1}(x)$ as claimed.
\end{proof}

\begin{lemma}\label{lem:gpisplit}
Fix $\Lambda\in\Model_\calP$ and $E\in \Lambda/\Model_\calP/ \Lambda$ for which $\pi_0 E\cong \pi_0\Lambda$. Then there is a natural splitting
\[
\Pi_n E \cong \pi_n^\Lambda E \oplus \Pi_n\Lambda
\]
of $\pi_0\Lambda$-modules for all $n\geq 1$
\end{lemma}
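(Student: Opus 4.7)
The plan is to extract the splitting from the Cartesian square provided by \cref{lem:loopb}. Writing $s\colon \Lambda \to E$ and $p\colon E \to \Lambda$ for the structure maps of $E$ over $\Lambda$, that lemma supplies a natural Cartesian square
\begin{center}\begin{tikzcd}
\Omega^n_\Lambda E \ar[r] \ar[d] & E^{S^n} \ar[d,"u"] \\
\Lambda \ar[r] & \Lambda^{S^n} \times_\Lambda E
\end{tikzcd}\end{center}
in $\Model_\calP$, where $u$ sends $f\colon S^n \to E$ to the pair $(p\circ f, f(*))$.

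My first step would be to verify pointwise that $u$ admits a section. Given $P \in \calP$ and a pair $(g,e)$ with $g\colon S^n \to \Lambda(P)$ satisfying $g(*) = p(e)$, I would define $f(x) = t(s(g(x)), s(p(e)), e)$ using the herd structure on $E(P)$; the Mal'cev identities $t(y,y,z)=z$ and $t(y,z,z)=y$ immediately give $f(*) = e$ and $p\circ f = g$, so this defines a section of $u$ natural in $P$, valued in herd objects. This is the only place where the herd hypothesis does nontrivial work, and I expect it to be the subtle step — though it is dispatched quickly.

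Next, I would apply $\pi_0$ pointwise and invoke \cref{lem:herdfacts}(5), justified by the section just produced, to conclude that $\pi_0$ preserves the Cartesian square. Using the previous lemma to identify $\pi_0(\Lambda^{S^n}) \cong \Pi_n \Lambda$, together with the hypothesis $\pi_0 E \cong \pi_0 \Lambda$, I can identify $\pi_0(\Lambda^{S^n} \times_\Lambda E) \cong \Pi_n \Lambda$, and the bottom horizontal map becomes the zero section. The resulting Cartesian square
\begin{center}\begin{tikzcd}
\pi_n^\Lambda E \ar[r] \ar[d] & \Pi_n E \ar[d,"p_\ast"] \\
\pi_0 \Lambda \ar[r,"0"] & \Pi_n \Lambda
\end{tikzcd}\end{center}
of $\pi_0 \Lambda$-modules then exhibits $\pi_n^\Lambda E$ as the kernel of $p_\ast\colon \Pi_n E \to \Pi_n \Lambda$.

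Finally, the section $s$ induces $s_\ast \colon \Pi_n \Lambda \to \Pi_n E$ splitting $p_\ast$, which together with the kernel identification yields the desired natural direct sum decomposition $\Pi_n E \cong \pi_n^\Lambda E \oplus \Pi_n \Lambda$. Naturality in $E$ is automatic from the naturality of every construction in the argument.
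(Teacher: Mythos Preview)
Your proof is correct and follows essentially the same route as the paper: invoke the Cartesian square of \cref{lem:loopb}, show the right vertical map admits a section so that the square stays Cartesian on $\pi_0$, identify the lower-right corner as $\Pi_n\Lambda$, and split the resulting short exact sequence via $s_\ast$. One small overclaim: your section $f(x)=t(s(g(x)),s(p(e)),e)$ is \emph{not} natural in $P$, since the herd structure on $E(P)$ depends on a choice of coherd structure on $P$ and need not be compatible with maps in $\calP$ (the paper flags this just after \cref{lem:herdfacts}). This does no harm, however, because you only need the section pointwise to apply \cref{lem:herdfacts}(5) at each $P$; the maps in the resulting short exact sequence are already maps of models, hence of $\pi_0\Lambda$-modules, independent of the section.
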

\begin{proof}
Consider the Cartesian square
\begin{center}\begin{tikzcd}
\Omega^n_\Lambda E\ar[r]\ar[d]& E^{S^n}\ar[d,"q"]\\
\Lambda\ar[r]&\Lambda^{S^n}\times_\Lambda E
\end{tikzcd}\end{center}
guaranteed by \cref{lem:loopb}. As $q$ admits a section, this square remains Cartesian upon taking path components. As $\pi_0 E \cong\pi_0\Lambda$, we may identify $\pi_0(\Lambda^{S^n}\times_\Lambda E)\cong \Pi_n\Lambda$. Thus there is a natural short exact sequence
\[
0\rightarrow\pi_n^\Lambda E\rightarrow \Pi_n E\rightarrow\Pi_n\Lambda\rightarrow 0
\]
of $\pi_0 \Lambda$-modules. This is naturally split by the map induced by $\Lambda\rightarrow E$.
\end{proof}

Given $X\in\Model_\calP$ and $p,q\in\bbZ$, define $\pi_0 X$-modules $\tau_{p,q} X$ as follows. If $p\geq 1$ and $q\geq 0$, then
\[
\tau_{p,q} X = \pi_q^{\tau_! X}(\tau_! X_{S^p}).
\]
If $p = 0$ and $q\geq 1$, then
\[
\tau_{0,q} X = \Pi_q \tau_! X.
\]
In all other cases, we take $\tau_{p,q} X$ to be the trivial $\pi_0 X$-module.

\begin{lemma}\label{lem:pisplit}
Fix $p,q,n\geq 1$. Then there is an natural splitting
\[
\Pi_q B^n_{\tau_! X}\tau_! X_{S^p}\cong \tau_{p,q-n} X \oplus \Pi_q \tau_! X
\]
of $\pi_0 X$-modules.
\end{lemma}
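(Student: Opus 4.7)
The plan is to apply \cref{lem:gpisplit} to $E := B^n_{\tau_! X}\tau_! X_{S^p}$ and then identify the resulting relative homotopy group with $\tau_{p,q-n}X$. Set $\Lambda := \tau_! X$ and $M := \tau_! X_{S^p}$.

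First I would verify that $M$ is an abelian group object over $\Lambda$ in $\Model_{\h\calP}$: by \cref{lem:pbwedge}, $X_{S^p}$ is a group object in $\Model_{\calP}/X$, and by \cref{lem:herdfacts}(2) it is abelian. Since $\tau_!$ preserves the finite limits (small products and reflexive coequalizers) used to witness the abelian group structure, $M$ is an abelian group object over $\Lambda$. This justifies forming the $n$-fold delooping $B^n_\Lambda M$ in the slice $\infty$-topos $\Psh(\h\calP)/\Lambda$.

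Next, for $n\geq 1$ the delooping $E = B^n_\Lambda M$ is a pointed object over $\Lambda$, via its canonical section $\Lambda\to E$, whose fiber over any $x\in\pi_0\Lambda$ is an $n$-fold delooping of the corresponding fiber of $M$ and hence is $(n-1)$-connected. Consequently $\pi_0 E \cong \pi_0\Lambda$, so the hypothesis of \cref{lem:gpisplit} is satisfied and we obtain a natural splitting
\[
\Pi_q E \cong \pi_q^\Lambda E \oplus \Pi_q\Lambda
\]
of $\pi_0 X$-modules.

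It remains to identify $\pi_q^\Lambda E = \pi_0\Omega_\Lambda^q B^n_\Lambda M$ with $\tau_{p,q-n}X$. Iterating the delooping identity $\Omega_\Lambda B^k_\Lambda M \simeq B^{k-1}_\Lambda M$ gives $\Omega_\Lambda^q B^n_\Lambda M \simeq B^{n-q}_\Lambda M$ for $q<n$ and $\Omega_\Lambda^q B^n_\Lambda M \simeq \Omega_\Lambda^{q-n} M$ for $q\geq n$. In the first case the result is again fiberwise connected, so $\pi_q^\Lambda E = 0$, matching $\tau_{p,q-n}X=0$ for $q-n<0$. In the second case, unfolding the definitions gives $\pi_q^\Lambda E = \pi_{q-n}^\Lambda M = \pi_{q-n}^{\tau_! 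X}(\tau_! X_{S^p}) = \tau_{p,q-n}X$. Naturality of the splitting throughout follows from the naturality of the section $\Lambda\to E$, of \cref{lem:gpisplit}, and of the delooping equivalences.

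The main obstacle is the identification of iterated loops and deloopings in step three in the setting where $M$ is not discrete; this is handled by invoking the standard correspondence in the $\infty$-topos $\Psh(\h\calP)/\Lambda$ between abelian group objects and connective spectrum objects, under which deloopings correspond to shifts and so automatically satisfy the required identities.
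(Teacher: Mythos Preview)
Your proof is correct and follows exactly the paper's route: observe $\pi_0 E\cong\pi_0\Lambda$ since $n\geq 1$, apply \cref{lem:gpisplit}, then identify $\pi_q^\Lambda B^n_\Lambda M\cong\pi_{q-n}^\Lambda M=\tau_{p,q-n}X$. One minor correction to your added justification: $\tau_!$ is a left adjoint and does not preserve finite limits in general; the group structure on $\tau_! X_{S^p}$ over $\tau_! X$ instead follows from $\tau_!(X_{S^p}\times_X X_{S^p})\simeq\tau_! X_{S^p\vee S^p}\simeq(\tau_! X)_{S^p}\times_{\tau_! X}(\tau_! X)_{S^p}$ via \cref{lem:pbwedge}, or simply from the construction already carried out in \cref{thm:derivedpostnikov}.
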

\begin{proof}
As $n\geq 1$, we have $\pi_0 B^n_{\tau_! X}\tau_! X_{S^p} = \pi_0 X$. By \cref{lem:gpisplit}, there is then a natural splitting
\[
\Pi_q B^n_{\tau_! X}\tau_! X^{S^p} \cong \pi_q^{\tau_! X}B^n_{\tau_! X}\tau_! X_{S^p}\oplus\Pi_q\tau_! X.
\]
By construction, 
\[
\pi_q^{\tau_! X}B^n_{\tau_! X}\tau_! X_{S^p}\cong \pi_{q-n}^{\tau_! X}\tau_! X_{S^p}\cong \tau_{p,q-n} X,
\]
yielding the lemma.
\end{proof}

\begin{theorem}\label{thm:spiralss}
Fix $X\in\Model_\calP$. Then there is a convergent spectral sequence
\[
E^1_{p,q} = \tau_{p,q-p}X \Rightarrow \Pi_q X,\qquad d^r_{p,q}\colon E^r_{p,q}\rightarrow E^r_{p+r,q-1}
\]
of $\pi_0 X$-modules.
\end{theorem}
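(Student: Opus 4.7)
The plan is to build the spiral spectral sequence as the exact-couple spectral sequence of the derived Postnikov tower of $X$ from \cref{thm:derivedpostnikov}. Write $X^{(n)} := \tau_n^\ast \tau_{n!} X$ and $F^{(n)} := \mathrm{fib}(X^{(n)} \to X^{(n-1)})$; all fibers are understood as taken over each basepoint of $\pi_0 X$ and assembled into $\pi_0 X$-module data. Part (1) of that theorem gives $X \simeq \lim_n X^{(n)}$, while part (2) displays $F^{(n)}$ as equivalent to the fiber of the canonical section
\[
s_n\colon \tau^\ast \tau_!X \longrightarrow \tau^\ast B^{n+1}_{\tau_!X}\tau_!X_{S^n}.
\]

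The key input is the identification $\Pi_q F^{(n)} \cong \tau_{n, q-n} X$. By \cref{lem:pisplit}, there is a natural splitting
\[
\Pi_q \tau^\ast B^{n+1}_{\tau_!X}\tau_!X_{S^n} \cong \tau_{n, q-n-1}X \oplus \Pi_q \tau_!X,
\]
and inspection of the proofs of \cref{lem:gpisplit} and \cref{lem:pisplit} shows that the second summand is precisely the image of $\Pi_q s_n$: the splitting of \cref{lem:gpisplit} is induced by the pointing $\Lambda \to E$, which here is our section $s_n$. Since $\Pi_q s_n$ is thus a split injection, the long exact sequence in $\Pi_\ast$ associated to the fiber sequence $F^{(n)} \to \tau^\ast\tau_!X \xrightarrow{s_n} \tau^\ast B^{n+1}_{\tau_!X}\tau_!X_{S^n}$ degenerates into
\[
\Pi_q F^{(n)} \cong \Pi_{q+1}\tau^\ast B^{n+1}_{\tau_!X}\tau_!X_{S^n} / \Pi_{q+1}\tau_!X \cong \tau_{n, q-n}X.
\]

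Assembling the long exact sequences into the standard exact couple of the tower then yields a spectral sequence with $E^1_{p, q} = \Pi_q F^{(p)} = \tau_{p, q-p}X$ abutting to $\Pi_q X$, its $d^1$-differential being the boundary map $\Pi_q F^{(p)} \to \Pi_{q-1} F^{(p+1)}$ (hence the bidegree $d^r\colon E^r_{p,q} \to E^r_{p+r, q-1}$). Strong convergence in each total degree $q$ is immediate from \cref{thm:derivedpostnikov}(1): since $X \to X^{(n)}$ is an equivalence on $n$-truncations, $\Pi_q X \cong \Pi_q X^{(n)}$ for all $n \geq q$, producing a bounded filtration on each $\Pi_q X$. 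The main technical step is verifying compatibility of the splitting in \cref{lem:pisplit} with the section $s_n$, but this reduces to tracing the Cartesian square in the proof of \cref{lem:gpisplit}.
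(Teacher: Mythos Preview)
Your proof is correct and takes essentially the same approach as the paper: both derive the exact couple from the derived Postnikov tower of \cref{thm:derivedpostnikov}, using \cref{lem:pisplit} to split off the $\Pi_q\tau_!X$ summand and identify the $E^1$-term as $\tau_{p,q-p}X$. The only cosmetic difference is that you name the fiber $F^{(n)}$ and compute its $\Pi_\ast$ via the fiber sequence $F^{(n)}\to\tau^\ast\tau_!X\to\tau^\ast B^{n+1}_{\tau_!X}\tau_!X_{S^n}$, whereas the paper writes out the Mayer--Vietoris long exact sequence of the Cartesian square directly and performs the splitting there to obtain the sequence $\cdots\to\tau_{p,q-p}X\to\Pi_q X^{(p)}\to\Pi_q X^{(p-1)}\to\tau_{p,q-p-1}X\to\cdots$ in one step; the content is identical.
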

\begin{proof}
Consider the derived Postnikov tower $X\simeq\lim_{n\rightarrow\infty}\tau_n^\ast\tau_{n!} X$. By \cref{thm:derivedpostnikov}, this has layers described by Cartesian squares
\begin{center}\begin{tikzcd}
\tau_{(p+1)}^\ast\tau_{(p+1)!}X\ar[r]\ar[d]&\tau^\ast\tau_!X\ar[d]\\
\tau_{p}^\ast\tau_{p!}X\ar[r]&\tau^\ast B^{p+1}_{\tau_! X} \tau_! X_{S^p}
\end{tikzcd}.\end{center}
This square gives rise to a long exact sequence
\[
\cdots\rightarrow \Pi_{q+1}\tau^\ast B^{p+1}_{\tau_! X}\tau_! X_{S^p}\rightarrow \Pi_q\tau_{(p+1)}^\ast\tau_{(p+1)!}X\rightarrow \Pi_q \tau_{p}^\ast\tau_{p!}X \oplus \Pi_q \tau_! X\rightarrow \Pi_q B^{p+1}_{\tau_! X}\tau_! X_{S^p}\rightarrow\cdots
\]
of $\pi_0 X$-modules. By \cref{lem:pisplit}, we may split off copies of $\Pi_q \tau_! X$ for $q\geq 1$ to obtain a long exact sequence
\[
\cdots\rightarrow \tau_{p,q-p}X\rightarrow \Pi_q\tau_{(p+1)}^\ast\tau_{(p+1)!}X\rightarrow \Pi_q \tau_{p}^\ast\tau_{p!}X\rightarrow \tau_{p,q-p-1}X\rightarrow\cdots
\]
of $\pi_0 X$-modules. These may be combined as usual to form an exact couple leading to the desired spectral sequence. Convergence follows from \cref{thm:derivedpostnikov}(1).
\end{proof}

Call this the \textit{spiral spectral sequence} for $X$.
The rest of this subsection relates the spiral spectral sequence to topics considered in \cref{sec:stable}. Suppose that $\calP$ is stable. For any $\Lambda\in\Model_{\h\calP}^\heart$, the category of $\Lambda$-modules is equivalent to $\Model_{\h\calP}^\heart$, this equivalence sending a $\Lambda$-module $p\colon E\rightarrow\Lambda$ to $\ker(p)$. In particular, if $X\in \Model_\calP$, then the spiral spectral sequence of $X$ can be considered a spectral sequence of objects of $\Model_{\h\calP}^\heart$.

Standard methods produce for any tower 
\[
\cdots \rightarrow X(-1)\rightarrow X(0)\rightarrow X(1)\rightarrow\cdots
\]
in $\LMod_\calP$ two spectral sequences
\begin{align*}
E^1_{p,q} &= \pi_q \Cof(X(-p-1)\rightarrow X(-p))\Rightarrow \pi_q \colim_{n\rightarrow\infty}X(n) \\
E^1_{p,q} &= \pi_q \Fib(X(-p)\rightarrow X(-p+1))\Rightarrow \pi_q \lim_{n\rightarrow \infty} X(-n)
\end{align*}
of models of $\h\calP$. Call the first the colimit-type spectral sequence associated to the tower and the second the limit-type spectral sequence associated to the tower.

\begin{lemma}\label{lem:3spiral}
Fix $X\in\Model_\calP$. Then the following spectral sequences are isomorphic.
\begin{enumerate}
\item The spiral spectral sequence for $X$;
\item The limit-type spectral sequence associated to the tower
\[
X_{\geq 1}\rightarrow \cdots\rightarrow (\tau_2^\ast\tau_{2!}X)_{\geq 1}\rightarrow (\tau_1^\ast\tau_{1!} X)_{\geq 1};
\]
\item The colimit-type spectral sequence associated to the tower
\[
\cdots\rightarrow \Sigma^2 X_{\Sigma^2}\rightarrow\Sigma X_{\Sigma}\rightarrow X_{\geq 1}.
\]
\end{enumerate}
\end{lemma}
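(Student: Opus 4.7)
The plan is to prove (1)$\cong$(2) and (2)$\cong$(3) separately, in each case by identifying the underlying towers whose associated spectral sequences are being compared, and then checking that the resulting $E^1$ pages and differentials match.

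For (1)$\cong$(2), note that the spiral spectral sequence of \cref{thm:spiralss} is built from the long exact sequences associated to the derived Postnikov tower $X \simeq \lim_n \tau_n^\ast \tau_{n!} X$ of \cref{thm:derivedpostnikov}, with the algebraic splitting of \cref{lem:pisplit} used to peel off a natural copy of $\Pi_q \tau_! X$ in each layer. The tower in (2) is obtained by applying the endofunctor $(-)_{\geq 1}$ to this very same derived Postnikov tower. I would show that, layer by layer, the Cartesian square of \cref{thm:derivedpostnikov}(2) becomes, after the application of $(-)_{\geq 1}$, a fiber sequence whose fiber computes precisely the $\tau_{n,q-n}X$ summand isolated in \cref{lem:pisplit}. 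The key point is that passage to $(-)_{\geq 1}$ is exactly what kills the $\Pi_q\tau_!X$ summand, since that summand arises from the map $X \to \tau^\ast\tau_! X$, which in the stable picture contributes only to degree $0$ once the $B^{n+1}$ delooping is taken into account. Naturality of the splitting in \cref{lem:pisplit} then ensures compatibility of the induced differentials.

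For (2)$\cong$(3), I would first iterate the spiral fiber sequence of \cref{lem:stablespiral} to produce natural fiber sequences
\[
\Sigma^n X_{\Sigma^n} \rightarrow X_{\geq 1} \rightarrow (\tau_n^\ast\tau_{n!}X)_{\geq 1},
\]
compatibly across $n \geq 1$. This can be done inductively: granted the fiber sequence for $n-1$, one extends to $n$ by taking $(-)_{\geq 1}$ of the Cartesian square of \cref{thm:derivedpostnikov}(2) and recognizing that the fiber of $(\tau_n^\ast\tau_{n!}X)_{\geq 1} \to (\tau_{n-1}^\ast\tau_{n-1!}X)_{\geq 1}$ is, via iterated application of the spiral to $X_{\Sigma^{n-1}}$ and a suspension, the cofiber $\Sigma^n X_{\Sigma^n}/\Sigma^{n-1} X_{\Sigma^{n-1}}$ in the tower of (3). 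Once these fiber sequences are assembled, (2) is the tower of quotients and (3) is the tower of fibers of a single filtered diagram based at $X_{\geq 1}$, and the standard fact that the limit-type and colimit-type spectral sequences of mutually complementary such towers are canonically isomorphic then yields (2)$\cong$(3).

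The main obstacle is the bookkeeping in (1)$\cong$(2), specifically matching the algebraic splitting of \cref{lem:pisplit} with the effect of $(-)_{\geq 1}$ on the Cartesian squares of \cref{thm:derivedpostnikov}(2), and checking that this identification carries the higher differentials of Theorem \ref{thm:spiralss} to those of the limit-type spectral sequence of (2). Beyond this, one must verify convergence of the colimit-type spectral sequence (3), which follows from the fact that $\Sigma^n X_{\Sigma^n}$ is $n$-connective in $\LMod_\calP$ (arguing inductively from the observation that $\Sigma X_\Sigma$ is the fiber of the $\pi_0$-isomorphism $X \to \tau^\ast \tau_! X$ and hence is $1$-connective), which gives the connectivity hypothesis needed for the standard convergence arguments reviewed in \cref{ssec:constructss}.
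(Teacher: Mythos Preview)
Your proposal is correct and follows essentially the same route as the paper, only with considerably more detail. The paper dispatches (1)$\cong$(2) in a single phrase, ``follows immediately from the construction,'' which is exactly your observation that in the stable setting the splitting of \cref{lem:pisplit} amounts to applying $(-)_{\geq 1}$ to the derived Postnikov tower; and for (2)$\cong$(3) the paper simply asserts the cofiber sequences $\Sigma^n X_{\Sigma^n}\to X_{\geq 1}\to(\tau_n^\ast\tau_{n!}X)_{\geq 1}$ and invokes the standard complementary-tower identification, which is precisely the argument you outline via induction on \cref{lem:stablespiral} and the octahedral axiom.

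Two minor points. First, your phrase ``the cofiber $\Sigma^n X_{\Sigma^n}/\Sigma^{n-1} X_{\Sigma^{n-1}}$'' has the arrow reversed: the fiber of $(\tau_n^\ast\tau_{n!}X)_{\geq 1}\to(\tau_{n-1}^\ast\tau_{(n-1)!}X)_{\geq 1}$ is the cofiber of $\Sigma^n X_{\Sigma^n}\to\Sigma^{n-1}X_{\Sigma^{n-1}}$, not the other way. Second, the convergence discussion at the end is superfluous to the lemma as stated, which asserts only an isomorphism of spectral sequences and not any convergence claim.
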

\begin{proof}
The equivalence of (1) and (2) follows immediately from the construction. The equivalence of (2) and (3) follows from the cofiber sequences
\[
\Sigma^n X_{\Sigma^n}\rightarrow X_{\geq 1}\rightarrow (\tau_n^\ast\tau_{n!} X)_{\geq 1},
\]
present for $n\geq 1$.
\end{proof}

In \cref{thm:stablelocalization}, we identified the localization $L\colon \LMod_\calP\rightarrow\LMod_\calP^\Omega$ as
\[
LX = \colim_{n\rightarrow\infty}\Sigma^{-n}X_{\Sigma^{-n}}.
\]
This yields a spectral sequence
\[
E^1_{p,q} = \pi_{q-p}\tau_! X[p]\Rightarrow \pi_q LX,\qquad d^r_{p,q}\colon E^r_{p,q}\rightarrow E^r_{p+r,q-1},
\]
and we described a special case of this in \cref{ssec:lderss}. To be precise, $E^r_{p,q}$ here agrees with $E^r_{-p,q}$ in \cref{ssec:lderss}. Call this the \textit{localization spectral sequence} for $X$. The tower producing the localization spectral sequence for $X$ extends that of \cref{lem:3spiral}(2) producing the spiral spectral sequence for $X$, and it turns out that these two spectral sequences house the same information.

With $X$ fixed, write $\{E^r_{p,q}\}$ for the spiral spectral sequence for $X$ and $\{LE^r_{p,q}\}$ for the localization spectral sequence for $X$. Let $Z^r_{p,q}$ and $B^r_{p,q}$ denote the $r$-cycles and $r$-boundaries for the localization spectral sequence, so that 
\[
0 = B^0_{p,q} \subset B^1_{p,q}\subset\cdots\subset Z^1_{p,q}\subset Z^0_{p,q} = \pi_{q-p}\tau_! X[p],\qquad LE^{r+1}_{p,q} = Z^r_{p,q} / B^r_{p,q}.
\]

\begin{prop}\label{prop:synth}
The following hold.
\begin{enumerate}
\item There are isomorphisms $Z^r_{p,q} \cong Z^r_{0,q-p}[p]$ and $B^r_{p,q} \cong B^r_{0,q-p}[p]$ in $\Model_{\h\calP}^\heart$.
\item Provided $p\geq q \geq 0$ and $p+q\geq 1$, we have $E^{r+1}_{p,q} = Z^r_{p,q} / B^{\min(p,r)}_{p,q}$. In particular, $E^{r+1}_{p,q} = LE^{r+1}_{p,q}$ for $p\geq r$.
\item For $q\geq 1$, the diagram
\begin{center}\begin{tikzcd}
Z^r_{0,q}\ar[d,"="]\ar[r]&Z^r_{0,q}/B^r_{0,q}\ar[d,"="]\ar[r,"="]&{(Z^r_{p,q+p} / B^r_{p,q+p})[-p]}\ar[d,"="]\\
E^r_{0,q}\ar[r]\ar[d,"d^r_{0,q}"]&LE^r_{0,q}\ar[r]\ar[d,"d^r_{0,q}"]&{LE^r_{p,q+p}[-p]}\ar[d,"{d^r_{p,q+p}[-p]}"]\\
E^r_{r,q-1}\ar[r,"="]&LE^r_{r,q-1}\ar[r,"="]&{LE^r_{p+r,q+p-1}[-p]}
\end{tikzcd}\end{center}
commutes.
\end{enumerate}
\end{prop}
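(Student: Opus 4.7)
The plan is to compare the two spectral sequences at the level of their underlying towers in $\LMod_\calP$ and $\LMod_{\h\calP}$. By \cref{lem:3spiral}(3), the spiral spectral sequence of $X$ is the colimit-type spectral sequence of the one-sided tower
\[
\cdots\to\Sigma^2 X_{\Sigma^2}\to\Sigma X_\Sigma \to X_{\geq 1},
\]
while the localization spectral sequence is the one associated to the bi-infinite tower $\{X(n) = \Sigma^{-n}X_{\Sigma^{-n}}\}_{n\in\bbZ}$ of \cref{thm:stablelocalization}, whose cofibers $\Cof(X(-p-1)\to X(-p))$ are identified via \cref{lem:stablespiral} as $\tau^\ast\Sigma^p(\tau_! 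X)[p]$. The spiral tower is thus a truncation of the localization tower: the two agree in positions $\leq -1$, the spiral replaces $X$ with $X_{\geq 1}$ at position $0$, and the spiral has the zero object at positions $\geq 1$.

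For part (1), I would exploit a self-symmetry of the $\tau_!$-image of the localization tower, whose position $n$ is $\Sigma^{-n}(\tau_! X)[-n]$. A direct check shows that applying the autoequivalence $[1]$ of $\LMod_{\h\calP}$ termwise produces a tower isomorphic, up to $\Omega$, to a reindexing of the original by one position. Since $[1]$ is an autoequivalence, applying it to the entire spectral sequence construction yields a canonical isomorphism $LE^r_{p,q}(X)[1]\cong LE^r_{p+1,q+1}(X)$, compatible with cycles, boundaries, and differentials. Iterating $p$ times then yields part (1).

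For part (2), in the stated range the $E^1$ terms of the two spectral sequences agree: the substitution of $X_{\geq 1}$ for $X$ at position $0$ is invisible at the level of $\pi_q$ for $q\geq 1$, and positions $\leq -1$ are shared between the two towers. The $r$-cycles $Z^r_{p,q}$ are computed using only tower terms at positions $\leq -p\leq 0$, which are identical in the two towers, so spiral and localization cycles coincide. The $r$-boundaries involve differentials $d^{r'}$ with $1\leq r'\leq r$ originating at positions $(p-r',q+1)$, but in the spiral these contribute only for $r'\leq p$ since the tower vanishes at positions $\geq 1$. This gives $E^{r+1}_{p,q}=Z^r_{p,q}/B^{\min(p,r)}_{p,q}$, with equality to $LE^{r+1}_{p,q}$ when $p\geq r$.

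Part (3) is then a bookkeeping check: the leftmost square expresses that the natural map from the spiral to the localization spectral sequence, induced by the inclusion of towers from part (2), intertwines differentials; the right square is the identification of part (1) applied to differentials, which is compatible by construction. The main obstacle will be making the self-symmetry in (1) precise, in particular carefully tracking the interaction between the suspension $\Sigma$ on $\LMod_\calP$ and the shift $[1]$ on $\h\calP$-modules; once this is settled, the remaining parts are routine exact-couple manipulations.
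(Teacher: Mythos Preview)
Your proposal is correct and is precisely the unpacking of what the paper means by its one-line proof (``These all follow directly from the constructions''). The self-symmetry you identify for (1) is exactly the observation that the autoequivalence $Y\mapsto \Sigma^{-1}Y_{\Sigma^{-1}}$ of $\LMod_\calP$ shifts the localization tower by one position, and after passing to $\h\calP$ this becomes the shift $[-1]$; the tower-truncation comparison you outline for (2) and the naturality check for (3) are likewise the intended direct verifications. One small imprecision: at tower position $0$ the two towers are not literally identical (spiral has $X_{\geq 1}$, localization has $X$), so your claim that ``positions $\leq -p\leq 0$ are identical'' needs the additional remark that the cofibers at position $0$, namely $(\tau^\ast\tau_! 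X)_{\geq 1}$ versus $\tau^\ast\tau_! X$, have the same $\pi_q$ for $q\geq 1$; this is what makes the $p=0$ column and the $d^p$-boundary work.
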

\begin{proof}
These all follow directly from the constructions.
\end{proof}

In particular, if $X\in \Model_\calP$, then instead of computing $\pi_\ast LX$ via the localization spectral sequence, one may as well compute $\pi_\ast X$ via the spiral spectral sequence. We end with a simple application of this perspective. Say that $T$ is a monad on $\Model_\calP^\Omega$ satisfying the hypotheses of \cref{thm:recmon}, so that $T\calP$ is a loop theory with $\Model_{T\calP}^\Omega\simeq\Alg_T$.

\begin{lemma}\label{lem:loccommute}
The diagram
\begin{center}\begin{tikzcd}
\Model_{T\calP}\ar[r,"U"]\ar[d,"L"]&\Model_\calP\ar[d,"L"]\\
\Model_{T\calP}^\Omega\ar[r,"U"]&\Model_\calP^\Omega
\end{tikzcd}\end{center}
commutes, where $L$ is the localization and $U$ is the forgetful map.
\end{lemma}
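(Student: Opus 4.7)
The plan is to construct a canonical natural comparison map $\alpha_X\colon Lt^*X \to t^*LX$ and show it is an equivalence. First I need that $t\colon\calP \to T\calP$ preserves tensors by finite wedges of spheres: given $P\in\calP$ and a finite wedge of spheres $F$, the tensor $F\otimes t(P)$ in $T\calP\subseteq \Alg_T\simeq \Model_{T\calP}^\Omega$ is computed as $T(F\otimes h(P)) = T(h(F\otimes P)) = t(F\otimes P)$, using that $T$ is a left adjoint and that $F\otimes h(P)\simeq h(F\otimes P)$ already in $\Model_\calP^\Omega$. This implies $t^*$ carries $\Model_{T\calP}^\Omega$ into $\Model_\calP^\Omega$, so the bottom $U$ is well-defined; then for any $X\in\Model_{T\calP}$, applying $t^*$ to the unit $X\to LX$ yields a map into $\Model_\calP^\Omega$, and the universal property of $L$ produces the desired $\alpha_X$.

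My strategy is to verify (a) that $\alpha_{h(Q)}$ is an equivalence for every $Q\in T\calP$, and (b) that both $Lt^*$ and $t^*L$ preserve geometric realizations. Since every $X\in\Model_{T\calP}$ is a geometric realization of representables by \cref{cor:modelsresolved}, this will imply $\alpha$ is a natural equivalence. For (a), representables in $\Model_{T\calP}$ already lie in $\Model_{T\calP}^\Omega$, so $Lh(Q) = h(Q)$ and hence $t^*Lh(Q) = t^*h(Q)$; meanwhile the monadic adjunction $T\dashv U_T$ identifies $t^*h(Q)(P) = \Map_{T\calP}(t(P),Q) = \Map_{\Alg_T}(Th(P),Q) = U_T(Q)(P)$, so $t^*h(Q) = U_T(Q)\in \Model_\calP^\Omega$, whence $Lt^*h(Q) = t^*h(Q) = U_T(Q)$ as well, and $\alpha_{h(Q)}$ is the identity of $U_T(Q)$.

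The preservation of geometric realizations by $Lt^*$ is immediate: $t^*$ is computed pointwise and so preserves them by \cref{prop:geometricrealizationsgroups}, while $L$ is a left adjoint. The main obstacle lies in (b) for $t^*L$: although $L$ preserves all colimits, under the identification $\Model_{T\calP}^\Omega\simeq \Alg_T$ of \cref{thm:recmon} the restricted $t^*$ corresponds to the monadic forgetful $U_T\colon \Alg_T\to \Model_\calP^\Omega$, whose preservation of geometric realizations is the delicate point — it is automatic when $T$ preserves geometric realizations, which is precisely the sufficient condition highlighted in the ``in particular'' clause of \cref{thm:recmon}.
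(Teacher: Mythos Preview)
Your approach is essentially the same as the paper's: both reduce to checking on representables $h(Q)$ for $Q\in T\calP$ by arguing that the two composites $\Model_{T\calP}\to\Model_\calP^\Omega$ preserve geometric realizations. The paper's proof is a terse two sentences, simply asserting that ``each of these functors preserves geometric realizations'' and then checking on $T(P)$; you have unwound this carefully and correctly identified that the nontrivial point is whether the bottom $U$ (equivalently the monadic forgetful $U_T\colon\Alg_T\to\Model_\calP^\Omega$) preserves geometric realizations.

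Your caution here is well-placed: the paper's bare assertion that all four functors preserve geometric realizations glosses over exactly this step, and as you note it is automatic only when $T$ itself preserves geometric realizations. Under the general hypothesis of \cref{thm:recmon} (merely that $Lt_!X\to TX$ is an equivalence on $\Model_\calP^\Omega$) it is not clear that $U_T$ preserves arbitrary geometric realizations, and indeed showing $t^*L_{T\calP}\simeq L_\calP t^*$ on $\Model_{T\calP}$ is equivalent to the lemma itself. So both your argument and the paper's are complete under the stronger hypothesis that $T$ preserves geometric realizations---which covers the applications the paper has in mind---and you have done well to make this explicit rather than leave it tacit.
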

\begin{proof}
Each of these functors preserves geometric realizations, so it suffices to show this diagram commutes upon restriction to objects of the form $T(P)\in\Model_{T\calP}$ for $P\in\calP$. Indeed, these are just sent to the objects of the same name in $\Model_\calP^\Omega$ under either composite.
\end{proof}

Now, say we are given $X\in\Model_{T\calP}$. \cref{lem:loccommute} implies that $LUX$ is the underlying object of the $T$-algebra $LX$, even though our explicit description of the localization $LUX$ cannot be carried out in the category $\Model_{T\calP}$. This is analogous to the observation that if $R$ is a commutative ring and $r\in R$, then $R[r^{-1}] = \colim(R\xrightarrow{r}R\xrightarrow{r}\cdots)$ is a ring despite the fact that multiplication by $r$ is generally not a ring map.

In particular, write $\bbT$ for the monad on $\Model_{\h\calP}^\heart$ for which $\Alg_{\bbT}\simeq\Model_{\h T\calP}^\heart$, as in \cref{prop:algebraicapproximations}. Then $\pi_0 LX$ is a $\bbT$-algebra, and $\pi_0 LUX$ is its underlying model of $\h\calP$. The localization spectral sequence may be used to compute $\pi_\ast LUX$, and by \cref{prop:synth} we may as well compute the spiral spectral sequence for $UX$ instead. This in turn refines to the spiral spectral sequence for $X$. This has the following consequence. 

Note that the category of $\pi_0 X$-modules is monadic over $\Model_{\h\calP}^\heart$, so it makes sense to speak of a $\pi_0 X$-module structure on an object of $\Model_{\h\calP}^\heart$.

\begin{prop}
In the above situation, the spiral spectral sequence for $UX$ is canonically equipped with the structure of a spectral sequence of $\pi_0 X$-modules, converging to the $\pi_0 X$-module structure of $\Pi_\ast X$.
\qed
\end{prop}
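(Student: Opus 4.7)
The plan is to apply the spiral spectral sequence construction to $X$ viewed as an object of the loop theory $\Model_{T\calP}$, and then show that $U$ carries the resulting spectral sequence to the spiral spectral sequence for $UX$. Since the former tautologically takes values in $\pi_0 X$-modules by \cref{prop:algebrasaresimple}, the latter inherits the asserted $\pi_0 X$-module structure via the monadic forgetful functor from $\pi_0 X$-modules to $\Model_{\h\calP}^\heart$.

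The key technical input is that $t\colon\calP\to T\calP$ preserves tensors by finite wedges of spheres. This holds because $t$ factors as the Yoneda embedding $h\colon\calP\to\Model_\calP^\Omega$, which preserves such tensors by construction of the localization $\Model_\calP^\Omega$, followed by the free algebra functor $\Model_\calP^\Omega\to\Alg_T$, which preserves all colimits as a left adjoint. As an immediate consequence, $U\colon\Model_{T\calP}\to\Model_\calP$ commutes with the operation $(\cdot)_{S^n}$: one has $U(Y_{S^n})(P) = Y(S^n\otimes tP) = Y(t(S^n\otimes P)) = (UY)_{S^n}(P)$.

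The main geometric claim is that $U$ sends the derived Postnikov tower of $X$ in $\Model_{T\calP}$ to the derived Postnikov tower of $UX$ in $\Model_\calP$. Unwinding, one needs $U\tau_n^\ast\tau_{n!} Y \simeq \tau_n^\ast\tau_{n!} UY$ for $Y\in\Model_{T\calP}$, with $\tau_n^\ast\tau_{n!}$ formed in the appropriate theory on each side. Both sides preserve geometric realizations in $Y$, so by \cref{cor:modelsresolved} it suffices to check agreement on representables $Y = h_{T\calP}(Q)$. The left-hand side equals $(Uh_{T\calP}(Q))_{\leq n}$, since $\tau_n^\ast\tau_{n!}$ sends a representable in $T\calP$ to its Postnikov truncation and $U$ commutes with truncations (both being pointwise). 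For the right-hand side, observe that $Uh_{T\calP}(Q)\in\Model_\calP^\Omega$: by \cref{lem:loopequiv}, this reduces to the chain $Uh_{T\calP}(Q)_{S^1}(P) = \Map_{T\calP}(S^1\otimes tP,Q) = \Map_{T\calP}(tP,Q)^{S^1} = Uh_{T\calP}(Q)(P)^{S^1}$, coming from the universal property of the tensor. Then \cref{thm:derivedpostnikov}(2) identifies $\tau_n^\ast\tau_{n!}(Uh_{T\calP}(Q))$ (formed in $\calP$) with $(Uh_{T\calP}(Q))_{\leq n}$, matching the left-hand side. A parallel argument, using that restriction along $\h t\colon\h\calP\to\h T\calP$ preserves limits together with \cref{cor:spiraldiscrete}, identifies $U$ of the layer $\tau^\ast B^{n+1}_{\tau_! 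X}\tau_! X_{S^n}$ with the corresponding layer for $UX$.

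Combining these, \cref{thm:spiralss} applied to the loop theory $T\calP$ produces the spiral spectral sequence for $X$ as a spectral sequence of $\pi_0 X$-modules, and $U$ carries this to the spiral spectral sequence for $UX$, which thereby acquires the desired $\pi_0 X$-module structure. Convergence to $\Pi_\ast X$ with its natural $\pi_0 X$-module structure follows because $U$ preserves limits and hence identifies $\Pi_n X$ with $\Pi_n UX$. The main obstacle is the compatibility of $U$ with the operation $\tau_n^\ast\tau_{n!}$, whose essential ingredient is the identification $Uh_{T\calP}(Q)\in\Model_\calP^\Omega$ enabled by $t$'s preservation of spherical tensors.
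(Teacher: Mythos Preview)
Your proposal is correct and follows exactly the approach the paper intends: the paper provides no explicit proof (the \qed\ appears at the end of the statement), taking the result to follow from the immediately preceding sentence ``This in turn refines to the spiral spectral sequence for $X$.'' Your argument is a careful unwinding of why that refinement holds, the key point being that $U$ carries the derived Postnikov tower of $X$ in $\Model_{T\calP}$ to that of $UX$ in $\Model_{\calP}$, which you verify by reducing to representables and using that $Uh_{T\calP}(Q)\in\Model_\calP^\Omega$.
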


Combined with the localization map $\Pi_\ast X \rightarrow \Pi_\ast LX$, this gives information about the $\pi_0 LX$-modules $\Pi_n LX$, and thus about the $\bbT$-algebra structure of $\pi_0 LX$.

\subsection{An obstruction theory for realizations}\label{ssec:realizations}

Fix a loop theory $\calP$. In the case where $\calP$ is pointed and finitary, Pstrągowski \cite{pstragowski2023moduli} set up an obstruction theory for realizing an object $\Lambda\in\Model_\calP^\heart$ as $\Lambda = \pi_0 X$ for some $X\in\Model_\calP^\Omega$. In this subsection, we verify that the same obstruction theory exists for a general $\calP$; the proof is essentially the same, only with minor modifications necessary to handle the unpointed setting.

We begin with a matter that could have been considered in \cref{ssec:postnikovtowers}. Fix an $\infty$-topos $\calX$; we will soon specialize to $\calX = \Psh(\calP)$. Fix an $(n-1)$-truncated object $Y$, and set $\pi_0 Y = \Lambda$. Let $M$ be a $\Lambda$-module. Every $\pi_0$-equivalence $Y\rightarrow B^{n+1}_\Lambda M$ gives rise, by pulling back along the zero section $\Lambda\rightarrow B^{n+1}_\Lambda M$, to an $n$-truncated object $X$ such that $X_{\leq n-1}\simeq Y$ and $\Pi_n X \simeq M$ as $\Lambda$-modules; let $\calM(Y+_\Lambda(M,n))\subset\calX^{\simeq}$ be the space of such $X$. If we write $\Map^{0\hyp\Eq}$ for spaces of $\pi_0$-equivalences, then we obtain a map $\Map_\calX^{0\hyp\Eq}(Y,B_\Lambda^{n+1} M)\rightarrow\calM(Y+_\Lambda(M,n))$. Let also $\calM(Y)\subset\calX^\simeq$ be the space of objects equivalent to $Y$, and let $\Aut(\Lambda,M)$ be the discrete group of pairs $(\alpha\colon\Lambda\simeq\Lambda,f\colon M\simeq\alpha^\ast M)$, so that $B\Aut(\Lambda,M)$ is equivalent to a path component of $(\calX/\AB)^{\simeq}$. Then $X\mapsto (X_{\leq n-1},\Pi_n X)$ determines a map $\calM(Y+_\Lambda(M,n))\rightarrow\calM(Y)\times_{B\Aut(\Lambda)}B\Aut(\Lambda,M)$. Following \cite[Theorem 3.60]{pstragowski2023moduli}, we obtain the following.

\begin{prop}\label{prop:uniqueness}
The above constructions describe Cartesian squares
\begin{center}\begin{tikzcd}[column sep=small]
\Map_{\calX/\Lambda}(Y,B^{n+1}_\Lambda M)\ar[d]\ar[r]&\Map_\calX^ {0\hyp\Eq}(Y,B^{n+1}_\Lambda M)\ar[d]\ar[r]&\calM(Y+_\Lambda (M,n))\ar[d]\\
\ast\ar[r]&\Aut(\Lambda)\ar[d]\ar[r]&\calM(Y)\times_{B\!\Aut(\Lambda)}B\!\Aut(\Lambda,M)\ar[d]\\
&\ast\ar[r]&\calM(Y)\times B\!\Aut(\Lambda,M)
\end{tikzcd}\end{center}
of $\infty$-groupoids.
\qed
\end{prop}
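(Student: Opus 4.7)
The plan is to verify that each of the three component squares is Cartesian in turn, working outward from the middle. The outer two squares are essentially formal; the essential content lies in the middle square, where the classical k-invariant classification of Postnikov extensions must be invoked.

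I would start with the right square. The comparison $\calM(Y)\times_{B\Aut(\Lambda)}B\Aut(\Lambda,M)\to\calM(Y)\times B\Aut(\Lambda,M)$ is, by the standard description of a homotopy fiber product as $(A\times B)\times_{C\times C}C$, the pullback of the diagonal $B\Aut(\Lambda)\to B\Aut(\Lambda)\times B\Aut(\Lambda)$ along the evident map from $\calM(Y)\times B\Aut(\Lambda,M)$. Its fiber at the chosen basepoint is therefore $\Omega B\Aut(\Lambda)\simeq\Aut(\Lambda)$, which gives the right square directly.

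Next I would treat the left square. The projection $B^{n+1}_\Lambda M\to\Lambda$ together with the assumption $\pi_0 Y=\Lambda$ yields a map $\Map_\calX^{0\hyp\Eq}(Y,B^{n+1}_\Lambda M)\to\Map_\calX(Y,\Lambda)$. Since $\Lambda$ is $0$-truncated, restriction along $Y\to\pi_0 Y=\Lambda$ gives an equivalence $\Map_\calX(\Lambda,\Lambda)\simeq\Map_\calX(Y,\Lambda)$, and the subspace of $\pi_0$-equivalences picks out precisely $\Aut(\Lambda)\subset\Map_\calX(\Lambda,\Lambda)$. The fiber over the identity consists of those $\phi$ making the triangle over $\Lambda$ commute, namely $\Map_{\calX/\Lambda}(Y,B^{n+1}_\Lambda M)$, verifying that the left square is Cartesian.

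The heart of the argument, and what I expect to be the main obstacle, is the middle square. Given a $\pi_0$-equivalence $\phi\colon Y\to B^{n+1}_\Lambda M$, pulling back along the zero section $\Lambda\to B^{n+1}_\Lambda M$ yields an $n$-truncated object $X$ fitting into a defining Cartesian square, for which one computes $X_{\leq n-1}\simeq Y$ and $\Pi_n X\simeq M$ as $\Lambda$-modules; this supplies the top horizontal arrow into $\calM(Y+_\Lambda(M,n))$. To verify the middle square is Cartesian, I would fix $\beta\in\Aut(\Lambda)$, identify the fiber of the right vertical map over its image as the moduli of $n$-truncated $X$ equipped with compatible equivalences $X_{\leq n-1}\simeq Y$ and $\Pi_n X\simeq\beta^\ast M$, and invoke the classical k-invariant classification (in the form given in \cite[Theorem 2.71]{pstragowski2017moduli}) to identify this moduli space with the fiber of the left vertical map at $\beta$, namely the space of $\phi\in\Map_\calX^{0\hyp\Eq}(Y,B^{n+1}_\Lambda M)$ inducing $\beta$ on $\pi_0$. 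The subtlety is naturality: one must check that these fiberwise identifications assemble into an honest Cartesian square of $\infty$-groupoids, not merely a pointwise matching. Since all constructions take place inside the ambient $\infty$-topos $\calX$, the arguments of \cite{pstragowski2017moduli} should transfer with only cosmetic changes.
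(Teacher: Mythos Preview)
Your proposal is correct and aligns with the paper's approach: the paper states this proposition with a \qed immediately after citing \cite[Theorem 2.71, Remark 3.17]{pstragowski2017moduli}, so its ``proof'' is precisely the deferral to Pstr\k{a}gowski that you invoke for the middle square. Your treatment simply unpacks the two flanking squares explicitly where the paper leaves them implicit, and your identification of the middle square as the place where the k-invariant classification enters is exactly right.
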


We can now proceed to the realization problem. Fix $\Lambda\in\Model_\calP^\heart$. Call $X\in\Model_{\calP}$ a \textit{potential $n$-stage} for $\Lambda$ when $X$ is $n$-truncated, $\pi_0 X\simeq \Lambda$, and $\smash{X_{S^1}\rightarrow X^{S^1}}$ is an $(n-1)$-equivalence over $X$. Let $\calM_n(\Lambda)$ be the space of potential $n$-stages for $\Lambda$. Truncation defines $\calM_n(\Lambda)\rightarrow\calM_{n-1}(\Lambda)$, and as in \cite[Proposition 4.14]{pstragowski2023moduli} the limit $\calM_\infty(\Lambda)$ is equivalent to the space of realizations of $\Lambda$, i.e.\ the space of $X\in\Model_\calP^\Omega$ such that $\pi_0 X \simeq \Lambda$. The following facts summarize some properties of $n$-stages.

\begin{samepage}
\begin{lemma}\label{lem:htpynstage}
Let $X$ be a potential $n$-stage for $\Lambda$, and choose an isomorphism $\pi_0 X \simeq \Lambda$.
\begin{enumerate}
\item The map $X_{S^k}\rightarrow X^{S^k}$ is an $(n-k)$-equivalence over $X$;
\item $\Pi_k X\simeq \Lambda\langle k \rangle$ for $k\leq n$;
\item The only nontrivial homotopy $\Lambda$-module of $\tau_! X$ is $\Pi_{n+2}\tau_! X \simeq \Lambda\langle n+1\rangle$.
\end{enumerate}
\end{lemma}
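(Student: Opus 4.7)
The plan is to prove (1) by induction on $k$, derive (2) as a direct corollary, and tackle (3) by comparing two Cartesian squares extracted from the cube in the proof of \cref{cor:spiraldiscrete}.

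For (1), I induct on $k\ge 0$. The base cases are $k=0$ (where the map is the identity of $X\times X$) and $k=1$ (the hypothesis). For the step from $k$ to $k+1$, applying \cref{lem:pbwedge} to the cofibering $S^k\vee S^1\to S^k\times S^1\to S^{k+1}$ gives parallel Cartesian squares
\[
X_{S^{k+1}}\simeq (X_{S^k})_{S^1}\times_{X_{S^k}\times_X X_{S^1}}X, \qquad X^{S^{k+1}}\simeq (X^{S^k})^{S^1}\times_{X^{S^k}\times_X X^{S^1}}X.
\]
Factor the comparison $(X_{S^k})_{S^1}\to (X^{S^k})^{S^1}$ as $(X_{S^k})_{S^1}\to (X^{S^k})_{S^1}\to (X^{S^k})^{S^1}$: the first factor is an $(n-k)$-equivalence by the inductive hypothesis applied at the object $S^1\otimes P$, and the second factor is an $(n-1-k)$-equivalence since $(-)^{S^k}$ of an $(n-1)$-equivalence is an $(n-1-k)$-equivalence. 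A five-lemma argument on the Mayer--Vietoris long exact sequences of the two pullback squares then shows $X_{S^{k+1}}\to X^{S^{k+1}}$ is an $(n-k-1)$-equivalence.

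Part (2) is then immediate: for $0\le k\le n$, part (1) produces a map $X_{S^k}\to X^{S^k}$ which is at least a $0$-equivalence, hence $\pi_0 X_{S^k}\cong \pi_0 X^{S^k}=\Pi_k X$. Since $\pi_0 X_{S^k}=(\pi_0 X)\langle k\rangle=\Lambda\langle k\rangle$ by definition of $\bs\langle k\rangle$, the claim follows.

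For (3), I use the cube in the proof of \cref{cor:spiraldiscrete}: its back face is the spiral square $B_X X_{S^1}\simeq X\times_{\tau^*\tau_! X}X$ of \cref{thm:spiral}, while its front face $B_X X^{S^1}\simeq X\times_{\pi_0 X}X$ records the usual Postnikov relation for the genuine loop object. The sides $X\to X$ are identities, and the induced map on pullbacks is the delooping of $X_{S^1}\to X^{S^1}$, hence an $n$-equivalence. A five-lemma comparison of the Mayer--Vietoris long exact sequences of these two Cartesian squares forces $\tau^*\tau_! X\to\pi_0 X$ to be an $(n+1)$-equivalence, giving $\Pi_k\tau_! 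X=0$ for $1\leq k\leq n+1$; pushing the comparison one degree further identifies $\Pi_{n+2}\tau_! X$ with the first nonvanishing relative homotopy of the deloop map, which by naturality and the compatibility between $\bs_{S^{n+1}}$ and $\bs\langle n+1\rangle$ on $\pi_0$ is $\Lambda\langle n+1\rangle$. The main obstacle is this final identification: vanishing through degree $n+1$ is a formal five-lemma argument, but pinning down the surviving module $\Lambda\langle n+1\rangle$ in degree $n+2$ requires careful bookkeeping of the module structures carried by the Cartesian squares.
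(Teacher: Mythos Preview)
Your approach is essentially the paper's own: for (1) you run the same induction via the Cartesian squares coming from the cofibering $S^k\vee S^1\to S^k\times S^1\to S^{k+1}$ (exactly as in the proof of \cref{lem:loopequiv}), (2) falls out identically, and for (3) you unpack what the paper compresses into ``this follows from \cref{thm:spiral}'' by comparing the spiral square to the Postnikov square via the cube of \cref{cor:spiraldiscrete}. Two small points: you should also note that $B_X X_{S^1}$ is $(n{+}1)$-truncated (since $X_{S^1}$ is $n$-truncated), which gives the vanishing of $\Pi_k\tau_!X$ for $k>n{+}2$; and your honest flag on the degree-$(n{+}2)$ identification is well placed---the paper is no more explicit here, and the cleanest route is to observe $\Pi_{n+2}\tau_!X\cong\pi_n^X(X_{S^1})$ and then identify the latter using part (2) together with \cref{lem:pbwedge} applied to $S^n\hookrightarrow S^n\times S^1$.
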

\end{samepage}
\begin{proof}
(1)~~ This follows from an inductive argument using the Cartesian squares
\begin{center}\begin{tikzcd}
X_{S^{k+1}}\ar[d]\ar[r]&X\ar[d]\\
(X_{S^k})_{S^1}\ar[r]&X_{S^k}\times_X X_{S^1}
\end{tikzcd}\end{center}
for $k\geq 1$; compare the proof of \cref{lem:loopequiv}.

(2)~~ This follows from (1).

(3)~~ This follows from \cref{thm:spiral}.
\end{proof}

\begin{lemma}\label{lem:detectnstage}
Fix an $n$-truncated object $X\in\Model_{\calP}$ such that $\pi_0 X\simeq \Lambda$. Then $X\in\calM_n(\Lambda)$ if and only if the map $X_{S_1}\rightarrow X^{S^1}$ induces an equivalence $(B_X X_{S^1})_{\leq n}\simeq (B_X X^{S^1})_{\leq n}$.
\qed
\end{lemma}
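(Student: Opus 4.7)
The starting observation is that both $X_{S^1}$ and $X^{S^1}$ are canonically pointed group objects in the slice topos $\Psh(\calP)/X$. The pointings come from restriction along the inclusion of a basepoint $\ast \hookrightarrow S^1$; the group structure on $X^{S^1}$ is the familiar loop-space one; and the group structure on $X_{S^1}$ is the one supplied by \cref{lem:pbwedge}, coming from the cogroup structure of $S^1$. The comparison map $X_{S^1} \to X^{S^1}$ is then a map of pointed group objects over $X$, so we may form its delooping $B_X(X_{S^1}) \to B_X(X^{S^1})$ in $\Psh(\calP)/X$.

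Next I would invoke the standard $\infty$-topos-theoretic fact that for a map $\phi\colon G \to H$ of pointed group objects over $X$, the map $\phi$ is an $(n-1)$-equivalence over $X$ if and only if $B_X\phi\colon B_X G \to B_X H$ induces an equivalence on $n$-truncations over $X$. This is a direct consequence of the equivalence $\Omega_X B_X \simeq \mathrm{id}$ on pointed connected objects over $X$, which yields a degree shift $\pi_j^X B_X G \cong \pi_{j-1}^X G$ on fiberwise homotopy; the $\pi_0^X$-piece is automatically trivial because the fibers of $B_X G \to X$ are connected. Matching indices, $\phi$ being an $(n-1)$-equivalence over $X$ (iso on fiberwise $\pi_i$ for $i\le n-1$) is equivalent to $B_X\phi$ being iso on fiberwise $\pi_j$ for $j\le n$, i.e.\ to $(B_X\phi)_{\le n}$ being an equivalence over $X$.

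Applying this general fact to $\phi = (X_{S^1} \to X^{S^1})$ gives the lemma. The whole argument is essentially formal bookkeeping: the Mal'cev machinery from \cref{lem:herdfacts} plus \cref{lem:pbwedge} supplies the group-object structure on $X_{S^1}$, and the existence and degree-shifting behavior of $B_X$ in $\Psh(\calP)/X$ follows from the general topos-theoretic setup of \cref{ssec:topospostnikov}. The only point requiring even mild care is verifying the indexing between ``$(n-1)$-equivalence'' of group objects and ``equivalence on $n$-truncations'' of their deloopings, which is automatic from the degree shift. I do not anticipate a substantive obstacle.
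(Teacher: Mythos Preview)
Your proposal is correct and matches the paper's approach: the lemma is stated with an immediate \qed in the paper, since by definition $X\in\calM_n(\Lambda)$ means exactly that $X_{S^1}\to X^{S^1}$ is an $(n-1)$-equivalence over $X$, and your degree-shift argument via $\Omega_X B_X\simeq\mathrm{id}$ is precisely the standard reason this is equivalent to $(B_X X_{S^1})_{\leq n}\simeq (B_X X^{S^1})_{\leq n}$.
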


\begin{prop}\label{prop:liftnstage}
Suppose given $Y\in\calM_{n-1}(\Lambda)$ together with a Cartesian square
\begin{center}\begin{tikzcd}
X\ar[r]\ar[d]&\tau^\ast \Lambda\ar[d]\\
Y\ar[r,"f"]&\tau^\ast B^{n+1}_\Lambda \Lambda\langle n \rangle
\end{tikzcd}.\end{center}
Then $X\in\calM_n(\Lambda)$ if and only if $f$ is adjoint to an equivalence $\tau_! Y\simeq B^{n+2}_\Lambda\Lambda\langle n \rangle$.
\end{prop}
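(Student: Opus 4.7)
Plan: The strategy rests on \cref{lem:detectnstage}, which reduces the condition $X\in\calM_n(\Lambda)$ to the condition that $\tau_! X\to \Lambda$ is an $(n+1)$-equivalence in $\Psh(\h\calP)$.  Applying \cref{lem:htpynstage}(3) to $Y\in\calM_{n-1}(\Lambda)$ (with $n-1$ in place of $n$) identifies $\tau_! Y\simeq B^{n+1}_\Lambda\Lambda\langle n\rangle$ as having a unique nontrivial $\Lambda$-module $\Pi_{n+1}\tau_! Y\simeq \Lambda\langle n\rangle$; since the target of $f$ is $\tau^\ast B^{n+1}_\Lambda\Lambda\langle n\rangle$, the $B^{n+2}$ in the statement should be read as $B^{n+1}$, so that under the adjunction $\tau_!\dashv\tau^\ast$ the map $f$ factors canonically as $f \simeq \tau^\ast\tilde{f}\circ\eta_Y$, where $\eta_Y$ is the spiral unit and $\tilde{f}\colon\tau_! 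Y\to B^{n+1}_\Lambda\Lambda\langle n\rangle$ is the adjoint of $f$.

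For the ``if'' direction, suppose $\tilde{f}$ is an equivalence.  Pasting the defining Cartesian square with the (trivially Cartesian) square induced by $\tau^\ast\tilde{f}$ identifies $X$ with the pullback $Y\times_{\tau^\ast\tau_! Y,\eta_Y,s_0}\tau^\ast\Lambda$, where $s_0$ is the image under $\tau^\ast$ of the zero section of $\tau_! Y\to\Lambda$.  Since $s_0$ realizes $\tau^\ast\Lambda\hookrightarrow\tau^\ast B^{n+1}_\Lambda\Lambda\langle n\rangle$ with fibers $\tau^\ast B^n_\Lambda\Lambda\langle n\rangle$ (which are $(n-1)$-connected with $\Pi_n=\Lambda\langle n\rangle$), the fibration $X\to Y$ has these same fibers; the resulting long exact sequence of homotopy $\Lambda$-modules, combined with $Y\in\calM_{n-1}$, yields that $X$ is $n$-truncated with $\Pi_k X\cong\Lambda\langle k\rangle$ for $k\le n$.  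Finally, the spiral square \cref{thm:spiral} for $X$ combined with this description shows that $\tau^\ast\tau_! X\to\tau^\ast\Lambda$ is an $(n+1)$-equivalence, i.e.\ $X\in\calM_n(\Lambda)$.

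For the ``only if'' direction, suppose $X\in\calM_n(\Lambda)$, so by \cref{lem:detectnstage} $\tau_! X\to\Lambda$ is an $(n+1)$-equivalence.  Applying $\tau_!$ to the defining Cartesian square and using $\tau_!\tau^\ast\simeq\mathrm{id}$ (valid since $\tau^\ast$ is fully faithful) yields a commutative square
\begin{center}\begin{tikzcd}
\tau_! X\ar[r]\ar[d]&\Lambda\ar[d,"0"]\\
\tau_! Y\ar[r,"\tilde{f}"']&B^{n+1}_\Lambda\Lambda\langle n\rangle
\end{tikzcd}\end{center}
Since both $\tau_! X\to\Lambda$ and $\tau_! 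Y\to B^{n+1}_\Lambda\Lambda\langle n\rangle$ are $\pi_0$-isomorphisms (the former by $(n+1)$-equivalence, the latter by the definition of $\tilde{f}$ relative to $\Lambda$), the commutativity in degree $n+1$ together with $\pi_{n+1}\tau_! X = 0$ and $\pi_{n+1}\Lambda = 0$ forces $\tilde{f}$ to induce an isomorphism on $\pi_{n+1}$; as both source and target have nontrivial homotopy only in degrees $0$ and $n+1$, $\tilde{f}$ is an equivalence.

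Main obstacle: The central difficulty is that $\tau_!$ does not preserve pullbacks in general, so the ``only if'' direction cannot be established by simply transporting the pullback square; instead one must run a direct homotopy module analysis in the commutative square above, using the specific vanishing provided by $\calM_n$-membership and the Eilenberg--MacLane structure of $\tau_! Y$.  In the ``if'' direction the analogous issue is sidestepped because the identification $X\simeq Y\times_{\tau^\ast\tau_! Y}\tau^\ast\Lambda$ reduces the problem to the spiral square for $Y$, which is already Cartesian in $\Model_\calP$ and whose Postnikov analysis is controlled by \cref{lem:htpynstage}.
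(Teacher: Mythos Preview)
Your proposal has genuine gaps in both directions.

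\textbf{The ``only if'' direction fails outright.} You apply $\tau_!$ to the Cartesian square and obtain merely a commutative square
\begin{center}\begin{tikzcd}
\tau_! X\ar[r]\ar[d]&\Lambda\ar[d,"0"]\\
\tau_! Y\ar[r,"\tilde{f}"']&B^{n+1}_\Lambda\Lambda\langle n\rangle
\end{tikzcd}.\end{center}
You then claim that commutativity in degree $n+1$, together with $\Pi_{n+1}\tau_!X=0$ and $\Pi_{n+1}\Lambda=0$, forces $\tilde f$ to be an isomorphism on $\Pi_{n+1}$. But this is not so: the top row contributes nothing in degree $n+1$, so commutativity imposes no constraint whatsoever on the bottom map $\tilde f$ in that degree. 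Your own ``main obstacle'' paragraph correctly identifies that $\tau_!$ fails to preserve pullbacks, but the ``direct homotopy module analysis'' you propose as a substitute does not extract the needed information.

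\textbf{The ``if'' direction is also incomplete.} You correctly identify $X\simeq Y\times_{\tau^\ast\tau_!Y}\tau^\ast\Lambda$ and deduce $\Pi_k X\cong\Lambda\langle k\rangle$ for $k\le n$. But knowing the homotopy modules of $X$ abstractly is strictly weaker than the $\calM_n$ condition: one must show that $X_{S^1}\rightarrow X^{S^1}$ is an $(n-1)$-equivalence, and this depends on how $X$ is built, not merely on $\Pi_\ast X$. Your final sentence, that ``the spiral square for $X$ combined with this description shows $\tau^\ast\tau_!X\to\tau^\ast\Lambda$ is an $(n+1)$-equivalence,'' is the heart of the matter and is not justified: the spiral square for $X$ involves $X_{S^1}$, which you have not analyzed.

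\textbf{What the paper does instead.} The paper introduces the auxiliary object $Z=\Lambda\times_{B^{n+1}_\Lambda\Lambda\langle n\rangle}\tau_!Y$ in $\Psh(\h\calP)$, so that the given square factors as $X\to\tau^\ast Z\to\tau^\ast\Lambda$ over $Y\to\tau^\ast\tau_!Y\to\tau^\ast B^{n+1}_\Lambda\Lambda\langle n\rangle$. Pulling back the spiral square for $Y$ along $X\to Y$ yields a Cartesian cube, from which one reads off $(B_X X_{S^1})_{\le n}\simeq(B_X(X\times_Y Y_{S^1}))_{\le n}\simeq(X\times_{\tau^\ast Z}X)_{\le n}$; here the first equivalence uses only that $X_{\le n-1}\simeq Y_{\le n-1}$. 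Since $(B_X X^{S^1})_{\le n}=(X\times_{\pi_0 X}X)_{\le n}$, \cref{lem:detectnstage} applied verbatim gives $X\in\calM_n(\Lambda)$ if and only if $Z\simeq\Lambda$, and a short fiber-sequence argument shows $Z\simeq\Lambda$ if and only if $\tilde f$ is an equivalence. This cube argument is exactly what replaces both of your incomplete steps: it handles the ``only if'' direction without needing $\tau_!$ to preserve the pullback, and it gives the ``if'' direction by directly controlling $B_X X_{S^1}$ rather than trying to recover it from $\Pi_\ast X$.
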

\begin{proof}
Form Cartesian squares
\begin{center}\begin{tikzcd}
X\ar[r]\ar[d]&\tau^\ast Z\ar[d]\ar[r]&\tau^\ast \Lambda\ar[d]\\
Y\ar[r]&\tau^\ast\tau_! Y\ar[r]&\tau^\ast B^{n+1}_\Lambda\Lambda\langle n \rangle
\end{tikzcd}.\end{center}
All maps here are $\pi_0$-equivalences, and we wish to show that $X\in\calM_n(\Lambda)$ if and only if $Z\simeq\Lambda$. Form the Cartesian cube
\begin{center}\begin{tikzcd}
B_X (X\times_Y Y_{S^1})\ar[rr]\ar[dd]\ar[dr]&&X\ar[dd]\ar[dr]\\
&B_Y Y_{S^1}\ar[dd]\ar[rr]&&Y\ar[dd]\\
X\ar[dr]\ar[rr]&&\tau^\ast Z\ar[dr]\\
&Y\ar[rr]&&\tau^\ast\tau_! Y
\end{tikzcd}.\end{center}
As $X_{\leq n-1}\simeq Y_{\leq n-1}$, there are equivalences $(X_{S^1})_{\leq n-1}\simeq (Y_{S^1})_{\leq n-1}\simeq (X\times_Y Y_{S^1})_{\leq n-1}$, and so $(B_X X_{S^1})_{\leq n}\simeq (B_X(X\times_Y Y_{S^1}))_{\leq n}\simeq (X\times_{\tau^\ast Z}X)_{\leq n}$. By \cref{lem:detectnstage}, it follows that if $Z\simeq \Lambda$, then $X\in\calM_n(\Lambda)$. Conversely, if $X\in\calM_n(\Lambda)$ then $(X\times_{\tau^\ast Z} X)_{\leq n}\simeq (X\times_{\pi_0 X}X)_{\leq n}$, from which it follows that $\tau^\ast Z\simeq\Lambda$.
\end{proof}

Let $\calM^h(\Lambda+_\Lambda(\Lambda\langle n \rangle,n+1))$ be as defined in the beginning of this subsection, only constructed with respect to $\Psh(\h\calP)$. This space can be identified using \cref{prop:uniqueness}.

\begin{lemma}\label{lem:extensionsarecohomology}
There is an equivalence 
\[
\calM^h(\Lambda+_\Lambda(\Lambda\langle n \rangle,n+1))\cong \Map_{\h\calP/\Lambda}(\Lambda;B^{n+2}_\Lambda \Lambda\langle n\rangle)_{\h\!\Aut(\Lambda,\Lambda \langle n \rangle)}.
\]
Under this equivalence, $B^{n+1}_\Lambda\Lambda\langle n \rangle\in\calM^h(\Lambda+_\Lambda(\Lambda\langle n \rangle,n+1))$ is sent to the zero section.
\end{lemma}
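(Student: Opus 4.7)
The plan is to apply \cref{prop:uniqueness} with $\calX = \Psh(\h\calP)$, $Y = \Lambda$, $M = \Lambda\langle n\rangle$, and the degree parameter ``$n$'' in the proposition taken to be $n+1$ here; by construction, the resulting moduli space is precisely $\calM^h(\Lambda +_\Lambda (\Lambda\langle n\rangle, n+1))$. First I would observe that because $\Lambda$ is $0$-truncated in $\Psh(\h\calP)$, the canonical map $\calM(\Lambda) \to B\!\Aut(\Lambda)$ is an equivalence, so the fibered product $\calM(\Lambda) \times_{B\!\Aut(\Lambda)} B\!\Aut(\Lambda, \Lambda\langle n\rangle)$ simplifies to $B\!\Aut(\Lambda, \Lambda\langle n\rangle)$.

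With this simplification, I would paste the two Cartesian squares of \cref{prop:uniqueness} along their shared middle column to obtain the outer Cartesian rectangle
\begin{center}\begin{tikzcd}
\Map_{\calX/\Lambda}(\Lambda, B^{n+2}_\Lambda \Lambda\langle n\rangle) \ar[r] \ar[d] & \calM^h(\Lambda+_\Lambda(\Lambda\langle n\rangle, n+1)) \ar[d] \\
\ast \ar[r] & B\!\Aut(\Lambda, \Lambda\langle n\rangle)
\end{tikzcd}.\end{center}
A Cartesian square over a classifying space $B\!\Aut(\Lambda, \Lambda\langle n\rangle)$ with fiber $\Map_{\calX/\Lambda}(\Lambda, B^{n+2}_\Lambda \Lambda\langle n\rangle)$ exhibits the total space as the Borel construction (homotopy orbits), which gives the asserted equivalence provided one interprets $\Map_{\h\calP/\Lambda}$ as the mapping space in the slice topos $\Psh(\h\calP)/\Lambda$ (which, for the sheaf-theoretic delooping $B^{n+2}_\Lambda \Lambda\langle n\rangle$, agrees with the Quillen-cohomology mapping space in $\Model_{\h\calP}/\Lambda$).

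For the identification of the distinguished basepoint, I would trace the construction preceding \cref{prop:uniqueness}: the map $\Map^{0\hyp\Eq}_\calX(\Lambda, B^{n+2}_\Lambda\Lambda\langle n\rangle) \to \calM^h(\Lambda+_\Lambda(\Lambda\langle n\rangle, n+1))$ sends a $\pi_0$-equivalence $f$ to the pullback of $f$ along the zero section $\Lambda \to B^{n+2}_\Lambda\Lambda\langle n\rangle$. Taking $f$ itself to be the zero section produces the pullback $\Omega_\Lambda B^{n+2}_\Lambda\Lambda\langle n\rangle \simeq B^{n+1}_\Lambda \Lambda\langle n\rangle$, and under the Cartesian square this $\pi_0$-equivalence is precisely the image of the zero element of $\Map_{\calX/\Lambda}(\Lambda, B^{n+2}_\Lambda \Lambda\langle n\rangle)$, i.e.\ the zero section of the Borel construction.

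Since the proof is entirely formal once \cref{prop:uniqueness} is in hand, I do not anticipate a genuine obstacle. The only piece of bookkeeping worth flagging is verifying that the notation $\Map_{\h\calP/\Lambda}$ in the statement matches the slice-topos mapping space $\Map_{\calX/\Lambda}$ produced by the proposition; this is immediate from the fact that $B^{n+2}_\Lambda \Lambda\langle n\rangle$ is a product-preserving presheaf on $\h\calP$, so that its mapping spaces out of $\Lambda$ coincide whether computed in $\Psh(\h\calP)/\Lambda$ or in $\Model_{\h\calP}/\Lambda$.
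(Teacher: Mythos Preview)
Your proposal is correct and is exactly the argument the paper has in mind: the lemma is stated with a bare \qed and is preceded by the sentence ``This space can be identified using \cref{prop:uniqueness},'' so the intended proof is precisely the pasting of the Cartesian squares from \cref{prop:uniqueness} together with the observation that $\calM(\Lambda)\simeq B\!\Aut(\Lambda)$ when $\Lambda$ is $0$-truncated. Your identification of the basepoint and your remark on matching $\Map_{\h\calP/\Lambda}$ with the slice-topos mapping space are both on target.
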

\begin{proof}
As the space of models of $\h\calP$ equivalent to $\Lambda$ is itself equivalent to $B\!\Aut(\Lambda)$, the upper rectangle of \cref{prop:uniqueness} gives a Cartesian square
\begin{center}\begin{tikzcd}
\Map_{\h\calP/\Lambda}(\Lambda,B^{n+2}_\Lambda\Lambda\langle n \rangle)\ar[r,"p"]\ar[d]&\calM^h(\Lambda+_\Lambda(\Lambda\langle n \rangle,n+1))\ar[d]\\
\ast\ar[r]&B\!\Aut(\Lambda,\Lambda\langle n \rangle)
\end{tikzcd}.\end{center}
This says exactly that $\Aut(\Lambda,\Lambda\langle n \rangle)$ acts on $\Map_{\h\calP}(\Lambda,B^{n+2}_\Lambda\Lambda\langle n \rangle)$ with homotopy orbits $\calM^h(\Lambda+_\Lambda(\Lambda\langle n \rangle))$. By construction $p$ sends a map $f\colon \Lambda\rightarrow B^{n+2}_\Lambda\Lambda\langle n \rangle$ to its pullback along the zero section $i\colon \Lambda\rightarrow B^{n+2}_\Lambda\Lambda\langle n \rangle$, so in particular $p(i) = B^{n+1}_\Lambda\Lambda\langle n \rangle$ as claimed.
\end{proof}

This is already enough for a coarse obstruction theory.

\begin{prop}\label{prop:coarseobstruction}
Fix $\Lambda\in\Model_\calP^\heart$, and let $Y$ be an $(n-1)$-stage for $\Lambda$. Then there is an obstruction $\epsilon_n(Y)$ in the orbit set $\left(\pi_0 \Map_{\h\calP/\Lambda}(\Lambda,B^{n+2}_\Lambda \Lambda\langle n \rangle)\right)/\Aut(\Lambda,\Lambda\langle n \rangle)$ which vanishes if and only if there is an $n$-stage $X$ such that $X_{\leq n-1}\simeq Y$.
\end{prop}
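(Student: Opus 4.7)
The plan is to interpret $\tau_!Y$ as a point in a moduli space of potential extensions of $\Lambda$ by $\Lambda\langle n\rangle$, and then invoke the cohomological identification of that moduli space supplied by \cref{lem:extensionsarecohomology}.

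First, observe that since $Y$ is an $(n-1)$-stage, \cref{lem:htpynstage}(3) tells us that $\tau_!Y\in\Model_{\h\calP}\subset\Psh(\h\calP)$ has precisely two nontrivial homotopy modules, namely $\pi_0\tau_!Y\simeq\Lambda$ and $\Pi_{n+1}\tau_!Y\simeq\Lambda\langle n\rangle$ as $\Lambda$-modules. In particular, $\tau_!Y$ is $(n+1)$-truncated with $(\tau_!Y)_{\leq n}\simeq\Lambda$, so it determines a point
\[
[\tau_!Y]\in\calM^h\bigl(\Lambda+_\Lambda(\Lambda\langle n\rangle,n+1)\bigr)
\]
in the moduli space of $(n+1)$-truncated objects of $\Psh(\h\calP)$ with this homotopical signature.

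Next, by \cref{prop:liftnstage}, an $n$-stage $X$ with $X_{\leq n-1}\simeq Y$ exists if and only if there is some map $f\colon Y\to\tau^\ast B^{n+1}_\Lambda\Lambda\langle n\rangle$ whose adjoint $\tau_!Y\to B^{n+1}_\Lambda\Lambda\langle n\rangle$ is an equivalence. Such an $f$ exists precisely when $[\tau_!Y]$ and $[B^{n+1}_\Lambda\Lambda\langle n\rangle]$ lie in the same connected component of $\calM^h(\Lambda+_\Lambda(\Lambda\langle n\rangle,n+1))$, i.e.\ agree in $\pi_0$ of this moduli space.

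Finally, applying \cref{lem:extensionsarecohomology} identifies
\[
\pi_0\calM^h\bigl(\Lambda+_\Lambda(\Lambda\langle n\rangle,n+1)\bigr)\;\cong\;\pi_0\Map_{\h\calP/\Lambda}\bigl(\Lambda;B^{n+2}_\Lambda\Lambda\langle n\rangle\bigr)\big/\Aut(\Lambda,\Lambda\langle n\rangle),
\]
under which the distinguished point $[B^{n+1}_\Lambda\Lambda\langle n\rangle]$ maps to the class of the zero section. We define $\epsilon_n(Y)$ to be the image of $[\tau_!Y]$ under this isomorphism. Then $\epsilon_n(Y)=0$ is equivalent to the two classes coinciding in $\pi_0\calM^h$, hence to the existence of the desired $n$-stage lift. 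There is no substantive obstacle beyond carefully matching the identifications; all of the real work has already been carried out in the preceding lemmas.
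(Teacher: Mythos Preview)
Your proof is correct and follows essentially the same approach as the paper: both arguments use \cref{lem:htpynstage} to place $\tau_!Y$ in the moduli space $\calM^h(\Lambda+_\Lambda(\Lambda\langle n\rangle,n+1))$, invoke \cref{lem:extensionsarecohomology} to identify $\pi_0$ of this moduli space with the stated quotient, and appeal to \cref{prop:liftnstage} to conclude that the obstruction vanishes precisely when a lift exists. Your version is slightly more explicit about why the existence of the equivalence $\tau_!Y\simeq B^{n+1}_\Lambda\Lambda\langle n\rangle$ corresponds to equality of path components, but this is the same argument.
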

\begin{proof}
\cref{lem:htpynstage} gives us a map $\tau_!\colon\calM_{n-1}(\Lambda)\rightarrow\calM^h(\Lambda+_\Lambda(\Lambda\langle n \rangle,n+1))$. By \cref{lem:extensionsarecohomology} we have $\pi_0 \calM^h(\Lambda+_\Lambda(\Lambda\langle n \rangle,n+1)) \cong \left(\pi_0 \Map_{\h\calP/\Lambda}(\Lambda,B^{n+2}_\Lambda \Lambda\langle n \rangle)\right)/\Aut(\Lambda,\Lambda\langle n \rangle)$, so let $\epsilon_n(Y)$ be the path component of $\tau_!(Y)$. We then conclude by \cref{prop:liftnstage}.
\end{proof}

The more refined statement is the following.

\begin{theorem}[{\cite[Theorem 4.24]{pstragowski2023moduli}}]\label{thm:obstructions}
For each $n\geq 1$, there is a Cartesian square
\begin{center}\begin{tikzcd}
\calM_n(\Lambda)\ar[d]\ar[r]&B\!\Aut(\Lambda,\Lambda\langle n \rangle)\ar[d]\\
\calM_{n-1}(\Lambda)\ar[r]&\calM^h(\Lambda+_\Lambda(\Lambda\langle n \rangle,n+1))
\end{tikzcd}.\end{center}
\end{theorem}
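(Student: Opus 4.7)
The plan is to show the square is Cartesian by computing its fibers over a varying base object in $\calM_{n-1}(\Lambda)$. The argument should be formally parallel to \cite[Theorem 3.15]{pstragowski2017moduli}, adapted to the present unpointed infinitary framework; all the ingredients are the Postnikov theory of \cref{ssec:topospostnikov,ssec:postnikovtowers} together with \cref{prop:uniqueness}, \cref{prop:liftnstage}, and \cref{lem:extensionsarecohomology}.

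First I would analyze the right-hand vertical map. Applying \cref{prop:uniqueness} inside $\Psh(\h\calP)$ with base object $\Lambda$ and module $\Lambda\langle n\rangle$, combined with the concrete identification of $\calM^h(\Lambda+_\Lambda(\Lambda\langle n\rangle,n+1))$ furnished by \cref{lem:extensionsarecohomology}, shows that the fiber of $B\!\Aut(\Lambda,\Lambda\langle n\rangle)\to \calM^h(\Lambda+_\Lambda(\Lambda\langle n\rangle,n+1))$ over a point $Z\in \calM^h$ is naturally the space of equivalences in $\Model_{\h\calP}$ between $Z$ and the basepoint $B^{n+1}_\Lambda\Lambda\langle n\rangle$ — a torsor under $\Aut(\Lambda,\Lambda\langle n\rangle)$ whenever nonempty.

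Second I would analyze the left-hand vertical map. Fix $Y\in\calM_{n-1}(\Lambda)$. By the Postnikov decomposition \cref{thm:modelspostnikov} together with \cref{prop:uniqueness} applied in $\Psh(\calP)$, the space of extensions of $Y$ to an $n$-truncated $X\in\Model_{\calP}$ with $\pi_0 X\simeq \Lambda$ and $\Pi_n X\simeq \Lambda\langle n\rangle$ is parameterized by $\pi_0$-equivalences $f\colon Y\to \tau^\ast B^{n+1}_\Lambda\Lambda\langle n\rangle$ over $\Lambda$, up to the action of $\Aut(\Lambda,\Lambda\langle n\rangle)$. By \cref{prop:liftnstage}, such an $X$ actually lies in $\calM_n(\Lambda)$ precisely when the adjoint of $f$ is an equivalence $\tau_!Y\simeq B^{n+2}_\Lambda\Lambda\langle n\rangle$. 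Hence the fiber of $\calM_n(\Lambda)\to\calM_{n-1}(\Lambda)$ over $Y$ is the space of such equivalences.

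Finally I would match the two descriptions and verify the comparison maps. The bottom horizontal arrow sends $Y\mapsto \tau_!Y$, which by \cref{lem:htpynstage}(3) lies in $\calM^h(\Lambda+_\Lambda(\Lambda\langle n\rangle,n+1))$, and the top arrow sends $X$ to the data of the canonical isomorphisms $\pi_0 X\simeq\Lambda$ and $\Pi_nX\simeq\Lambda\langle n\rangle$; by construction both fibers parameterize the same space of equivalences to the basepoint $B^{n+1}_\Lambda\Lambda\langle n\rangle$, and naturality in $Y$ is immediate, so the square is Cartesian. The main obstacle I anticipate is the careful bookkeeping of the various automorphism actions and the reconciliation of indexing conventions between the two parallel applications of \cref{prop:uniqueness} — one in $\Psh(\calP)$ and the other in $\Psh(\h\calP)$ — linked by the adjunction $\tau_!\dashv\tau^\ast$; the rest of the proof is essentially formal assembly.
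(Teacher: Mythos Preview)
Your proposal is correct and takes essentially the same approach as the paper: reduce to fibers over a fixed $Y\in\calM_{n-1}(\Lambda)$, use \cref{prop:uniqueness} in $\Psh(\calP)$ to parametrize extensions of $Y$, apply \cref{prop:liftnstage} to carve out those that are potential $n$-stages, and use \cref{prop:uniqueness} in $\Psh(\h\calP)$ to identify the corresponding fiber on the right. The paper's argument is organized slightly differently---it introduces an intermediate space $F'$ of all extensions (not just $n$-stages) and factors the comparison through it, making the application of \cref{prop:uniqueness} and the passage to the subspace $F\subset F'$ of $n$-stages via \cref{prop:liftnstage} into two separate Cartesian squares---but this is exactly the ``careful bookkeeping'' you flag, and the ingredients are identical.
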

\begin{proof}
If $\calM_{n-1}(\Lambda)$ is empty, then $\calM_n(\Lambda)$ is also empty and there is nothing left to show. Otherwise, by choosing $Y\in\calM_{n-1}(\Lambda)$ and declaring $F = \{Y\}\times_{\calM_{n-1}(\Lambda)}\calM_n(\Lambda)$ to be the space of $X\in\calM_n(\Lambda)$ equipped with an equivalence $X_{\leq n-1}\simeq Y$, we reduce to verifying that the bottom square in
\begin{center}\begin{tikzcd}
\Eq(\tau_! Y,B^{n+1}_\Lambda\Lambda\langle n\rangle)\ar[r]\ar[d]&\{\Lambda\langle n \rangle\}\ar[d]\\
F\ar[r]\ar[d]&B\!\Aut(\Lambda,\Lambda\langle n \rangle)\ar[d]\\
\{\tau_! Y\}\ar[r]&\calM^h(\Lambda+_\Lambda(\Lambda\langle n \rangle,n+1))
\end{tikzcd}\end{center}
is Cartesian. Here the top left space is the space of equivalences $\tau_! Y \simeq B^{n+1}_\Lambda\Lambda\langle n \rangle$. The outer square is Cartesian by definition, so it is sufficient to verify that the top square is Cartesian. Let $F'$ be the space of all $X\in\calM(Y+_\Lambda(\Lambda\langle n \rangle,n))$ equipped with an equivalence $X_{\leq n-1}\simeq Y$, so that $F$ is a collection of path components of $F'$, and $F'$ fits into a Cartesian square
\begin{center}\begin{tikzcd}
F'\ar[r]\ar[d]&\calM(Y+_\Lambda(\Lambda\langle n\rangle,n))\ar[d]\\
\{Y\}\times B\!\Aut(\Lambda,\Lambda\langle n \rangle)\ar[r]&\calM(Y)\times B \!\Aut(\Lambda,\Lambda\langle n \rangle)
\end{tikzcd}.\end{center}
Form the diagram
\begin{center}\begin{tikzcd}
\Eq(\tau_! Y,B^{n+1}_\Lambda\Lambda\langle n \rangle)\ar[r]\ar[d]&\Map_{\calP}^{0\hyp\Eq}(Y,\tau^\ast B^{n+1}_\Lambda\Lambda\langle n \rangle)\ar[d]\ar[r]&\{\Lambda\langle n \rangle\}\ar[d]\\
F\ar[r]&F'\ar[r]&B\!\Aut(\Lambda,\Lambda\langle n \rangle)
\end{tikzcd},\end{center}
where the upper left horizontal map is obtained via the adjunction
\[
\Map_{\h\calP}(\tau_! Y,B^{n+1}_\Lambda\Lambda\langle n \rangle)\simeq\Map_{\calP}(Y,\tau^\ast B^{n+1}_\Lambda\Lambda\langle n \rangle).
\]
The rightmost square is Cartesian by \cref{prop:uniqueness}, so to show the outer square is Cartesian it is sufficient to verify that the left square is Cartesian. This follows from \cref{prop:liftnstage}.
\end{proof}

\section{Localizations and completions}\label{sec:completions}

If $R$ is an $\bbE_2$-ring and $I\subset R_0$ is a finitely generated ideal, then there is a good notion of $I$-completeness for $R$-modules, and one can proceed to consider various algebraic structures built from $I$-complete $R$-modules. We would like to be able to apply our machinery in this setting; in fact, this was our original motivation for working with infinitary theories. The use of infinitary theories to capture completeness conditions has also been employed by Brantner in \cite{brantner2017lubin}; in particular, part of \cref{prop:tameho} may be seen in \cite[Proposition 4.1.2]{brantner2017lubin}.

If $\LMod_R^{\Cpl(I)}$ is the category of $I$-complete left $R$-modules, and $\calP = \LMod_R^{\Cpl(I),\free}$ is the category of $I$-completions of free $R$-modules, then to apply our machinery we would like to say $\Model_{\calP}^\Omega\simeq\LMod_R^{\Cpl(I)}$, and to identify $\Model_{\h\calP}$ as something recognizable in terms of $\LMod_{R_\ast}$. The former always holds, and the latter is possible under a minor algebraic condition on the ideal $I$. We describe this condition in \cref{ssec:completionsr} in the more general setting of $R$-linear theories for a connective $\bbE_2$-ring $R$. Before this, in \cref{ssec:localizingtheories} we consider some general aspects of the interaction between localizations and theories.

\subsection{Localizations of theories}\label{ssec:localizingtheories}
We begin with some facts about localizing monads in a $1$-categorical setting. 

\begin{lemma}\label{lem:localizemonad}
Let $\calC$ be a $1$-category, $L\colon\calC\rightarrow\calC$ be a localization, and $T\colon\calC\rightarrow\calC$ be a monad. If $LTC\rightarrow LTLC$ is an equivalence for all $C\in\calC$, then the composite $LTLT\simeq LTT\rightarrow LT$ equips $LT$ with the structure of a monad. Moreover, $L$ canonically lifts to a localization $L\colon\Alg_T\rightarrow\Alg_{LT}$ exhibiting $\Alg_{LT}$ as the category of $T$-algebras whose underlying object of $\calC$ is $L$-local.
\end{lemma}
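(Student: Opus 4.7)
The plan is to construct the monad structure on $LT$ explicitly, identify $\Alg_{LT}$ with the full subcategory of $\Alg_T$ on algebras whose underlying object is $L$-local, and then build the reflector.

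For the monad structure, I would take as unit the composite $\eta_{LT} = \eta_L T \circ \eta_T \colon \mathrm{id}_\calC \to T \to LT$. For the multiplication, observe that applying the hypothesis to objects of the form $TC$ shows that $LT\eta_L T \colon LT^2 \to LTLT$ is an isomorphism, and so $\mu_{LT} \colon LTLT \to LT$ is well-defined by the composite in the statement. The monad axioms reduce to associativity of $\mu_T$ and the unit laws for $\eta_T$ together with naturality of $\eta_L$; I would verify them by drawing the pentagon and the two triangles for $LT$, expanding every $\mu_{LT}$ and $\eta_{LT}$ edge into its two-step factorization, and tiling the resulting diagram with naturality squares for $\eta_L$ and instances of the $T$-monad axioms.

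Next I would identify which $T$-algebras underlie an $LT$-algebra. Given an $LT$-algebra $(B,\beta)$, pullback along $\eta_L^{TB}$ produces a $T$-algebra structure $\alpha = \beta \circ \eta_L^{TB}$ on $B$; the unit law for $\beta$ then yields $\beta \circ \eta_L^{TB} \circ \eta_T^B = \mathrm{id}_B$, so by naturality of $\eta_L$ applied to $\eta_T^B$, the map $\beta \circ L\eta_T^B \colon LB \to B$ is a retraction of $\eta_L^B$, hence $B$ is $L$-local. Conversely, if $(A,\alpha)$ is a $T$-algebra with $A$ an $L$-local object, the universal property of $L$ factors $\alpha$ uniquely through $\tilde\alpha \colon LTA \to A$; the $LT$-algebra axioms for $\tilde\alpha$ are verified by checking the corresponding equalities of maps out of $LTA$ and $LTLTA$ after precomposing with the appropriate $\eta_L$-unit maps, which reduces them to the original axioms for $\alpha$. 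The two constructions are evidently inverse, and they extend to morphisms by the same factorization argument.

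Finally I would exhibit $L \colon \Alg_T \to \Alg_{LT}$ as a left adjoint to the inclusion. Given $(A,\alpha) \in \Alg_T$, the hypothesis makes $LT\eta_L^A$ an isomorphism, so $LA$ acquires the $LT$-algebra structure $LT(LA) \xleftarrow{\sim} LTA \xrightarrow{L\alpha} LA$, and $\eta_L^A \colon A \to LA$ is a map of $T$-algebras by naturality. For any $(B,\beta) \in \Alg_{LT}$ — necessarily with $B$ an $L$-local object — a $T$-algebra map $A \to B$ factors uniquely through $LA$ in $\calC$ by $L$-locality of $B$, and one checks that the resulting map is compatible with the $LT$-algebra structures by comparing the two possible composites $LTA \to B$ and invoking uniqueness of factorizations of maps out of $LTA$ into $B$.

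The main obstacle is the bookkeeping in the associativity diagram for $\mu_{LT}$; everything else is essentially forced by the universal property of $L$ and the fact that the $1$-categorical hypothesis lets us compare morphisms pointwise.
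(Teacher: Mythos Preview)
Your proposal is correct and is precisely the kind of diagram chase the paper has in mind; the paper itself does not give a proof but simply says ``This is a diagram chase; see for instance \cite[Proposition 11.5]{rezk2018analytic}.'' Your sketch fills in exactly what that reference provides: the explicit unit and multiplication on $LT$, the identification of $\Alg_{LT}$ with the $T$-algebras on $L$-local objects (your retraction argument for $L$-locality is the standard one, using closure of local objects under retracts), and the reflector. There is nothing to add.
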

\begin{proof}
This is a diagram chase; see for instance \cite[Proposition 11.5]{rezk2018analytic}.
\end{proof}

\begin{rmk}
We fully expect that \cref{lem:localizemonad} holds even when $\calC$ is an $\infty$-category. As we do not have a reference, we will avoid arguments that would rely on this.
\tqed
\end{rmk}

The monad structure on $LT$ obtained by \cref{lem:localizemonad} is essentially unique, in the following sense.

\begin{lemma}\label{lem:locpairing}
Let $\calC$ be a $1$-category, and let $L\colon \calC\rightarrow\calC$ a localization. Let $T\colon\calC\rightarrow\calC$ be a functor, and suppose that $T,LT\in\Fun(\calC,\calC)$ are equipped with right-unital pairings $(\eta,\mu)$ and $(\widehat{\eta},\widehat{\mu})$ in such a way that $T\rightarrow LT$ preserves this structure.
\begin{enumerate}
\item For any $C\in\calC$, the map $LTC\rightarrow LTLC$ is an equivalence;
\item The pairing on $LT$ is given by the composite $LT\circ LT\xleftarrow{\simeq}LT\circ T = L(T\circ T)\rightarrow LT$.
\end{enumerate}
\end{lemma}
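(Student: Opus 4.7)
The strategy is to reduce both parts to the right-unit axiom for $\widehat{\mu}$, combined with the compatibility of the natural transformation $\iota := \ell_T \colon T \to LT$ with the pairings. Assertion (1) is the substantive piece; assertion (2) will fall out of a short diagram chase once (1) is in hand.

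For (1), I will construct an explicit two-sided inverse to $LT\ell_C\colon LTC \to LTLC$. Since $LTC$ is $L$-local, the unit $\widehat{\eta}_C \colon C \to LTC$ factors uniquely as $\widehat{\eta}_C = \bar{\eta}_C \circ \ell_C$ for some $\bar{\eta}_C\colon LC \to LTC$; applying naturality of $\ell$ to $\eta_C$ one checks that $\bar{\eta} = L\eta$, so $\bar{\eta}\colon L \to LT$ is itself a natural transformation. Setting $s_C := \widehat{\mu}_C \circ LT\bar{\eta}_C$, the right-unit axiom immediately gives
\[
s_C \circ LT\ell_C = \widehat{\mu}_C \circ LT\widehat{\eta}_C = \mathrm{id}_{LTC}.
\]
For the reverse composition, naturality of $\widehat{\mu}$ and of $\bar{\eta}$ with respect to $\ell_C$ lets me rewrite
\[
LT\ell_C \circ s_C = \widehat{\mu}_{LC} \circ LT\bar{\eta}_{LC} \circ LT(L\ell_C).
\]
The identification $L\ell_C = \ell_{LC}$, valid for any idempotent localization, converts $LT(\bar{\eta}_{LC}\circ L\ell_C)$ into $LT(\bar{\eta}_{LC}\circ \ell_{LC}) = LT\widehat{\eta}_{LC}$, so the right-unit axiom at $LC$ collapses the whole expression to $\mathrm{id}_{LTLC}$.

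For (2), part (1) applied at $TC$ says $LT\iota_C$ is an equivalence, so the composite displayed in the statement is well-defined. It then suffices to verify $\widehat{\mu}_C \circ LT\iota_C = L\mu_C$ as maps $LTTC \to LTC$. Expanding the horizontal composite as $\iota\iota = LT\iota \circ \iota_T$ and applying the compatibility $\widehat{\mu}\circ(\iota\iota) = \iota \circ \mu$ gives
\[
\widehat{\mu}_C \circ LT\iota_C \circ \iota_{TC} = \iota_C \circ \mu_C = L\mu_C \circ \iota_{TC},
\]
the last equality coming from naturality of $\ell$ applied to $\mu_C$. Since $LTC$ is $L$-local, maps $LTTC \to LTC$ are uniquely determined by their precomposition with $\iota_{TC} = \ell_{TTC}$, and the desired identity $\widehat{\mu}_C \circ LT\iota_C = L\mu_C$ follows.

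The main obstacle is right-invertibility in (1). Left-invertibility of $LT\ell_C$ is immediate from the right-unit axiom, but the reverse composition uses the full strength of the localization hypothesis: the identity $L\ell_C = \ell_{LC}$ is exactly what lets the right-unit axiom at $LC$ close up the calculation. Once this piece is in place, the remainder of the argument, and in particular (2), is natural-transformation bookkeeping using the compatibility of $\iota$ with pairings and the universal property of $L$.
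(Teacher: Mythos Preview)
Your proof is correct and follows essentially the same approach as the paper. In both arguments, part~(1) is established by exhibiting $s = \widehat{\mu}\circ LT(L\eta)$ as a two-sided inverse to $LT\ell$: one composite collapses via the right-unit axiom at $C$, and the other via the right-unit axiom at $LC$ after invoking the localization identity $L\ell = \ell L$. For part~(2), both you and the paper verify $\widehat{\mu}\circ LT\iota = L\mu$ by precomposing with the unit $\ell_{TT}$ and appealing to the universal property of $L$; your write-up is somewhat more direct, but the underlying diagram chase is the same.
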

\begin{proof}
(1)~~ Write $I$ for the identity on $\calC$ and $c\colon I\rightarrow L$ for the unit. By naturality of $c$, the diagram
\begin{center}\begin{tikzcd}
I\ar[r,"c"]\ar[d,"\eta"]&L\ar[d,"L\eta"]\\
T\ar[r,"c T"]&LT
\end{tikzcd}\end{center}
commutes, and the assumption that $c$ is compatible with units implies that the composite is $\widehat{\eta}$. As a consequence, there is a commutative diagram
\begin{center}\begin{tikzcd}
LT\ar[d,"LTc"']\ar[r,"LT\widehat{\eta}"]&LTLT\ar[r,"\widehat{\mu}"]&LT\ar[d,"LTc"]\\
LTL\ar[ur,"LTL\eta"']\ar[rr,"g"]&&LTL
\end{tikzcd},\end{center}
where $g$ is defined so the diagram commutes. To show that $LTc$ is an equivalence, it is sufficient to verify that $g$ is the identity, for then an inverse is given by $\widehat{\mu}\circ LTL\eta$. Indeed, consider the diagram

\begin{center}\begin{tikzcd}[column sep=huge]
LTL\ar[r,"LTLc=LTcL"]\ar[d,"LTL\eta"]&LTLL\ar[d,"LTL\eta L"]\\
LTLT\ar[r,"LTLTc"]\ar[d,"\widehat{\mu}"]&LTLTL\ar[d,"\widehat{\mu}L"]\\
LT\ar[r,"LTc"]&LTL
\end{tikzcd}.\end{center}
The top square commutes by naturality of $\eta$, and the bottom square commutes by naturality of $\widehat{\mu}$. The clockwise composite is the identity as $L\eta\circ c = \widehat{\eta}$ implies $LTL\eta L\circ LTcL = LT\widehat{\eta}L$, and the counterclockwise composite is $g$, hence $g$ is the identity.

(2)~~ Observe that the diagram
\begin{center}\begin{tikzcd}[column sep=large]
TT\ar[r,"c TT"]\ar[d,"\mu"]\ar[rr,"cTcT",bend left]&LTT\ar[r,"LTc T"]\ar[d,"L\mu"]&LTLT\ar[d,"\widehat{\mu}"]\\
T\ar[r,"cT"]&LT\ar[r,"="]&LT
\end{tikzcd}\end{center}
commutes. Indeed, the leftmost square commutes by naturality of $c$, and the outermost by compatibility of $c$ with the pairings. As $L$ is a localization, the rightmost square commutes because the outer square commutes. Consider the diagram
\begin{center}\begin{tikzcd}[column sep=large]
LTLT\ar[d,"\widehat{\mu}"]\ar[r,"LTL\eta T"]&LTLTT\ar[d,"\widehat{\mu}T"]\\
LT&LTT\ar[ul,"LTc T"']\ar[l,"L\mu"']
\end{tikzcd}.\end{center}
By our proof of (1), the clockwise composite is exactly the pairing $LTLT\simeq LTT\rightarrow LT$, so we must verify the outer square commutes. The left triangle commutes by the above, and the right triangle gives the identity on $LTLT$ by our proof of (1), so the outer square indeed commutes.
\end{proof}

Now let $\calP$ be a theory, and let $L\colon\Model_{\calP}\rightarrow\Model_{\calP}$ be a localization which preserves geometric realizations. Being a localization, $L$ carries the structure of a monad, and the category of $L$-local objects is equivalent to the category of $L$-algebras; \cref{prop:monadtheory} thus implies that restriction along $L\colon \calP\rightarrow L\calP$ induces a fully faithful functor $\Model_{L\calP}\rightarrow\Model_\calP$ realizing $\Model_{L\calP}$ as the category of $L$-local objects in $\Model_\calP$. Let $T$ be a monad on $\Model_{\calP}$ which preserves geometric realizations, so that $\Alg_T\simeq\Model_{T\calP}$.

\begin{prop}\label{thm:locmonmal}
Fix notation as in the preceding paragraph, and suppose that $LTh(P)$ is a $T$-algebra for each $P\in\calP$ naturally in $Th(P)$.
\begin{enumerate}
\item The map $LT\rightarrow LTL$ is a natural isomorphism;
\item The functor $LT$ carries the structure of a monad, informally described by $LTLT\simeq LTT\rightarrow LT$;
\item The localization $L$ lifts to a localization $L\colon\Alg_T\rightarrow\Alg_{LT}$ realizing $\Alg_{LT}$ as the category of $T$-algebras whose underlying object of $\Model_{\calP}$ is $L$-local.
\end{enumerate}
\end{prop}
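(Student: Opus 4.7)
The plan is to construct a lift $\widetilde{L}\colon\Alg_T\to\Alg_T$ of the localization $L$ and derive all three conclusions from the assertion that $\widetilde{L}$ is a localization whose local objects are exactly those $T$-algebras whose underlying object is $L$-local. The hypothesis gives a functor $\beta\colon T\calP\to\Alg_T$ with $U\beta\simeq L|_{T\calP}$, and (by naturality in $Th(P)$) a natural transformation from the inclusion $T\calP\hookrightarrow\Alg_T$ to $\beta$ lifting $c$ on underlying objects. Since $\Alg_T\simeq\Model_{T\calP}$ realizes $\Alg_T$ as the free cocompletion of $T\calP$ under geometric realizations (\cref{cor:modelsresolved}), left Kan extension produces $\widetilde{L}\colon\Alg_T\to\Alg_T$ together with a natural transformation $\widehat{c}\colon\mathrm{id}_{\Alg_T}\to\widetilde{L}$, and both preserve geometric realizations. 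As $U$ preserves geometric realizations (the monad $T$ does, and $U$ is the forgetful functor of a monadic adjunction), $U\widetilde{L}$ agrees with $LU$ on $T\calP$ and both preserve geometric realizations, so $U\widetilde{L}\simeq LU$ and $U\widehat{c}\simeq cU$.

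Next I would show $\widetilde{L}$ is a localization. Idempotence: $U\widetilde{L}\widetilde{L}\simeq LU\widetilde{L}\simeq LLU\simeq LU\simeq U\widetilde{L}$, and since $U$ is conservative and both maps involved become equivalences after applying $U$, the map $\widehat{c}\widetilde{L}\colon\widetilde{L}\to\widetilde{L}\widetilde{L}$ is itself an equivalence. Similarly, if $A\in\Alg_T$ has $UA$ being $L$-local, then $U\widehat{c}_A=c_{UA}$ is an equivalence, so $\widehat{c}_A$ is too by conservativity. This identifies the full subcategory of $\widetilde{L}$-local objects as $\Alg_T^{L\text{-loc}}=\{A\in\Alg_T:UA\text{ is }L\text{-local}\}$.

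For (1), given $X\in\Model_{\calP}$, apply $\widetilde{L}$ to $T_F c_X\colon T_F X\to T_F LX$ in $\Alg_T$; the underlying map in $\Model_\calP$ is $LT c_X\colon LTX\to LTLX$. The target $\widetilde{L}T_F LX$ is local, so it suffices to show the composite $T_F X\to T_F LX\to \widetilde{L}T_F LX$ realizes the $\widetilde{L}$-localization of $T_F X$. For any local $Z$, one computes $\Map_{\Alg_T}(T_F LX,Z)\simeq\Map_{\Model_\calP}(LX,UZ)\simeq\Map_{\Model_\calP}(X,UZ)\simeq\Map_{\Alg_T}(T_FX,Z)$, using that $UZ$ is $L$-local, so $\widetilde{L}T_F c_X$ is an equivalence and (1) follows. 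Statements (2) and (3) then follow by Beck's monadicity theorem applied to the adjunction between $\Model_{L\calP}$ and $\Alg_T^{L\text{-loc}}$ obtained by restricting $T_F\dashv U$: the conservative right adjoint $U$ preserves $U$-split geometric realizations (all functors involved do), and the associated monad on $\Model_{L\calP}$ has underlying functor $U\widetilde{L}T_F\simeq LUT_F=LT$, providing the monad structure on $LT$ and identifying $\Alg_{LT}$ with $\Alg_T^{L\text{-loc}}$.

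The main obstacle will be correctly interpreting the naturality clause in the hypothesis and ensuring that left Kan extension really does produce a natural transformation $\widehat{c}\colon\mathrm{id}\to\widetilde{L}$ rather than just an endofunctor $\widetilde{L}$; once this lifted unit is in hand, conservativity of $U$ does almost all the remaining work, and the proof sidesteps the $\infty$-categorical subtleties flagged in the remark after \cref{lem:localizemonad}.
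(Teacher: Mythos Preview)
Your proof is correct and takes a genuinely different route from the paper's. The paper first constructs the monad structure on $LT$: it observes that the objects $LTh(P)$ assemble into a theory $LT\calP\subset\Alg_T$, so that restriction $\Model_{LT\calP}\to\Model_\calP$ is monadic with associated monad $LT$ by \cref{prop:monadicforget}, and this already supplies a map $T\to LT$ of monads. It then invokes the $1$-categorical \cref{lem:locpairing}, applied to the homotopy category of $\Model_\calP$, to extract (1) and to pin down the multiplication on $LT$ as the composite $LTLT\simeq LTT\to LT$. Your approach reverses the order: you build the lift $\widetilde{L}$ on $\Alg_T$ by left Kan extension, use conservativity of $U$ to show it is a localization, prove (1) directly via the free--forgetful adjunction, and only then recover the monad structure on $LT$ through Beck's theorem applied to the restricted adjunction. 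Your argument is more self-contained in that it does not require \cref{lem:locpairing}, while the paper's argument is shorter precisely because it has that lemma available and can work entirely with theories and restriction functors. Both approaches sidestep the $\infty$-categorical coherence issue flagged in the remark after \cref{lem:localizemonad}: the paper by passing to the homotopy category, you by obtaining the monad structure from an adjunction rather than by hand. One small point: your Beck argument yields a monad structure on $LT$ but does not explicitly verify that its multiplication is the composite $LTLT\simeq LTT\to LT$; if you want that identification, you would still need a short argument along the lines of \cref{lem:locpairing}(2), though the statement only asks for this ``informally''.
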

\begin{proof}
(1--2)~~ As each $LTh(P)$ is a $T$-algebra, we can let $LT\calP\subset\Alg_T$ be the full subcategory of such objects. There is then a commutative diagram
\begin{center}\begin{tikzcd}
\Model_{LT\calP}\ar[r]\ar[d]&\Model_{T\calP}\ar[d]\\
\Model_{L\calP}\ar[r]&\Model_\calP
\end{tikzcd}\end{center}
of restriction functors which preserve geometric realizations and which are the forgetful functors of monadic adjunctions. The monad associated to $\Model_{LT\calP}\rightarrow\Model_{\calP}$ has underlying functor $LT$, and this gives a map $T\rightarrow LT$ of monads. We conclude by applying \cref{lem:locpairing} to the homotopy category of $\Model_\calP$.

(3)~~ Here we are claiming that the above diagram is Cartesian, and that
\begin{center}\begin{tikzcd}
\Model_{LT\calP}\ar[d]&\Model_{T\calP}\ar[d]\ar[l]\\
\Model_{L\calP}&\Model_{\calP}\ar[l,"L"']
\end{tikzcd}\end{center}
commutes. The latter is clear, as can be checked on objects of the form $T(P)$ for $P\in\calP$, and this implies the former.
\end{proof}

It is only a bit of extra work to include loop theories in the story.

\begin{prop}\label{prop:locres}
Let $\calP$ be a loop theory, and let $L\colon\Model_{\calP}\rightarrow\Model_{\calP}$ be a localization which preserves geometric realizations. Suppose that $Lh(P)\in\Model_\calP^\Omega$ for each $P\in\calP$. Then
\begin{enumerate}
\item $\Model_{L\calP}^\Omega$ is a localization of $\Model_\calP^\Omega$;
\item The diagram
\begin{center}\begin{tikzcd}
\Model_{L\calP}^\Omega\ar[r]\ar[d]&\Model_\calP^\Omega\ar[d]\\
\Model_{L\calP}\ar[r]&\Model_{\calP}
\end{tikzcd}\end{center}
is Cartesian.
\end{enumerate} 
\end{prop}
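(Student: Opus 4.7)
The plan is first to establish that $L\calP \subset \Model_\calP$ is itself a loop theory. Closure under coproducts is automatic, since $L$ preserves colimits and hence $Lh(\coprod_i P_i) \simeq \coprod_i Lh(P_i)$. The substantive step is showing that $Lh(F \otimes P)$ serves as the tensor $F \otimes_{L\calP} Lh(P)$ in the theory $L\calP$; I will verify the universal property by computing, for any $Q \in \calP$,
\[
\Map_{L\calP}(Lh(F \otimes P), Lh(Q)) = Lh(Q)(F \otimes P) \stackrel{(\ast)}{=} Lh(Q)(P)^F = \Map_{L\calP}(Lh(P), Lh(Q))^F,
\]
where $(\ast)$ uses the hypothesis $Lh(Q) \in \Model_\calP^\Omega$. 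The main obstacle I anticipate is precisely this step: the universal property need only be tested against objects of $L\calP$, and it is exactly the hypothesis that makes this test succeed; a priori the analogous identification of tensors in the ambient category $\Model_{L\calP}$ could require the stronger condition that every $L$-local object lies in $\Model_\calP^\Omega$.

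For part (2), I will then show that $\Model_{L\calP}^\Omega = \Model_{L\calP} \cap \Model_\calP^\Omega$ as full subcategories of $\Model_\calP$, where $\Model_{L\calP} \subset \Model_\calP$ is the subcategory of $L$-local objects (by \cref{prop:monadtheory} applied to the monad $L$). For such $X$, the $L$-locality gives $X(Lh(R)) \simeq X(R)$ for $R \in \calP$, and the identification of the first paragraph gives $X(F \otimes_{L\calP} Lh(P)) \simeq X(F \otimes P)$. Hence the condition $X(F \otimes_{L\calP} Lh(P)) \simeq X(Lh(P))^F$ characterizing membership in $\Model_{L\calP}^\Omega$ becomes exactly $X(F \otimes P) \simeq X(P)^F$, the condition for $X \in \Model_\calP^\Omega$.

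For part (1), since $L\calP$ is a loop theory, \cref{lem:looploc} produces an accessible localization $L'_\Omega \colon \Model_{L\calP} \to \Model_{L\calP}^\Omega$. I will then claim that the composite $L^\Omega := L'_\Omega \circ L|_{\Model_\calP^\Omega}$ is left adjoint to the inclusion $\Model_{L\calP}^\Omega \hookrightarrow \Model_\calP^\Omega$ provided by (2), via the natural chain
\[
\Map_{\Model_{L\calP}^\Omega}(L'_\Omega L X, Y) \simeq \Map_{\Model_{L\calP}}(LX, Y) \simeq \Map_{\Model_\calP}(X, Y) \simeq \Map_{\Model_\calP^\Omega}(X, Y)
\]
for $X \in \Model_\calP^\Omega$ and $Y \in \Model_{L\calP}^\Omega$, using successively the $L'_\Omega$-adjunction with $Y \in \Model_{L\calP}^\Omega$, the $L$-locality of $Y$, and full-subcategory inclusions.
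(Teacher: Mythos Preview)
Your argument is correct. Both you and the paper begin by observing that the hypothesis $Lh(P)\in\Model_\calP^\Omega$ is exactly what makes $L\calP$ a loop theory, with $F\otimes_{L\calP}Lh(P)\simeq Lh(F\otimes P)$; this is the key point. From there, the organization diverges slightly. You prove (2) first by directly unwinding the $\Omega$-condition for an $L$-local $X$, and then deduce (1) by exhibiting the left adjoint as the composite $L'_\Omega\circ L$. The paper instead invokes \cref{prop:monadicforget} to see that restriction $\Model_{L\calP}^\Omega\rightarrow\Model_\calP^\Omega$ is monadic, observes it is fully faithful since it is a restriction of $\Model_{L\calP}\rightarrow\Model_\calP$, and concludes it is a localization; (2) is then immediate. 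Your route is more explicit and self-contained, while the paper's is terser by appealing to the general monadicity result already established. Neither requires any idea the other lacks.
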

\begin{proof}
(1)~~ Because $Lh(P)\in\Model_\calP^\Omega$ for each $P\in\calP$, the full subcategory $L\calP\subset\Model_{L\calP}$ is a loop theory, and restriction gives $\Model_{L\calP}^\Omega\rightarrow\Model_\calP^\Omega$. This is fully faithful, being obtained from $\Model_{L\calP}\rightarrow\Model_{\calP}$, and is the forgetful functor of a monadic adjunction by \cref{prop:monadicforget}.

(2)~~ Here we are claiming that $\Model_{L\calP}^\Omega$ consists of those objects of $\Model_\calP^\Omega$ which are $L$-local in $\Model_{\calP}$, which is now clear.
\end{proof}

\begin{prop}\label{thm:locresmonad}
Let $\calP$ be a loop theory, and let $L\colon\Model_{\calP}\rightarrow\Model_{\calP}$ be a localization which preserves geometric realizations such that $Lh(P)\in\Model_\calP^\Omega$ for each $P\in\calP$. Let $T$ be a monad on $\Model_\calP^\Omega$, and suppose that
\begin{enumerate}
\item[(a)] $T$ satisfies the criteria of \cref{thm:recmon}, so that $\Alg_T\simeq\Model_{T\calP}^\Omega$;
\item[(b)] $LTh(P)\in\Model_\calP^\Omega$ for each $P\in\calP$;
\item[(c)] $LTh(P)$ is a $T$-algebra for each $P\in\calP$ naturally in $Th(P)$.
\end{enumerate}
Then
\begin{enumerate}
\item The functor $LT$ carries the structure of a monad, and the associated forgetful functor can be identified as restriction $\Model_{LT\calP}^\Omega\rightarrow\Model_\calP^\Omega$;
\item The square
\begin{center}\begin{tikzcd}
\Model_{LT\calP}^\Omega\ar[r]\ar[d]&\Model_{T\calP}^\Omega\ar[d]\\
\Model_{L\calP}^\Omega\ar[r]&\Model_\calP^\Omega
\end{tikzcd}\end{center}
is Cartesian;
\item The square
\begin{center}\begin{tikzcd}
\Model_{LT\calP}^\Omega\ar[d]&\Model_{T\calP}^\Omega\ar[d]\ar[l]\\
\Model_{L\calP}^\Omega&\Model_\calP^\Omega\ar[l,"L"']
\end{tikzcd}\end{center}
commutes.
\end{enumerate}
\end{prop}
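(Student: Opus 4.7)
The plan is to synthesize \cref{thm:locmonmal} with \cref{prop:locres}. First I would recast hypotheses (a)--(c) in terms of the monad $t_!$ on $\Model_\calP$ obtained by left Kan extension from $t = Th\colon\calP\to\Model_\calP^\Omega\subset\Model_\calP$. By construction $t_!$ preserves geometric realizations, and by hypothesis (a) together with \cref{thm:recmon,prop:monadtheory} we have $\Alg_{t_!}\simeq\Model_{T\calP}$ with $\Model_{T\calP}^\Omega\simeq\Alg_T$ sitting inside as those $t_!$-algebras whose underlying model of $\calP$ is $\Omega$-local. Hypothesis (c) then translates to saying that $Lt_!h(P) = LTh(P)$ carries a canonical $t_!$-algebra structure natural in $t_!h(P)$.

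Next I would apply \cref{thm:locmonmal} to the triple $(\calP, L, t_!)$. This produces $Lt_!$ as a monad on $\Model_\calP$ with $\Alg_{Lt_!}\simeq\Model_{LT\calP}$, together with the Cartesian square $\Model_{LT\calP}\simeq\Model_{L\calP}\times_{\Model_\calP}\Model_{T\calP}$ and the commutativity assertion of \cref{thm:locmonmal}(3).

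For (2), the key intermediate identity is $\Model_{T\calP}^\Omega\simeq\Model_{T\calP}\times_{\Model_\calP}\Model_\calP^\Omega$, which is just a reformulation of $\Model_{T\calP}^\Omega\simeq\Alg_T$, together with its analogue $\Model_{LT\calP}^\Omega\simeq\Model_{LT\calP}\times_{\Model_\calP}\Model_\calP^\Omega$ for $LT$, which relies on hypothesis (b) to place $LT\calP$ inside $\Model_\calP^\Omega$. Combining these with the square from Step 2 and with $\Model_{L\calP}^\Omega\simeq\Model_{L\calP}\times_{\Model_\calP}\Model_\calP^\Omega$ from \cref{prop:locres}(2), a pullback manipulation gives $\Model_{LT\calP}^\Omega\simeq \Model_{L\calP}^\Omega\times_{\Model_\calP^\Omega}\Model_{T\calP}^\Omega$, which is (2). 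Part (1) is then deduced from (2): the forgetful $\Model_{LT\calP}^\Omega\to\Model_\calP^\Omega$ is the forgetful of a monadic adjunction by \cref{prop:monadicforget}(2), since the natural functor $\calP\to LT\calP$ is essentially surjective and coproduct-preserving (finite coproducts and tensors by finite wedges of spheres in $LT\calP$ are computed via $LTh(F\otimes P)$, using that $Lt_!$ preserves these finite colimits). The associated monad agrees with $LT$ on representables, hence everywhere. Part (3) is then obtained by restricting the commutative square of \cref{thm:locmonmal}(3) along the $\Omega$-inclusions, using the pullback description from (2).

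\textbf{The main obstacle} will be justifying the identity $\Model_{LT\calP}^\Omega\simeq\Model_{LT\calP}\times_{\Model_\calP}\Model_\calP^\Omega$, i.e.\ that $\Omega$-locality of an $LT$-algebra $X$ is detected by the underlying model of $\calP$. Because the loop theory structure on $LT\calP$ is transported from that on $\calP$ via condition (b) and preservation of geometric realizations by $L$ and $t_!$, one has $F\otimes LTh(P)\simeq LTh(F\otimes P)$ in $LT\calP$ for any finite wedge of spheres $F$. Consequently the defining conditions $X(F\otimes Q)\simeq X(Q)^F$ for $Q\in LT\calP$ reduce, under restriction along $\calP\to LT\calP$, to the $\Omega$-conditions on the underlying model of $\calP$.
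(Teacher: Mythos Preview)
Your proposal is correct and follows essentially the same strategy as the paper: reduce to \cref{thm:locmonmal} at the level of $\Model_\calP$, then pass to $\Omega$-local objects using that the $\Omega$-condition on a model of $T\calP$ (resp.\ $LT\calP$) is detected on the underlying model of $\calP$. The paper organizes the passage differently---it first observes that $\Model_{LT\calP}^\Omega\to\Model_{T\calP}^\Omega$ is both monadic (via \cref{prop:monadicforget}) and a reflective inclusion (inherited from $\Model_{LT\calP}\to\Model_{T\calP}$), then verifies (3) by a cube diagram chasing restrictions and left adjoints, and reads off (1) and (2) from (3)---whereas you assemble (2) first from explicit pullback identities and then deduce (1) and (3). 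Your explicit identification of $\Model_{LT\calP}^\Omega$ as $\Model_{LT\calP}\times_{\Model_\calP}\Model_\calP^\Omega$ is exactly the content the paper leaves implicit in its cube argument, and your justification via $F\otimes LTh(P)\simeq LTh(F\otimes P)$ is the right one.
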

\begin{proof}
The restriction $\Model_{LT\calP}^\Omega\rightarrow\Model_{T\calP}^\Omega$ is the forgetful functor of a monadic adjunction by \cref{prop:monadicforget}. It is obtained from $\Model_{LT\calP}\rightarrow\Model_{T\calP}$, so is the inclusion of a reflective subcategory by \cref{thm:locmonmal}. We claim that the associated localization is a lift of $L$; (2) follows quickly. This is the content of (3), and moreover shows that the monad associated to $\Model_{LT\calP}^\Omega\rightarrow\Model_\calP^\Omega$ has underlying functor $LT$, proving (1). Consider the cube
\begin{center}\begin{tikzcd}
\Model_{LT\calP}^\Omega\ar[dd]&&\Model_{T\calP}^\Omega\ar[ll,dashed]\ar[dr]\ar[dd]\\
&\Model_{LT\calP}\ar[ul]\ar[dd]&&\Model_{T\calP}\ar[dd]\ar[ll]\\
\Model_{L\calP}^\Omega&&\Model_\calP^\Omega\ar[ll,dashed]\ar[dr]\\
&\Model_{L\calP}\ar[ul]&&\Model_{T\calP}\ar[ll]
\end{tikzcd},\end{center}
consisting of restrictions or left adjoints. The dashed arrows are such that the top and bottom faces commute, and the back face is the square of (3). As the rest of the diagram commutes, so does the back face, as claimed.
\end{proof}

\subsection{\texorpdfstring{$R$}{R}-linear theories and completions}\label{ssec:completionsr}

We now consider the main cases of interest, namely those derived from $I$-completion. We begin by reviewing the relevant notion of completeness; this story is developed in \cite[Section 7]{lurie2018spectral}, and our perspective is strongly influenced by \cite{rezk2018analytic}.

Fix a connective $\bbE_2$-ring $R$. In \cite[Definition D.1.4.1]{lurie2018spectral}, the notion of an $R$-linear prestable category is introduced. In short, where $\LMod_R^\sfg$ is the category of left $R$-modules equivalent to $R^{\oplus n}$ for some $n<\infty$, an $R$-linear structure on a presentable stable category $\calM$ is equivalent to an additive and monoidal functor $\LMod_R^{\sfg}\rightarrow\Fun^L(\calM,\calM)$, where the latter is the category of colimit-preserving endofunctors of $\calM$. For convenience, we extend this definition to allow $\calM$ to be an arbitrary presentable additive category; we will only apply the theory of \cite{lurie2018spectral} beyond the definitions in the case where $\calM$ is stable.

Let $I\subset R_0$ be a finitely generated ideal, and let $\calM$ be an $R$-linear stable category. An object $M\in\calM$ is said to be \textit{$I$-nilpotent} if $R[x^{-1}]\otimes_R M \simeq 0$ for all $x\in I$, is said to be \textit{$I$-local} if $\Map_\calM(N,M)$ is contractible for all $I$-nilpotent $N$, and is said to be \textit{$I$-complete} if $\Map_\calM(N,M)$ is contractible for all $I$-local $N$. Let $\calM^{\Cpl(I)}\subset\calM$ denote the full subcategory of $I$-complete objects. Then $\calM^{\Cpl(I)}$ is a reflective subcategory of $\calM$, with associated localization the functor $M\mapsto M_I^\wedge$ of \textit{$I$-completion}.

We will need an explicit formula for $I$-completion, and for this we must first fix some notation. Let $\ul{h} = \{1,\ldots,h\}$, and let $P(\ul{h})$ denote the powerset of $\ul{h}$, so that an $h$-cube is given by a functor from $P(\ul{h})$. Given an $h$-cube $V\colon P(\ul{h})\rightarrow\calC$ in some category $\calC$, we will for $i\notin S\subset\ul{h}$ write $V_i\colon V(S)\rightarrow V(S\cup\{i\})$ for the map induced by $S\subset S\cup\{i\}$. Given an $h$-cube $V$ in a category $\calC$ with finite colimits, write $\tCof V$ for the total cofiber of $V$.

Suppose now that $I$ is a finitely generated ideal, and make a choice of generators $\ul{u} = (u_1,\ldots,u_h)$. If $M$ is any object of an additive category with countable products on which $u_1,\ldots,u_h$ act, then we can define the $h$-cube
\[
K(M;\ul{u})\colon P(\ul{h})\rightarrow\calM,\qquad S\mapsto M[[T_1,\ldots,T_h]] = M^{\times \omega^h},
\]
where
\[
K(M;\ul{u})_i = (T_i-u_1)\colon M[[T_1,\ldots,T_h]]\rightarrow M[[T_1,\ldots,T_h]].
\]

\begin{prop}\label{prop:completionformula}
Let $\calM$ be an $R$-linear stable category. Then for $M\in\calM$, there is an equivalence
\[
M_I^\wedge \simeq \tCof K(M;\ul{u}).
\]
\end{prop}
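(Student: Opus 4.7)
My plan is to proceed by induction on the number of generators $h$. The key structural input is that the total cofiber of an $h$-cube in a stable $\infty$-category is computed iteratively: splitting $K(M;\ul u)$ along direction $h$ presents it as a map between two copies of the $(h-1)$-cube $K(M[[T_h]]; u_1,\ldots,u_{h-1})$, where $M[[T_h]]=M^{\times\omega}$ carries the diagonal $R$-action, and $\tCof$ of the full cube is the cofiber of the induced map on total cofibers of these $(h-1)$-faces. By induction these equal $M[[T_h]]_{(u_1,\ldots,u_{h-1})}^\wedge$; using that $I$-completion commutes with arbitrary products---a product of $I$-complete (resp., $I$-local) objects is again $I$-complete (resp., $I$-local) from the $\Map$-characterization, so completion preserves its defining fiber sequence factorwise---this simplifies to $M_{(u_1,\ldots,u_{h-1})}^\wedge[[T_h]]$. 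The $h=1$ case applied to $N:=M_{(u_1,\ldots,u_{h-1})}^\wedge$ with $u_h$ then yields $N_{u_h}^\wedge$, which coincides with $M_I^\wedge$ by the standard identification of iterated completion with $(u_1,\ldots,u_h)$-completion.

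For the base case $h=1$, by the universal property of $u$-completion it suffices to produce a natural map $\beta: M\to \Cof(T-u : M[[T]]\to M[[T]])$ whose target is $u$-complete and whose fiber is $u$-local. The $u$-completeness is direct: for $u$-local $N$, applying $\Map_\calM(N,-)$ to the defining cofiber sequence of the target produces a cofiber sequence involving $A[[T]]$ with $A:=\Map_\calM(N,M)$, and since $u$ acts invertibly on $A$, the operator $T-u$ admits the explicit recursive inverse given by $f_0=-u^{-1}g_0$ and $f_n=u^{-1}(f_{n-1}-g_n)$ for the equation $(T-u)f=g$.

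For the fiber, observe that the parallel sequence $M[T]\xrightarrow{T-u}M[T]\to M$ with $M[T]:=\bigoplus_n M$ is also a cofiber sequence, the third map being the evaluation $\sum m_nT^n\mapsto \sum m_n u^n$; one verifies this via Yoneda by computing that for any $E$ the induced sequence $\Map_\calM(M,E)\to \Map_\calM(M[T],E)\xrightarrow{(T-u)^\ast}\Map_\calM(M[T],E)$ is a fiber sequence, as $(T-u)^\ast$ on $\prod_n B$ (with $B=\Map_\calM(M,E)$) is surjective with kernel $\{(b,ub,u^2b,\ldots):b\in B\}\cong B$. The inclusion $M[T]\hookrightarrow M[[T]]$ induces a map of cofiber sequences and hence the natural map $\beta$; taking cofibers of vertical maps yields $\Fib(\beta)\simeq \Omega\Cof(T-u : K\to K)$ with $K:=\Cof(M[T]\to M[[T]])$. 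The main point is that $T:K\to K$ is an equivalence, which follows from a further map of cofiber sequences: $(M[T]\xrightarrow{T}M[T]\to M)$ into $(M[[T]]\xrightarrow{T}M[[T]]\to M)$, where both third terms are the cokernel of the shift identified with the $n=0$ component; the induced cofiber sequence on vertical cofibers becomes $K\xrightarrow{T}K\to 0$, so $T\simeq \mathrm{id}_K$. Since $T=u$ on $\Cof(T-u:K\to K)$, the endomorphism $u$ acts invertibly there, making this cofiber $u$-local, whence $\Fib(\beta)$ is $u$-local as well.

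The main obstacle is the base case, and within it the recognition that $T$ acts invertibly on $K=\Cof(M[T]\to M[[T]])$; this is the conceptual heart of the argument, capturing the completion phenomenon in the gap between direct sum and infinite product.
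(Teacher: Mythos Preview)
The paper's proof simply cites \cite[Propositions 7.3.2.1, 7.3.3.2]{lurie2018spectral}, so your detailed argument goes beyond it while following the same two-step structure (the base case $h=1$, and the inductive reduction via iterated completion). The inductive step and the verification that the target $D$ is $u$-complete are fine. There is, however, a genuine gap in the base case at the assertion ``since $T=u$ on $\Cof(T-u:K\to K)$''. In a stable $\infty$-category, given $f\colon X\to X$ with cofiber $C$, the induced map $\bar f\colon C\to C$ is \emph{not} in general null: the standard example is $f=2$ on $X=\bbS$, where $2\cdot\mathrm{id}_{\bbS/2}$ is the nonzero order-two element of $[\bbS/2,\bbS/2]\cong\bbZ/4$. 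So you cannot conclude $\bar T\simeq\bar u$ on $C$, and your deduction that $u$ acts invertibly there is unjustified. (Relatedly, ``so $T\simeq\mathrm{id}_K$'' should read ``so $T$ is invertible on $K$''.)

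The patch is short: while $\bar f$ need not vanish on $C=\Cof(f)$, one always has $\bar f^{\,2}\simeq 0$. Indeed, in the cofiber sequence $X\xrightarrow{f}X\xrightarrow{p}C\xrightarrow{\delta}\Sigma X$, the nullhomotopies $p f\simeq 0$ and $(\Sigma f)\delta\simeq 0$ yield factorizations $\bar f\simeq g\circ\delta$ and $\bar f\simeq p\circ h$ for some $g,h$, whence $\bar f^{\,2}\simeq g\circ(\delta\circ p)\circ h\simeq 0$ since $\delta\circ p\simeq 0$. Applied with $f=T-u$ on $K$, this gives $(\bar T-\bar u)^2\simeq 0$ on $C$. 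As $\bar T$ is invertible on $C$ (because $T$ is on $K$ and commutes with $T-u$), the identity $\bar u=\bar T\bigl(1-\bar T^{-1}(\bar T-\bar u)\bigr)$ shows $\bar u$ is invertible as well, the bracketed factor having inverse $1+\bar T^{-1}(\bar T-\bar u)$. This recovers the conclusion that $\Cof(T-u:K\to K)$, and hence $\Fib(\beta)$, is $u$-local.
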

\begin{proof}
When $h=1$, this is a reformulation of \cite[Proposition 7.3.2.1]{lurie2018spectral}. The general case then follows from \cite[Proposition 7.3.3.2]{lurie2018spectral}.
\end{proof}

Let $\calP$ be an additive theory. Say that $\calP$ is an \textit{$R$-linear theory} if we have chosen an additive monoidal functor $\Mod_R^{\sfg}\rightarrow\Fun^\oplus(\calP,\calP)$, where the latter is the category of coproduct-preserving endofunctors of $\calP$. If $\calP$ is an additive loop theory, say that $\calP$ is an \textit{$R$-linear loop theory} if we have chosen an additive monoidal functor $\Mod_R^\sfg\rightarrow\Fun^{\oplus,\Sigma}(\calP,\calP)$, where the latter is the category of coproduct and suspension-preserving endofunctors of $\calP$. Note that if $\calP$ is an $R$-linear theory, then so is $\h\calP $. If $\calP$ is an $R$-linear theory, then $\Model_{\calP}$ is an $R$-linear category, and $\LMod_\calP$ is an $R$-linear stable category; if $\calP$ is an $R$-linear loop theory, then $\Model_\calP^\Omega$ is an $R$-linear category, stable so long as $\calP$ is a stable loop theory.

\begin{prop}\label{prop:completiongeom}
Let $\calP$ be an $R$-linear theory. Then $I$-completion
\[
\LMod_\calP\rightarrow\LMod_\calP,\qquad X\mapsto X_I^\wedge
\]
restricts to a localization of $\LMod_\calP^\cn\simeq\Model_{\calP}$ which preserves geometric realizations. This localization is given explicitly by
\[
X_I^\wedge = \tCof K(X;\ul{u})
\]
for $X\in\Model_{\calP}$.
\end{prop}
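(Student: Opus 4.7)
The explicit formula $X_I^\wedge = \tCof K(X;\ul{u})$ as an identity in $\LMod_\calP$ follows immediately from \cref{prop:completionformula} applied to the $R$-linear stable category $\LMod_\calP$. The remainder of the proposition will be deduced by inspecting this formula.

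First I would verify that $I$-completion preserves connectivity. For $X \in \Model_\calP = \LMod_\calP^\cn$, each vertex of the cube $K(X;\ul{u})$ is $X^{\times\omega^h}$, which is connective since countable products preserve connectivity (checked pointwise in $\Sp_{\geq 0}$). Each edge map is $R$-linear between connective objects, and the cofiber of such a map in the prestable category $\LMod_\calP$ is again connective; iterating shows $\tCof K(X;\ul{u}) \in \LMod_\calP^\cn$. Hence the unit $X \to X_I^\wedge$ at a connective $X$ lands in $\Model_\calP \cap \LMod_\calP^{\Cpl(I)}$, so the $I$-completion adjunction in $\LMod_\calP$ restricts to an adjunction $\Model_\calP \leftrightarrows \Model_\calP \cap \LMod_\calP^{\Cpl(I)}$, which is a localization since the original one is.

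For preservation of geometric realizations, I return to the explicit formula. The total cofiber $\tCof$ is a finite colimit, hence automatically commutes with geometric realizations, and the maps in $K(X;\ul{u})$ are natural in $X$ through colimit-preserving operations. It therefore suffices to verify that $X \mapsto X^{\times\omega^h}$ preserves geometric realizations in $\Model_\calP$. Since both countable products and geometric realizations in $\Model_\calP \subset \Psh(\calP)$ are computed pointwise (\cref{prop:geometricrealizationsgroups}), this reduces to the analogous statement for countable products in $\Sp_{\geq 0}$. This is the main obstacle. The plan is to handle it via the Bousfield--Kan spectral sequence: the canonical comparison $|\prod_i X^i_\bullet| \to \prod_i |X^i_\bullet|$ of connective spectra induces a map between strongly convergent first-quadrant spectral sequences that is an isomorphism on $E_2$-pages, since $\pi_\ast$ commutes with countable products in $\Sp$. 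As the filtration on each $\pi_n$ of the abutment is finite and countable products commute with finite limits, the comparison map is an equivalence.
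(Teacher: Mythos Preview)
Your proof is correct, but considerably more explicit than the paper's, which is the one-liner ``This follows from the description of $I$-completion given in \cref{prop:completionformula}.'' The paper is treating both the connectivity claim and the preservation of geometric realizations as immediate from the formula $X_I^\wedge = \tCof K(X;\ul{u})$.

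The main point of comparison is how you handle the commutation of countable products with geometric realizations. You reduce pointwise to $\Sp_{\geq 0}$ and then run a Bousfield--Kan spectral sequence argument. This works, but you are re-proving something the paper has already established in greater generality: the proof of \cref{prop:geometricrealizationsgroups} shows precisely that geometric realizations commute with small products in $\Hrd(\Gpd_\infty)$, by modeling everything with bisimplicial herds and using that all simplicial herds are fibrant. Since $\calP$ is additive, the values $X(P)$ are abelian group objects and in particular herd objects, so this applies directly. Citing that would let you skip the spectral sequence entirely and is presumably what the paper's one-line proof has in mind. That said, your argument has the virtue of being phrased purely in the stable/prestable language and not routing through the Mal'cev machinery, which some readers may find more transparent in the additive setting.

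One small stylistic point: in your final sentence, the phrase ``countable products commute with finite limits'' is not quite the relevant fact---what you actually use is that an isomorphism on $E_\infty$-pages together with a finite filtration on each $\pi_n$ forces an isomorphism on the abutment. This is standard and your intent is clear, but the phrasing could be tightened.
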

\begin{proof}
If $X\in \LMod_\calP$ is valued in connective spectra, then so too is $\tCof K(X;\ul{u})$. Thus $I$-completion restricts to a localization of $\LMod_\calP^{\cn}\simeq\Model_\calP$ as claimed. This localization preserves geometric realizations as geometric realizations commute with total cofibers and infinite products in $\Model_\calP$, the latter by \cref{prop:geometricrealizationsgroups}.
\end{proof}

Call $X\in\Model_{\calP}$ \textit{$I$-complete} if $X\simeq X_I^\wedge$. If $\calP$ is a stable loop theory, then $\Model_\calP^\Omega$ is itself a stable $R$-linear category, so there is a possible ambiguity in speaking of $I$-complete objects of $\Model_\calP^\Omega$. However, this ambiguity turns out to vanish.

\begin{lemma}\label{lem:compcart}
Let $\calP$ be an $R$-linear stable loop theory, and fix $X\in\Model_\calP^\Omega$. Then $X$ is $I$-complete as an object of $\Model_\calP^\Omega$ if and only if it is $I$-complete as an object of $\Model_{\calP}$.
\end{lemma}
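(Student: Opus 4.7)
The plan is to set up a mapping-space characterization of $I$-completeness common to both $\Model_\calP$ and $\Model_\calP^\Omega$, and then to transfer the characterization across the reflection $L\colon \Model_\calP \to \Model_\calP^\Omega$. I would first argue that in an $R$-linear (pre)stable category $\calM$, an object $X$ is $I$-complete if and only if $\Map_\calM(Y, X) \simeq 0$ for every $I$-local $Y \in \calM$---that is, for every $Y$ on which multiplication by each $y \in I$ is an equivalence. In the stable case of $\Model_\calP^\Omega$ this is the standard description of $I$-completeness as right orthogonality to $I$-local objects. In the prestable case of $\Model_\calP$, \cref{prop:completiongeom} identifies $I$-completeness with $I$-completeness in $\LMod_\calP = \Sp(\Model_\calP)$, and the characterization then follows by writing any $I$-local $Z \in \LMod_\calP$ as $Z \simeq \colim_n \Sigma^{-n}\tau_{\geq 0}(\Sigma^n Z)$: each $\tau_{\geq 0}(\Sigma^n Z)$ is a connective $I$-local model, so $\Map_{\LMod_\calP}(Z, X)$ vanishes whenever mapping to $X$ vanishes on all $I$-local objects of $\Model_\calP$.

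Granted this characterization, the two implications of the lemma are essentially formal. I would note that the reflection $L$ is $R$-linear---both $R$-linear structures arise from that of $\calP$, and $L$ commutes with the $R$-action by construction---so $L$ carries $I$-local objects of $\Model_\calP$ to $I$-local objects of $\Model_\calP^\Omega$. For the forward direction, if $X$ is $I$-complete in $\Model_\calP$ and $Y \in \Model_\calP^\Omega$ is $I$-local, then $Y$ is automatically $I$-local in $\Model_\calP$, and full faithfulness of the inclusion gives $\Map_{\Model_\calP^\Omega}(Y, X) \simeq \Map_{\Model_\calP}(Y, X) \simeq 0$. For the reverse, if $X$ is $I$-complete in $\Model_\calP^\Omega$ and $Y \in \Model_\calP$ is $I$-local, the adjunction $L \dashv i$ yields $\Map_{\Model_\calP}(Y, X) \simeq \Map_{\Model_\calP^\Omega}(LY, X) \simeq 0$.

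The main obstacle I anticipate is the justification of the mapping-space characterization in the prestable category $\Model_\calP$, since the working definition of $I$-completeness there is the total-cofiber formula of \cref{prop:completiongeom} rather than a mapping-space condition; once that bridge is established, the remaining manipulations are short adjunction arguments.
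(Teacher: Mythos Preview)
Your reverse implication uses the same key input as the paper: the reflection is $R$-linear and colimit-preserving, hence carries $I$-local objects to $I$-local objects, and then the adjunction kills the relevant mapping space. The paper runs this in $\LMod_\calP$ rather than in $\Model_\calP$, testing against all $I$-local $M\in\LMod_\calP$ and using $L\colon\LMod_\calP\to\LMod_\calP^\Omega$; this sidesteps the prestable bridging step you correctly flag as the main obstacle.

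The forward implication, however, has a genuine gap. Your claim that an $I$-local $Y\in\Model_\calP^\Omega$ is automatically $I$-local when viewed in $\Model_\calP$ is false in general. Take $\calP=\Mod_R^{\free}$ with $R=k[x,y]$ and $I=(x,y)$: the object $L_I R\in\Mod_R\simeq\Model_\calP^\Omega$ is $I$-local, but its image under $\Model_\calP^\Omega\subset\Model_\calP\simeq\LMod_\calP^{\cn}$ evaluates at the generator to $\tau_{\geq 0}(L_I R)\simeq R$, which is not $I$-local. The underlying reason is that the inclusion $\Model_\calP^\Omega\subset\Model_\calP\subset\LMod_\calP$ is the connective cover of $\LMod_\calP^\Omega\subset\LMod_\calP$, and $\tau_{\geq 0}$ does not preserve $I$-locality once $I$ needs more than one generator. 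The same counterexample shows your prestable bridge fails: the pieces $\tau_{\geq 0}(\Sigma^n Z)$ need not be $I$-local even when $Z$ is. The paper avoids all of this in the forward direction by working directly with the explicit formula of \cref{prop:completiongeom}: if $X\simeq\tCof K(X;\ul{u})$ in $\Model_\calP$, then this total cofiber already lies in $\Model_\calP^\Omega$, and the completion of $X$ in $\Model_\calP^\Omega$---being the reflection of that total cofiber---is again $X$.

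A related minor point: once $I$ needs more than one generator, ``$I$-local'' does not mean that every $y\in I$ acts invertibly; for instance $R[u_1^{-1}]$ is $(u_1,u_2)$-local without $u_2$ acting invertibly on it. This misidentification is part of what makes the forward step look plausible.
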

\begin{proof}
If $X\in\Model_\calP^\Omega$ is $I$-complete in $\Model_{\calP}$, then $X\simeq\tCof(X;\ul{u})$ in $\Model_{\calP}$. So this total cofiber lives in $\Model_\calP^\Omega$, and thus $X$ is $I$-complete in $\Model_\calP^\Omega$. 

Suppose conversely that $X$ is $I$-complete in $\Model_\calP^\Omega$. By definition, $X$ is $I$-complete in $\Model_{\calP}$ if and only if for every $M\in\LMod_\calP$ which is $I$-local, the mapping space $\Map_{\calP}(M,X)$ is contractible. As $\Map_{\calP}(M,X)\simeq\Map_{\calP}(L M,X)$ where $L\colon \LMod_\calP\rightarrow\LMod_\calP^\Omega$ is the localization, and as $X$ is $I$-complete in $\LMod_\calP^\Omega$, it is sufficient to verify that $L$ preserves $I$-local objects. This is a consequence of \cite[Proposition 7.2.4.9]{lurie2018spectral} and the fact that $L$ is $R$-linear.
\end{proof}

\begin{prop}\label{prop:completion}
Let $\calP$ be an $R$-linear stable loop theory. Let $\calP_I^\wedge\subset\Model_\calP^\Omega$ be the full subcategory spanned by the $I$-completions of objects of $\calP$. Then
\[
\Model_\calP^{\Omega,{\Cpl(I)}}\simeq\Model_{\calP_I^\wedge}^\Omega.
\]
\end{prop}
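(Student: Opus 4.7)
The plan is to apply \cref{prop:locres} with $L = (\bs)_I^\wedge \colon \Model_\calP \to \Model_\calP$; this is a localization preserving geometric realizations by \cref{prop:completiongeom}, and by construction $L\calP = \calP_I^\wedge$. The one hypothesis that needs verification is that $L h(P) = h(P)_I^\wedge \in \Model_\calP^\Omega$ for each $P\in\calP$.

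To check this, first I would note that each representable $h(P)$ lies in $\Model_\calP^\Omega$: since $\calP$ is stable, $h(P)(\Sigma Q) = \Map_\calP(\Sigma Q,P)\simeq \Omega\Map_\calP(Q,P) = \Omega h(P)(Q)$. Next, by \cref{prop:completionformula} the $I$-completion $h(P)_I^\wedge = \tCof K(h(P);\ul{u})$ is built in $\LMod_\calP$ as a finite colimit of countable products of copies of $h(P)$. The endofunctors $(\bs)_{S^1}$ and $(\bs)^{S^1}$ of $\LMod_\calP$ both commute with the total cofiber formula --- the former because it is restriction along $S^1\otimes(\bs)\colon\calP\to\calP$ and so preserves all pointwise (co)limits, the latter because it is a finite cotensor in a stable category and hence preserves all limits and finite colimits. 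Thus both $(\bs)_{S^1}$ and $(\bs)^{S^1}$ commute with $I$-completion, so the natural map $(h(P)_I^\wedge)_{S^1}\to (h(P)_I^\wedge)^{S^1}$ is identified with the $I$-completion of the equivalence $h(P)_{S^1}\simeq h(P)^{S^1}$, and is therefore itself an equivalence. Since $h(P)_I^\wedge$ remains in $\LMod_\calP^\cn\simeq\Model_\calP$ by \cref{prop:completiongeom}, \cref{lem:loopequiv} gives $h(P)_I^\wedge\in\Model_\calP^\Omega$.

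Applying \cref{prop:locres} then yields a Cartesian square identifying $\Model_{\calP_I^\wedge}^\Omega$ with the full subcategory of $\Model_\calP^\Omega$ consisting of those objects which are $I$-complete when viewed inside the ambient $\Model_\calP$. To conclude, I would invoke \cref{lem:compcart}, which asserts that for $X\in\Model_\calP^\Omega$ the notions of $I$-completeness in $\Model_\calP$ and in the $R$-linear stable category $\Model_\calP^\Omega$ itself coincide. This yields the claimed equivalence $\Model_\calP^{\Omega,\Cpl(I)}\simeq\Model_{\calP_I^\wedge}^\Omega$. The main subtlety is verifying that the loop condition survives $I$-completion, and this falls out cleanly from the stability of $\LMod_\calP$ together with the observation that both $(\bs)_{S^1}$ and $(\bs)^{S^1}$ commute with the total cofiber formula computing $I$-completion.
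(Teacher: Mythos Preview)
Your approach via \cref{prop:locres} is different from the paper's (which goes through \cref{thm:recmon}), but more importantly it contains a genuine gap: the claim that the $\Model_\calP$-completion $h(P)_I^\wedge$ lies in $\Model_\calP^\Omega$ is \emph{false} in general. The paper makes this explicit just after the proposition: understanding ``when, given $X\in\Model_\calP^\Omega$, the completion $X_I^\wedge$ as computed in $\Model_\calP$ still lives in $\Model_\calP^\Omega$'' is exactly the content of the tameness condition in \cref{prop:deftame}. Without tameness, your hypothesis for \cref{prop:locres} simply fails.

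The error in your verification is a conflation of $(\bs)^{S^1}$ in $\Model_\calP$ with $(\bs)^{S^1}$ in $\LMod_\calP$. The map $h(P)_{S^1}\to h(P)^{S^1}$ is an equivalence in $\Model_\calP$, but \emph{not} in $\LMod_\calP$: there $h(P)^{S^1}\simeq h(P)\oplus\Sigma^{-1}h(P)$, which is not even connective, whereas $h(P)_{S^1}$ is connective. So after $I$-completing in $\LMod_\calP$ you do not get an equivalence. If instead you try to run the argument in $\Model_\calP$, the problem is that $(\bs)^{S^1}$ there is a finite limit in a non-stable category and has no reason to commute with the finite colimits appearing in the total cofiber. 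A related issue: the proposition defines $\calP_I^\wedge$ using completions taken in $\Model_\calP^\Omega$, so your identification $L\calP=\calP_I^\wedge$ (with $L$ the $\Model_\calP$-completion) is already off unless tameness holds.

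The paper's route sidesteps all of this: it applies \cref{thm:recmon} to the monad $T=(\bs)_I^\wedge$ on the \emph{stable} category $\Model_\calP^\Omega$. The relevant condition there is that the square relating completion in $\Model_\calP^\Omega$ to completion in $\Model_\calP$ followed by the localization $L\colon\Model_\calP\to\Model_\calP^\Omega$ commutes; this follows from the total cofiber formula of \cref{prop:completionformula} together with the fact that for $X\in\Model_\calP^\Omega$ the cube $K(X;\ul{u})$ already lives in $\Model_\calP^\Omega$, so its $\Model_\calP^\Omega$-total cofiber is exactly $L$ applied to its $\Model_\calP$-total cofiber. No claim that the $\Model_\calP$-completion stays in $\Model_\calP^\Omega$ is needed.
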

\begin{proof}
By \cref{thm:recmon}, it is sufficient to verify that
\begin{center}\begin{tikzcd}
\Model_\calP^\Omega\ar[r,"(\bs)_I^\wedge"]\ar[d,"i"]&\Model_\calP^\Omega\\
\Model_\calP\ar[r,"(\bs)_I^\wedge"]&\Model_{\calP}\ar[u,"L"]
\end{tikzcd}\end{center}
commutes, where $i$ is the inclusion. Indeed, let $M\in\Model_\calP^\Omega$. Then as $L$ preserves colimits and $i$ preserves limits, \cref{prop:completionformula} implies
\[
L(i(M)_I^\wedge)\simeq L(\tCof K(i(M);\ul{u}))\simeq \tCof L(i(K(M;\ul{u})))\simeq \tCof K(M;\ul{u})\simeq M_I^\wedge
\]
as claimed.
\end{proof}

Some additional hypotheses are necessary to make practical use of \cref{prop:completion}. For example, $\h\calP $ is itself an $R$-linear theory, so one would like to identify $\Model_{\h(\calP_I^\wedge)}\simeq\Model_{\h\calP}^{\Cpl(I)}$; however, this is not true in general. Determining when properties such as this hold amount to understanding when, given $X\in\Model_\calP^\Omega$, the completion $X_I^\wedge$ as computed in $\Model_{\calP}$ still lives in $\Model_\calP^\Omega$. This turns out to be an essentially algebraic condition.

Call a theory $\calP$ \textit{pretame} if $\tau_!\colon \Model_{\calP}\rightarrow\Model_{\h\calP}$ preserves countable products. It follows from \cref{thm:spiral} that every loop theory is pretame. The purpose of the pretameness condition is the following.

\begin{lemma}\label{lem:pretamecomp}
Let $\calP$ be a pretame $R$-linear theory. Then
\[
\tau_!(X_I^\wedge) = (\tau_! X)_I^\wedge
\]
for all $X\in\Model_{\calP}$.
\end{lemma}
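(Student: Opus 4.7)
The plan is to leverage the explicit total-cofiber formula for $I$-completion given in \cref{prop:completiongeom}, namely
\[
X_I^\wedge \simeq \tCof K(X;\ul{u}),
\]
and to check that the functor $\tau_!$ commutes with each ingredient in this formula.

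First I would note that $\tau_!\colon\Model_{\calP}\rightarrow\Model_{\h\calP}$ is a left adjoint, hence preserves all colimits, and in particular preserves total cofibers of finite cubes. Next, since the $R$-linear structure on $\calP$ passes to $\h\calP $ and the functor $\tau\colon\calP\rightarrow\h\calP $ is $R$-linear by construction, the induced functor $\tau_!$ on models is $R$-linear as well. In particular, for any $X\in\Model_\calP$, the maps $T_i-u_i$ on $X^{\times\omega^h}$ are carried by $\tau_!$ to the corresponding maps $T_i-u_i$ on $(\tau_!X)^{\times\omega^h}$, provided $\tau_!$ commutes with the countable products $X^{\times\omega^h}=X[[T_1,\dots,T_h]]$.

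This last property is exactly the pretameness of $\calP$: by assumption $\tau_!$ preserves countable products, and $\omega^h$-indexed products are built from countable products. Hence there is a natural isomorphism of $h$-cubes
\[
\tau_!\,K(X;\ul{u})\;\cong\;K(\tau_!X;\ul{u}).
\]
Combining with the fact that $\tau_!$ preserves total cofibers, we obtain
\[
\tau_!(X_I^\wedge)\;\simeq\;\tau_!\tCof K(X;\ul{u})\;\simeq\;\tCof K(\tau_!X;\ul{u})\;\simeq\;(\tau_!X)_I^\wedge,
\]
as required.

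The only non-formal input is the commutation of $\tau_!$ with the countable products appearing in $K(X;\ul{u})$; everything else is a direct consequence of $\tau_!$ being a colimit-preserving $R$-linear left adjoint. Thus the whole argument reduces to the pretameness hypothesis, and there is no real obstacle beyond unpacking the definitions.
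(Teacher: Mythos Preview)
Your proof is correct and follows essentially the same approach as the paper: both use the explicit total-cofiber formula $X_I^\wedge \simeq \tCof K(X;\ul{u})$ from \cref{prop:completiongeom}, invoke pretameness to obtain $\tau_! K(X;\ul{u}) \simeq K(\tau_! X;\ul{u})$, and conclude since $\tau_!$ preserves total cofibers. You have simply spelled out more of the details (that $\tau_!$ is a colimit-preserving $R$-linear left adjoint) that the paper leaves implicit.
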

\begin{proof}
As $\calP$ is pretame, there is an equivalence $\tau_! K(X;\ul{u})\simeq K(\tau_! X;\ul{u})$ of $h$-cubes, so the claim follows from \cref{prop:completiongeom}.
\end{proof}

If $\calP$ is a discrete $R$-linear theory, then the $I$-completion of discrete objects of $\Model_{\calP}$ admits an algebraic description. Given an abelian category $\calA$, an $h$-cube $V\colon P(\ul{h})\rightarrow\calA$ may be viewed as an $h$-dimensional complex, and so we may form the total complex $C_\ast V$. This satisfies $H_0(C_\ast V) = \tCof V$, this total cofiber taken in the $1$-category $\calA$. In general, $C_\ast V$ is a model of the derived total cofiber of $V$ in the following sense.

\begin{lemma}\label{lem:tcof}
Let $\calA$ be an abelian category with enough projectives, and let $V\colon P(\ul{h})\rightarrow\calA$ be an $h$-cube. Then $(\bbL_n\!\tCof)(V) = H_n(C_\ast V)$.
\qed
\end{lemma}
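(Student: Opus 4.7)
The plan is to realize $\{H_n(C_\ast-)\}_{n\geq 0}$ as a universal $\delta$-functor extending $\tCof$, whence the lemma follows from the universal property of left-derived functors.

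First I would observe that $C_\ast\colon\Fun(P(\ul{h}),\calA)\to\Ch_{\geq 0}(\calA)$ is exact in each degree, since $C_p V=\bigoplus_{|S|=h-p}V(S)$ is a finite direct sum of (exact) evaluation functors. Consequently $V\mapsto H_n(C_\ast V)$ assembles into a homological $\delta$-functor, converting short exact sequences of $h$-cubes into long exact sequences in $\calA$. A direct inspection of $d_1\colon C_1 V=\bigoplus_i V(\ul{h}\setminus\{i\})\to V(\ul{h})=C_0 V$ identifies $H_0(C_\ast V)$ with $\tCof V$, since the image of $d_1$ coincides with the image of $\colim_{S\subsetneq\ul{h}}V(S)\to V(\ul{h})$.

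It remains to show effaceability on projectives. Because $\calA$ has enough projectives and $P(\ul{h})$ is finite, the same is true of $\Fun(P(\ul{h}),\calA)$, with a generating set of projectives given by the objects $F_{T,P}$ for $T\in P(\ul{h})$ and $P\in\calA$ projective, defined by $F_{T,P}(S)=P$ when $T\subset S$ and $0$ otherwise, with identity structure maps; these are projective because $\Hom(F_{T,P},V)\cong\Hom_\calA(P,V(T))$. For $T=\ul{h}$, $C_\ast F_{\ul{h},P}$ is $P$ concentrated in degree $0$; for $T\subsetneq\ul{h}$, writing $m=h-|T|>0$, the complex $C_\ast F_{T,P}$ is canonically identified with $P\otimes_{\bbZ}\tilde{K}$, where $\tilde{K}$ is the augmented simplicial chain complex of the $(m-1)$-simplex on vertex set $\ul{h}\setminus T$ (reindexed via $R=S\setminus T$), which is acyclic by contractibility of the simplex.

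The main obstacle is the combinatorial identification of $C_\ast F_{T,P}$ with the augmented simplex complex, i.e., matching the Koszul-type signs in the differential of $C_\ast$ with the simplicial boundary signs. Once this bookkeeping is in place, the rest is a formal application of the universality of left-derived functors.
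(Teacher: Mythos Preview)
Your argument is correct and complete. The paper omits the proof entirely (the statement carries only a \qed), treating the identification as standard; your universal $\delta$-functor argument via effaceability on the generating projectives $F_{T,P}$ is exactly the expected verification, and the sign bookkeeping you flag is indeed routine.
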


In particular, if $\calA$ is an $R_0$-linear abelian category with countable products and $M\in\calA$, then we can form the chain complex $C_\ast K(M;\ul{u})$. In this case, set $K_n(M;\ul{u}) = H_n C_\ast K(M;\ul{u})$.

\begin{lemma}\label{lem:discom}
Let $\calP$ be a discrete $R$-linear theory. For $M\in\Model_{\calP}$ discrete, there are isomorphisms
\[
\pi_\ast (M_I^\wedge) \cong K_\ast(M;\ul{u}).
\]
In particular,
\begin{enumerate}
\item $M_I^\wedge$ is $h$-truncated, $h$ being the length of the sequence $\ul{u}$;
\item $M_I^\wedge$ is discrete if and only if $K_n(M;\ul{u})=0$ for $n>0$.
\end{enumerate}
\end{lemma}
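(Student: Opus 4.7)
The plan is to reduce the computation of $\pi_\ast(M_I^\wedge)$ to a derived functor computation in the abelian category $\Model_\calP^\heart$, which is handled by \cref{lem:tcof}. First, by \cref{prop:completiongeom}, we have $M_I^\wedge \simeq \tCof K(M;\ul{u})$ with the total cofiber computed in $\Model_\calP$. Since $\Model_\calP^\heart \subset \Model_\calP$ is closed under products (products being computed pointwise in $\Psh(\calP,\Set)$), and since $M$ is discrete, each vertex $M[[T_1,\ldots,T_h]] = M^{\times \omega^h}$ of the cube $K(M;\ul{u})$ is discrete. Thus $K(M;\ul{u})$ is an $h$-cube in the abelian category $\Model_\calP^\heart$.

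The key step is then to identify
\[
\pi_n(\tCof V) = (\bbL_n \tCof)(V)
\]
for any $h$-cube $V$ in $\Model_\calP^\heart$, where the left side is the $\infty$-categorical total cofiber in $\Model_\calP$. For this I would choose a projective resolution of $V$ by a simplicial cube $P_\bullet$ in $\Model_\calP^\heart$, i.e.\ a simplicial object $P_\bullet \colon \Delta^\op \to \Fun(P(\ul{h}), \Model_\calP^\heart)$ such that each $P_\bullet(S)$ is a projective resolution of $V(S)$, using that $\Model_\calP^\heart$ has enough projectives by \cref{prop:additiveabelian}. Using \cref{thm:rigidification}, the geometric realization of $P_\bullet$ in $\Fun(P(\ul{h}),\Model_\calP)$ recovers $V$. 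Since $\tCof$ preserves geometric realizations (being a colimit), $\tCof V$ is the geometric realization of $\tCof P_\bullet$ in $\Model_\calP$. The homotopy groups of this geometric realization of a simplicial object of $\Model_\calP^\heart$ are by definition the left-derived functors $(\bbL_n \tCof)(V)$. Combining this with \cref{lem:tcof}, we conclude $\pi_n(M_I^\wedge) = H_n C_\ast K(M;\ul{u}) = K_n(M;\ul{u})$.

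The two corollaries (1) and (2) then follow directly: for (1), the total complex $C_\ast K(M;\ul{u})$ is concentrated in degrees $0$ through $h$, so its homology vanishes above degree $h$; for (2), $M_I^\wedge$ being discrete is equivalent to its higher homotopy groups vanishing, which under our identification is exactly the condition that $K_n(M;\ul{u}) = 0$ for $n>0$.

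The main obstacle is the identification $\pi_n(\tCof V) = (\bbL_n \tCof)(V)$; although morally standard, it requires comparing the $\infty$-categorical colimit in the prestable category $\Model_\calP$ with derived-functor calculations in its heart. Once this is in place, the rest is a direct application of \cref{lem:tcof} and the explicit formula for completion.
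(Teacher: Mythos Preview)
Your proof is correct and follows the same approach as the paper, which simply cites \cref{prop:completiongeom} and \cref{lem:tcof} as an ``immediate consequence''. You have correctly identified and expanded the implicit step the paper leaves to the reader, namely that for an $h$-cube $V$ in the heart $\Model_\calP^\heart$, the homotopy groups $\pi_n(\tCof V)$ of the $\infty$-categorical total cofiber in $\Model_\calP$ agree with the classical derived functors $(\bbL_n\tCof)(V)$; your justification via projective resolutions and \cref{thm:rigidification} is exactly the content of \cref{prop:leftderivedfunctors} applied to the colimit functor $\tCof$.
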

\begin{proof}
This is an immediate consequence of \cref{prop:completiongeom} and \cref{lem:tcof}.
\end{proof}

We now arrive at the promised characterization.

\begin{prop}\label{prop:deftame}
Let $\calP$ be a pretame $R$-linear theory, and fix $X\in\Model_{\calP}$. Suppose that $\tau_! X = \pi_0 X$. Then the following are equivalent:
\begin{enumerate}
\item The object $\tau_! X_I^\wedge$ of $\Model_{\h\calP}$ is discrete;
\item The object $K_n(\pi_0 X;\ul{u})$ of $\Model_{\h\calP}^\heart$ vanishes for $n>0$.
\end{enumerate}
If $\calP$ is a loop theory, then $X\in\Model_\calP^\Omega$, and these are equivalent to:
\begin{enumerate}[resume]
\item The completion $X_I^\wedge$ as computed in $\Model_{\calP}$ lives in $\Model_\calP^\Omega$.
\end{enumerate}
\end{prop}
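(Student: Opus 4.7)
The plan is as follows. The equivalence (1) $\iff$ (2) is a direct application of \cref{lem:pretamecomp} and \cref{lem:discom}: pretameness of $\calP$ together with the hypothesis $\tau_! X = \pi_0 X$ gives
\[
\tau_!(X_I^\wedge) \simeq (\tau_! X)_I^\wedge \simeq (\pi_0 X)_I^\wedge,
\]
with the completion on the right computed in $\Model_{\h\calP}$. Since $\h\calP$ is a discrete $R$-linear theory and $\pi_0 X$ is a discrete object of it, \cref{lem:discom} identifies $\pi_n(\pi_0 X)_I^\wedge \cong K_n(\pi_0 X;\ul{u})$, so $\tau_!(X_I^\wedge)$ is discrete exactly when $K_n(\pi_0 X;\ul{u})$ vanishes for all $n > 0$.

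Now suppose $\calP$ is a loop theory. By \cref{cor:spiraldiscrete}, the hypothesis $\tau_! X = \pi_0 X$ is equivalent to $X \in \Model_\calP^\Omega$, which establishes the parenthetical. For (3) $\Rightarrow$ (1), if $X_I^\wedge \in \Model_\calP^\Omega$ then \cref{cor:spiraldiscrete} applied to $X_I^\wedge$ gives $\tau_! X_I^\wedge \simeq \pi_0 X_I^\wedge$, which is plainly discrete.

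The only substantive direction is (1) $\Rightarrow$ (3). The goal is to show that the natural comparison $\tau_! X_I^\wedge \to \pi_0 X_I^\wedge$ --- the $\tau_! \dashv \tau^*$ adjoint of the projection $X_I^\wedge \to \pi_0 X_I^\wedge$ in $\Psh(\calP)$ --- is an equivalence in $\Model_{\h\calP}$; the desired conclusion will then follow from another application of \cref{cor:spiraldiscrete}. Evaluated at any $P \in \calP$, the definition of the adjoint fits this map as the second leg of a commuting triangle whose first leg $X_I^\wedge(P) \to (\tau_! X_I^\wedge)(P)$ is the $\pi_0$-equivalence guaranteed by \cref{thm:spiral}(1), and whose hypotenuse $X_I^\wedge(P) \to \pi_0 X_I^\wedge(P)$ is a tautological $\pi_0$-equivalence. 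Hence $(\tau_! X_I^\wedge)(P) \to \pi_0 X_I^\wedge(P)$ is also a $\pi_0$-equivalence. The hypothesis that $\tau_!(X_I^\wedge)$ is discrete then promotes this to a bijection of sets, giving the equivalence pointwise, hence in $\Model_{\h\calP}$. The main obstacle is purely bookkeeping: once this commuting triangle of natural maps between $X_I^\wedge$, $\tau^*\tau_! X_I^\wedge$, and $\pi_0 X_I^\wedge$ is correctly identified, no further homotopical input beyond the spiral sequence is required.
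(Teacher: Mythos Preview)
Your proof is correct and follows the same approach as the paper. The paper's own proof is far more terse --- it simply says the equivalence of (1) and (2) follows from \cref{lem:pretamecomp} and \cref{lem:discom}, and that ``the inclusion of (3) follows from \cref{cor:spiraldiscrete}'' --- but your more detailed unpacking of (1)\,$\Leftrightarrow$\,(3), via the commuting triangle relating $X_I^\wedge$, $\tau^\ast\tau_! X_I^\wedge$, and $\pi_0 X_I^\wedge$, is exactly what the reader must supply to justify that one-line citation.
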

\begin{proof}
The equivalence of (1) and (2) follows from \cref{lem:pretamecomp} and \cref{lem:discom}, and the inclusion of (3) follows from \cref{cor:spiraldiscrete}.
\end{proof}

Say that $I$ is \textit{tame} on $X\in\Model_{\calP}$ if the equivalent conditions of \cref{prop:deftame} hold for $X$, and say that $I$ is \textit{tame} on $\calP$ if $I$ is tame on $h(P)$ for $P\in\calP$. In particular, $I$ is tame on $\calP$ if and only if it is tame on $\h\calP $, i.e.\ tameness is an algebraic condition. 

If $S$ is an $R$-algebra and $\calP = \LMod_S^\free$, then $I$ is tame on $\calP$ precisely when it is tame on $\LMod_{S_\ast}^\free=\h(\LMod_S^\free)$. Tameness in this setting coincides with the notion of tameness discussed in Greenlees-May \cite{greenleesmay1992derived} \cite{greenleesmay1995completions} and Rezk \cite[Section 8]{rezk2018analytic}, and holds in a number of situations. For example, if $M$ is an $S_0$-module, then $I$ is tame on $M$ when $I$ is generated by a sequence which is regular on $M$, or when $M = N^{\oplus J}$ for some Noetherian $S_0$-module $N$ and some set $J$ \cite[Corollary 7.3.6.1]{lurie2018spectral}.

The definition of tameness is chosen so that the following holds.

\begin{theorem}\label{prop:tameho}
Let $\calP$ be a stable $R$-linear theory, and suppose that $I$ is tame on $\calP$. Let $\calP_I^\wedge\subset\LMod_\calP^\Omega$ be the full subcategory spanned by the $I$-completions of objects of $\calP$. Then there are equivalences
\[
\Model_{\calP_I^\wedge}^\Omega\simeq\Model_\calP^{\Omega,\Cpl(I)},\qquad\Model_{\calP_I^\wedge}\simeq\Model_\calP^{\Cpl(I)},\qquad\Model_{\h(\calP_I^\wedge)}\simeq\Model_{\h\calP}^{\Cpl(I)}.
\]
\end{theorem}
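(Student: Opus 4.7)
The first equivalence is immediate from \cref{prop:completion}; note that for a stable $\calP$, the two possible interpretations of $\calP_I^\wedge$ (inside $\Model_\calP^\Omega$ or inside $\LMod_\calP^\Omega$) agree via \cref{cor:stablelocalization}(2). The remaining two equivalences both follow the same pattern: apply \cref{prop:monadtheory} to the $I$-completion monad on the ambient category of models, and use tameness to identify the resulting theory.

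For the second equivalence, $I$-completion is a monad $T = (-)_I^\wedge$ on $\Model_\calP$ which preserves geometric realizations by \cref{prop:completiongeom}, so \cref{prop:monadtheory} gives $\Model_\calP^{\Cpl(I)} = \Alg_T \simeq \Model_{T\calP}$, where $T\calP \subset \Model_\calP^{\Cpl(I)}$ is spanned by the objects $h(P)_I^\wedge$. By tameness, each such object lies in $\Model_\calP^\Omega$, so $T\calP$ coincides with $\calP_I^\wedge$.

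For the third equivalence, first observe that $I$ is tame on $\h\calP$: by \cref{lem:pretamecomp}, $\tau_!(h(P)_I^\wedge) = h(\tau P)_I^\wedge$, and this object is discrete by tameness of $I$ on $\calP$ combined with \cref{cor:spiraldiscrete}. Running the argument of the second equivalence for the discrete $R$-linear theory $\h\calP$ yields $\Model_{\h\calP}^{\Cpl(I)} \simeq \Model_{(\h\calP)_I^\wedge}$, where $(\h\calP)_I^\wedge$ is spanned by the $h(\tau P)_I^\wedge$. It remains to match theories: the functor $\tau_!$ induces $\h(\calP_I^\wedge) \to (\h\calP)_I^\wedge$, which is essentially surjective by \cref{lem:pretamecomp}, and fully faithful since for $P, Q \in \calP$ one computes
\[
\Hom_{\h(\calP_I^\wedge)}(h(P)_I^\wedge,h(Q)_I^\wedge) = \pi_0 h(Q)_I^\wedge(P) = h(\tau Q)_I^\wedge(\tau P) = \Hom_{(\h\calP)_I^\wedge}(h(\tau P)_I^\wedge,h(\tau Q)_I^\wedge),
\]
where the middle equality uses that $h(Q)_I^\wedge \in \Model_\calP^\Omega$, together with \cref{cor:spiraldiscrete} and \cref{lem:pretamecomp}. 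The main technical input throughout is the compatibility of $\tau_!$ with $I$-completion provided by \cref{lem:pretamecomp}, which both propagates tameness from $\calP$ to $\h\calP$ and underwrites the final identification of theories.
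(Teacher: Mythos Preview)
Your proof is correct and follows essentially the same approach as the paper's: the first equivalence is \cref{prop:completion}, and the remaining two come from applying \cref{prop:monadtheory} to the $I$-completion monad (via \cref{prop:completiongeom}) and using tameness to identify the resulting theory. The paper's proof is terser---it asserts the identification $\h(\calP_I^\wedge)\simeq(\h\calP)_I^\wedge$ directly, whereas you spell it out by computing hom-sets using \cref{lem:pretamecomp} and \cref{cor:spiraldiscrete}---but the substance is the same.
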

\begin{proof}
The first equivalence is a restatement of \cref{prop:completion}. For the remaining two, observe that as
$I$ is tame on $\calP$, the category $\calP_I^\wedge$ may be identified as the full subcategory of $\Model_\calP$ spanned by $h(P)_I^\wedge$ for $P\in\calP$, where this completion is taken in $\Model_{\calP}$, and that $\h(\calP_I^\wedge)\subset\Model_{\h\calP}$ may be identified as the full subcategory spanned by $(\pi_0 h(P))_I^\wedge$ for $P\in\calP$. So these equivalences are consequences of \cref{prop:completiongeom} and \cref{prop:monadtheory}.
\end{proof}

\cref{prop:tameho} extends by combination with \cref{ssec:localizingtheories} to describe unstable theories built out of stable theories in completed settings.

\begin{appendix}

\section{Spectral sequences}\label{sec:app}

This section gives the facts that were needed in \cref{sec:stable} about towers in a stable category with $t$-structure and their associated spectral sequences. We will freely use material and notation from \cite[Section 1.2]{lurie2017higheralgebra}.

\subsection{Construction and convergence}\label{ssec:constructss}

Fix a stable category $\calC$ with $t$-structure, and let $\calA$ be the heart of $\calC$. There results a functor $\pi_0 = \tau_{\leq 0}\tau_{\geq 0}\colon \calC\rightarrow\calA$, and we set $\pi_p = \pi_0\circ\Sigma^{-p}$. Fix a tower
\[
X  = \cdots\rightarrow X(-1)\rightarrow X(0)\rightarrow X(1)\rightarrow\cdots
\]
in $\calC$. Following \cite[Section 1.2]{lurie2017higheralgebra}, there is for each $-\infty\leq p \leq q$ an object $X(p,q)$ in $\calC$, where $X(-\infty,p) = X(p)$ and for $p\leq q\leq r$ there is a chosen cofiber sequence
\begin{center}\begin{tikzcd}
X(p,q)\ar[r,"\eta"]& X(p,r)\ar[r,"\eta"]& X(q,r)
\end{tikzcd}.\end{center}
In particular, $X(p,q)$ sits in a cofiber sequence
\begin{center}\begin{tikzcd}
X(p)\ar[r,"\eta"] &X(q)\ar[r,"\eta"]&X(p,q)
\end{tikzcd}.\end{center}

Define
\[
E^r_{p,q} = \im\left(\pi_q X(p-r,p)\rightarrow \pi_q X(p-1,p+r-1)\right);
\]
we abbreviate $E^r_{p,\ast}$ as $E^r_p$ when it simplifies the notation. Using the diagrams
\begin{center}\begin{tikzcd}
X(p-r,p)\ar[r,"\eta"]\ar[d]&X(p-1,p+r-1)\ar[d]\\
\Sigma X(p-2r,p-r)\ar[r,"\Sigma\eta"]&\Sigma X(p-r-1,p-1)
\end{tikzcd},\end{center}
we obtain maps
\[
d^r_{p,q}\colon E^r_{p,q}\rightarrow E^r_{p-r,q-1}.
\]
\begin{prop}[{\cite[Proposition 1.2.2.7]{lurie2017higheralgebra}}]
With notation as above,
\begin{enumerate}
\item $d^r\circ d^r = 0$;
\item There are canonical equivalences $E^{r+1} = H(E^r,d^r)$.
\end{enumerate}
In particular, $\{E^r,d^r\}$ is a spectral sequence of objects of $\calA$.
\qed
\end{prop}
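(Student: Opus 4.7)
The strategy is to realize $\{E^r, d^r\}$ as the spectral sequence of an exact couple and reduce both statements to a formal diagram chase in the abelian category $\calA$. First I would set up the first-page exact couple: applying $\pi_\ast$ to the cofiber sequences $X(p-1) \to X(p) \to X(p-1,p)$ produces long exact sequences in $\calA$ which assemble, with $A^1_{p,q} = \pi_q X(p)$ and $\widetilde{E}^1_{p,q} = \pi_q X(p-1,p)$, into an exact couple with maps $i\colon A^1_{p-1,q} \to A^1_{p,q}$, $j\colon A^1_{p,q}\to \widetilde{E}^1_{p,q}$, $k\colon \widetilde{E}^1_{p,q} \to A^1_{p-1,q-1}$ whose composite $j \circ k$ agrees with the $r=1$ case of the differential defined in the statement (using the cofiber sequence $X(p-2,p-1) \to X(p-2,p) \to X(p-1,p)$).

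The key technical step is to identify, inductively, the $r$-th derived couple with the image description in the statement. Given three composable cofiber sequences arising from the octahedral axiom applied to $X(p-r) \to X(p-1) \to X(p+r-1)$ and its refinements, one obtains a canonical commutative diagram in $\calA$ which identifies the $r$-fold iterate of the derived couple's $E$-term with $\im(\pi_q X(p-r,p) \to \pi_q X(p-1,p+r-1))$ and the derived differential with the composite described in the vertical boundaries of the square above the statement. Granted this identification, part (1), $d^r \circ d^r = 0$, follows at once because it realizes two consecutive boundary maps in the long exact sequence associated to $X(p-r-1,p-1) \to X(p-r-1,p+r-1) \to X(p-1,p+r-1)$, so the composite factors through a composition of two consecutive maps in an exact sequence. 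Part (2), $E^{r+1} = H(E^r,d^r)$, is then the defining property of the derived couple, restated under the identification.

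The main obstacle is purely the bookkeeping: one must set up the bi-indexed diagrams of $X(a,b)$'s and invoke the correct octahedra to identify each $E^r$ with both an image (as in the statement) and with the $r$-th derived $E$-term from the exact couple machinery. Once these two descriptions are matched, both (1) and (2) are immediate; there are no homotopical subtleties, since everything occurs in $\calA$ after applying $\pi_\ast$, and cofiber sequences in $\calC$ map to long exact sequences. In practice this is exactly the argument of \cite[Proposition 1.2.2.7]{lurie2017higheralgebra}, to which one could appeal directly.
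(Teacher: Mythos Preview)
The paper gives no proof at all: the proposition is stated with a citation to \cite[Proposition 1.2.2.7]{lurie2017higheralgebra} and closed with \qed. Your final sentence, that one could appeal directly to Lurie, is precisely what the paper does, so in that sense your proposal subsumes the paper's treatment.

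The sketch you give goes beyond this and is essentially correct, though it is worth noting that Lurie's own argument does not pass through exact couples and derived couples. He works directly with the image description $E^r_{p,q} = \im(\pi_q X(p-r,p) \to \pi_q X(p-1,p+r-1))$ and proves both claims by a diagram chase in $\calA$ using the long exact sequences coming from the various cofiber sequences among the $X(a,b)$; the octahedral-type input is hidden in the compatibilities of the gap objects $X(a,b)$ already recorded before the proposition. Your exact-couple route is a standard and perfectly valid alternative, and the identification you describe between the $r$-th derived $E$-term and the stated image is the nontrivial bookkeeping either way. Both approaches yield the result with the same essential content; the exact-couple packaging makes (2) tautological once the identification is made, whereas Lurie's direct approach avoids introducing the auxiliary $A$-terms but requires checking (2) by hand.
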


Write $E(X)$ for this spectral sequence. We would like to identify some simple criteria for convergence. Suppose that $\calC$ and $\calA$ admit countable direct sums, and thus all countable colimits. For a tower $X$, write $X(\infty) = \colim_{p\rightarrow\infty}X(p)$. The following may be proved just as in \cite[Proposition 1.2.2.14]{lurie2017higheralgebra}.

\begin{prop}\label{prop:convergess}
Fix a tower $X$, and suppose 
\begin{enumerate}
\item[(a)] The connectivity of $X(p)$ goes to $\infty$ as $p$ goes to $-\infty$;
\item[(b)] $\colim_{r\rightarrow\infty}\pi_\ast X(p,p+r)\cong \pi_\ast \colim_{r\rightarrow\infty}X(p,p+r)$ for all $p\in\bbZ\cup\{-\infty\}$;
\end{enumerate}
and moreover one of the following holds:
\begin{enumerate}
\item[(c)]The $t$-structure on $\calC$ is compatible with filtered colimits;
\item[(c$'$)]For all $q\in\bbZ$, the map $\pi_q X(p)\rightarrow \pi_q X(p+1)$ is an isomorphism for all but finitely many $p$.
\end{enumerate}
Then $E(X)$ converges to $\pi_\ast X(\infty)$. Explicitly, if $A_q = \pi_q X(\infty)$, then
\begin{enumerate}
\item For all fixed $p,q$ and all sufficiently large $r$, there are canonical inclusions $E_{p,q}^r\subset E_{p,q}^{r+1}$, and in case (c$'$) these eventually stabilize;
\item Where $F^p A_q = \im(\pi_q X(p) \rightarrow \pi_q X(\infty))$, both $F^p A_q=0$ for $p$ sufficiently small and $\colim_{p\rightarrow\infty} F^p A_q = A_q$, and in case (c$'$) this filtration is finite;
\item There are canonical isomorphisms $F^pA_q/F^{p-1}A_q \cong E^\infty_{p,q}$.
\qed
\end{enumerate}
\end{prop}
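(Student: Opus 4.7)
The proof follows the template of \cite[Proposition 1.2.2.14]{lurie2017higheralgebra}. The key inputs are the cofiber sequences $X(p,q)\to X(p,r)\to X(q,r)$ recalled from \cite[Section 1.2]{lurie2017higheralgebra} together with the induced long exact sequences on $\pi_\ast$. Throughout, write $X(p,\infty) := \colim_{r\to\infty} X(p,r)$; by (b), $\pi_\ast X(p,\infty) = \colim_r \pi_\ast X(p,r)$, and the cofiber sequence $X(p)\to X(\infty)\to X(p,\infty)$ identifies
\[
F^p A_q = \ker\bigl(A_q\to \pi_q X(p,\infty)\bigr).
\]

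I would handle statements (2) and (3) about the filtration first. Compatibility $F^{p-1}A_q\subseteq F^pA_q$ follows from the map of cofiber sequences induced by $X(p-1)\to X(p)$. For separation, (a) implies $\pi_q X(p) = 0$ for $p$ sufficiently small, hence $F^pA_q = 0$ there. For exhaustion, in case (c) the compatibility of the $t$-structure with filtered colimits gives $\colim_p\pi_q X(p) = \pi_q X(\infty)$, and hence $\colim_p F^pA_q = A_q$; in case (c$'$) the filtration is finite in each degree, again using (a) on the other end.

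The heart of the argument is statement (1) together with the identification $E^\infty_{p,q} \cong F^pA_q/F^{p-1}A_q$. One presents $E^r_{p,q}$ as a subquotient $Z^r_{p,q}/B^r_{p,q}$ of $\pi_qX(p-1,p)$, where $Z^r_{p,q}$ is the image of $\pi_qX(p-r,p)\to\pi_qX(p-1,p)$ and $B^r_{p,q}$ is the image of the connecting map $\pi_{q+1}X(p,p+r-1)\to\pi_qX(p-1,p)$ from the cofiber sequence $X(p-1,p)\to X(p-1,p+r-1)\to X(p,p+r-1)$, and checks this matches the given definition. Under (a), $Z^r_{p,q}$ stabilizes for $r$ large to $Z^\infty_{p,q} := \im(\pi_qX(p)\to\pi_qX(p-1,p))$; in case (c$'$) this stabilization is uniform, yielding stabilization of the $E^r$-pages. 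Under (b), the boundaries assemble in the colimit to $B^\infty_{p,q} := \im(\pi_{q+1}X(p,\infty)\to\pi_qX(p-1,p))$. The canonical inclusions $E^r_{p,q}\subseteq E^{r+1}_{p,q}$ of (1) then arise in the range where $Z^r$ has stabilized, and $E^\infty_{p,q} = Z^\infty_{p,q}/B^\infty_{p,q}$. A $3\times 3$ diagram chase on the three cofiber sequences $X(p-1)\to X(p)\to X(p-1,p)$, $X(p-1)\to X(\infty)\to X(p-1,\infty)$, and $X(p)\to X(\infty)\to X(p,\infty)$ identifies this subquotient with $F^pA_q/F^{p-1}A_q$.

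The main obstacle is the final diagram chase: one must carefully track how the kernels and images arrange themselves across the several long exact sequences to see that the associated graded of $\{F^pA_q\}$ is exactly $Z^\infty/B^\infty$, not some neighboring subquotient. The role of hypothesis (b) is isolated precisely to computing the boundaries $B^\infty_{p,q}$ correctly as a colimit; its omission leaves the spectral sequence converging only in a conditional sense.
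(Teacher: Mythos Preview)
Your proposal is correct and matches the paper's approach: the paper gives no proof of its own but simply refers to \cite[Proposition 1.2.2.14]{lurie2017higheralgebra}, and you have written out precisely that argument. One small remark: exhaustion $\colim_p F^pA_q = A_q$ already follows from hypothesis (b) at $p=-\infty$ rather than from (c).
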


\subsection{Monoidal properties of towers}\label{ssec:monoidaltower}

Fix a stable category $\calC$ with $t$-structure, and let $\calO$ be a single-colored $\infty$-operad. Following \cite[Definition 2.2.1.6]{lurie2017higheralgebra}, say an $\calO$-monoidal structure on $\calC$ is \textit{compatible with the $t$-structure} on $\calC$ if it respects finite colimits, and for all $f\in\calO(n)$, the tensor product $\otimes_f$ sends $\calC_{\geq 0}^{\times n}$ into $\calC_{\geq 0}$. Fix such an $\calO$-monoidal structure on $\calC$.

\begin{prop}\label{prop:whiteheadmonoidal}
The functor $\calC\rightarrow\Fun(\bbZ,\calC)$ sending an object to its Whitehead tower is canonically lax $\calO$-monoidal.
\end{prop}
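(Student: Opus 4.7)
The plan is to realize the Whitehead tower functor $W: \calC \to \Fun(\bbZ, \calC)$, with $W(X)(p) = \tau_{\geq -p}(X)$, as the right adjoint of a strong $\calO$-monoidal functor, so that its lax $\calO$-monoidal structure is obtained canonically. With this indexing the structure maps $W(X)(p) \to W(X)(p+1)$ are the canonical inclusions of connective covers and $\colim_p W(X)(p) \simeq X$, matching the tower appearing in \cref{cor:stablelocalization}(1) (up to reindexing).

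Consider the full subcategory $\Fun^{\mathrm{fil}}(\bbZ, \calC) \subset \Fun(\bbZ, \calC)$ of towers $Y$ with $Y(p) \in \calC_{\geq -p}$ for all $p$. The first step is to show that Day convolution restricts to $\Fun^{\mathrm{fil}}$: given $Y_1, \ldots, Y_n \in \Fun^{\mathrm{fil}}$ and $f \in \calO(n)$, the level-$p$ Day convolution is
\[
(Y_1 \otimes_f \cdots \otimes_f Y_n)(p) = \colim_{p_1 + \cdots + p_n \leq p} \otimes_f(Y_1(p_1), \ldots, Y_n(p_n)),
\]
each term of which lies in $\calC_{\geq -\sum p_i} \subseteq \calC_{\geq -p}$ by compatibility of the $\calO$-monoidal structure with the $t$-structure. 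The indexing poset admits the anti-diagonal $\{(p_1,\ldots,p_n) : \sum p_i = p\}$ as a cofinal filtered subposet, so this colimit may be computed as a filtered colimit of objects of $\calC_{\geq -p}$, hence lies in $\calC_{\geq -p}$.

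Granting this closure, the inclusion $\iota: \Fun^{\mathrm{fil}}(\bbZ, \calC) \hookrightarrow \Fun(\bbZ, \calC)$ is strong $\calO$-monoidal, and the colimit $\colim: \Fun^{\mathrm{fil}}(\bbZ, \calC) \to \calC$ is strong $\calO$-monoidal (using that each $\otimes_f$ respects the anti-diagonal filtered colimits computing Day convolution). The Whitehead tower defines $W': \calC \to \Fun^{\mathrm{fil}}(\bbZ, \calC)$ by $W'(X)(p) = \tau_{\geq -p}(X)$, and this is right adjoint to $\colim$: the adjunction follows from
\[
\Map_{\Fun^{\mathrm{fil}}}(Y, W'(X)) \simeq \lim_p \Map_\calC(Y(p), \tau_{\geq -p}X) \simeq \lim_p \Map_\calC(Y(p), X) \simeq \Map_\calC(\colim_p Y(p), X),
\]
where the middle equivalence uses the universal property of $\tau_{\geq -p}$ together with the condition $Y(p) \in \calC_{\geq -p}$. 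Consequently $W'$ is canonically lax $\calO$-monoidal, and $W = \iota \circ W'$ inherits a lax $\calO$-monoidal structure as claimed.

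The main obstacle is establishing that $\Fun^{\mathrm{fil}}(\bbZ, \calC)$ is closed under Day convolution, which reduces to closure of the connective subcategories $\calC_{\geq n}$ under the relevant anti-diagonal filtered colimits. This is automatic in the presentable setting of the applications -- e.g.\ when $\calC = \LMod_\calP$ with its natural $t$-structure -- but formally requires some compatibility of the $t$-structure with filtered colimits beyond the finite-colimit version in the definition of compatibility used here. If this compatibility were to fail, one could instead define the lax structure pointwise via the maps $\otimes_f(\tau_{\geq -p_1}X_1, \ldots, \tau_{\geq -p_n}X_n) \to \tau_{\geq -p}(\otimes_f(X_1, \ldots, X_n))$ arising from the universal property of the connective cover, and verify coherence via the universal property of Day convolution \cite[Section 2.2.6]{lurie2017higheralgebra}.
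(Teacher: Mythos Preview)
Your overall strategy---realizing the Whitehead tower as the right adjoint of the strong $\calO$-monoidal functor $\colim\colon\Fun^{\mathrm{fil}}(\bbZ,\calC)\to\calC$---is sound and does yield a proof, but it takes a different route from the paper. The paper factors the Whitehead tower as the diagonal $\calC\to\Fun(\bbZ,\calC)$ (lax monoidal since $\bbZ\to\{0\}$ is monoidal) followed by the endofunctor $Y\mapsto(n\mapsto Y(n)_{\geq -n})$; this endofunctor is a colocalization onto exactly your $\Fun^{\mathrm{fil}}(\bbZ,\calC)$, and since that image is closed under Day convolution, \cite[Proposition~2.2.1.1]{lurie2017higheralgebra} makes it lax monoidal directly. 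Both arguments hinge on the same closure fact; the paper's packaging via the cited colocalization result is a bit more economical, while yours makes the adjunction producing the lax structure explicit.

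There is, however, a genuine error in how you justify that closure. The anti-diagonal $\{(p_1,\ldots,p_n):\sum p_i=p\}$ is neither filtered nor cofinal in $\{(p_1,\ldots,p_n):\sum p_i\leq p\}$ for $n\geq 2$: under the product order it is discrete, and for any tuple with $\sum p_i<p$ the set of anti-diagonal elements above it is a nontrivial discrete set, hence not weakly contractible. Fortunately none of this is needed. The subcategory $\calC_{\geq -p}\subset\calC$ is closed under \emph{all} colimits, being the left orthogonal of $\calC_{\leq -p-1}$: for $X_i\in\calC_{\geq -p}$ and $Y\in\calC_{\leq -p-1}$ one has $\Map_\calC(\colim_i X_i,Y)\simeq\lim_i\Map_\calC(X_i,Y)\simeq\ast$. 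This gives closure of $\Fun^{\mathrm{fil}}$ under Day convolution outright, so your final paragraph's worry about compatibility with filtered colimits is a red herring. The same correction applies to your reason for $\colim$ being strong monoidal: the clean argument is that $\colim$ is left Kan extension along the monoidal functor $\bbZ\to\ast$, which is strong monoidal for Day convolution in general, with no appeal to filteredness required.
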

\begin{proof}
This functor factors as the composite of the diagonal $\calC\rightarrow\Fun(\bbZ,\calC)$ and the endofunctor $W$ of $\Fun(\bbZ,\calC)$ sending a tower $n\mapsto X(n)$ to the new tower $n\mapsto X(n)_{\geq -n}$. The former is lax $\calO$-monoidal, as $\bbZ\rightarrow\{0\}$ is monoidal, hence it is sufficient to verify that the latter is lax $\calO$-monoidal. This follows from \cite[Proposition 2.2.1.1]{lurie2017higheralgebra}, for $W$ is a colocalization of $\Fun(\bbZ,\calC)$, with image closed under the $\calO$-monoidal structure by compatibility with the $t$-structure on $\calC$.
\end{proof}

Restrict now to the case where $\calO$ is the nonunital $\bbA_2$-operad. In other words, fix a pairing $\otimes\colon\calC\times\calC\rightarrow\calC$ which is exact in each variable and sends $\calC_{\geq 0}\times\calC_{\geq 0}$ to $\calC_{\geq 0}$. Writing $\calA$ for the heart of $\calC$, this gives a pairing $\olotimes$ on $\calA$ by
\[
M\olotimes N = \pi_0(M\otimes N).
\]
For $X',X''\in\calC$, there is a canonical K\"unneth map $\pi_0 X' \olotimes\pi_0 X''\rightarrow \pi_0(X\otimes X'')$ given by
\[
\pi_0 X' \olotimes\pi_0 X'' = \pi_0(X'_{\geq 0}\otimes X''_{\geq 0})\rightarrow \pi_0(X'\otimes X'').
\]
There is not such a canonical map in nonzero degrees, the issue being the following. As $\otimes$ is exact in each variable, there are canonical isomorphisms $(\Sigma X')\otimes X''\simeq \Sigma(X'\otimes X'')$ and $X'\otimes (\Sigma X'')\simeq \Sigma(X'\otimes X'')$. However, the diagram
\begin{center}\begin{tikzcd}
(\Sigma^{q'} X') \otimes(\Sigma^{q''} X'')\ar[r]\ar[d]&\Sigma^{q'}(X'\otimes(\Sigma^{q''} X''))\ar[d]\\
\Sigma^{q''}((\Sigma^{q'} X')\otimes X'')\ar[r]&\Sigma^{q'+q''}(X'\otimes X'')
\end{tikzcd}\end{center}
can only be made to commute up to a switch map $S^{q'+q''}\simeq S^{q'}\otimes S^{q''}\simeq S^{q''}\otimes S^{q'}\simeq S^{q''+q'}=S^{q'+q''}$, and so on $\pi_0$ up to a sign of $(-1)^{q'q''}$. For the rest of this section, we choose the isomorphism given by the counterclockwise composite; in other words, we choose
\[
(\Sigma^{q'} X')\otimes (\Sigma^{q''} X'')=\Sigma^{q''}\Sigma^{q'}(X'\otimes X'').
\]
This choice falls naturally out of the convention of pretending that $(\Sigma X')\otimes X''$ and $\Sigma(X'\otimes X'')$ are the ``same'', whereas $X'\otimes (\Sigma X'')$ and $\Sigma(X'\otimes X'')$ are ``different''. Having made a choice, we obtain a natural transformation
\begin{align*}
\pi_{q'} X' \olotimes \pi_{q''} X'' &= \pi_0 \Sigma^{-q'} X'\olotimes\pi_0\Sigma^{-q''} X''\\
&\rightarrow \pi_0(\Sigma^{-q'} X'\otimes\Sigma^{-q''}X'')\simeq\pi_{q'+q''}(X'\otimes X'').
\end{align*}
With this choice, the diagram
\begin{center}\begin{tikzcd}
\pi_{q'}\Sigma X'\olotimes \pi_{q''}X''\ar[r,"="]\ar[d]&\pi_{q'-1}X'\otimes \pi_{q''}X''\ar[d]\\
\pi_{q'+q''}\Sigma X'\otimes X''\ar[r,"\simeq"]&\pi_{q'+q''-1}X'\otimes X''
\end{tikzcd}\end{center}
commutes, whereas the diagram
\begin{center}\begin{tikzcd}
\pi_{q'} X'\olotimes\pi_{q''} \Sigma X''\ar[d]\ar[r,"="]&\pi_{q'} X' \olotimes\pi_{q''-1}X''\ar[d]\\
\pi_{q'+q''}X'\otimes\Sigma X''\ar[r,"\simeq"]&\pi_{q'+q''-1}X'\otimes X''
\end{tikzcd}\end{center}
commutes up to a factor of exactly $(-1)^{q'}$. This is the origin of the signs that will appear for us.

We end this subsection by recording a concrete description of a pairing in $\Fun(\bbZ,\calC)$.

\begin{lemma}
A pairing $X'\otimes X''\rightarrow X$ in $\Fun(\bbZ,\calC)$ is equivalent to the choice of pairings $X'(p')\otimes X''(p'')\rightarrow X(p'+p'')$ for $p',p''\in\bbZ$, together with homotopies filling in the cubes
\begin{center}\begin{tikzcd}[column sep=0mm]
X'(p'-1)\otimes X''(p''-1)\ar[rr]\ar[dd]\ar[dr]&&X'(p')\otimes X''(p''-1)\ar[dd]\ar[dr]\\
&X(p'+p''-2)\ar[rr]\ar[dd]&&X(p'+p''-1)\ar[dd]\\
X'(p'-1)\otimes X''(p'')\ar[rr]\ar[dr]&&X'(p')\otimes X''(p'')\ar[dr]\\
&X(p'+p''-1)\ar[rr]&&X(p'+p'')
\end{tikzcd}.\end{center}
\end{lemma}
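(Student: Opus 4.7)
The plan is to unpack the universal property of Day convolution on $\Fun(\bbZ,\calC)$. By this universal property (as recalled in \cref{ssec:monoidal}), the space of pairings $X'\otimes X''\to X$ in $\Fun(\bbZ,\calC)$ is naturally equivalent to the space of natural transformations
\[
\otimes \circ (X'\times X'') \Rightarrow X \circ +
\]
between functors $\bbZ\times\bbZ\to\calC$, where $+\colon \bbZ\times\bbZ\to\bbZ$ is the addition map. So the remaining task is to translate such a natural transformation into the concrete data of the lemma.

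First, a natural transformation between two functors $\bbZ\times\bbZ\to\calC$ is, by adjunction, a functor $\bbZ\times\bbZ\to\Fun(\Delta^1,\calC)$ with prescribed restrictions to the two endpoints of $\Delta^1$. Since $\bbZ$ is a linearly ordered poset, $\bbZ\times\bbZ$ is a product of two such, and I use the fact that a functor from such a product into an $\infty$-category is determined by its values on objects, its values on generating horizontal and vertical morphisms, and fillings of the generating commutative squares
\[
\begin{tikzcd}
(p'-1,p''-1)\ar[r]\ar[d]&(p',p''-1)\ar[d]\\
(p'-1,p'')\ar[r]&(p',p'')
\end{tikzcd}.
\]
Under the curry adjunction $\Fun(\bbZ\times\bbZ,\Fun(\Delta^1,\calC))\simeq\Fun(\Delta^1,\Fun(\bbZ\times\bbZ,\calC))$, the object-values become precisely the pointwise pairings $X'(p')\otimes X''(p'')\to X(p'+p'')$; the values on the generating horizontal and vertical morphisms are then forced by the structure maps of $X'$, $X''$, and $X$; and the fillings of the generating squares in $\bbZ\times\bbZ$ become precisely the homotopies filling the displayed cubes.

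The main technical point is the justification that a functor out of $\bbZ\times\bbZ$ into an $\infty$-category is determined by this $2$-truncated data, without independent higher coherences. This follows from writing $N(\bbZ\times\bbZ)\cong N\bbZ\times N\bbZ$ as an iterated pushout of squares $\Delta^1\times\Delta^1$ glued along their edges (the higher non-degenerate simplices in a product of $1$-dimensional simplicial sets are shuffles, whose fillings are determined by the cube data). This is the one step requiring care in a fully rigorous argument, but it is a standard piece of simplicial bookkeeping.
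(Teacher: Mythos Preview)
Your proposal is correct and is exactly an unpacking of the paper's one-line proof, which invokes the same two ingredients you do: the universal property of Day convolution (reducing to natural transformations of functors $\bbZ\times\bbZ\to\calC$) and the description of mapping spaces in $\Fun(\bbZ\times\bbZ,\calC)$. The only quibble is that $N\bbZ$ is not literally $1$-dimensional; the cleanest way to justify your last step is via currying $\Fun(\bbZ\times\bbZ,-)\simeq\Fun(\bbZ,\Fun(\bbZ,-))$ together with the spine decomposition of $N\bbZ$, which is genuinely $1$-dimensional and inner anodyne into $N\bbZ$.
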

\begin{proof}
This is immediate from the construction of the tensor product on $\Fun(\bbZ,\calC)$ and the form of mapping spaces in $\Fun(\bbZ\times\bbZ,\calC)$.
\end{proof}

\subsection{Pairings of spectral sequences}\label{ssec:pairings}
Fix conventions as in the previous subsections. Our goal in this subsection is to verify that every pairing $X'\otimes X''\rightarrow X$ of towers gives rise to a pairing $E(X')\olotimes E(X'')\rightarrow E(X)$ of spectral sequences. Before giving the main construction, we point out the following. By a cofibering $X''(-1)\rightarrow X''(0)\rightarrow C''(0)$, we refer really to the left square in a suitable coherently commutative diagram
\begin{center}\begin{tikzcd}
X''(-1)\ar[r]\ar[d]&X''(0)\ar[d]\ar[r]&0\ar[d]\\
0\ar[r]&C''(0)\ar[r,"\delta"]&\Sigma X''(-1)
\end{tikzcd},\end{center}
from which we obtain the right square, in particular the boundary map $\delta$. As $\otimes$ is exact in both variables, for any $X'\in\calC$, we may tensor the original cofiber sequence with $X'$ to obtain a cofiber sequence $X'\otimes X''(-1)\rightarrow X'\otimes X''(0)\rightarrow X'\otimes C''(0)$. Again, this refers really to the left square in
\begin{center}\begin{tikzcd}
X'\otimes X''(-1)\ar[r]\ar[d]&X'\otimes X''(0)\ar[d]\ar[r]&0\ar[d]\\
0\ar[r]&X'\otimes C''(0)\ar[r,"\delta'"]&\Sigma(X'\otimes X''(-1))
\end{tikzcd},\end{center}
where we have implicitly identified $X'\otimes 0\simeq 0$, and from this we obtain the right square, in particular the boundary map $\delta'$. This diagram is canonically equivalent to the diagram obtained by tensoring the first with $X'$, and so $\delta'$ is homotopic to the composite
\begin{center}\begin{tikzcd}
X'\otimes C''(0)\ar[r,"X'\otimes\delta"]&X'\otimes \Sigma X''(-1)\ar[r,"\simeq"]&\Sigma(X'\otimes X''(-1))
\end{tikzcd}.\end{center}

We now proceed to the main construction. Fix the data of cofiberings
\begin{gather*}
X(-2)\rightarrow X(-1)\rightarrow C(-1)\qquad X(-1)\rightarrow X(0)\rightarrow C(0)\\
X'(-1)\rightarrow X'(0)\rightarrow C'(0)\qquad X''(-1)\rightarrow X''(0)\rightarrow C''(0),
\end{gather*}
as well as the data of a filled in cube
\begin{center}\begin{tikzcd}
X'(-1)\otimes X''(-1)\ar[rr]\ar[dd]\ar[dr]&&X'(0)\otimes X''(-1)\ar[dd]\ar[dr]\\
&X(-2)\ar[rr]\ar[dd]&&X(-1)\ar[dd]\\
X'(-1)\otimes X''(0)\ar[rr]\ar[dr]&&X'(0)\otimes X''(0)\ar[dr]\\
&X(-1)\ar[rr]&&X(0)
\end{tikzcd}.\end{center}
Our initial cofiberings, together with the fact that $\otimes$ is exact in each variable, give rise to a canonical isomorphism from the total cofiber of the back face of this cube to $C'(0)\otimes C''(0)$. As a consequence of this, we can form the commutative diagrams
\begin{center}\begin{tikzcd}
X'(-1)\otimes X''(0)\cup_{X'(-1)\otimes X''(-1)}X'(0)\otimes X''(-1)\ar[r]\ar[d]&X(-1)\ar[d]\\
X'(0)\otimes X''(0)\ar[r]\ar[d]&X(0)\ar[d]\\
C'(0)\otimes C''(0)\ar[r,dashed]&C(0)
\end{tikzcd}\end{center}
\begin{center}\begin{tikzcd}
X'(-1)\otimes X''(-1)\ar[r]\ar[d]&X(-2)\ar[d]\\
X'(-1)\otimes X''(0)\cup_{X'(-1)\otimes X''(-1)}X'(0)\otimes X''(-1)\ar[r]\ar[d,"f"]&X(-1)\ar[d]\\
X'(-1)\otimes C''(0)\oplus C'(0)\otimes X''(-1)\ar[r,dashed]&C(-1)
\end{tikzcd},\end{center}
where the columns have the structure of cofiber sequences and the bottom squares are induced from this. From the construction of the maps involved, we obtain the following.

\begin{lemma}\label{lem:leib}
In the diagram
\begin{center}\begin{tikzcd}
C'(0)\otimes C''(0)\ar[r]\ar[d]&C(0)\ar[d]\\
\Sigma(X'(-1)\otimes X''(0)\cup_{X'(-1)\otimes X''(-1)}X'(0)\otimes X''(-1))\ar[r]\ar[d,"\Sigma f"]&\Sigma X(-1)\ar[d]\\
\Sigma(X'(-1)\otimes C''(0)\oplus C'(0)\otimes X''(-1))\ar[r]&\Sigma C(-1)
\end{tikzcd}\end{center}
obtained from the above data, the left vertical composite is given by the sum of the maps
\begin{gather*}
C'(0)\otimes C''(0)\rightarrow(\Sigma X'(-1))\otimes C''(0)\simeq \Sigma(X'(-1)\otimes C''(0))\\
C'(0)\otimes C''(0)\rightarrow C'(0)\otimes (\Sigma X''(-1))\simeq \Sigma(C'(0)\otimes X''(-1)).
\end{gather*}
\qed
\end{lemma}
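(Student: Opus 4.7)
The plan is to identify the two components of the left vertical composite separately, where the composite in question is $(\Sigma f)\circ \partial$, with $\partial: C'(0)\otimes C''(0)\to \Sigma P$ the connecting map of the cofiber sequence $P \to X'(0)\otimes X''(0) \to C'(0)\otimes C''(0)$ (here $P$ denotes the pushout appearing in the middle row). The key point is that each component $f_i$ of the map $f$ fits naturally into a map of cofiber sequences, and naturality of connecting maps then pins down the composite. Formally, in the stable setting, $\Sigma(A\oplus B)\simeq \Sigma A\oplus \Sigma B$, so it suffices to determine the two projections.

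First I identify $f_1\colon P \to X'(-1)\otimes C''(0)$ using the universal property of the pushout: on the summand $X'(-1)\otimes X''(0)$ it is $X'(-1)\otimes q''$, where $q''\colon X''(0)\to C''(0)$; on the summand $X'(0)\otimes X''(-1)$ it is null, with the required coherence coming from the canonical nullhomotopy of $X'(-1)\otimes(q''\circ i'')\simeq 0$. I then observe that this is the first component of the cofiber of $X'(-1)\otimes X''(-1)\to P$, as is forced by the structure of the lower cube in the setup. Next, I assemble the commutative diagram
\begin{center}\begin{tikzcd}
P\ar[r]\ar[d,"f_1"]&X'(0)\otimes X''(0)\ar[d,"X'(0)\otimes q''"]\ar[r]&C'(0)\otimes C''(0)\ar[d,"="]\\
X'(-1)\otimes C''(0)\ar[r]&X'(0)\otimes C''(0)\ar[r]&C'(0)\otimes C''(0)
\end{tikzcd}\end{center}
whose bottom row is a cofiber sequence (being the cofiber sequence $X'(-1)\to X'(0)\to C'(0)$ tensored with $C''(0)$, which uses exactness of $\otimes$ in the first variable). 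The top row is also a cofiber sequence, and the diagram commutes by construction of $f_1$.

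By naturality of connecting maps, the composite $C'(0)\otimes C''(0)\xrightarrow{\partial}\Sigma P\xrightarrow{\Sigma f_1}\Sigma(X'(-1)\otimes C''(0))$ is equal to the connecting map of the bottom cofiber sequence in the diagram above, which is $\partial'\otimes C''(0)$ followed by the canonical identification $(\Sigma X'(-1))\otimes C''(0)\simeq \Sigma(X'(-1)\otimes C''(0))$. An entirely symmetric argument, using exactness of $\otimes$ in the second variable and the analogous identification of $f_2$, shows that $(\Sigma f_2)\circ\partial$ is the composite
$C'(0)\otimes C''(0) \to C'(0)\otimes(\Sigma X''(-1))\simeq \Sigma(C'(0)\otimes X''(-1))$.
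Summing the two components yields the claimed formula.

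The main technical point is verifying the coherent naturality of the maps $f_i$ — more precisely, verifying that the map $f\colon P \to X'(-1)\otimes C''(0)\oplus C'(0)\otimes X''(-1)$ given by the cofiber of $X'(-1)\otimes X''(-1)\to P$ does split as the sum of the two canonical projections $f_1, f_2$ that I defined above, and that each fits into the indicated map of cofiber sequences. This splitting is not a property of an arbitrary pushout square in a stable category, but rather is forced here by the fact that the square itself arises from a $3\times 3$ grid whose rows and columns are all cofiber sequences (from exactness of $\otimes$); so I expect this coherence check to be the most delicate point, although ultimately it reduces to unwinding the universal properties in play.
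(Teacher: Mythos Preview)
Your argument is correct and is exactly the natural verification of this lemma; the paper itself offers no proof beyond the \qed, treating the statement as immediate from the construction of the maps involved. Your decomposition into the two components $f_1,f_2$ and the appeal to naturality of boundary maps for each is precisely what ``from the construction'' unwinds to.

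On the coherence point you flag at the end: you are right that this is the only place requiring care, and your diagnosis is also correct. The splitting of $\operatorname{cofib}(X'(-1)\otimes X''(-1)\to P)$ as the direct sum, with components the maps you name, is forced by the $3\times 3$ grid obtained by taking row and column cofibers of the back face of the cube; computing the total cofiber first along rows and then along columns (or vice versa) exhibits both projections, and the two computations agree with the single object $C'(0)\otimes C''(0)$ by the standard iterated-cofiber lemma for bifiltered objects (cf.\ \cite[Section~1.2.2]{lurie2017higheralgebra}). Once that is in hand, the left square in your map of cofiber sequences commutes on the nose on each leg of the pushout, and the required compatibility of nullhomotopies on $X'(0)\otimes X''(-1)$ is supplied by exactness of $\otimes$ in the second variable (it is the canonical nullhomotopy of $X'(0)\otimes(q''\circ i'')$), so the induced map on cofibers is indeed the identity on $C'(0)\otimes C''(0)$.
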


We are now in a position to prove the following.

\begin{theorem}\label{thm:pairingss}
A pairing $X'\otimes X''\rightarrow X$ of towers gives rise to a pairing $E(X')\olotimes E(X'')\rightarrow E(X'')$ of spectral sequences, i.e.\ pairings
\[
\smile\colon E^r_{p',q'}(X')\olotimes E^r_{p'',q''}(X'')\rightarrow E^r_{p'+p'',q'+q''}(X)
\]
satisfying the Leibniz rule
\[
d^r (x'\smile x'') = d^r(x')\smile x'' + (-1)^{q'} x'\smile d^r(x''),
\]
where moreover the pairing on $E^r$ is induced from naturally defined maps
\[
X'(p'-r,p')\otimes X''(p''-r,p'')\rightarrow X(p'+p''-r,p'+p''),
\]
and the pairing on $E^{r+1}$ is induced by that on $E^r$.
\end{theorem}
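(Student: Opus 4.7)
The plan is to refine the given pairing of towers into coherent pairings on pair objects, and then to reduce the Leibniz identity to Lemma \ref{lem:leib}.

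First I would construct, for each $r \geq 0$ and each $p', p''$, a natural pairing
\[
X'(p'-r, p') \otimes X''(p''-r, p'') \to X(p'+p''-r, p'+p''),
\]
compatible with the structure maps $\eta$ that enter into the definition of $E^r$. The cleanest way to do this is to work with Day convolution in $\Fun(\bbZ, \calC)$: a pairing $X' \otimes X'' \to X$ of towers corresponds, by the universal property, to a map of bifiltered objects, and for any $a \leq b$, $c \leq d$ one extracts a natural map $X'(a,b) \otimes X''(c,d) \to X(a+c, b+d)$ from the filled cubes constituting the pairing combined with the cofiber sequences $X'(a) \to X'(b) \to X'(a,b)$ and $X''(c) \to X''(d) \to X''(c,d)$. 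Alternatively, one can build these inductively in $r$ using the cofiberings $X'(p'-r, p'-r+1) \to X'(p'-r, p') \to X'(p'-r+1, p')$ and their analogues on the $X''$ and $X$ sides, which reduces the construction to the case $r = 1$ that is already given by the original pairing.

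Applying $\pi_\ast$ yields pairings
\[
\pi_{q'} X'(p'-r, p') \otimes \pi_{q''} X''(p''-r, p'') \to \pi_{q'+q''} X(p'+p''-r, p'+p'').
\]
Composing with the natural map $\pi_{q'+q''} X(p'+p''-r, p'+p'') \to \pi_{q'+q''} X(p'+p''-1, p'+p''+r-1)$ that enters the definition of $E^r_{p'+p'', q'+q''}(X)$, and using naturality under the analogous maps for $X'$ and $X''$, I would check that the image factors through $E^r_{p'+p'', q'+q''}(X)$, producing the required pairing $\smile$.

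For the Leibniz rule, consider the cofiberings that produce $d^r$ on each side. Applying Lemma \ref{lem:leib} to the cube obtained by pairing these cofiberings (with $X'(-1) = X'(p'-1, p'+r-1)$, $C'(0) = X'(p'-r, p'-1)$, and analogously on the $X''$ and $X$ sides) expresses the boundary of the tensor as the sum of $\delta \otimes 1$ and $(1 \otimes \delta) \circ \mathrm{switch}$. Passing to $\pi_\ast$, the switch map contributes precisely the sign $(-1)^{q'}$ by the sign convention fixed at the end of \cref{ssec:monoidaltower}, yielding
\[
d^r(x' \smile x'') = d^r(x') \smile x'' + (-1)^{q'} x' \smile d^r(x'').
\]
The compatibility of the pairing on $E^{r+1}$ with that on $E^r$ is then essentially formal: the pair pairings at level $r+1$ restrict to those at level $r$ via the natural maps $X'(p'-r-1, p') \to X'(p'-r, p')$ and their analogues, so cycles map to cycles and boundaries to boundaries, and the induced pairing on $H(E^r, d^r) = E^{r+1}$ agrees with the directly constructed pairing at level $r+1$. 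The main technical obstacle is organizing the coherence data for the pair pairings uniformly in $r$, and for this the symmetric monoidal perspective on Day convolution in $\Fun(\bbZ, \calC)$ is the cleanest tool, as it packages all the filled cubes into a single structure map and makes every subsequent verification a purely diagrammatic exercise.
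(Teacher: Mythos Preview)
Your plan matches the paper's: construct pair pairings $X'(p'-r,p')\otimes X''(p''-r,p'')\to X(p'+p''-r,p'+p'')$, invoke Lemma~\ref{lem:leib} for the Leibniz rule, and check compatibility across pages. The one slip is in how you apply the lemma. You propose slotting pair objects into the roles $X'(-1)$ and $C'(0)$, but your labels are swapped and shifted: in a cofibering $A\to B\to C$ the cofiber is $C$, so $C'(0)$ should be $X'(p'-1,p'+r-1)$ rather than $X'(-1)$, and the other term should be $X'(p'-r-1,p'-1)$. More to the point, applying the lemma with pair objects as the cube vertices requires first assembling a filled cube of pair-object pairings, which you gesture at via Day convolution but do not actually build.

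The paper sidesteps this by applying Lemma~\ref{lem:leib} directly to the cube of \emph{tower} objects: take $X'(-1)=X'(p'-r)$, $X'(0)=X'(p')$, $X''(-1)=X''(p''-r)$, $X''(0)=X''(p'')$, and $X(-2)=X(p'+p''-2r)$, $X(-1)=X(p'+p''-r)$, $X(0)=X(p'+p'')$. This cube is exactly the data carried by the pairing $X'\otimes X''\to X$ at those filtration levels, so nothing extra needs to be constructed. The pair objects then appear as the \emph{outputs} $C'(0)=X'(p'-r,p')$, $C''(0)=X''(p''-r,p'')$, $C(0)=X(p'+p''-r,p'+p'')$, $C(-1)=X(p'+p''-2r,p'+p''-r)$, and the lemma simultaneously produces the pair pairing $C'(0)\otimes C''(0)\to C(0)$ and its compatibility with the two boundary maps into $\Sigma C(-1)$. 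Passing to $\pi_\ast$ with the suspension-switch convention gives Leibniz for $d^r$; for $r=1$ this is the $E^1$ pairing, which then passes inductively to all $E^r$, and a final restriction diagram through $X'(p'-r,p')\to X'(p'-1,p')$ (and its analogues) confirms that the inductive pairing coincides with the one induced directly from the level-$r$ pair pairings.
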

\begin{proof}
From the pairing $X'\otimes X''\rightarrow X$, we obtain solid cubes 
\begin{center}\begin{tikzcd}[column sep=0mm]
X'(p'-r)\otimes X''(p''-r)\ar[rr]\ar[dr]\ar[dd]&&X'(p')\otimes X''(p''-r)\ar[dd]\ar[dr]\\
&X(p'+p''-2r)\ar[rr]\ar[dd]&&X(p'+p''-r)\ar[dd]\\
X'(p'-r)\otimes X''(p'')\ar[rr]\ar[dr]&&X'(p')\otimes X''(p'')\ar[dr]\\
&X(p'+p''-r)\ar[rr]&&X(p'+p'')
\end{tikzcd},\end{center}
giving rise to pairings
\[
X'(p'-r,p')\otimes X''(p''-r,p'')\rightarrow X(p'+p''-r,p'+p'')
\]
fitting into commutative diagrams
\begin{center}\begin{tikzcd}[column sep=large]
X'(p'-r,p')\otimes X''(p''-r,p'')\ar[d]\ar[r,"\mu"]& X(p'+p''-r,p'+p'')\ar[d]\\
\Sigma\left(\begin{array}{c}X'(p'-2r,p'-r)\otimes X''(p''-r,p'')\\ \oplus\\ X'(p'-r,p')\otimes X''(p''-2r,p''-r)\end{array}\right)\ar[r,"\Sigma(\mu+\mu)"]& \Sigma X(p'+p''-2r,p'+p''-r)
\end{tikzcd}.\end{center}
As $\pi_\ast X(p-1,p) = E^1_p(X)$, the pairings obtained for $r=1$ give $E^1_{p'}(X')\olotimes E^1_{p''}(X'')\rightarrow E^1_{p'+p''}(X)$. The above square, together with \cref{lem:leib} identifying the left vertical composite and our conventions regarding K\"unneth maps, implies this satisfies the Leibniz rule, and hence passes to a pairing on $E^r$ for all $r\geq 1$. By construction there are canonically commutative diagrams
\begin{center}\begin{tikzcd}
X'(p'-r,p')\otimes X''(p''-r,p'')\ar[r]\ar[d]&X(p'+p''-r,p'+p'')\ar[d]\\
X'(p'-1,p')\otimes X''(p''-1,p'')\ar[r]&X(p'+p''-1,p'+p'')
\end{tikzcd},\end{center}
and these tell us that the pairing on $E^r$ is induced from the pairing in $\calC$.
\end{proof}

We end with a remark concerning convergence of this product. Fix a pairing of towers $X'\otimes X''\rightarrow X$. Suppose that $\calC$ and $\calA$ admit countable sums, and that these distribute across $\otimes$ and $\olotimes$. In particular, we obtain a pairing $X'(\infty)\otimes X''(\infty)\rightarrow X(\infty)$. Under the convergence conditions of \cref{prop:convergess}, the pairing $E(X')\olotimes E(X'')\rightarrow E(X)$ of \cref{thm:pairingss} passes to $E^\infty_{p'}(X')\olotimes E^\infty_{p''}(X'')\rightarrow E^\infty_{p'+p''}(X)$. As there are canonically commutative diagrams
\begin{center}\begin{tikzcd}
X'(p')\otimes X''(p'')\ar[r]\ar[d]&X(p'+p'')\ar[d]\\
X'(\infty)\otimes X''(\infty)\ar[r]&X(\infty)
\end{tikzcd},\end{center}
this is the associated graded of the pairing $\pi_\ast X'(\infty)\olotimes \pi_\ast X''(\infty)\rightarrow\pi_\ast X(\infty)$.

\end{appendix}

\begingroup
\raggedright
\bibliographystyle{alpha}
\bibliography{refs} 

\begin{thebibliography}{GIKR22}

\bibitem[ARV11]{adamekrosickyvitale2011algebraic}
J.~Ad{\'a}mek, J.~Rosick{\'y}, and E.~M. Vitale.
\newblock {\em Algebraic theories. {A} categorical introduction to general
  algebra. {With} a foreword by {F}. {W}. {Lawvere}}, volume 184 of {\em Camb.
  Tracts Math.}
\newblock Cambridge: Cambridge University Press, 2011.

\bibitem[Bad02]{badzioch2002algebraic}
Bernard Badzioch.
\newblock Algebraic theories in homotopy theory.
\newblock {\em Ann. of Math. (2)}, 155(3):895--913, 2002.

\bibitem[Bal23]{balderrama2021algebraic}
William Balderrama.
\newblock Algebraic theories of power operations.
\newblock {\em J. Topol.}, 16(4):1543--1640, 2023.

\bibitem[BDG04]{blancdwyergoerss2004realization}
D.~Blanc, W.~G. Dwyer, and P.~G. Goerss.
\newblock The realization space of a {$\Pi$}-algebra: a moduli problem in
  algebraic topology.
\newblock {\em Topology}, 43(4):857--892, 2004.

\bibitem[Ber06]{bergner2006rigidification}
Julia~E. Bergner.
\newblock Rigidification of algebras over multi-sorted theories.
\newblock {\em Algebr. Geom. Topol.}, 6:1925--1955, 2006.

\bibitem[BF78]{bousfieldfriedlander1978homotopy}
A.~K. Bousfield and E.~M. Friedlander.
\newblock Homotopy theory of {$\Gamma $}-spaces, spectra, and bisimplicial
  sets.
\newblock In {\em Geometric applications of homotopy theory ({P}roc. {C}onf.,
  {E}vanston, {I}ll., 1977), {II}}, volume 658 of {\em Lecture Notes in Math.},
  pages 80--130. Springer, Berlin, 1978.

\bibitem[BHS23]{burklundhahnsenger2019boundaries}
Robert Burklund, Jeremy Hahn, and Andrew Senger.
\newblock On the boundaries of highly connected, almost closed manifolds.
\newblock {\em Acta Math.}, 231(2):205--344, 2023.

\bibitem[Bra17]{brantner2017lubin}
Lukas Brantner.
\newblock {\em {The Lubin-Tate Theory of Spectral Lie Algebras}}.
\newblock Ph.{D}. thesis, Harvard University, 2017.
\newblock urn-3:HUL.InstRepos:41140243.

\bibitem[Cis19]{cisinski2019higher}
Denis-Charles Cisinski.
\newblock {\em Higher categories and homotopical algebra}, volume 180 of {\em
  Cambridge Studies in Advanced Mathematics}.
\newblock Cambridge University Press, Cambridge, 2019.

\bibitem[DKS95]{dwyerkanstover1995bigraded}
W.~G. Dwyer, D.~M. Kan, and C.~R. Stover.
\newblock The bigraded homotopy groups {$\pi_{i,j}X$} of a pointed simplicial
  space {$X$}.
\newblock {\em J. Pure Appl. Algebra}, 103(2):167--188, 1995.

\bibitem[DP61]{doldpuppe1961homologie}
Albrecht Dold and Dieter Puppe.
\newblock Homologie nicht-additiver {F}unktoren. {A}nwendungen.
\newblock {\em Ann. Inst. Fourier Grenoble}, 11:201--312, 1961.

\bibitem[GH05]{goersshopkinsxxxxmoduli}
Paul Goerss and Michael Hopkins.
\newblock Moduli problems for structured ring spectra.
\newblock \url{http://www.math.northwestern.edu/~pgoerss/spectra/obstruct.pdf},
  2005.

\bibitem[GIKR22]{gheorgheisaksenkrausericka2018cmotivic}
Bogdan Gheorghe, Daniel~C. Isaksen, Achim Krause, and Nicolas Ricka.
\newblock {$\mathbb{C}$}-motivic modular forms.
\newblock {\em J. Eur. Math. Soc. (JEMS)}, 24(10):3597--3628, 2022.

\bibitem[Gla16]{glasman2016day}
Saul Glasman.
\newblock Day convolution for {$\infty$}-categories.
\newblock {\em Math. Res. Lett.}, 23(5):1369--1385, 2016.

\bibitem[GM92]{greenleesmay1992derived}
J.~P.~C. Greenlees and J.~P. May.
\newblock Derived functors of {$I$}-adic completion and local homology.
\newblock {\em J. Algebra}, 149(2):438--453, 1992.

\bibitem[GM95]{greenleesmay1995completions}
J.~P.~C. Greenlees and J.~P. May.
\newblock Completions in algebra and topology.
\newblock In {\em Handbook of algebraic topology}, pages 255--276.
  North-Holland, Amsterdam, 1995.

\bibitem[HL17]{hopkinslurie2017brauer}
Michael Hopkins and Jacob Lurie.
\newblock On {B}rauer groups of {L}ubin-{T}ate spectra {I}.
\newblock \url{https://www.math.ias.edu/~lurie/papers/Brauer.pdf}, 2017.

\bibitem[HS99]{hoveystrickland1999morava}
Mark Hovey and Neil~P. Strickland.
\newblock Morava {$K$}-theories and localisation.
\newblock {\em Mem. Amer. Math. Soc.}, 139(666):viii+100, 1999.

\bibitem[Lam55]{lambek1955groups}
J.~Lambek.
\newblock Groups and herds.
\newblock {\em Bull. Amer. Math. Soc.}, 61:78, 1955.

\bibitem[Lam92]{lambek1992ubiquity}
J.~Lambek.
\newblock On the ubiquity of {M}al{'}cev operations.
\newblock In {\em Proceedings of the {I}nternational {C}onference on {A}lgebra,
  {P}art 3 ({N}ovosibirsk, 1989)}, volume 131 of {\em Contemp. Math.}, pages
  135--146. Amer. Math. Soc., Providence, RI, 1992.

\bibitem[Law04]{lawvere1963functorial}
F.~William Lawvere.
\newblock Functorial semantics of algebraic theories and some algebraic
  problems in the context of functorial semantics of algebraic theories.
\newblock {\em Repr. Theory Appl. Categ.}, pages 1--121, 2004.

\bibitem[Law19]{lawsonxxxxen}
Tyler Lawson.
\newblock {$E_n$}-spectra and {D}yer-{L}ashof operations.
\newblock In {\em {H}andbook of {H}omotopy {T}heory}. CRC Press, 2019.

\bibitem[LM06]{lewsismandell2006equivariant}
L.~Gaunce Lewis, Jr. and Michael~A. Mandell.
\newblock Equivariant universal coefficient and {K}\"{u}nneth spectral
  sequences.
\newblock {\em Proc. London Math. Soc. (3)}, 92(2):505--544, 2006.

\bibitem[Lur11]{lurie2011quasi}
Jacob Lurie.
\newblock Quasi-coherent sheaves and {T}annaka duality theorems.
\newblock \url{https://www.math.ias.edu/~lurie/papers/DAG-VIII.pdf}, 2011.

\bibitem[Lur17a]{lurie2017higheralgebra}
Jacob Lurie.
\newblock Higher algebra.
\newblock \url{http://www.math.ias.edu/~lurie/papers/HA.pdf}, 2017.

\bibitem[Lur17b]{lurie2017highertopos}
Jacob Lurie.
\newblock Higher topos theory.
\newblock \url{http://www.math.ias.edu/~lurie/papers/HTT.pdf}, 2017.

\bibitem[Lur18]{lurie2018spectral}
Jacob Lurie.
\newblock Spectral algebraic geometry.
\newblock \url{https://www.math.ias.edu/~lurie/papers/SAG-rootfile.pdf}, 2018.

\bibitem[MG19]{mazelgee2019grothendieck}
Aaron Mazel-Gee.
\newblock On the {Grothendieck} construction for {{\(\infty\)}}-categories.
\newblock {\em J. Pure Appl. Algebra}, 223(11):4602--4651, 2019.

\bibitem[Pst23a]{pstragowski2023moduli}
Piotr Pstrągowski.
\newblock Moduli of spaces with prescribed homotopy groups.
\newblock {\em Journal of Pure and Applied Algebra}, 227(10):107409, 2023.

\bibitem[Pst23b]{pstragowski2023synthetic}
Piotr Pstrągowski.
\newblock Synthetic spectra and the cellular motivic category.
\newblock {\em Invent. Math.}, 232(2):553--681, 2023.

\bibitem[Qui67]{quillen1967homotopical}
Daniel~G. Quillen.
\newblock {\em Homotopical algebra}.
\newblock Lecture Notes in Mathematics, No. 43. Springer-Verlag, Berlin-New
  York, 1967.

\bibitem[Rez06]{rezk2006lectures}
Charles Rezk.
\newblock Lectures on power operations.
\newblock \url{https://rezk.web.illinois.edu/power-operation-lectures.dvi},
  2006.

\bibitem[Rez09]{rezk2009congruence}
Charles Rezk.
\newblock The congruence criterion for power operations in {M}orava
  {$E$}-theory.
\newblock {\em Homology Homotopy Appl.}, 11(2):327--379, 2009.

\bibitem[Rez18]{rezk2018analytic}
Charles Rezk.
\newblock Analytic completion.
\newblock \url{https://rezk.web.illinois.edu/analytic-paper.pdf}, 2018.

\bibitem[Smi76]{smith1976malcev}
Jonathan D.~H. Smith.
\newblock {\em Mal'cev varieties}.
\newblock Lecture Notes in Mathematics, Vol. 554. Springer-Verlag, Berlin-New
  York, 1976.

\bibitem[SS03]{schwedeshipler2003stable}
Stefan Schwede and Brooke Shipley.
\newblock Stable model categories are categories of modules.
\newblock {\em Topology}, 42(1):103--153, 2003.

\bibitem[{Til}16]{tilson2016power}
Sean {Tilson}.
\newblock {Power operations in the K\"unneth spectral sequence and commutative
  {$H\mathbb{F}_p$}-algebras}.
\newblock {\em arXiv e-prints}, page arXiv:1602.06736, February 2016.

\bibitem[TV69]{tierneyvogel1969simplicial}
Myles Tierney and Wolfgang Vogel.
\newblock Simplicial resolutions and derived functors.
\newblock {\em Math. Z.}, 111:1--14, 1969.

\bibitem[Wra70]{wraith1969algebraic}
G.~C. Wraith.
\newblock {\em Algebraic theories}.
\newblock Lectures Autumn 1969. Lecture Notes Series, No. 22. Matematisk
  Institut, Aarhus Universitet, Aarhus, 1970.

\end{thebibliography}
\endgroup

\end{document}